\documentclass{amsart}
\title{Triality over Schemes}

% Packages
\usepackage{amsmath, amssymb, amsthm}
\usepackage{bbm, lmodern, mathrsfs, euscript}
\usepackage{tikz, tikz-cd}
\usepackage{xcolor}
\usepackage{arydshln}
\usepackage{enumitem}
\usepackage[colorlinks=true, pdfstartview=FitV, linkcolor=blue, citecolor=blue, urlcolor=blue, breaklinks=true]{hyperref}
\usepackage{cleveref}

% Fonts
\usepackage{forloop}
\newcounter{fonts}
\setcounter{fonts}{1}
\let\eeee\edef
\forloop{fonts}{1}{\value{fonts} < 27}{%
\expandafter\eeee\csname \Alph{fonts}\Alph{fonts}\endcsname{\noexpand\mathbb{\Alph{fonts}}}
\expandafter\eeee\csname b\Alph{fonts}\endcsname{\noexpand\mathbf{\Alph{fonts}}}
\expandafter\eeee\csname c\Alph{fonts}\endcsname{\noexpand\mathcal{\Alph{fonts}}}
\expandafter\eeee\csname f\Alph{fonts}\endcsname{\noexpand\mathfrak{\Alph{fonts}}}
\expandafter\eeee\csname fr\alph{fonts}\endcsname{\noexpand\mathfrak{\alph{fonts}}}
\expandafter\eeee\csname s\Alph{fonts}\endcsname{\noexpand\mathscr{\Alph{fonts}}}
\expandafter\eeee\csname e\Alph{fonts}\endcsname{\noexpand\EuScript{\Alph{fonts}}}
}
\mathchardef\mdash="2D

% Section Numbering

\setlist[enumerate,1]{label={\rm(\roman*)}}

% Environments
\newtheorem{thm}{Theorem}[section]
\newtheorem{defn}[thm]{Definition}
\newtheorem{prop}[thm]{Proposition}
\newtheorem{cor}[thm]{Corollary}
\newtheorem{lem}[thm]{Lemma}

\newtheoremstyle{remarkstyle}{\topsep}{\topsep}{\rm}{}{\bfseries}{.}{.5em}{}
\theoremstyle{remarkstyle}
\newtheorem{rem}[thm]{Remark}

\newtheorem{ex}[thm]{Example}

\newtheoremstyle{solutionstyle}{\topsep}{\topsep}{\rm}{}{\it}{.}{.5em}{}
\theoremstyle{solutionstyle}

\newtheorem*{sol*}{Solution}

% Assorted

 %set notation middle bar which adjust to match \left \right brackets.
 %Powerset script P
\newcommand{\und}{\underline{\hspace{2ex}}}
\newcommand{\iso}{\overset{\sim}{\longrightarrow}}
\newcommand{\inj}{\hookrightarrow}
\newcommand{\surj}{\twoheadrightarrow}

% Groups

\newcommand{\runity}{\boldsymbol{\mu}}
\newcommand{\GL}{\mathbf{GL}}

\newcommand{\SO}{\mathbf{SO}}

\newcommand{\PGL}{\mathbf{PGL}}

\newcommand{\PGO}{\mathbf{PGO}}
\newcommand{\Spin}{\mathbf{Spin}}

% Lie Algebras

\newcommand{\slLie}{\mathfrak{sl}}

% Categories
 
\DeclareMathOperator{\Grp}{\mathfrak{Grp}}
\DeclareMathOperator{\Ab}{\mathfrak{Ab}}
\DeclareMathOperator{\Sets}{\mathfrak{Sets}}
\DeclareMathOperator{\Rings}{\mathfrak{Rings}}

\DeclareMathOperator{\Sch}{\mathfrak{Sch}}
\DeclareMathOperator{\Aff}{\mathfrak{Aff}}

% Sheaves
\newcommand{\cHom}{\mathcal{H}\hspace{-0.4ex}\textit{o\hspace{-0.2ex}m}}
\newcommand{\cEnd}{\mathcal{E}\hspace{-0.4ex}\textit{n\hspace{-0.2ex}d}}
%newcommand{\cAut}{\mathcal{A}\hspace{-0.4ex}\textit{u\hspace{-0.2ex}t}}
\newcommand{\cAut}{\mathbf{Aut}}
\newcommand{\bAut}{\mathbf{Aut}}
\newcommand{\cSym}{\mathcal{S}\hspace{-0.5ex}\textit{y\hspace{-0.3ex}m}}

\newcommand{\cSkew}{\mathcal{S}\hspace{-0.4ex}\textit{k\hspace{-0.2ex}e\hspace{-0.3ex}w}}
\newcommand{\cAlt}{\mathcal{A}\hspace{-0.1ex}\textit{$\ell$\hspace{-0.3ex}t}}
\newcommand{\cIsom}{\mathcal{I}\hspace{-0.35ex}\textit{s\hspace{-0.15ex}o\hspace{-0.15ex}m}}

% Math Operators
\DeclareMathOperator{\Span}{Span}

\DeclareMathOperator{\Img}{Img}
\DeclareMathOperator{\Ker}{Ker}

\DeclareMathOperator{\Hom}{Hom}

\DeclareMathOperator{\Isom}{Isom}
\DeclareMathOperator{\Aut}{Aut}

\DeclareMathOperator{\Id}{Id}

\DeclareMathOperator{\Trd}{Trd}
\DeclareMathOperator{\Nrd}{Nrd}

\DeclareMathOperator{\diag}{diag}

\DeclareMathOperator{\Spec}{Spec}
\DeclareMathOperator{\Inn}{Inn}

\DeclareMathOperator{\Mat}{M}

\newcommand{\Csigma}{\underline{\sigma}}
\newcommand{\Cf}{\underline{f}}

\newcommand{\Cl}{\mathcal{C}\hspace{-0.3ex}\ell}

\newcommand{\fTrip}{\mathfrak{Trip}}
\newcommand{\fTrial}{\mathfrak{TriAlg}}
\newcommand{\fTriAlg}{\mathfrak{TriAlg}}

\newcommand{\qt}{\mathrm{qt}}
\newcommand{\spl}{_\mathrm{split}}

\newcommand{\sw}{\mathrm{sw}}
\newcommand{\oct}{_\mathrm{oct}}

\DeclareMathOperator{\Out}{Out}

\newcommand{\adj}{\mathrm{ad}}
\newcommand{\simpc}{\mathrm{sc}}

\newcommand{\preSpin}{\mathbf{preSpin}}

\begin{document}
\author[C. Ruether]{Cameron Ruether}
\address{The ``Simion Stoilow" Institute of Mathematics of the Romanian Academy, 21 Calea Grivitei Street, 010702 Bucharest, Romania.}
\email{cameronruether@gmail.com}

\thanks{This work was supported by the NSERC grants of Mikhail Kotchetov (Discovery Grant 2018-04883) and Yorck Sommerh\"auser (RGPIN-2017-06543), as well as by the AARMS collaborative research group ``Groups, Rings, Lie and Hopf Algebras" and the Atlantic Algebra Centre. It was also supported by the project ``Group schemes, root systems, and related representations" founded by the European Union - NextGenerationEU through Romania's National Recovery and Resilience Plan (PNRR) call no. PNRR-III-C9-2023-I8, Project CF159/31.07.2023, and coordinated by the Ministry of Research, Innovation and Digitalization (MCID) of Romania}

\date{April 29, 2024}

\maketitle

\noindent{\bf Abstract:} {Working over an arbitrary base scheme, we provide an alternative development of triality which does not use Octonion algebras or symmetric composition algebras. Instead, we use the Clifford algebra of the split hyperbolic quadratic form of rank 8 and computations with Chevalley generators of groups of type $D_4$. Following the strategy of \cite{KMRT}, we then define the stack of trialitarian triples and show it is equivalent to the gerbe of $\PGO_8^+$--torsors. We show it has endomorphisms generating a group isomorphic to $\SS_3$ and that several familiar cohomological properties of $\PGO_8^+$ follow in this setting as a result. Next, we define the stack of trialitarian algebras and show it is equivalent to the gerbe of $\PGO_8^+\rtimes \SS_3$--torsors. Because of this, it is also equivalent to the gerbes of simply connected, respectively adjoint, groups of type $D_4$. We define $\Spin_\cT$ and $\PGO^+_\cT$ for a trialitarian algebra and define concrete functors $\cT \mapsto \Spin_\cT$ and $\cT \mapsto \PGO^+_\cT$ which realize these equivalences.}
\medskip

\noindent{\bf Keywords: {Triality, $D_4$, Quadratic Pairs, Chevalley Groups, Octonion Algebras, Symmetric Composition Algebras, Clifford Algebras}}\\
\medskip
\noindent {\em MSC 2020: Primary 20G35, Secondary 11E57 11E88 16H05 17A75 20G10.}
\bigskip

\section*{Introduction}
{%
\renewcommand{\thethm}{\Alph{thm}}
The standard development and exposition of the phenomenon of triality, that simply connected and adjoint semisimple groups of type $D_4$ have extra outer automorphisms due to the extra symmetries of the Dynkin diagram, is built on the theory of Octonion algebras. This approach dates back to \'Elie Cartan starting in 1925. Later approaches, such as in \cite{KMRT} which works over a field of characteristic not $2$, use symmetric composition algebras, which are twisted forms of para-Cayley algebras. Following \cite{KMRT} but including characteristic $2$, Section 4 of \cite{DQ21} utilizes para-Cayley algebras as well. The triality discussed in \cite{AG19} over rings is also in the context of Octonions. Their definition of $\Spin_{8}$ as triples uses Octonion multiplication explicitly and they also show how their results can be obtained by generalizing the techniques of \cite{KMRT}. We instead present an approach which avoids the use of Octonions or symmetric composition algebras. Instead, we are able to identify and describe the phenomena of triality using the Chevalley generators of split groups of type $D_4$.

We work in the same setting as in \cite{GNR} and \cite{Rue23}; primarily with sheaves on the fppf ringed site $(\Sch_S,\cO)$ over a base scheme $S$ where $\cO$ is the global sections functor. In particular, we make no assumption on the invertibility of $2$. We write sheaves in calligraphic letters and other rings, algebras, etc., in roman letters. In order to work with groups of type $D$ in this setting, we utilize quadratic pairs. See \Cref{quad_forms_and_triples} for an overview of their definition and properties. Given a central simple algebra with quadratic pair $(A,\sigma,f)$ over a field (with some assumptions on the degree), in \cite{DQ21} it was shown how to equip the associated Clifford algebra $(\mathrm{Cl}(A,\sigma,f),\underline{\sigma})$ with its own canonical quadratic pair $(\underline{\sigma},\underline{f})$, where $\underline{\sigma}$ is the usual canonical involution. This was extended to our setting over a scheme in \cite{Rue23} where, for an Azumaya algebra with quadratic pair $(\cA,\sigma,f)$ over $S$, the canonical quadratic pair on the Clifford algebra $\Cl(\cA,\sigma,f)$ was defined. We similarly denote it $(\underline{\sigma},\underline{f})$. With this definition as a starting point, we are now able to describe triality over schemes as well.

Our work here starts with the isomorphism of algebras with quadratic pair \cite[(4)]{Rue23} (which is built on the isomorphisms of algebras from \cite{CF} and \cite{Knus}). Let $(\cO^8,q)$ be the split hyperbolic quadratic form of rank $8$, in detail, $q(x_1,\ldots,x_8) = x_1x_8 + x_2x_7 + x_3 x_6 + x_4 x_5$. Let $(\Mat_8(\cO),\sigma,f)$ be its adjoint quadratic triple. After choosing appropriate bases, the isomorphism in \cite{Rue23} can be written as
\[
\Phi\colon \big(\Cl\big(\Mat_8(\cO),\sigma,f\big),\underline{\sigma},\underline{f}\big) \iso (\Mat_8(\cO),\sigma,f)\times (\Mat_8(\cO),\sigma,f).
\]
By analyzing the images of the Chevalley generators of $\Spin_8 \subset \Cl(\Mat_8(\cO),\sigma,f)$ under $\Phi$, we show how one can identify the phenomenon of triality within the isomorphism $\Phi$. Precisely, we show the following.
\begin{thm}\label{intro_thm_Phi_restricts}
The isomorphism $\Phi$ restricts to a group homomorphism $\Spin_8 \to \bO_8^+ \times \bO_8^+$ which induces the homomorphism
\begin{align*}
\PGO_8^+ &\to \PGO_8^+ \times \PGO_8^+ \\
x &\mapsto \big(\theta^+(x),(\theta^+)^2(x)\big)
\end{align*}
where $\theta^+$ is an order three outer automorphism of $\PGO_8^+$.
\end{thm}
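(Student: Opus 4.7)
The approach is to verify the statement on Chevalley generators. Fppf-locally on $S$, the split simply connected group $\Spin_8 \subset \Cl(\Mat_8(\cO),\sigma,f)^\times$ is generated by the root subgroups $x_\alpha(t) = 1 + t\,e_\alpha$ for $\alpha$ a simple root of $D_4$ and $t$ a local section of $\cO$, where each $e_\alpha$ is a nilpotent element of the Clifford algebra whose expression in the basis chosen to define $\Phi$ is explicit. The first step is to compute
\[
\Phi\bigl(x_\alpha(t)\bigr) = \bigl(M_\alpha^{(1)}(t),\, M_\alpha^{(2)}(t)\bigr) \in \Mat_8(\cO)\times\Mat_8(\cO)
\]
for each simple $\alpha$ and to verify by direct matrix calculation that each $M_\alpha^{(i)}(t)$ preserves the split form $q$. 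Since the $x_\alpha(t)$ generate $\Spin_8$ fppf-locally, this yields the restricted homomorphism $\Spin_8 \to \bO_8^+\times\bO_8^+$.

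The same computation identifies the images more precisely. Each $M_\alpha^{(i)}(t)$ should turn out to be an elementary matrix lying in the root subgroup of $\bO_8^+$ indexed by $\phi_i(\alpha)$, where $\phi_1, \phi_2$ are nontrivial graph automorphisms of the $D_4$ Dynkin diagram. Composing each projection $\theta_i\colon \Spin_8 \to \bO_8^+$ with the adjoint quotient $\bO_8^+ \twoheadrightarrow \PGO_8^+$, I would check on a generating set of the standard maximal torus (i.e., on the coroots) that the center $Z(\Spin_8)\cong \runity_2\times\runity_2$ lands in $Z(\bO_8^+)$ under each $\theta_i$, so that each $\theta_i$ descends to an endomorphism $\overline{\theta}_i$ of $\PGO_8^+$. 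Because any endomorphism of a split adjoint simple group that realizes a nontrivial diagram automorphism is necessarily an outer automorphism of the same order, and because the $\phi_i$ produced by $\Phi$ turn out to be the two nontrivial rotations of the triple of outer nodes of $D_4$ (so $\phi_1$ has order $3$ and $\phi_2 = \phi_1^2$), one concludes that $\theta^+ := \overline{\theta}_1$ is an outer automorphism of order $3$ and that $\overline{\theta}_2 = (\theta^+)^2$. The announced formula for the induced map $\PGO_8^+ \to \PGO_8^+\times\PGO_8^+$ follows.

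The main obstacle I anticipate is organizational rather than conceptual. It amounts to aligning the basis conventions of \cite{Rue23} used to define $\Phi$, the Chevalley basis of $\so_8$, and the matrix realization of $\bO_8^+\subset \Mat_8(\cO)$, so that when one evaluates $\Phi$ on each generator $x_\alpha(t)$ the output lands cleanly in the simple root subgroup indexed by the predicted cyclic permutation of the outer $D_4$ nodes. Once this bookkeeping is fixed, the argument reduces to a finite matrix verification together with standard facts about split adjoint groups of type $D_4$ (generation by root subgroups, characterization of the center via coroots, and the identification of outer automorphisms with diagram automorphisms).
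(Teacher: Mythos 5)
Your proposal is essentially the same as the paper's argument: compute the images of the Chevalley generators of $\Spin_8$ under $\Phi$ (the paper's $\Psi$) in the basis of \cite{Rue23}, recognize them as Chevalley generators of $\bO_8^+\times\bO_8^+$, and read off the induced graph automorphism on each factor. The paper carries this out in \Cref{Spin_Chevalley_computations} (computing all root groups, not just simple ones) and then passes to $\PGO_8^+$ in \Cref{triality_hom} by noting that the map $\Cl\colon\PGO_8^+\to\cAut(\Cl(\Mat_8(\cO),\sigma_8,f_8))$ is inner conjugation by a $\Spin_8$-lift (the commutative triangle \Cref{action_on_Clifford}), so the induced map on $\PGO_8^+$ is read directly off the table. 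You instead propose to verify descent by checking that $Z(\Spin_8)$ maps into $Z(\bO_8^+\times\bO_8^+)$ and then to identify $\theta^+$ via the abstract characterization of outer automorphisms by diagram automorphisms. Both work; the paper's route avoids the center check (it is implicit in recognizing the images as lying in specific root subgroups) and compares against an explicitly tabulated $\theta^+$ (\Cref{theta+_description}) rather than invoking general structure theory. Two small remarks on your plan: verifying that the $M_\alpha^{(i)}(t)$ preserve $q$ by direct matrix computation is redundant once they are recognized as $x_{\phi_i(\alpha)}(\pm t)$; and the signs matter --- the images are $x_{\phi_i(\alpha)}(\pm t)$, not always $x_{\phi_i(\alpha)}(t)$, so the comparison with the graph automorphism has to track the sign conventions carefully, which is where the nonstandard simple system and the particular ordered bases of $\wedge_0\cV$ and $\wedge_1\cV$ in \Cref{eq_V_bases} enter (cf.\ \Cref{choice_remark}). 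That is exactly the bookkeeping burden you flag, and it is indeed where the work lies.
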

Readers familiar with triality will know that the usual descriptions of the phenomena come from a similar looking isomorphism of algebras
\[
\big(\Cl(\cEnd_\cO(\cS),\sigma_n),\underline{\sigma_n}\big) \iso (\cEnd_\cO(\cS),\sigma_n)\times (\cEnd_\cO(\cS),\sigma_n)
\]
where $(\cS,n)$ is a symmetric composition algebra with its quadratic norm. Here, $\sigma_n$ is the adjoint involution of $n$. The isomorphism above is defined using properties of the non-associative multiplication in $\cS$. We compare our Chevalley generator based approach to this usual approach in \Cref{without_octonions}, showing, as one would hope, that they are equivalent.

Once we have the result of \Cref{intro_thm_Phi_restricts}, we no longer need Chevalley generators or symmetric composition algebras and we proceed down a path analogous to \cite{KMRT}. To shorten notation, we write $\cA_\qt = (\cA,\sigma_\cA,f_\cA)$ for an Azumaya algebra with quadratic pair (qt standing for \emph{quadratic triple}, the term used in \cite{GNR} and \cite{Rue23}). We begin by defining trialitarian triples in the same way as in \cite{KMRT}, \cite{DQ21}, or \cite{BT23}, i.e., they are the data of $(\cA_\qt,\cB_\qt,\cC_\qt,\alpha_\cA)$, where $\cA_\qt$, $\cB_\qt$, $\cC_\qt$ are quadratic triples of degree $8$ and
\[
\alpha_\cA \colon \big(\Cl(\cA_\qt),\underline{\sigma_\cA},\underline{f_\cA}\big)\iso \cB_\qt \times \cC_\qt
\]
is an isomorphism of quadratic triples. By considering quadratic triples over differing base schemes, these form a stack over $\Sch_S$ which we denote by $\fTrip$.
\begin{thm}\label{intro_thm_triples}
The stack $\fTrip$ is equivalent to the gerbe of $\PGO_8^+$--torsors. Furthermore, there are stack endomorphisms
\begin{align*}
\Theta^+ \colon \fTrip &\to \fTrip \\
(\cA_\qt,\cB_\qt,\cC_\qt,\alpha_\cA) &\mapsto (\cB_\qt,\cC_\qt,\cA_\qt,\alpha') \\[2pt]
\Theta \colon \fTrip &\to \fTrip \\
(\cA_\qt,\cB_\qt,\cC_\qt,\alpha_\cA) &\mapsto (\cA_\qt,\cC_\qt,\cB_\qt,\sw \circ \alpha_\cA)
\end{align*}
where $\sw \colon \cB_\qt \times \cC_\qt \iso \cC_\qt \times \cB_\qt$ is the switch isomorphism. These satisfy the relations
\[
(\Theta^+)^3 = \Id, \quad \Theta^2 = \Id, \text{ and } \Theta^+ \circ \Theta = \Theta \circ (\Theta^+)^2.
\]
\end{thm}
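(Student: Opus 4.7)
The approach follows the gerbe-theoretic strategy of \cite{KMRT}, with \Cref{intro_thm_Phi_restricts} providing the key computation for identifying the band. I would begin by singling out a canonical split object
\[
\cT_0 = \big((\Mat_8(\cO),\sigma,f),(\Mat_8(\cO),\sigma,f),(\Mat_8(\cO),\sigma,f),\Phi\big).
\]
By \Cref{intro_thm_Phi_restricts}, an automorphism of $\cT_0$ is determined by its action $\phi_\cA \in \PGO_8^+$ on the first factor, since the second and third components are forced to be $\theta^+(\phi_\cA)$ and $(\theta^+)^2(\phi_\cA)$ for compatibility with $\Phi$; hence $\Aut(\cT_0) = \PGO_8^+$.

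Next I would establish local triviality. Any Azumaya algebra with quadratic pair of degree $8$ is fppf-locally isomorphic to $(\Mat_8(\cO),\sigma,f)$, so locally $\cA_\qt$ splits. Transporting $\Phi$ through $\alpha_\cA$ identifies $\cB_\qt \times \cC_\qt$ with $(\Mat_8(\cO),\sigma,f) \times (\Mat_8(\cO),\sigma,f)$; after further localization to fix the factor decomposition, each of $\cB_\qt,\cC_\qt$ is isomorphic to $(\Mat_8(\cO),\sigma,f)$ compatibly with $\alpha_\cA$. Combined with the first step, this shows $\fTrip$ is a gerbe banded by $\PGO_8^+$ and hence equivalent to the gerbe of $\PGO_8^+$--torsors.

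To construct $\Theta^+$, the task is to produce $\alpha' \colon \Cl(\cB_\qt) \iso \cC_\qt \times \cA_\qt$ from $(\cA_\qt,\cB_\qt,\cC_\qt,\alpha_\cA)$ functorially. I would define $\alpha'$ by fppf descent: on an fppf cover trivializing the triple to $\cT_0$, take $\alpha'$ to be $\Phi$ on each piece. The transition cocycle of $(\cA_\qt,\cB_\qt,\cC_\qt,\alpha_\cA)$ is $\PGO_8^+$--valued, and by \Cref{intro_thm_Phi_restricts} its induced action on $\cB_\qt$ is through $\theta^+$ while on $\cC_\qt$ it is through $(\theta^+)^2$; this is exactly the compatibility needed for the local $\alpha'$'s to glue. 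The functor $\Theta$ needs no descent argument: $\sw$ is itself an isomorphism of quadratic triples, so $\sw \circ \alpha_\cA$ is again a Clifford isomorphism, and the assignment is visibly functorial.

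For the relations, $\Theta^2 = \Id$ is immediate from $\sw^2 = \Id$, and both $(\Theta^+)^3 = \Id$ and $\Theta^+ \circ \Theta = \Theta \circ (\Theta^+)^2$ can be verified on $\cT_0$ using $(\theta^+)^3 = \Id$ together with the $\SS_3$ identity among the corresponding outer automorphisms of $\PGO_8^+$; the gerbe equivalence then propagates these relations to all of $\fTrip$. I expect the main obstacle to be the descent construction for $\Theta^+$: one must verify not only that the local $\alpha'$'s glue, but also that the resulting functor is canonical (independent of cover and local trivialization) and natural in morphisms of trialitarian triples. This is precisely where the explicit cocycle action via $\theta^+$ furnished by \Cref{intro_thm_Phi_restricts} does the essential work.
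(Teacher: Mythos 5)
Your proposal follows essentially the same strategy as the paper: identify the split object with automorphism sheaf $\PGO_8^+$, establish the gerbe structure by local triviality, construct $\Theta^+$ via descent from local trivializations using the torsor of isomorphisms to the split object, and verify the relations through that descent formula, noting that $\Theta$ is the easy case since $\sw$ requires no descent. The key observation that the transition cocycle acts on the second and third factors through $\theta^+$ and $(\theta^+)^2$, which is exactly what makes the local definitions of $\alpha'$ glue, is precisely the engine of the paper's \Cref{build_alpha_B}.

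One place where your phrasing would not survive being written out: in the local-triviality step you write ``after further localization to fix the factor decomposition.'' Localizing does not repair a swapped factor ordering, since whether $\Psi\circ\Cl(\phi_1)\circ\alpha_\cA^{-1}$ preserves or exchanges the two idempotents of the center is a property of the chosen trivialization $\phi_1$, locally constant but not made trivial by refinement of the cover. The paper handles this by composing $\phi_1$ with the improper isometry $\pi_\bO(\varphi)$ of \Cref{theta_description}, which swaps the two factors of the Clifford decomposition, thereby reducing to the case where the ordering is preserved. Similarly, ``verify on $\cT_0$ and propagate via gerbe equivalence'' is too quick for the claimed strict equalities $(\Theta^+)^3 = \Id$ and $\Theta^+\circ\Theta = \Theta\circ(\Theta^+)^2$: a gerbe equivalence only transfers facts up to natural isomorphism. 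The actual verification, as in \Cref{Theta+_order_3} and \Cref{Theta_functor_properties}, uses the explicit torsor-section formula for $\alpha'$ and the fact that $\cM_\qt^3$ is a fixed point, from which one checks that the composite formula collapses exactly. Since you do flag that establishing well-definedness, independence of the cover, and naturality is the real work, the gap is one of exposition rather than strategy.
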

The fact that $\fTrip$ is equivalent to the gerbe of $\PGO_8^+$--torsors follows from showing that the automorphism sheaf of a designated split object in $\fTrip$ is $\PGO_8^+$. The morphisms $\Theta^+$ and $\Theta$ are stack level analogues of outer automorphisms $\theta^+$ and $\theta$ of $\PGO_8^+$ which generate $\Out(\PGO_8^+)(T) \cong \SS_3(T)$ over a connected scheme $T$. The existence of these endomorphisms recovers the results of \cite[42.3]{KMRT} or \cite[4.11]{DQ21} in our setting, that if we have Azumaya algebras of degree $8$ with quadratic pairs such that $\big(\Cl(\cA_\qt),\underline{\sigma_\cA},\underline{f_\cA}\big) \cong \cB_\qt\times \cC_\qt$, then also
\begin{align*}
\big(\Cl(\cB_\qt),\underline{\sigma_\cB},\underline{f_\cB}\big) &\cong \cC_\qt\times \cA_\qt, \text{ and} \\
\big(\Cl(\cC_\qt),\underline{\sigma_\cC},\underline{f_\cC}\big) &\cong \cA_\qt\times \cB_\qt.
\end{align*}
Because we have worked on the level of stacks, we obtain as an easy corollary the description of how the outer automorphisms $\theta^+$ and $\theta$ act on the first cohomology set $H^1(S,\PGO_8^+)$, which classifies trialitarian triples up to isomorphism.

The final goal is to mirror the classical correspondence between algebraic groups and algebras with involution for the case of type $D_4$. This is done over fields in \cite[\S 43]{KMRT} by defining trialitarian algebras. In \Cref{Trialitarian_Algebras}, we define trialitarian algebras over a scheme and assemble them into a gerbe, $\fTriAlg$, whose split object has automorphism sheaf $\PGO_8^+\rtimes \SS_3$. This means $\fTriAlg$ is equivalent to the gerbe of $\PGO_8^+\rtimes \SS_3$--torsors. Therefore, it will abstractly be equivalent to the gerbe of simply connected groups of type $D_4$ as well as the gerbe of adjoint groups of type $D_4$. In \Cref{Groups}, for a trialitarian algebra $\cT$, we define the groups $\PGO^+_\cT$ and $\Spin_\cT$ and show that these definitions provide concrete equivalences of gerbes.
\begin{thm}\label{intro_thm_equivalences}
Let $\fD_4^\simpc$ and $\fD_4^\adj$ be the gerbes of simply connected, respectively adjoint, groups of type $D_4$ over $S$. There are equivalences of gerbes
\begin{align*}
\fTriAlg &\to \fD_4^\simpc & \fTriAlg &\to \fD_4^\adj \\
\cT &\mapsto \Spin_\cT & \cT &\mapsto \PGO^+_\cT.
\end{align*}
Furthermore, there are canonical surjections $\Spin_\cT \surj \PGO^+_\cT$, whose kernels are the respective centers, which induce an equivalence of categories $\fD_4^\simpc \to \fD_4^\adj$ which makes the diagram
\[
\begin{tikzcd}[row sep=-1ex]
 & \fD_4^\simpc \ar[dd] \\
\fTriAlg \ar[ur] \ar[dr] & \\
& \fD_4^\adj
\end{tikzcd}
\]
commute.
\end{thm}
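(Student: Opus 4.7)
The plan is to leverage the identification of $\fTriAlg$ as the gerbe of $\PGO_8^+\rtimes \SS_3$-torsors, which by the statements preceding this theorem has already been established via the automorphism sheaf of a split trialitarian algebra $\cT_0$. Since $\PGO_8^+\rtimes \SS_3$ is also the automorphism group scheme of the split simply connected and split adjoint groups of type $D_4$ (the inner factor by conjugation, the outer $\SS_3$ as the Dynkin diagram symmetries realizing triality), the three gerbes $\fTriAlg$, $\fD_4^\simpc$, and $\fD_4^\adj$ are abstractly equivalent as gerbes banded by the same group. The substantive content of the theorem is then that the concretely defined functors $\cT\mapsto \Spin_\cT$ and $\cT\mapsto \PGO^+_\cT$ realize these band-level equivalences.

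To verify this, I would invoke the general principle that a morphism between gerbes over the same site is an equivalence if and only if it sends one split object to a split object and induces an isomorphism of bands there. Since $\Spin_{\cT_0}\cong \Spin_8$ and $\PGO^+_{\cT_0}\cong \PGO_8^+$ directly from the constructions in \Cref{Groups}, the first condition is immediate. It remains to check that the induced maps
\[
\bAut_\fTriAlg(\cT_0) \to \bAut(\Spin_{\cT_0}), \qquad \bAut_\fTriAlg(\cT_0) \to \bAut(\PGO^+_{\cT_0})
\]
are isomorphisms $\PGO_8^+\rtimes \SS_3 \iso \PGO_8^+\rtimes \SS_3$. The restriction to the inner $\PGO_8^+$ summand is essentially bookkeeping: it amounts to checking that conjugation by a unit of the underlying quadratic-triple data induces the corresponding conjugation on $\Spin_8$ and on $\PGO_8^+$ via the Clifford construction.

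The main obstacle is the $\SS_3$ summand. One must show that the trialitarian stack endomorphisms $\Theta^+$ and $\Theta$ from \Cref{intro_thm_triples}, evaluated as automorphisms of the split object $\cT_0$, are sent under the two functors to the classical triality outer automorphisms of $\Spin_8$ and of $\PGO_8^+$ that generate $\SS_3 = \Out(\PGO_8^+)$. This should follow by unwinding the explicit definition of $\Spin_\cT$ inside $\Cl(\cA_\qt)$ and using the isomorphism $\Phi$ of the introduction to trace how the cyclic permutation $(\cA_\qt,\cB_\qt,\cC_\qt)\mapsto (\cB_\qt,\cC_\qt,\cA_\qt)$ on trialitarian data acts on Clifford-algebra spin elements; carrying this through requires careful compatibility checks for the several identifications at play, which is where I expect the real work to lie.

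For the last assertion, the canonical surjection $\Spin_\cT\surj \PGO^+_\cT$ is constructed fppf-locally by pulling back the standard isogeny $\Spin_8\surj \PGO_8^+$ along a trivialization of $\cT$; by the automorphism calculation above, the $\PGO_8^+\rtimes \SS_3$-descent datum defining $\cT$ acts compatibly on both groups and on the isogeny between them, so the surjection descends, and its kernel is the subgroup scheme that fppf-locally equals $\runity_2\times \runity_2$, i.e.\ the respective centers. The induced equivalence $\fD_4^\simpc\to \fD_4^\adj$ is then identified with quotient by the center, and commutativity of the triangle reduces, via functoriality of all constructions from $\cT$, to the tautology $\Spin_8/(\runity_2\times \runity_2) = \PGO_8^+$ on the split object.
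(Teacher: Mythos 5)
Your proposal follows the paper's approach: identify $\fTriAlg$ with the gerbe of $\PGO_8^+\rtimes\SS_3$-torsors, invoke the criterion that a morphism of gerbes is an equivalence once it is an isomorphism on automorphism sheaves at a global section (\Cref{equivalence_gerbes}), and then verify this at the split object, with the $\SS_3$ summand being the substantive check and the isogeny descending from (equivalently, restricting from) the split case. That is exactly what \Cref{F^ad_equiv}, \Cref{F^sc_equiv}, and \Cref{PGO_T_Spin_T_isogeny} do.

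One loose point worth flagging: you describe the $\SS_3$ part of $\bAut(\cT_0)$ as given by ``$\Theta^+$ and $\Theta$ evaluated as automorphisms of $\cT_0$.'' In the paper, $\Theta^+$ and $\Theta$ are stack endomorphisms of $\fTrip$, not automorphisms of any particular object; the $\SS_3$ automorphisms of $\cT\spl$ in $\fTriAlg$ actually arise from permuting the three components of the split degree-$3$ cover $S^{\sqcup 3}$, and one then verifies (as in the proof of \Cref{F^sc_equiv}) that the cyclic permutation $c$ maps to $\theta^+$ and the transposition $\lambda$ to $\theta$. Also, the isomorphism $\Spin_{\cT\spl}\cong\Spin_8$ is not quite free of charge: given $(a,b,c)$ satisfying the equalizer condition, the relations only force $\theta^+(a)\equiv b$ modulo $\Ker(\chi)$ a priori, and \Cref{simpc_group_prelim}\ref{simpc_group_prelim_i} needs the computation that $\theta^+$ does not stabilize $\Ker(\chi)$ nontrivially in order to conclude $\theta^+(a)=b$ exactly. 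Neither point changes your overall strategy.
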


This paper is organized as follows. In the preliminaries, we establish our setting over a scheme and review the necessary definitions and background results for key objects such as quadratic triples, the groups $\Spin(\cA,\sigma,f)$, $\bO^+(\cA,\sigma,f)$, and $\PGO^+(\cA,\sigma,f)$, as well as stacks, cohomology, and \'etale covers. Our choices of Chevalley generators for the split groups $\Spin_{2n}$, $\bO^+_{2n}$, and $\PGO^+_{2n}$ are described in \Cref{Chevalley_generators}. Moving into \Cref{without_octonions}, we begin by reviewing the automorphism groups of our split groups in terms of Chevalley generators. \Cref{intro_thm_Phi_restricts} is justified in \Cref{Clifford_Triality} where the necessary computations are the subject of \Cref{Spin_Chevalley_computations}. As mentioned above, \Cref{without_octonions} compares our approach to the usual methods of describing triality.

\Cref{Triples} is where we deal with all things trialitarian triple and justify \Cref{intro_thm_triples}. The fact that $\fTrip$ is equivalent to the gerbe of $\PGO_8^+$--torsors follows from \Cref{Trip_properties}\ref{Trip_properties_ii} where we show that the automorphism sheaf of the designated split object in $\fTrip$ is $\PGO_8^+$. The morphisms $\Theta^+$ and $\Theta$ are constructed explicitly over each fiber throughout \Cref{endo_Trip(S)} and the relations between them are shown in \Cref{endo_Trip_group_homs}.

\Cref{Trialitarian_Algebras} is about trialitarian algebras. In order to define them, we use what we call the trialitarian cover $\Sigma_2(L)$ of a degree $3$ \'etale cover $L\to S$ as well as various morphisms involving $\Sigma_2(L)$, all reviewed in the preliminaries. Using these morphisms, we define trialitarian algebras and the stack $\fTriAlg$. We show in \Cref{TriAlg_automorphism_sheaf} that $\fTriAlg$ is equivalent to the gerbe of $\PGO_8^+ \rtimes \SS_3$--torsors.

Finally, we discuss the simply connected and adjoint groups of type $D_4$ in \Cref{Groups}. Their definitions are given in \Cref{defn_Spin_T} and \Cref{defn_PGO+_T} respectively. \Cref{intro_thm_equivalences} is justified across \Cref{F^ad_equiv}, \Cref{F^sc_equiv}, and \Cref{PGO_T_Spin_T_isogeny}.
}

\section{Preliminaries}
\subsection{Setting over a Scheme}
We work in the same setting as in \cite{Rue23}, which we review here. We fix a base scheme $S$ and primarily work with sheaves over the big fppf ringed site $(\Sch_S,\cO)$ where $\Sch_S$ is as in \cite[Tag 021R]{Stacks} and $\cO$ is the global sections sheaf of \cite[Tag 03DU]{Stacks}. Covers in the fppf topology will be denoted by $\{T_i \to T\}_{i\in I}$ with $T_{ij} = T_i \times_T T_j$. We denote by $\Aff_S$ the big affine site over $S$ as in \cite[Tag 021S (2)]{Stacks}. Affine schemes will usually be denoted by $U$ or $V$. We let
\begin{enumerate}[label={\rm(\roman*)}]
\item $\Sets$ be the category of sets,
\item $\Grp$ be the category of groups,
\item $\Ab$ be the category of abelian groups, and
\item $\Rings$ be the category of unital commutative associative rings.
\end{enumerate}
We use restriction notation as in \cite{Stacks}. In particular, if $\cF$ is a sheaf on $\Sch_S$ and $T\in \Sch_S$, we denote by $\cF|_T$ the sheaf restricted to the subcategory $\Sch_T$. If we have a morphism $T' \to T$ and a section $x\in \cF(T)$, then $x|_{T'}$ denotes the image of $x$ under the restriction map $\cF(T) \to \cF(T')$. A phrase like ``for appropriate sections $x\in \cF$ and $y\in \cG$" means that we consider all $T\in \Sch_S$ and pairs $x\in \cF(T)$ and $y\in \cG(T)$.

Modules and algebras will be over the structure sheaf $\cO$. We use $\cHom_{\cO}(\cM_1,\cM_2)$ to denote the sheaf of internal homomorphisms between two $\cO$--modules. We set $\cEnd_{\cO}(\cM) = \cHom_{\cO}(\cM,\cM)$ and set $\bAut_{\cO}(\cM)$ to be the subsheaf of automorphisms. For an $\cO$--module $\cM$, its \emph{dual} is $\cM^* = \cHom_{\cO}(\cM,\cO)$. Modules can be finite locally free, everywhere positive rank, and/or of constant rank (among other properties) as in \cite[Tag 03DL (2)]{Stacks}. By \cite[Tag 03DN]{Stacks}, it is sufficient to check these conditions over a cover of $S$.

We make extensive use of Azumaya $\cO$--algebras, referring to \cite{Ford} for the background on Azumaya algebras over rings. We use the definition \cite[2.5.3.4]{CF}, stating that $\cA$ is an Azumaya $\cO$--algebra if 
\begin{enumerate}[label={\rm(\roman*)}]
\item For each affine scheme $U\in \Aff_S$, $\cA(U)$ is an Azumaya algebra over the ring $\cO(U)$.
\end{enumerate}
This is equivalent to each of the following two conditions.
\begin{enumerate}[label={\rm(\roman*)}]
\setcounter{enumi}{1}
\item There exists a cover $\{T_i \to S\}_{i\in I}$ such that for each $i\in I$, $\cA|_{T_i} \cong \cEnd_{\cO|_{T_i}}(\cM_i)$ for some $\cO|_{T_i}$--module $\cM_i$ which is finite locally free of everywhere positive rank.
\item There exists a cover $\{T_i \to S\}_{i\in I}$ such that for each $i\in I$, $\cA|_{T_i} \cong \Mat_{n_i}(\cO|_{T_i})$ for some $n_i>0$.
\end{enumerate} 

We use the \emph{reduced norm} $\Nrd_{\cB} \colon \cB \to \cO$ and \emph{reduced trace} $\Trd_{\cB} \colon \cB \to \cO$ of a separable algebra $\cB$. When case the algebra is Azumaya, these are descended versions of the familiar determinant and trace of matrix algebras, respectively. For an Azumaya algebra $\cA$, we set $\slLie_{\cA} = \Ker(\Trd_{\cA})$ to be the submodule of trace zero sections. In case $\cB$ is not Azumaya, but is instead locally a cartesian product of finitely many Azumaya algebras, then $\Trd_{\cB}$ is locally the sum of the traces. Precisely, if $\{T_i \to S\}_{i\in I}$ is a cover such that
\[
\cB|_{T_i} \cong \cA_{i,1}\times \ldots \times \cA_{i,n}
\]
for Azumaya $\cO|_{T_i}$--algebras $\cA_{i,1},\ldots,\cA_{i,n}$, then
\[
\Trd_{\cB}|_{T_i} = \Trd_{\cA_{i,1}} + \ldots + \Trd_{\cA_{i,n}} \colon \cA_{i,1}\times \ldots \times \cA_{i,n} \to \cO|_{T_i}.
\]

\subsection{Quadratic Forms, Quadratic Triples, and Clifford Algebras}\label{quad_forms_and_triples}
We refer to \cite{Knus} for background on quadratic forms over rings. For further background on quadratic triples over schemes we refer to \cite{CF} and \cite{GNR}. 

Given a quadratic form $q\colon \cM \to \cO$ on an $\cO$--module $\cM$, we call $(\cM,q)$ a \emph{quadratic module}. Its polar bilinear form is denoted by $b_q \colon \cM\times \cM \to \cO$. A general bilinear form $b\colon \cM \times \cM \to \cO$ is called \emph{regular} if the map
\begin{align*}
b^* \colon \cM &\to \cM^* \\
m &\mapsto b(m,\und)
\end{align*}
is an isomorphism. A quadratic form $q$ or quadratic module $(\cM,q)$ is called \emph{regular} if $b_q$ is regular. Every regular bilinear form has an associated adjoint involution $\sigma_b$ on the algebra $\cEnd_{\cO}(\cM)$. A regular bilinear form also induces an isomorphism
\begin{align}
\varphi_b \colon \cM\otimes_{\cO}\cM &\iso \cEnd_{\cO}(\cM) \label{eq_bilinear_iso} \\
m_1\otimes m_2 &\mapsto b(m_1,\und)\cdot m_2. \nonumber
\end{align}

By an involution $\sigma$ on an Azumaya $\cO$--algebra $\cA$, we mean an $\cO$--linear anti-automorphism. Over a sufficiently fine cover, any such involution $\sigma$ will be the adjoint involution of some regular bilinear forms. We call $\sigma$
\begin{enumerate}[label={\rm(\roman*)}]
\item \emph{orthogonal} if the bilinear forms are symmetric,
\item \emph{weakly-symplectic} if the bilinear forms are skew-symmetric, and
\item \emph{symplectic} if the bilinear forms are alternating.
\end{enumerate}
Every symplectic involution is weakly-symplectic. If $\frac{1}{2}\in \cO$, then weakly-symplectic and symplectic coincide and they are disjoint from orthogonal. However, in the case when $2=0\in \cO$, the notions of orthogonal and weakly-symplectic coincide. If $\cB$ is a separable $\cO$--algebra, then an involution $\sigma$ on $\cB$ will be called orthogonal if it acts trivially on the center and locally whenever $\cB$ factors as a product of Azumaya algebras, the restriction of $\sigma$ to each Azumaya factor is orthogonal. We call $\sigma$ weakly-symplectic or symplectic similarly.

Given a quadratic form $q\colon \cM \to \cO$, using the definition over a scheme from \cite[4.2]{CF}, we have the \emph{Clifford algebra} \[
\Cl(\cM,q) = \cT(\cM)/\cI
\]
where $\cI$ is the subsheaf of two-sided ideals generated by elements of the form $m\otimes m - q(m)$. It has a canonical involution $\underline{\sigma}$ which reverses the tensor factors and hence is the identity on the submodule isomorphic to $\cM$. By \cite[4.2.0.7]{CF}, if $U\in \Aff_S$, then $\Cl(\cM,q)(U) \cong \mathrm{Cl}(\cM(U),q(U))$ where the right hand side is the Clifford algebra over a ring as in \cite[IV.1.1.2]{Knus}. As usual, we denote the even Clifford algebra generated by products of even length by $\Cl_0(\cM,q)$. The canonical involution restricts to $\underline{\sigma}_0$ on $\Cl_0(\cM,q)$. 

Let $\cM$ be an $\cO$--module and $\sigma \colon \cM \to \cM$ an order two endomorphism. We have two maps $\Id \pm \sigma \colon \cA \to \cA$ and we define the submodules
\[
\cSym_{\cM,\sigma} = \Ker(\Id - \sigma)\text{, } \cSkew_{\cM,\sigma} = \Ker(\Id + \sigma)\text{, and }  \cAlt_{\cM,\sigma} = \Img(\Id - \sigma) 
\]
where $\cAlt_{\cM,\sigma}$ is an image sheaf and therefore involves sheafification.

A \emph{quadratic triple} is $(\cA,\sigma,f)$ where $(\cA,\sigma)$ is an Azumaya $\cO$--algebra with orthogonal involution and $f\colon \cSym_{\cA,\sigma} \to \cO$ is an $\cO$--linear map satisfying
\[
f(a+\sigma(a)) = \Trd_{\cA}(a)
\]
for all $a\in \cA$. We call the function $f$ a \emph{semi-trace} and note that $(\sigma,f)$ is commonly referred to as a \emph{quadratic pair} on $\cA$. A morphism of quadratic triples $\varphi \colon (\cA,\sigma,f) \to (\cA',\sigma',f')$ is a morphism of the Azumaya algebras satisfying $\varphi \circ \sigma = \sigma'\circ \varphi$ and $f'\circ \varphi = f$.

If $(\cM,q)$ is a regular quadratic module, then by \cite[4.4]{GNR} it corresponds to a unique quadratic triple $(\cEnd_{\cO}(\cM),\sigma_q,f_q)$ where $\sigma_q$ is the adjoint involution of $b_q$ and the semi-trace satisfies
\[
f_q(\varphi_b(m\otimes m))=q(m)
\]
for all $m\in \cM$. The triple $(\cEnd_{\cO}(\cM),\sigma_q,f_q)$ is called the \emph{adjoint} of $(\cM,q)$.

By \cite[3.6(iii)]{GNR}, if $\sigma$ is an orthogonal involution, then the submodules $\cSym_{\cA,\sigma}$ and $\cAlt_{\cA,\sigma}$ are mutually orthogonal with respect to the trace form on $\cA$. Therefore, any element $\lambda \in \cA/\cAlt_{\cA,\sigma}(S)$ gives a well defined map $g_\lambda \colon \cSym_{\cA,\sigma} \to \cO$ which is defined locally over those $T\in \Sch_S$ for which there exists $\ell_\lambda \in \cA(T)$ with $\ell_\lambda\mapsto \lambda|
_T$ by $\Trd_{\cA}(\ell_\lambda \und)$. Not all such $g_\lambda$ are semi-traces, but by \cite[4.18]{GNR}, given a quadratic triple $(\cA,\sigma,f)$ there exists a unique element $\lambda_f\in \cA/\cAlt_{\cA,\sigma}(S)$ such that $f = g_{\lambda_f}$.

Let $(\cB,\sigma)$ be a separable $\cO$--algebra with orthogonal involution. A linear map $f\colon \cSym_{\cB,\sigma} \to \cO$ satisfying $f(b+\sigma(b)) = \Trd_{\cB}(b)$ for all $b\in \cB$ will also be called a \emph{semi-trace}. The triple $(\cB,\sigma,f)$ will be called an \emph{algebra with semi-trace} to distinguish it from a quadratic triple, which assumes that $\cB$ is Azumaya. An algebra with semi-trace will locally be a cartesian product of quadratic triples. The process of defining a semi-trace using an element $\lambda \in \cB/\cAlt_{\cB,\sigma}$ works analogously by working sufficiently locally. We note that our terminology differs slightly from \cite{DQ21} where $(\sigma,f)$ would also be called a quadratic pair on $\cB$.

Associated to a quadratic triple $(\cA,\sigma,f)$ is a Clifford algebra $(\Cl(\cA,\sigma,f),\underline{\sigma})$ with canonical involution defined in \cite[4.2.0.13]{CF} or equivalently in \cite[1.3]{Rue23}. It comes with a canonical linear map $c\colon \cA \to \Cl(\cA,\sigma,f)$ whose image generates $\Cl(\cA,\sigma,f)$ as an algebra. The canonical involution is defined by $\underline{\sigma}(c(a))=c(\sigma(a))$. By \cite[4.2.0.14]{CF}, if $(\cM,q)$ is a regular quadratic module of constant even rank, then there is an isomorphism of algebras with involution
\begin{equation}\label{eq_Clifford_iso}
\Phi \colon (\Cl_0(\cM,q),\underline{\sigma}_0) \iso (\Cl(\cEnd_{\cO}(\cM),\sigma_q,f_q),\underline{\sigma_q})
\end{equation}
induced by the isomorphism of modules $\varphi_b \colon \cM\otimes_{\cO} \cM \iso \cEnd_{\cO}(\cM)$ of \Cref{eq_bilinear_iso}. If $\cA$ is of degree $2n$, then by \cite[1.9]{Rue23}, $\Cl(\cA,\sigma,f)$ is locally a Cartesian product of two Azumaya algebras and the involution $\underline{\sigma}$
\begin{enumerate}[label={\rm(\roman*)}]
\item does not act trivially on the center if $n$ is odd,
\item is orthogonal if $n\equiv 0 \pmod{4}$, and
\item is symplectic if $n\equiv 2 \pmod{4}$.
\end{enumerate}
In particular, due to our conventions, $\underline{\sigma}$ is an orthogonal involution if and only if $n\equiv 0 \pmod{4}$ or both $n \equiv 2 \pmod{4}$ and $2=0\in \cO$. In the cases where $\deg(\cA)\geq 8$ and $\underline{\sigma}$ is orthogonal, then the Clifford algebra also has a canonical semi-trace $\underline{f}$ defined in \cite[2.7]{Rue23}. There is a commutative diagram with exact rows
\[
\begin{tikzcd}[column sep=3ex]
0 \arrow{r} & \slLie_{\cA} \arrow[hookrightarrow]{r} \arrow{d}{c} & \cA \arrow{r}{\Trd_{\cA}} \arrow{d}{c} & \cO \arrow{d}{\rho} \arrow{r} & 0 \\
0 \arrow{r} & \cAlt_{\Cl(\cA,\sigma,f),\underline{\sigma}} \arrow[hookrightarrow]{r} & \Cl(\cA,\sigma,f) \arrow{r} & \Cl(\cA,\sigma,f)/\cAlt_{\Cl(\cA,\sigma,f),\underline{\sigma}} \arrow{r} & 0
\end{tikzcd}
\]
and the canonical semi-trace $\underline{f}$ is defined to be the linear map $g_{\rho(1)} \colon \cSym_{\Cl(\cA,\sigma,f),\underline{\sigma}} \to \cO$ associated to $\rho(1) \in (\Cl(\cA,\sigma,f)/\cAlt_{\Cl(\cA,\sigma,f),\underline{\sigma}})(S)$. By \cite[2.12]{Rue23}, this construction extends the usual functoriality of the Clifford algebra with respect to isomorphisms. Given an isomorphism of quadratic triples $\varphi \colon (\cA,\sigma,f) \to (\cA',\sigma',f')$, there is an associated isomorphism of Clifford algebras with semi-trace
\begin{align}
\Cl(\varphi)\colon (\Cl(\cA,\sigma,f),\underline{\sigma},\underline{f}) &\to (\Cl(\cA',\sigma',f'),\underline{\sigma'},\underline{f'}) \label{eq_Clifford_functorial} \\
c(a) &\mapsto c(\varphi(a)). \nonumber
\end{align}

We will frequently use the Clifford algebra of a hyperbolic quadratic form for calculations. We recall from \cite[1.5]{Rue23} the relevant details of such a Clifford algebra and fix our conventions.
\begin{ex}\label{split_Clifford}
We let $\cV = \cO^n$ be the free $\cO$--module of rank $n$, and set $\HH(\cV) = \cV \oplus \cV^*$. Let $\{v_1,\ldots,v_n\}$ be the standard basis of $\cV$ and $\{v_n^*,\ldots,v_1^*\}$ the dual basis of $\cV^*$. We define a hyperbolic quadratic form on $\HH(\cV)$ by setting $q_{2n}(x+g)=g(x)$ for $x\in \cV$ and $g\in \cV^*$. This is a regular quadratic form. The Clifford algebra $\Cl(\HH(\cV),q_{2n})$ is generated by the elements $v_i$ and $v_j^*$ subject to the relations
\begin{align*}
v_i^2 &= 0 & (v_i^*)^2 &= 0 \\
v_iv_j &= -v_jv_i & v_i^* v_j^* &= -v_j^* v_i^* \\
v_iv_i^* &= 1-v_i^*v_i & v_iv_j^* &= -v_j^*v_i
\end{align*}
for all $1\leq i,j \leq n$ with $i\neq j$.

Let $\wedge \cV$ be the exterior algebra of $\cV$ and let $\wedge_0 \cV$ and $\wedge_1 \cV$ be the even and odd parts respectively. We define two families of endomorphisms of $\wedge\cV$. For $x\in \cV$, let $\ell_x \colon \wedge \cV \to \wedge\cV$ be the left multiplication by $x$. For $g\in \cV^*$, define $d_g \colon \wedge \cV \to \wedge \cV$ by
\[
d_g(x_1\wedge \ldots \wedge x_k) = \sum_{i=1}^k (-1)^{i+1}g(x_i)\cdot x_1\wedge\ldots\wedge \hat{x_i} \wedge \ldots \wedge x_k
\]  
where hat denotes omission. Then by \cite[4.2.0.10]{CF} (which follows from \cite[IV.2.1.1]{Knus}), there is an isomorphism of algebras
\begin{align*}
\Phi \colon \Cl(\HH(\cV),q_{2n}) &\iso \cEnd_{\cO}(\wedge \cV) \\
v_i &\mapsto \ell_{v_i} \\
v_i^* &\mapsto d_{v_i^*}
\end{align*}
which restricts to an isomorphism
\[
\Phi_0 \colon \Cl_0(\HH(\cV),q_{2n}) \iso \cEnd_{\cO}(\wedge_0 \cV)\times \cEnd_{\cO}(\wedge_1 \cV).
\]
The $\cO$--module $\wedge \cV$ is equipped with a bilinear form following \cite[pg.90]{KMRT}. Let $[n]=\{1,\ldots,n\}$ be the set of integers from $1$ to $n$, and then for $I=\{i_1,\ldots,i_k\}\subseteq [n]$ written in increasing order, we set $v_I = v_{i_1}\wedge v_{i_2} \wedge \ldots \wedge v_{i_k}$. These $v_I$ form a basis of $\wedge\cV$. Now, for $x\in \wedge\cV$ let $\overline{x}$ be the element with the wedge factors reversed, i.e., $\overline{x_1\wedge x_2 \wedge \ldots \wedge x_k}=x_k\wedge \ldots \wedge x_2 \wedge x_1$ and extended linearly. Let $\pi \colon \wedge\cV \to \cO$ be the map coming from the projection onto the submodule spanned by $e_{[n]}$. Define a bilinear form $b_\wedge \colon \wedge \cV \times \wedge\cV \to \cO$ by
\[
b_{\wedge}(x,y) = \pi(\overline{x}\wedge y).
\]
By \cite[1.7]{Rue23}, $b_\wedge$ is a regular bilinear form whose adjoint involution $\sigma_\wedge$ corresponds to $\underline{\sigma}$ under the isomorphism $\Phi$. When $n\equiv 0,1\pmod{4}$ or $2=0\in\cO$ then $b_\wedge$ is the polar of a quadratic form $q_\wedge$ defined in \cite[Eq (6)]{Rue23}. 

Let $(\cEnd_{\cO}(\HH(\cV),\sigma_{2n},f_{2n})$ be the adjoint quadratic triple of $(\HH(\cV),q_{2n})$. We identify $\Cl_0(\HH(\cV),q_{2n}) = \Cl(\cEnd_{\cO}(\HH(\cV),\sigma_{2n},f_{2n})$. When $n\geq 4$ and $\underline{\sigma_{2n}}_0$ is orthogonal, i.e. when $n\equiv 0 \pmod{4}$ or both $n\equiv 2\pmod{4}$ and $2=0\in \cO$, we have restricted bilinear forms $b_{\wedge i}$ and quadratic forms $q_{\wedge i}$ on each $\wedge_i \cV$, as well as restricted orthogonal involutions $\sigma_{\wedge i}$ on each $\cEnd_{\cO}(\wedge_i \cV)$. By \cite[2.11]{Rue23}, the isomorphism $\Phi_0$ then becomes an isomorphism of Clifford algebras with canonical semi-trace \refstepcounter{equation}\label{eq_Phi_not}
\[
\textnormal{(\theequation) }\Phi_0 \colon \! (\Cl_0(\HH(\cV),q_{2n}),\underline{\sigma_{2n}}_0,\underline{f_{2n}})\! \iso\! (\cEnd_{\cO}(\wedge_0 \cV),\sigma_{\wedge 0},f_{\wedge 0})\!\times\! (\cEnd_{\cO}(\wedge_1 \cV),\sigma_{\wedge 1},f_{\wedge 1}).
\]
\end{ex}

\subsection{Algebraic Groups}\label{algebraic_groups}
We are interested in linear algebraic groups of type $D$, which we consider as sheaves of groups on $\Sch_S$. For the split groups arising from a hyperbolic quadratic form, we will also work with their subpresheaf of elementary groups generated by their Chevalley generators. 

\subsubsection{Relevant Definitions}\label{revelant_definitions}
We recall the definitions of some key groups from \cite{CF} and fix our notation.

If $\cB$ is an $\cO$--algebra, the \emph{general linear group} of $\cB$
\begin{align*}
\GL_{\cB} \colon \Sch_S &\to \Grp \\
T &\mapsto \cB(T)^\times .
\end{align*}
is the subsheaf of invertible elements in $\cB$. The \emph{projective general linear group} is the group of algebra automorphisms $\PGL_{\cB} = \cAut_{\cO\mathrm{\mdash alg}}(\cB)$. If $\cM$ is an $\cO$--module, we set $\GL_{\cM} = \GL_{\cEnd_{\cO}(\cM)} = \cAut_{\cO\mathrm{\mdash mod}}(\cM)$ and $\PGL_{\cM} = \PGL_{\cEnd_{\cO}(\cM)}$. We set $\GL_n = \GL_{\cO^n} \cong \GL_{\Mat_n(\cO)}$ and $\PGL_n$ similarly.

The group of $n^{\textnormal{th}}$--roots of unity is
\begin{align*}
\runity_n \colon \Sch_S &\to \Grp \\
T &\mapsto \{x \in \cO(T) \mid x^n = 1\}.
\end{align*}

Let $(\cM,q)$ be a regular quadratic module. The \emph{orthogonal group} is the group of automorphisms which fix $q$,
\begin{align*}
\bO_q \colon \Sch_S &\to \Grp \\
T &\mapsto \{\varphi \in \GL_{\cM}(T) \mid q|_T \circ \varphi = q|_T\}.
\end{align*}
When $(\cM,q)=(\HH(\cV),q_{2n})$ is the even rank hyperbolic quadratic form of \Cref{split_Clifford}, we write $\bO_{q_{2n}} = \bO_{2n}$. The \emph{spin group} of $(\cM,q)$ is
\begin{align*}
\Spin_q \colon \Sch_S &\to \Grp \\
T &\mapsto \{a \in \Cl_0(\cM,q)(T) \mid a\underline{\sigma}(a)=1,\; \Inn(a)(\cM|_T) = \cM|_T\}
\end{align*}
where $\Inn(a)$ is the inner automorphism of $\Cl(\cM,q)|_T$ defined by the section $a\in \Cl_0(\cM,q)(T)$, and as usual we view $\cM|_T$ as a submodule of $\Cl(\cM,q)|_T$ via the canonical inclusion. For $(\HH(\cV),q_{2n})$ of \Cref{split_Clifford}, we write $\Spin_{2n}$. The spin group comes with a surjective map of sheaves
\begin{align*}
\chi \colon \Spin_q &\to \bO_q \\
a &\mapsto \Inn(a)|_{\cM},
\end{align*}
called the \emph{vector representation}, whose kernel is isomorphic to $\runity_2$.

Let $(\cA,\sigma,f)$ be a quadratic triple. The \emph{orthogonal group} of $(\cA,\sigma,f)$ is
\begin{align*}
\bO_{(\cA,\sigma,f)} \colon \Sch_S &\to \Grp \\
T &\mapsto \{ a \in \cA(T) \mid a\sigma(a)=1,\; f|_T \circ \Inn(a) = f|_T \}
\end{align*}
where $\Inn(a) \colon \cA|_T \to \cA|_T$ is the inner automorphism defined by the section $a \in \cA(T)$. By \cite[4.4.0.44]{CF}, for a regular quadratic module $(\cM,q)$ there is a canonical isomorphism $\bO_q \cong \bO_{(\cEnd_{\cO}(\cM),\sigma_q,f_q)}$. The \emph{projective orthogonal group} is the group of quadratic triple automorphisms
\[
\PGO_{(\cA,\sigma,f)} = \cAut(\cA,\sigma,f).
\]
We set $\PGO_{q} = \PGO_{(\cEnd_{\cO}(\cM),\sigma_q,f_q)}$. For $(\HH(\cV),q_{2n})$ of \Cref{split_Clifford}, we write $\PGO_{2n}$. The canonical projection
\begin{align*}
\pi_{\bO}\colon \bO_{(\cA,\sigma,f)} &\surj \PGO_{(\cA,\sigma,f)} \\
a &\mapsto \Inn(a)
\end{align*}
is a surjective map of sheaves with kernel isomorphic to $\runity_2$.

The spin group of a quadratic triple is defined over a field in \cite{KMRT} using the Clifford bimodule and the generalization of the same approach is used in \cite{CF}, though the authors omit the technicalities. By \cite[4.5.2.3]{CF}, combined with their definition \cite[4.5.2.5]{CF}, they show that for the adjoint quadratic triple of a regular quadratic form $(\cM,q)$, there is an isomorphism
\[
\Spin_{q} \iso \Spin_{(\cEnd_{\cO}(\cM),\sigma_q,f_q)}
\]
which is the restriction of the isomorphism $\Phi$ of \Cref{eq_Clifford_iso}. This allows us to also avoid the technicalities of the Clifford bimodule by using the following descent flavoured definition.
\begin{lem}\label{nonsplit_Spin}
Let $(\cA,\sigma,f)$ be a quadratic triple. For each $T \in \Sch_S$, let $\cU_T$ be the class of $T$--schemes over which $(\cA,\sigma,f)$ becomes adjoint to a regular quadratic form, i.e., for all $T'\in \cU_T$ we have
\begin{align*}
(\cA,\sigma,f)|_{T'} &\cong (\cEnd_{\cO|_{T'}}(\cM'),\sigma_{q'},f_{q'}), \text{ and}\\
\Cl(\cA,\sigma,f)|_{T'} &\cong \Cl_0(\cM',q')
\end{align*}
for a regular quadratic $\cO|_{T'}$--module $(\cM',q')$. Then, the spin group of $(\cA,\sigma,f)$ can be described as
\begin{align*}
\Spin_{(\cA,\sigma,f)} \colon \Sch_S &\to \Grp \\
T &\mapsto \{a\in \Cl(\cA,\sigma,f) \mid \Phi_{T'}^{-1}(a|_{T'}) \in \Spin_{q'}(T') \text{ for all } T'\in \cU_T \}.
\end{align*}
where $\Phi_{T'}$ is the isomorphism of \Cref{eq_Clifford_iso}.
\end{lem}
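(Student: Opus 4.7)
The plan is to verify that the displayed functor coincides with the spin group $\Spin_{(\cA,\sigma,f)}$ as constructed from the Clifford bimodule in \cite[4.5.2]{CF}. The argument is essentially a sheaf-descent argument exploiting the fact that quadratic triples are fppf locally adjoint to regular quadratic modules, so that the already-understood split case of $\Spin_q$ can be glued.

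First, I would confirm that for every $T\in \Sch_S$ the class $\cU_T$ is non-empty and in fact contains an fppf cover of $T$. This follows from the local structure of Azumaya algebras (they are fppf locally matrix algebras) combined with the correspondence recalled in \Cref{quad_forms_and_triples} between adjoint quadratic triples of regular quadratic modules and quadratic triples on split Azumaya algebras. Consequently the condition defining the displayed functor is always testable on some covering, and one immediately sees that the assignment $T \mapsto \{a \in \Cl(\cA,\sigma,f)(T)\mid \Phi_{T'}^{-1}(a|_{T'})\in \Spin_{q'}(T')\text{ for all }T'\in \cU_T\}$ is a subsheaf of $\Cl(\cA,\sigma,f)$, being cut out by pullback-stable conditions.

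The central step is to identify this subsheaf with $\Spin_{(\cA,\sigma,f)}$. By the result of \cite[4.5.2.3]{CF} recalled in the text, for any $T'\in \cU_T$ with witnessing quadratic module $(\cM',q')$ the restriction of the isomorphism $\Phi$ from \Cref{eq_Clifford_iso} gives an isomorphism $\Spin_{q'}\iso \Spin_{(\cA,\sigma,f)}|_{T'}$ under which $\Spin_{q'}\subset \Cl_0(\cM',q')$ corresponds to the preimage of $\Spin_{(\cA,\sigma,f)}|_{T'}$ inside $\Cl(\cA,\sigma,f)|_{T'}$. Thus a section $a\in \Cl(\cA,\sigma,f)(T)$ lies in $\Spin_{(\cA,\sigma,f)}(T)$ if and only if, after restriction to any (equivalently, to a covering family inside) $T'\in \cU_T$, its image $\Phi_{T'}^{-1}(a|_{T'})$ lies in $\Spin_{q'}(T')$. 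This is precisely the condition in the lemma.

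The main obstacle I anticipate is the bookkeeping of compatibility across choices: one must check that the condition is independent of which $T'\in \cU_T$ is used and of which quadratic module $(\cM',q')$ realizes the triple on $T'$. This is handled by passing to a common fppf refinement of any two members of $\cU_T$ and invoking the functoriality of the Clifford construction recorded in \Cref{eq_Clifford_functorial}, together with the fact that any two regular quadratic modules realizing the same adjoint triple differ by an isomorphism of quadratic triples. Under such an isomorphism the two copies of $\Phi$ intertwine, so membership in $\Spin_{q'}$ over one refinement forces membership over every other, and the descent argument closes.
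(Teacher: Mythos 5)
Your proposal follows essentially the same route as the paper's proof: observe that both the displayed functor and $\Spin_{(\cA,\sigma,f)}$ are subsheaves of $\Cl(\cA,\sigma,f)$ that agree whenever $(\cA,\sigma,f)$ is already adjoint to a quadratic module (via \cite[4.5.2.3]{CF}), and then conclude they coincide globally because they agree over an fppf cover. The only difference is that you spell out the well-definedness check — independence of the choice of $T'\in\cU_T$ and of witnessing $(\cM',q')$, via common refinements and \Cref{eq_Clifford_functorial} — which the paper's proof dispatches as "clear"; this makes your argument a bit more explicit but does not change its substance.
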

\begin{proof}
Let $\cF$ be the sheaf
\[
T \mapsto \{a\in \Cl(\cA,\sigma,f) \mid \Phi_{T'}^{-1}(a|_{T'}) \in \Spin_{q'}(T') \text{ for all } T'\in \cU_T \}.
\]
For a scheme $T\in \Sch_S$, if $T\in \cU_T$, i.e., $(\cA,\sigma,f)|_T$ is adjoint to a regular quadratic $\cO|_T$--module $(\cM',q')$, then it is clear that $\Phi_T^{-1}(\cF(T))=\Spin_{q'}(T)$, or equivalently, $\cF(T)=\Spin_{(\cA,\sigma,f)}(T)$. Since there exists an fppf cover $\{T_i \to S\}_{i\in I}$ such that $(\cA,\sigma,f)|_{T_i}$ is adjoint to a regular quadratic module for each $i\in I$, and since $\cF$ and $\Spin_{(\cA,\sigma,f)}$ share the same restriction maps because they are both subsheaves of $\Cl(\cA,\sigma,f)$, this means that $\cF = \Spin_{(\cA,\sigma,f)}$ as claimed.
\end{proof}
These spin groups also come with a homomorphism $\chi \colon \Spin_{(\cA,\sigma,f)} \to \bO_{(\cA,\sigma,f)}$ which is locally the vector representation defined above.

Let $(\cM,q)$ be a regular quadratic module. We have group homomorphisms
\begin{align}
\bC_{\Inn} \colon \Spin_q &\to \cAut(\Cl(\cM,q),\underline{\sigma}), \nonumber \\
a &\mapsto \Inn(a) \nonumber \\
\bC \colon \bO_q &\to \cAut(\Cl(\cM,q),\underline{\sigma}), \label{eq_Clifford_actions} \\
B &\mapsto \big(m\mapsto B(m)\big) \nonumber \\
\Cl \colon \PGO_q &\to \cAut(\Cl_0(\cM,q),\underline{\sigma}_0) \nonumber \\
\psi &\mapsto \big(m_1\otimes m_2 \mapsto (\varphi_{b_q}^{-1}\circ \psi \circ \varphi_{b_q})(m_1\otimes m_2)\big),\nonumber
\end{align}
where $\bC(B)$ and $\Cl(\psi)$ are defined uniquely by their action on generators given above, where $\varphi_{b_q}$ is the isomorphism of \Cref{eq_bilinear_iso}. Of course, for $\psi \in \PGO_q$, its image $\Cl(\psi)$ is defined by the functoriality of the Clifford algebra after accounting for the isomorphism $\Phi$ of \Cref{eq_Clifford_iso}, which is itself induced by $\varphi_{b_q}$. Note that the images of both $\bC_{\Inn}$ and $\bC$ also stabilize the even Clifford algebra, and so by abuse of notation we will also discuss homomorphisms
\begin{align*}
\bC_{\Inn} \colon \Spin_q &\to \cAut(\Cl_0(\cM,q),\underline{\sigma}_0), \text{ and} \\
\bC \colon \bO_q &\to \cAut(\Cl_0(\cM,q),\underline{\sigma}_0).
\end{align*}
We then get commutative diagrams
\begin{equation}\label{action_on_Clifford}
\begin{tikzcd}
\Spin_q \arrow{dr}{\bC_{\Inn}} \arrow{d}{\chi} & \\
\bO_q \arrow{d}{\pi_{\bO}} \arrow{r}{\bC} & \cAut(\Cl_0(\cM,q),\underline{\sigma}_0) \\
\PGO_q \arrow[swap]{ur}{\Cl} & 
\end{tikzcd}
\begin{tikzcd}
\Spin_{(\cA,\sigma,f)} \arrow{dr}{\bC_{\Inn}} \arrow{d}{\chi} & \\
\bO_{(\cA,\sigma,f)} \arrow{d}{\pi_{\bO}} \arrow{r}{\bC} & \cAut(\Cl(\cA,\sigma,f),\underline{\sigma}) \\
\PGO_{(\cA,\sigma,f)} \arrow[swap]{ur}{\Cl} & 
\end{tikzcd}
\end{equation}
where in the left diagram, by definition of $\chi \colon \Spin_q \to \bO_q$, we have that $\bC_{\Inn} = \bC \circ \chi$, so the top triangle commutes and the bottom triangle commutes by \cite[1.14]{Rue23}. We reuse notation for the diagram on the right since it is the twisted version of the diagram on the left, i.e., both diagrams agree locally. On the right, $\bC_{\Inn}$ still sends an element to its inner automorphism and now $\Cl$ is directly given by the functoriality of the Clifford algebra. In the cases when the Clifford algebra has its canonical quadratic triple, we know from \Cref{eq_Clifford_functorial} that $\Cl$ lands in the automorphism group of this canonical triple, and so we have a commutative diagram
\begin{equation}\label{eq_action_on_canonical}
\begin{tikzcd}
\Spin_{(\cA,\sigma,f)} \arrow{dr}{\bC_{\Inn}} \arrow{d}{\chi} & \\
\bO_{(\cA,\sigma,f)} \arrow{d}{\pi_{\bO}} \arrow{r}{\bC} & \cAut(\Cl(\cA,\sigma,f),\underline{\sigma},\underline{f}). \\
\PGO_{(\cA,\sigma,f)} \arrow[swap]{ur}{\Cl} & 
\end{tikzcd}
\end{equation}

There are two final groups we will need. For this, we assume $(\cA,\sigma,f)$ is a quadratic triple of even rank. In this case, \cite[4.2.0.15]{CF} says that the center of $\Cl(\cA,\sigma,f)$, denoted $\cZ$, is a finite \'etale $\cO$--algebra of degree $2$. Since any automorphism of an algebra restricts to an automorphism of the center, this gives us a map
\[
\mathrm{Arf} \colon \bO_{(\cA,\sigma,f)} \xrightarrow{\bC} \cAut(\Cl(\cA,\sigma,f),\underline{\sigma}) \to \cAut(\cZ)\cong \ZZ/2\ZZ
\]
called the \emph{Arf map} in \cite[4.3.0.27]{CF} (and over rings is called the \emph{Dickson map} in \cite[IV.5.1]{Knus}). Of course, there is also another Arf map
\[
\mathrm{Arf}' \colon \PGO_{(\cA,\sigma,f)} \xrightarrow{\Cl} \cAut(\Cl(\cA,\sigma,f),\underline{\sigma}) \to \cAut(\cZ)\cong \ZZ/2\ZZ
\]
and $\mathrm{Arf} = \mathrm{Arf'}\circ \pi_{\bO}$. We set
\begin{align*}
\bO_{(\cA,\sigma,f)}^+ &= \Ker(\mathrm{Arf})\text{, and} \\
\PGO_{(\cA,\sigma,f)}^+ &= \Ker(\mathrm{Arf}').
\end{align*}
When $(\cA,\sigma,f)$ is adjoint to $(\cM,q)$ we denote these $\bO_q^+$ and $\PGO_q^+$. When $(\cM,q)=(\HH(\cV),q_{2n})$ is the even rank hyperbolic quadratic form of \Cref{split_Clifford}, we write $\bO_{2n}^+$ and $\PGO_{2n}^+$. The vector representation factors through $\bO_{(\cA,\sigma,f)}^+$, in particular we have surjective maps of sheaves
\[
\Spin_{(\cA,\sigma,f)} \overset{\chi}{\surj} \bO_{(\cA,\sigma,f)}^+ \overset{\pi_{\bO}}{\surj} \PGO_{(\cA,\sigma,f)}^+.
\] 
These groups are semisimple groups of type $D$. The spin groups are simply connected and $\PGO^+$ is adjoint.

A small fact we mention here for later use is that the vector representation respects isomorphisms of quadratic triples.
\begin{lem}\label{autos_respect_vector_rep}
Let $\psi \colon (\cA_1,\sigma_1,f_1) \iso (\cA_2,\sigma_2,f_2)$ be an isomorphism of quadratic triples. Then, the diagram
\[
\begin{tikzcd}
\Spin_{(\cA_1,\sigma_1,f_1)} \ar[r,"\Cl(\psi)"] \ar[d,"\chi_1"] & \Spin_{(\cA_2,\sigma_2,f_2)} \ar[d,"\chi_2"] \\
\bO^+_{(\cA_1,\sigma_1,f_1)} \ar[r,"\psi"] & \bO^+_{(\cA_2,\sigma_2,f_2)}
\end{tikzcd}
\]
commutes where $\chi_i$ are the respective vector representations.
\end{lem}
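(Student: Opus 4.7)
The plan is to reduce the commutativity of the square to a direct computation in the split case and then invoke fppf descent. Since both $\chi_i$ and $\Cl(\psi)$ are morphisms of sheaves, commutativity may be tested on a fppf cover. Pick a cover $\{T_k\to S\}_k$ over which $(\cA_1,\sigma_1,f_1)|_{T_k}$ is adjoint to a regular quadratic module $(\cM_1^k,q_1^k)$; transporting the structure along $\psi$ and refining further if necessary, we may also assume that $(\cA_2,\sigma_2,f_2)|_{T_k}$ is adjoint to some $(\cM_2^k,q_2^k)$ and that $\psi|_{T_k}=\Inn(u_k)$ for an isometry $u_k\colon(\cM_1^k,q_1^k)\iso(\cM_2^k,q_2^k)$. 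The existence of such a $u_k$ is standard: Skolem--Noether for Azumaya algebras yields an inner presentation by a similitude, and a square-root extraction, which is an fppf-local operation, rescales the similitude to an isometry.

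Working on a single patch and suppressing the index $k$, identify $\Cl(\cA_i,\sigma_i,f_i)=\Cl_0(\cM_i,q_i)$ via the isomorphism $\Phi$ of \eqref{eq_Clifford_iso}. Under this identification, $\Spin_{(\cA_i,\sigma_i,f_i)}=\Spin_{q_i}$ and $\chi_i$ sends $a\in\Spin_{q_i}$ to $m\mapsto a\cdot m\cdot a^{-1}$ on $\cM_i\subset\Cl(\cM_i,q_i)$. A short check using the isometry property $b_{q_2}(u(x),u(y))=b_{q_1}(x,y)$ shows that the algebra isomorphism $\psi=\Inn(u)$ translates, under these identifications, to the restriction to even parts of the Clifford extension $\widetilde u\colon\Cl(\cM_1,q_1)\iso\Cl(\cM_2,q_2)$ of $u$; that is, $\Cl(\psi)$ becomes the restriction of $\widetilde u$ to $\Cl_0$.

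With these identifications in place, commutativity follows from the computation
\[
\psi(\chi_1(a))(m)=u\bigl(a\cdot u^{-1}(m)\cdot a^{-1}\bigr)=\widetilde u(a)\cdot m\cdot \widetilde u(a)^{-1}=\chi_2\bigl(\Cl(\psi)(a)\bigr)(m),
\]
valid for any $a\in\Spin_{q_1}$ and $m\in\cM_2$, where the middle equality uses that $\widetilde u$ is an algebra homomorphism that restricts to $u$ on $\cM_1$. The principal technical point is the reduction in the first paragraph to an isometric inner presentation of $\psi$; once this is in place, the remainder of the verification is routine.
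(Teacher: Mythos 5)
Your argument is correct and follows essentially the same route as the paper: reduce by fppf descent to the split case where the quadratic triples are adjoint to regular quadratic forms and $\psi$ is induced by an isometry, identify $\Cl(\psi)$ with the restriction of the functorial Clifford map of that isometry, and then check the equality $\psi(\chi_1(a))(m)=\chi_2(\Cl(\psi)(a))(m)$ directly. The one place you expand beyond the paper is in justifying that $\psi$ can locally be realized by an isometry rather than a mere similitude (Skolem--Noether followed by an fppf-local square-root rescaling), a step the paper asserts without comment; your justification is correct.
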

\begin{proof}
It is clear from the definitions of $\Spin$ and $\bO^+$ that the algebra automorphisms $\Cl(\psi)$ and $\psi$ restrict to group homomorphisms as in the diagram. Since we may check the commutativity of the diagram over a cover, by working sufficiently locally we may assume that $\cA_i = \cEnd_{\cO}(\cM_i)$ and that $(\sigma_i,f_i)$ is adjoint to a regular quadratic form $q_i \colon \cM_i \to \cO$ for each $i$. Furthermore, we may assume that $\psi$ is induced by an isometry $\phi \colon (\cM_1,q_1) \iso (\cM_2,q_2)$, i.e., that
\begin{align*}
\psi \colon \cEnd_\cO(\cM_1) &\iso \cEnd_\cO(\cM_2) \\
B &\mapsto \phi \circ B \circ \phi^{-1}.
\end{align*}
Additionally, we will have $\Cl(\cA_1,\sigma_1,f_1) \cong \Cl_0(\cM_1,q_1)$ and $\Cl(\psi)(m_1\otimes \ldots \otimes m_k) = \phi(m_1)\otimes \ldots \otimes \phi(m_k)$.

Now, consider $a\in \Spin_{(\cA_1,\sigma_1,f_1)}$. The element $\psi(\chi_1(a))$ is the section of $\bO^+_{(\cA_2,\sigma_2,f_2)}$ which behaves as
\begin{align*}
\psi(\chi_1(a))(n) &= (\phi \circ \chi_1(a) \circ \phi^{-1})(n) \\
&= \phi\big(a \cdot \phi^{-1}(n) \cdot a^{-1}\big)
\end{align*}
where $a \cdot \phi^{-1}(n) \cdot a^{-1}$ lies in $\cM_1\subset \Cl(\cM_1,q_1)$. By definition, the diagram
\[
\begin{tikzcd}
\Cl(\cM_1,q_1) \ar[r,"\Cl(\phi)"] & \Cl(\cM_2,q_2) \\
\cM_1 \ar[u,hookrightarrow] \ar[r,"\phi"] & \cM_2 \ar[u,hookrightarrow]
\end{tikzcd}
\]
commutes and $\Cl(\phi)|_{\Cl_0(\cM_1,q_1)} = \Cl(\psi)$. So, identifying $\cM_1$ with its image in $\Cl(\cM_1,q_1)$ we have
\begin{align*}
\phi\big(a \cdot \phi^{-1}(n) \cdot a^{-1}\big) &= \Cl(\phi)\big(a \cdot \phi^{-1}(n) \cdot a^{-1}\big) \\
&= \Cl(\psi)(a) \cdot \Cl(\phi)(\phi^{-1}(n)) \cdot \Cl(\phi)(a)^{-1} \\
&= \Cl(\psi)(a) \cdot n \cdot \Cl(\psi)(a)^{-1}.
\end{align*}
Therefore, we see that $\psi(\chi_1(a)) = \chi_2(\Cl(\psi)(a))$, which proves the claim.
\end{proof}

\subsubsection{Chevalley Generators}\label{Chevalley_generators}
For the rest of this section we work with $(\HH(\cV),q_{2n})$ of \Cref{split_Clifford}. We fix the ordered basis
\[
\{v_1,v_2,\ldots,v_n,v_n^*,\ldots,v_2^*,v_1^*\}
\]
of $\HH(\cV)$ and use it to identify $\cEnd_{\cO}(\HH(\cV)) = \Mat_{2n}(\cO)$.

The groups $\Spin_{2n}$, $\bO_{2n}^+$, and $\PGO_{2n}^+$ are Chevalley groups. In particular, they have elementary subpresheaves of groups generated by Chevalley generators. For background on Chevalley groups over fields and algebraic relations between Chevalley generators we refer to \cite{Steinberg}.

All three of these groups are of type $D_n = \{\pm e_i \pm e_j \mid 1\leq i<j \leq n\}$. We recall from \cite[1.3.21]{Thesis} that we may choose the following elements as Chevalley generators of $\Spin_{2n} \subseteq \Cl(\HH(\cV),q_{2n})$ where $1\leq i < j \leq n$.
\begin{align*}
X_{e_i-e_j}(t) &= 1+tv_iv_j^* & X_{-e_i+e_j}(t) &= 1+tv_jv_i^* \\
X_{e_i+e_j}(t) &= 1+tv_iv_j & X_{-e_i-e_j}(t) &= 1+tv_j^*v_i^*
\end{align*}
By \cite[1.3.22]{Thesis} (where $\bO^+$ is called $\SO$), these map via $\chi$ to the following choice of Chevalley generators for $\bO_{2n}^+$.
\begin{align*}
x_{e_i-e_j}(t) &= I+t(E_{ij}-E_{\bar{j}\bar{i}}) & x_{-e_i+e_j}(t) &= I+t(E_{ji}-E_{\bar{i}\bar{j}}) \\
x_{e_i+e_j}(t) &= I+t(E_{i\bar{j}}-E_{j\bar{i}}) & x_{-e_i-e_j}(t) &= I+t(E_{\bar{j}i}-E_{\bar{i}j})
\end{align*}
where $\bar{i} = 2n+1-i$ and $E_{ij}$ is the matrix with $1$ in the $(i,j)$ entry and zeroes elsewhere. We choose the Chevalley generators
\[
x'_{\alpha}(t) = \pi_{\bO}(x_\alpha(t))
\]
for $\PGO_{2n}^+$. These choices then define presheaves of subgroups called the \emph{elementary subgroups}.
\begin{defn}\label{elementary_group}
We define the following functors into groups.
\begin{align*}
\bE_{\Spin} \colon \Sch_S &\to \Grp \\
T &\mapsto \langle X_{\alpha}(t) \mid \alpha \in D_n,\; t\in \cO(T)\rangle \\[2ex]
\bE_{\bO^+} \colon \Sch_S &\to \Grp \\
T &\mapsto \langle x_{\alpha}(t) \mid \alpha \in D_n,\; t\in \cO(T)\rangle \\[2ex]
\bE_{\PGO^+} \colon \Sch_S &\to \Grp \\
T &\mapsto \langle x'_{\alpha}(t) \mid \alpha \in D_n,\; t\in \cO(T)\rangle.
\end{align*}
\end{defn}

\begin{prop}\label{Chevalley_sheafification}
The sheafification of $\bE_{\Spin}$ is $\bE_{\Spin}^\sharp = \Spin_{2n}$. Similarly, $\bE_{\bO^+}^\sharp = \bO_{2n}^+$ and $\bE_{\PGO^+}^\sharp = \PGO_{2n}^+$.
\end{prop}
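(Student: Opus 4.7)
The strategy is to prove all three statements by showing that each inclusion of presheaves $\bE \hookrightarrow G$ becomes an epimorphism of sheaves after fppf sheafification. Since each $G \in \{\Spin_{2n}, \bO_{2n}^+, \PGO_{2n}^+\}$ is already an fppf sheaf, this amounts to the claim that for every $T \in \Sch_S$ and every section $g \in G(T)$, there is an fppf cover $\{T_i \to T\}_{i \in I}$ such that each $g|_{T_i}$ lies in $\bE(T_i)$, i.e., is an honest word in the Chevalley generators.

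First I would dispatch the $\Spin_{2n}$ case using the scheme-theoretic Bruhat decomposition. As a split simply connected semisimple group scheme of type $D_n$, $\Spin_{2n}$ contains the big cell $\Omega = U^- \cdot H \cdot U$ as a Zariski-dense open subscheme, and the Weyl translates $\{n_w \Omega\}_{w \in W}$ form a Zariski cover of $\Spin_{2n}$. Pulling this cover back along any $g \colon T \to \Spin_{2n}$ produces a Zariski cover $\{T_i \to T\}$ on which $g|_{T_i} = n_{w_i} \cdot u_i^- \cdot h_i \cdot u_i$ with $u_i^\pm \in U^\pm(T_i)$ and $h_i \in H(T_i)$. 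The Weyl representatives $n_w$ are products of $w_\alpha(1) = X_\alpha(1) X_{-\alpha}(-1) X_\alpha(1)$ and hence lie in $\bE_{\Spin}(S)$, and the unipotent parts $u_i^\pm$ are products of $X_\alpha(t)$'s by definition. For the torus part, simple-connectedness gives an isomorphism $H \cong \Gm^n$ via the coroot cocharacters $h_{\alpha_1}, \ldots, h_{\alpha_n}$, so each $h_i$ factors as $\prod_j h_{\alpha_j}(t_{ij})$ for units $t_{ij} \in \cO(T_i)^\times$; the identity $h_\alpha(t) = w_\alpha(t) w_\alpha(-1)$ with $w_\alpha(t) = X_\alpha(t) X_{-\alpha}(-t^{-1}) X_\alpha(t)$ expresses each $h_{\alpha_j}(t_{ij})$ as a product of Chevalley generators. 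Thus $g|_{T_i} \in \bE_{\Spin}(T_i)$ and $\bE_{\Spin}^\sharp = \Spin_{2n}$ follows.

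For the remaining two cases I would leverage the sheaf-theoretic surjections $\Spin_{2n} \overset{\chi}{\surj} \bO_{2n}^+ \overset{\pi_{\bO}}{\surj} \PGO_{2n}^+$ discussed in \Cref{algebraic_groups}. Both $\chi$ and $\pi_\bO \circ \chi$ have central kernels of multiplicative type (essentially copies of $\runity_2$), so they are fppf epimorphisms of sheaves. Because the Chevalley generators were chosen compatibly, namely $\chi(X_\alpha(t)) = x_\alpha(t)$ and $\pi_{\bO}(x_\alpha(t)) = x'_\alpha(t)$, any section of $\bO_{2n}^+(T)$ or $\PGO_{2n}^+(T)$ lifts, after an fppf cover, to a section of $\Spin_{2n}$; the Spin case then rewrites the lift as a product of $X_\alpha(t)$'s, whose image is the desired product of $x_\alpha(t)$'s or $x'_\alpha(t)$'s. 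This simultaneously yields $\bE_{\bO^+}^\sharp = \bO_{2n}^+$ and $\bE_{\PGO^+}^\sharp = \PGO_{2n}^+$.

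The main obstacle is really the Spin step: one must invoke the scheme-theoretic big cell decomposition and, crucially, the identification of the simply connected split torus with $\Gm^n$ via coroots, so that torus sections factor explicitly into images of $h_\alpha$. Both ingredients are standard for split semisimple simply connected group schemes (and are consistent with the conventions adopted from \cite{Thesis}), but they are what allows the purely formal sheaf-theoretic claim to be converted into an explicit, finite product of Chevalley generators over each piece of a cover. Everything else is bookkeeping and transferring the Spin result along the fppf surjections.
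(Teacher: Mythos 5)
Your proof is correct, but it runs in the opposite direction from the paper's. The paper first establishes $\bE_{\bO^+}^\sharp = \bO_{2n}^+$ by an explicit local argument (invoking \cite[3.4.8]{Thesis} and adjoining square roots through fppf covers of the form $\Spec\big(R[x]/\langle x^2-r\rangle\big)\to\Spec(R)$), then deduces the $\Spin$ case by showing $\Ker(\chi)\subseteq\bE_{\Spin}$ and the $\PGO^+$ case by pushing forward along $\pi_{\bO}$. You instead prove the $\Spin$ case directly, using the scheme-theoretic big-cell covering $\Spin_{2n}=\bigcup_w n_w\Omega$ together with the coroot factorization $\bT_{\Spin_{2n}}\cong\GG_m^n$ (which needs simple-connectedness), and then derive the other two cases by lifting sections fppf-locally along the central isogenies $\chi$ and $\pi_{\bO}\circ\chi$. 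Your route is essentially the one the paper's own subsequent remark sketches via \cite[Expos\'e~XXIII, 3.5.4]{SGA3}: it is more conceptual and avoids the concrete matrix manipulations from \cite{Thesis}, at the cost of invoking the Bruhat/big-cell machinery of SGA3 for group schemes. It also has a forced order of attack, since simple-connectedness is precisely what lets torus sections factor through the $h_{\alpha_j}(t)$'s, so $\Spin$ must come first; by contrast the paper's computational argument applies head-on to $\bO_{2n}^+$, whose character lattice is intermediate and need not be spanned by coroots. Both arguments are sound.
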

\begin{proof}
It is sufficient to argue that $\bE_{\bO^+}^\sharp = \bO_{2n}^+$. If this is true, then the image of this group under the homomorphism $\pi_{\bO}$ will be both $\bE_{\PGO^+}^\sharp$ and $\PGO_{2n}^+$, thereby implying the last claim. For the first claim, using \cite[Lemma 28(d)]{Steinberg} (which is only stated over fields but holds over any ring), one can compute that the center of $\bE_{\Spin}$ contains the elements
\[
h_{e_{n-1}-e_n}(t)h_{e_{n-1}+e_n}(t)
\]
for $t\in \runity_2$ where the elements $h_{\alpha}(t)$ are as defined in \cite[Lemma 19]{Steinberg}. These elements also form the kernel of $\chi \colon \Spin_{2n} \to \bO_{2n}^+$, and therefore $\Ker(\chi) \subseteq \bE_{\Spin}$. Thus, for every section $a\in \Spin_{2n}$, since $\chi(a)$ locally belongs to $\bE_{\bO^+}$ by assumption and $\bE_{\Spin}(T) \surj \bE_{\bO^+}(T)$ is surjective for all $T \in \Sch_S$, the section $a$ will locally belong to $\bE_{\Spin}$. Hence $\bE_{\Spin}^\sharp = \Spin_{2n}$, verifying the last claim.

Hence, we now argue that $\bE_{\bO^+}^\sharp = \bO_{2n}^+$. The argument hinges on the following fact about the fppf topology. For every ring $R$ and element $r\in R$, the ring extension $R \to R[x]/\langle x^2-r \rangle$ is finitely presented and is also faithfully flat since $R[x]/\langle x^2-r\rangle \cong R\oplus Rx$ is free as an $R$--module. Therefore, $\{\Spec(R[x]/\langle x^2 - r \rangle) \to \Spec(R)\}$ is an fppf covering. This means that when considering an fppf covering of $S$ by affine schemes, we may assume that any element of $R$ has a square root in $R$ by refining the cover if necessary.

Now, let $T\in \Sch_S$ and $A\in \bO_{2n}^+(T)$. Let $\{U_i \to T\}_{i\in I}$ be an affine open cover. Then $A|_{U_i} \in \bO_{2n}^+(U_i) \subset \Mat_{2n}(\cO(U_i))$ is a matrix with entries $A|_{U_i}=[a_{ij}]$. Since $A|_{U_i}$ is invertible, $\det(A|_{U_i}) \in \cO(U_i)^\times$, which means $\langle a_{11},a_{12},\ldots,a_{1n}\rangle = \cO(U_i)$. So, by refining the open cover to include the localizations $\Spec(\cO(U_i)_{a_{1j}})$ if needed, we may assume at least one of the entries in the first row of $A|_{U_i}$ is invertible. Furthermore, since $(q_{2n}\circ (A|_{U_i})^{-1})(v_n^*) = q_{2n}(v_n^*)=0$ and $(A|_{U_i})^{-1} = \sigma(A|_{U_i})$ which has final column $\begin{bmatrix} a_{1,2n} & a_{1,2n-1} & \ldots & a_{11}\end{bmatrix}^T$, we obtain the relation that
\[
\sum_{k=1}^n a_{1,k}a_{1,2n+1-k} = 0.
\]
With these facts, and the square root of elements as necessary, we can repeat the calculations for $\bE_{\SO_{2n}}$ from the proof of \cite[3.4.8]{Thesis} to show that $A|_{U_i}\in \bE_{\bO^+}(U_i)$. Since $A$ is locally in $\bE_{\bO^+}$ we know that $A\in \bE_{\bO^+}^\sharp(T)$, and so $\bE_{\bO^+}^\sharp = \bO_{2n}^+$ as desired.
\end{proof}
\begin{rem}
The above result is also an application of \cite[Expose XXIII, 3.5.4]{SGA3}, which proves the claim more generally for groups of any type with a pinning. In their setup, one has unipotent root subgroups as well as a torus which generate the subsheaf together. One can show that the elements $h_\alpha(t)$ generate an appropriate torus and therefore the result from \cite{SGA3} applies.

Furthermore, the above result is stated for, and the proof inherently uses, the fppf topology. However, in some cases weaker topologies are sufficient. It was known by E.~Abe in \cite{Abe} and H.~Matsumoto in \cite{Matsu} that $\bE_\bG(R) = \bG(R)$ when $\bG$ is simply connected and $R$ is a local ring. This implies that for simply connected groups, $\bG$ is the Zariski sheafification of $\bE_\bG$. In particular this holds for $\Spin_{2n}$. Alternatively, if we assume that $\frac{1}{2} \in \cO^\times$, then whenever we need to add a square root of an element in the above argument, we may do so with an \'etale cover. Therefore in this case, $\bO_{2n}^+$ and $\PGO_{2n}^+$ are the \'etale sheafifications of their elementary subgroups.
\end{rem}
\Cref{Chevalley_sheafification} will allow us to use calculations on Chevalley generators to identify homomorphisms between these groups, since a map of presheaves between the elementary subgroups extends uniquely to a morphism between the sheaves of groups.

\subsection{Cohomology, Stacks, and Twisting}\label{cohomology_twisting}
We will make use of first non-abelian cohomology in the style of \cite[2.4.2]{Gir}. For a sheaf of groups $\bG \colon \Sch_S \to \Grp$, a \emph{$\bG$--torsor} is a sheaf of sets $\cP \colon \Sch_S \to \Grp$ with a right $\bG$--action, i.e., a natural transformation $\cP\times \bG \to \cP$ such that
\begin{enumerate}[label={\rm(\roman*)}]
\item \label{torsor_defn_i} for all $T \in \Sch_S$, the map $\cP(T)\times \bG(T) \to \cP(T)$ defines a simply transitive right $\bG(T)$--action on $\cP(T)$, and
\item \label{torsor_defn_ii} there exists an fppf cover $\{T_i \to S\}_{i\in I}$ over which $\cP(T_i)\neq \O$ for all $i\in I$.
\end{enumerate}
A torsor $\cP$ may have $\cP(T) = \O$ for some $T\in \Sch_S$. In this case we view the map $\O \times \bG(T) \to \O$ as vacuously giving a simply transitive action of $\bG(T)$ on $\O$. A morphism of $\bG$--torsors is a natural transformation of sheaves which is equivariant with respect to the right $\bG$--actions. Any morphism of torsors must be an isomorphism. The sheaf of sets $\bG$ itself viewed as torsor via right multiplication is called the \emph{trivial torsor}. The first cohomology set $H^1(S,\bG)$ is defined to be the pointed set of isomorphism classes of $\bG$--torsors with the class of the trivial torsor as the base point.

Two sheaves $\cF$ and $\cG$ on $\Sch_S$ are called \emph{twisted forms} of each other if they are locally isomorphic, i.e., if there exists a cover $\{T_i \to S\}_{i\in I}$ over which $\cF|_{T_i} \cong \cG|_{T_i}$ for all $i\in I$. In this case, the sheaf of internal isomorphisms
\begin{align*}
\cIsom(\cF,\cG) \colon \Sch_S &\to \Sets \\
T &\mapsto \Isom(\cF|_T,\cG|_T)
\end{align*}
is an $\cAut(\cF)$--torsor. For example, if $(\cA,\sigma,f)$ is a quadratic triple of constant rank $2n$, then the sheaf of quadratic triple isomorphisms
\[
\cP = \cIsom((\cEnd_{\cO}(\HH(\cV)),\sigma_{q_{2n}},f_{q_{2n}}),(\cA,\sigma,f))
\]
is a $\PGO_{2n}$--torsor.

We will also make use of stacks over $\Sch_S$ following \cite{Olsson} and \cite{Vis}. Let $p \colon \fF \to \Sch_S$ be a stack. For $T \in \Sch_S$ we denote by $\fF(T)$ the fiber of $\fF$ over $T$. For a morphism $f\colon T' \to T$ of $S$--schemes and an object $x\in \fF(T)$, a pullback of $x$ will be denoted $f^*(x) \in \fF(T')$ or simply by $x|_{T'}$ when $f$ is clear. The automorphism sheaf of an object $x\in \fF(T)$ is
\begin{align*}
\bAut(x) \colon \Sch_T &\to \Grp \\
T' &\mapsto \Aut_{\fF(T')}(x|_{T'}).
\end{align*}
Two objects $x,y\in \fF(T)$ in the same fiber of a stack are called \emph{twisted forms} if they are locally isomorphic, i.e., there is a cover $\{T_i \to T\}_{i\in I}$ such that $x|_{T_i} \cong y|_{T_i}$ in $\fF(T_i)$ for all $i\in I$. In this case, $\cIsom(x,y)$ is an $\bAut(x)$--torsor over $T$.

When the stack $\fF$ is a gerbe with $x\in \fF(S)$, the isomorphism classes of $\fF(S)$ are classified by $H^1(S,\bAut(x))$. Following \cite[3.1.10]{Olsson}, a morphism $\varphi \colon \fF \to \fG$ of stacks (or more generally fibered categories) is called an \emph{equivalence} if for every $T\in \Sch_S$ the functor between fibers $\varphi_T \colon \fF(T) \to \fG(T)$ is an equivalence of categories.

\begin{lem}[{\cite{Gir},\cite[1.19]{GNR2}}]\label{equivalence_gerbes}
Let $\varphi \colon \fF \to \fG$ be a morphism of gerbes. Assume there is an object $x\in \fF(S)$. If the associated group homomorphism
\[
\bAut(\varphi)\colon \bAut(x) \to \bAut(\varphi(x))
\]
is an isomorphism, then $\varphi$ is an equivalence of stacks.
\end{lem}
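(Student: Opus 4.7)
The plan is to verify that for each $T\in \Sch_S$ the induced functor $\varphi_T\colon \fF(T)\to \fG(T)$ is both fully faithful and essentially surjective. Both assertions will be obtained by descending local statements that follow from the gerbe hypothesis combined with the isomorphism $\bAut(\varphi)$. Full faithfulness must be handled before essential surjectivity, since the latter argument will rely on the former.

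For full faithfulness, fix $T\in \Sch_S$ and $x_1,x_2\in \fF(T)$, and consider the internal hom sheaves $\cHom(x_1,x_2)$ and $\cHom(\varphi(x_1),\varphi(x_2))$ on $\Sch_T$. The functor $\varphi$ induces a natural transformation between these sheaves, and it suffices to show it is an isomorphism, since evaluating on $T$-sections then yields the desired bijection on Hom-sets. Because $\fF$ is a gerbe and $x|_T\in \fF(T)$, there exists a cover $\{T_i\to T\}$ over which both $x_1|_{T_i}$ and $x_2|_{T_i}$ become isomorphic to $x|_{T_i}$. Fixing such trivializations identifies $\cHom(x_1,x_2)|_{T_i}\cong \bAut(x)|_{T_i}$ by pre- and post-composition, and similarly $\cHom(\varphi(x_1),\varphi(x_2))|_{T_i}\cong \bAut(\varphi(x))|_{T_i}$, in a way that intertwines the map induced by $\varphi$ with $\bAut(\varphi)|_{T_i}$. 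By hypothesis the latter is an isomorphism of sheaves, hence so is the former locally, and hence globally.

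For essential surjectivity, let $y\in \fG(T)$. Since $\fG$ is a gerbe and $\varphi(x|_T)\in \fG(T)$, there exists a cover $\{T_i\to T\}$ and isomorphisms $\alpha_i\colon \varphi(x|_{T_i})\iso y|_{T_i}$ in $\fG(T_i)$. On each overlap $T_{ij}$ the composite $\alpha_j^{-1}\circ\alpha_i$ is an automorphism of $\varphi(x|_{T_{ij}})$, which by the full faithfulness just established lifts uniquely to an automorphism $\beta_{ij}$ of $x|_{T_{ij}}$. Uniqueness transfers the cocycle identity from the $\alpha_j^{-1}\alpha_i$ to the $\beta_{ij}$ on triple overlaps, so descent in $\fF$ produces an object $x'\in \fF(T)$ obtained by gluing the $x|_{T_i}$ along $\beta_{ij}$. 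Applying $\varphi$, the gluing data of $\varphi(x')$ matches that of $y$ via the $\alpha_i$, so descent in $\fG$ assembles an isomorphism $\varphi(x')\iso y$. The main delicacy is precisely this bookkeeping in the essential-surjectivity step: one must exploit both the existence and the \emph{uniqueness} of lifts through $\varphi$, which is why full faithfulness is a prerequisite.
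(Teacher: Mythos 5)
Your proof is correct. The paper does not present its own proof of this lemma; the remark following the statement explicitly defers to \cite[1.19]{GNR2} for an argument assembled from Giraud. What you give is exactly the standard folklore argument: full faithfulness reduces, after local trivialization of both objects against $x$, to the hypothesis that $\bAut(\varphi)$ is an isomorphism of sheaves (being locally an isomorphism over a cover forces the map $\cHom(x_1,x_2)\to\cHom(\varphi(x_1),\varphi(x_2))$ to be a global isomorphism), and essential surjectivity then follows by lifting descent data. Two minor points of precision: in the essential surjectivity step, the unique lift of $\alpha_j^{-1}\circ\alpha_i$ does not really need the full faithfulness you just proved for arbitrary pairs, only the bijectivity of $\bAut(x)(T_{ij})\to\bAut(\varphi(x))(T_{ij})$, which is the hypothesis directly; and the final isomorphism $\varphi(x')\iso y$ is produced by gluing the local isomorphisms $\alpha_i\circ\varphi(\gamma_i)^{-1}$ (where $\gamma_i$ are the canonical isomorphisms coming from the glued object $x'$), which agree on overlaps precisely because the $\beta_{ij}$ push forward to $\alpha_j^{-1}\circ\alpha_i$. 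Both points are implicit in your write-up and do not affect correctness.
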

We attribute the above lemma to two sources because it is a folklore result which easily follows from results in \cite{Gir}, but does not appear there in this form. A proof which assembles the relevant facts from \cite{Gir} is given in \cite[1.19]{GNR2}.

One motivation for our use of stacks is to take advantage of the following result.
\begin{lem}[{\cite[2.5.3(i)]{Gir}}]\label{stack_cohomology}
Let $\varphi \colon \fF \to \fG$ be a morphism of gerbes. For an object $x\in \fF(S)$, the associated group homomorphism $\bAut(\varphi) \colon \bAut(x) \to \bAut(\varphi(x))$ induces the following map on cohomology
\begin{align*}
H^1(S,\bAut(x)) &\to H^1(S,\bAut(\varphi(x))) \\
[x'] &\mapsto [\varphi(x')]
\end{align*}
where $[x']$ denotes the isomorphism class of $x' \in \fF(S)$ and $[\varphi(x')]$ the isomorphism class of $\varphi(x') \in \fG(S)$.
\end{lem}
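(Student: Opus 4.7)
The plan is to unwind the standard bijection between isomorphism classes of objects in a gerbe over $S$ and the first cohomology set of the automorphism sheaf of any chosen basepoint object, and then verify that the image of $[x']$ under this bijection is carried by the change-of-group map induced by $\bAut(\varphi)$ to the image of $[\varphi(x')]$.

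First I would set up the bijections. Since $\fF$ is a gerbe and $x,x'\in \fF(S)$, the local connectedness axiom of a gerbe ensures that $x$ and $x'$ are twisted forms of each other, so $\cIsom_\fF(x,x')$ is an $\bAut(x)$--torsor on $S$ and the assignment $[x']\mapsto [\cIsom_\fF(x,x')]$ realizes a bijection between isomorphism classes of objects in $\fF(S)$ and $H^1(S,\bAut(x))$. The analogous statement holds for $\fG$ with $\varphi(x)$ in place of $x$. The content of the lemma is therefore the commutativity of the square
\[
\begin{tikzcd}
\{\text{iso classes in }\fF(S)\} \ar[r,"\varphi"] \ar[d,"\sim"] & \{\text{iso classes in }\fG(S)\} \ar[d,"\sim"] \\
H^1(S,\bAut(x)) \ar[r,"\bAut(\varphi)_*"] & H^1(S,\bAut(\varphi(x))).
\end{tikzcd}
\]

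Next I would describe the bottom horizontal map concretely. For a homomorphism $\alpha \colon \bH \to \bG$ of sheaves of groups, the pushforward $\alpha_* \colon H^1(S,\bH) \to H^1(S,\bG)$ is represented by the contracted product construction: an $\bH$--torsor $\cP$ is sent to the $\bG$--torsor $\cP \times^\bH \bG$, obtained by sheafifying the quotient of $\cP \times \bG$ by the diagonal $\bH$--action. Applying this with $\alpha = \bAut(\varphi)$ and $\cP = \cIsom_\fF(x,x')$, going down-then-right in the square yields the class of $\cIsom_\fF(x,x') \times^{\bAut(x)} \bAut(\varphi(x))$, while going right-then-down yields the class of $\cIsom_\fG(\varphi(x),\varphi(x'))$. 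It therefore suffices to exhibit an isomorphism of $\bAut(\varphi(x))$--torsors between these.

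The required isomorphism comes from the functoriality of $\varphi$. The assignment $\psi \mapsto \varphi(\psi)$ defines a morphism of sheaves of sets $\cIsom_\fF(x,x') \to \cIsom_\fG(\varphi(x),\varphi(x'))$ which intertwines the $\bAut(x)$--action on the source with the $\bAut(\varphi(x))$--action on the target through $\bAut(\varphi)$. By the universal property of the contracted product this extends uniquely to a morphism of $\bAut(\varphi(x))$--torsors, and since any morphism between torsors under the same sheaf of groups is automatically an isomorphism, we are done. The main obstacle, though purely bookkeeping, is reconciling the left/right action conventions used for torsors and for the contracted product so that the equivariance of $\psi \mapsto \varphi(\psi)$ matches the convention implicit in $\times^{\bAut(x)}$; once those conventions are pinned down, the remainder of the argument is formal.
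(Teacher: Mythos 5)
The paper provides no proof of this lemma, citing Giraud \cite[2.5.3(i)]{Gir} directly, so there is no internal argument to compare against. Your reconstruction is correct and is the standard one: the bijection $[x']\mapsto[\cIsom(x,x')]$ identifies isomorphism classes in the gerbe with $H^1(S,\bAut(x))$, the pushforward $\bAut(\varphi)_*$ is realized by the contracted product $\cP\mapsto\cP\times^{\bAut(x)}\bAut(\varphi(x))$, and the $\bAut(\varphi)$--equivariance of $\psi\mapsto\varphi(\psi)$ supplies, via the universal property of the contracted product and the fact that any morphism of torsors under a fixed sheaf of groups is an isomorphism, the required identification with $\cIsom_\fG(\varphi(x),\varphi(x'))$. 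The convention check you flag does go through: with the paper's right $\bAut(x)$--action on $\cIsom_\fF(x,x')$ by precomposition, functoriality of $\varphi$ gives $\varphi(\psi\circ a)=\varphi(\psi)\circ\bAut(\varphi)(a)$, which is exactly right equivariance through $\bAut(\varphi)$, so the map factors through the contracted product as needed and there is no hidden obstruction.
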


\subsection{\'Etale Covers}\label{etale_covers}
By definition, an \emph{\'etale cover of degree $d$} of a scheme $T$ is an affine morphism $f\colon L \to T$ such that $f_*(\cO|_L)$ is an \'etale $\cO|_T$--algebra of rank $d$, i.e., it is a twisted form as commutative $\cO|_T$--algebras of $\cO|_T^d$. As in \cite[2.5.2.9]{CF}, since $\cAut(\cO|_T^d) \cong \SS_d|_T$ is the constant group sheaf associated to the permutation group of $d$ letters, isomorphism classes of degree $d$ \'etale covers of $T$ are classified by $H^1(T,\SS_d)$. If $f \colon L \to T$ is such a cover, since $f_*(\cO|_T)$ is locally isomorphic to $\cO|_T^d$, there will be a cover $\{T_i \to T\}_{i\in I}$ such that $L\times_T T_i \cong T_i^{\sqcup d}$. This provides an equivalent characterization of \'etale covers of degree $d$.

To set our conventions, $\cAut_S(S^{\sqcup d})\cong\SS_d$ acts by permuting components as follows. Let $T \in \Sch_S$ and write $T=\sqcup_{i\in I} T_i$ as the disjoint union of its connected components. Writing $P_d$ for the abstract permutation group of $\{1,\ldots,d\}$, we thus have $\SS_d(T) \cong \prod_{i\in I} P_d$ and we write $\beta=(\beta_i)_{i\in I}$ for a section over $T$. Now,
\[
T^d = \big(\bigsqcup_{i\in I}T_{i,1}\big)\sqcup \ldots \sqcup \big(\bigsqcup_{i\in I}T_{i,d}\big)
\]
where $T_{i,1}=T_{i,2}=\ldots = T_{i,d} = T_i$ for all $i\in I$. Then,
\begin{align}
\beta \colon T^d &\to T^d \label{permutation_convention} \\
T_{i,k} &\xrightarrow{\Id} T_{i,\beta_i^{-1}(k)}, \nonumber
\end{align}
by which we mean, $\beta$ sends the $T_{i,k}$ component to the $T_{i,\beta_i^{-1}(k)}$ component via the identity. Alternatively, $T_{i,\beta_i(k)}$ ends up in the $T_{i,k}$ position. However, since $\SS_d$ is the sheafification of the constant presheaf $\PP_d$ associated to the abstract group $P_d$, we will often be able to assume without loss of generality that $\beta \in \SS_3(T)$ is a \emph{diagonal section}, i.e., that $\beta_i = \beta_j$ for all $i,j\in I$ or equivalently that $\beta \in \PP_3(T) \subseteq \SS_3(T)$. For example, showing that a homomorphism $\varphi \colon \SS_3 \to \SS_3$ restricts to an isomorphism $\PP_3 \iso \PP_3$ is sufficient to conclude that $\varphi$ itself is an isomorphism.

Analogously to what is done over fields in \cite[\S 2.1]{KT03}, we will use the construction of $\Sigma_2(L)$ from a degree $3$ \'etale cover $L \to S$. We review this construction explicitly in this case. Writing the set of size six as $\{(1,2),(2,3),(3,1),(1,3),(2,1),(3,2)\}$ induces a group homomorphisms $P_3 \inj P_6$ by allowing elements of $P_3$ to act componentwise on the pairs. This homomorphism of abstract groups then corresponds to a homomorphism $\SS_3 \to \SS_6$ of the associated constant group sheaves. Now, by the above discussion, a degree $3$ \'etale cover $f\colon L \to S$ corresponds to an $\SS_3$--torsor $\cP_L$ over $S$. Therefore, $\cP_L \wedge^{\SS_3} \SS_6$ is an $\SS_6$--torsor which corresponds to a degree $6$ \'etale cover which we denote by $f' \colon \Sigma_2(L) \to S$.

Now consider the split degree $6$ \'etale cover,
\begin{equation}\label{eq_S6_order}
S^{\sqcup 6} = (S_{1,2} \sqcup S_{2,3} \sqcup S_{3,1}) \sqcup (S_{1,3} \sqcup S_{2,1} \sqcup S_{3,2})
\end{equation}
where each $S_{i,j} = S$ also. We then have two degree $2$ \'etale covers
\begin{align*}
\pi \colon S^{\sqcup 6} &\to S^{\sqcup 3} & \tau \colon S^{\sqcup 6} &\to S^{\sqcup 3} \\
x\in S_{i,j} &\mapsto x\in S_i & x\in S_{i,j} &\mapsto x\in S_j
\end{align*}
by which we mean, $\pi$ sends the $i,j$ labelled component of $S^{\sqcup 6}$ to the $i$ labelled component of $S^{\sqcup 3}$ via the identity map $S \to S$, and likewise $\tau$ sends the $i,j$ component to the $j$ component. These maps correspond to the two morphisms of \'etale $\cO$--algebras
\begin{align*}
\pi' \colon \cO^3 &\to \cO^6 & \tau' \colon \cO^3 &\to \cO^6 \\
(a,b,c) &\mapsto (a,b,c,a,b,c) & (a,b,c)&\mapsto (b,c,a,c,a,b).
\end{align*}
It is straightforward to check that both of these maps are equivariant with respect to the $\SS_3$ action on $\cO^3$ and on $\cO^6$ via the homomorphism $\SS_3 \inj \SS_6$, and therefore they twist into morphisms $\pi_L', \tau_L' \colon f_*(\cO|_L) \to f'_*(\cO|_{\Sigma_2(L)})$ which correspond to two degree $2$ \'etale covers
\[
\pi_L \colon \Sigma_2(L) \to L \text{ and } \tau_L \colon \Sigma_2(L) \to L.
\]
We call $\Sigma_2(L)$ the \emph{trialitarian cover} of $L$.

\begin{lem}\label{Sigma(L)_pullback}
Let $L\to S$ be a degree $3$ \'etale cover. For any scheme $T\in \Sch_S$, the pullback $L\times_S T \to T$ is another degree $3$ \'etale cover. In this case, we have an isomorphism of schemes $\Sigma_2(L)\times_S T \cong \Sigma_2(L\times_S T)$.
\end{lem}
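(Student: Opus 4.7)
The plan is to exploit the fact that the construction $L \mapsto \Sigma_2(L)$ is defined via torsors and contracted products, both of which behave well under pullback. First, pullback of an \'etale cover along $T \to S$ gives an \'etale cover: indeed, if $f\colon L \to S$ is affine with $f_*(\cO|_L)$ an \'etale $\cO|_S$--algebra of rank $3$, then the projection $L \times_S T \to T$ is affine and its push-forward sheaf is (the restriction to $\Sch_T$ of) $f_*(\cO|_L) \otimes_{\cO|_S} \cO|_T$, which is a twisted form of $\cO|_T^3$. Hence $L \times_S T \to T$ is again a degree $3$ \'etale cover, so $\Sigma_2(L \times_S T)$ is defined.

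Next, I would unpack both sides via the classifying $\SS_3$--torsors. Under the bijection between isomorphism classes of degree $d$ \'etale covers and $H^1(\und,\SS_d)$, the cover $L \to S$ corresponds to an $\SS_3$--torsor $\cP_L$ on $\Sch_S$, and the cover $L \times_S T \to T$ corresponds to the pullback (restriction) $\cP_L|_T$, which is an $\SS_3|_T$--torsor on $\Sch_T$. This is just the naturality of the torsor--\'etale cover correspondence in the base. By construction of $\Sigma_2$, the $\SS_6$--torsor classifying $\Sigma_2(L)$ is $\cP_L \wedge^{\SS_3} \SS_6$, and analogously the one classifying $\Sigma_2(L \times_S T)$ is $\cP_L|_T \wedge^{\SS_3|_T} \SS_6|_T$.

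The key step is then the compatibility of the contracted product with restriction. Since $\cP \wedge^{\bG} \bH$ is defined as the sheafification of the presheaf quotient $(\cP \times \bH)/\bG$ under the diagonal action, and since sheafification, fibered products, and quotients by group actions all commute with restriction to $\Sch_T$ (they are defined by universal properties that are preserved), we obtain a canonical isomorphism
\[
\bigl(\cP_L \wedge^{\SS_3} \SS_6\bigr)\big|_T \;\cong\; \cP_L|_T \wedge^{\SS_3|_T} \SS_6|_T
\]
of $\SS_6|_T$--torsors on $\Sch_T$. Translating this back across the correspondence between $\SS_6$--torsors and degree $6$ \'etale covers, which again commutes with pullback, yields the desired isomorphism $\Sigma_2(L) \times_S T \cong \Sigma_2(L \times_S T)$ of schemes over $T$.

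The only mild obstacle is verifying that the contracted product commutes with restriction; this is standard but worth stating explicitly. Once that is in hand, the rest is naturality of well-known correspondences, and no direct computation with the components $S_{i,j}$ of the split cover \eqref{eq_S6_order} is needed.
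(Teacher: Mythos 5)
Your proof is correct and takes essentially the same route as the paper: the paper phrases it as a commutative square of restriction maps on $H^1(-,\SS_3)$ and $H^1(-,\SS_6)$, while you unpack the reason that square commutes, namely that the contracted product $\cP \wedge^{\SS_3} \SS_6$ is compatible with restriction. Both arguments ultimately produce only an identification of isomorphism classes of covers (which is all the statement requires), so the two are equivalent in substance.
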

\begin{proof}
This can be seen in terms of cohomology. There is a restriction map $H^1(S,\SS_3) \to H^1(T,\SS_3|_T)$ which sends the class of a torsor $[\cP]$ to the class $[\cP|_T]$. Likewise, there is a restriction map for $\SS_6$. In addition, we have maps $H^1(S,\SS_3) \to H^1(S,\SS_6)$ and $H^1(T,\SS_3|_T) \to H^1(T,\SS_6|_T)$ coming from our group homomorphism $\SS_3 \to \SS_6$. These fit into a commutative diagram
\[
\begin{tikzcd}
H^1(S,\SS_3) \ar[r] \ar[d] & H^1(T,\SS_3|_T) \ar[d] \\
H^1(S,\SS_6) \ar[r] & H^1(T,\SS_6|_T).
\end{tikzcd}
\]
Tracing the class of the cover $L\to S$ through this diagram, we obtain
\[
\begin{tikzcd}
{[L]} \ar[r,mapsto] \ar[d,mapsto] & {[L\times_S T]} \ar[d,mapsto] \\
{[\Sigma_2(L)]} \ar[r,mapsto] & {[\Sigma_2(L)\times_S T]} = {[\Sigma_2(L\times_S T)]}
\end{tikzcd}
\]
which justifies our claim.
\end{proof}

We construct one additional map, an isomorphism of $\Sigma_2(L)$, which will be used to define trialitarian algebras. Consider the isomorphism of $\rho \colon S^{\sqcup 6} \iso S^{\sqcup 6}$ given by the permutation
\begin{equation}\label{eq_rho}
\begin{tikzcd}[column sep=-6pt]
(S_{1,2} \ar[drr] & \sqcup & S_{2,3} \ar[drr] & \sqcup & S_{3,1}) \ar[dllll,start anchor={[xshift=-4pt]south},end anchor={[xshift=4pt]north}] & \sqcup & (S_{1,3} \ar[drrrr,start anchor={[xshift=4pt]south},end anchor={[xshift=-8pt]north}] & \sqcup & S_{2,1} \ar[dll,end anchor=north] & \sqcup & S_{3,2}) \ar[dll,end anchor=north] \\
(S_{1,2} & \sqcup & S_{2,3} & \sqcup & S_{3,1}) & \sqcup & (S_{1,3} & \sqcup & S_{2,1} & \sqcup & S_{3,2})
\end{tikzcd}
\end{equation}
where the arrows are the identity $S\to S$. Viewed as an element in $\SS_6(S)$, this is $(1\;3\;2)(4\;5\;6)$ according to our conventions in \Cref{permutation_convention}. Given an $\cO|_{S^{\sqcup 6}}$--algebra $\cE=(\cA_1,\cA_2,\cA_3,\cB_1,\cB_2,\cB_3)$, the pullback along $\rho$ is
\begin{equation}\label{rho_pullback}
\rho^*(\cE)=(\cA_2,\cA_3,\cA_1,\cB_3,\cB_1,\cB_2).
\end{equation}
The scheme isomorphism $\rho$ corresponds to the $\cO$--algebra isomorphism
\begin{align*}
\rho' \colon \cO^6 &\to \cO^6 \\
((a,b,c),(d,e,f)) &\mapsto ((b,c,a),(f,d,e)).
\end{align*}

\begin{lem}\label{split_rho}
The isomorphism $\rho'$ is $\SS_3$--equivariant.
\end{lem}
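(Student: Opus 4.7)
The plan is to transfer the question from an equivariance statement about $\cO^6$ to a commutation statement inside $\SS_6$. Both the $\SS_3$-action on $\cO^6$ and the automorphism $\rho'$ arise as pullbacks along permutations of the six factors of $S^{\sqcup 6}$: the $\SS_3$-action is induced by the embedding $\SS_3 \inj \SS_6$ coming from the componentwise action on pairs, and $\rho'$ is the pullback of the element $\rho = (1\;3\;2)(4\;5\;6) \in \SS_6$ identified in the excerpt. Consequently, $\rho'$ is $\SS_3$-equivariant if and only if $\rho$ centralizes the image of $\SS_3$ in $\SS_6$.

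I would then reduce to checking commutation against a generating set of $\SS_3$, for instance the transposition $(1\;2)$ and the 3-cycle $(1\;2\;3)$. Using the ordering of pairs fixed in \eqref{eq_S6_order}, the componentwise action of $(1\;2\;3)$ on pairs yields the element $(1\;2\;3)(4\;5\;6) \in \SS_6$, which commutes with $\rho$ immediately, since both lie in the abelian subgroup $\langle (1\;2\;3)\rangle \times \langle (4\;5\;6)\rangle$. The action of $(1\;2)$ yields $(1\;5)(2\;4)(3\;6) \in \SS_6$, and a one-line calculation confirms that $\rho\cdot (1\;5)(2\;4)(3\;6) = (1\;5)(2\;4)(3\;6) \cdot \rho = (1\;6\;2\;5\;3\;4)$.

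The only real obstacle is bookkeeping: keeping the convention \eqref{permutation_convention} consistent between the action of $\SS_6$ on $S^{\sqcup 6}$ and its pullback action on $\cO^6$, so that the images of the generators of $\SS_3$ in $\SS_6$ are identified correctly and compared with $\rho$. Once this translation is set up, the equivariance of $\rho'$ reduces to the two immediate group-theoretic checks above.
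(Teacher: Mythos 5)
Your proof is correct, and it takes a genuinely different and arguably more illuminating route than the paper's. The paper's proof works directly at the level of $\cO^6$: it writes out the explicit formulas for $\rho'$ and for the actions of the generators $c=(1\,2\,3)$ and $\lambda=(2\,3)$ on six-tuples, and verifies the two commutation identities by hand. You instead observe that both $\rho'$ and the $\SS_3$-action on $\cO^6$ are pullbacks along permutations of $S^{\sqcup 6}$, so that equivariance is equivalent to $\rho=(1\,3\,2)(4\,5\,6)$ centralizing the image of $\SS_3$ under the embedding $\SS_3\inj\SS_6$, which you then verify on a generating set. Your identifications are correct: with the pair ordering $\{(1,2),(2,3),(3,1),(1,3),(2,1),(3,2)\}$, the componentwise action sends $(1\,2\,3)\mapsto(1\,2\,3)(4\,5\,6)$ and $(1\,2)\mapsto(1\,5)(2\,4)(3\,6)$, and both products you compute with $\rho$ equal the $6$-cycle $(1\,6\,2\,5\,3\,4)$. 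The payoff of your approach is structural clarity: since the componentwise $\SS_3$-action on the six pairs is the regular action of $\SS_3$ on itself, its centralizer in $\SS_6$ is the opposite copy of $\SS_3$ given by right multiplication, and $\rho$ is simply an order-$3$ element of that centralizer. This makes the equivariance transparent, whereas the paper's direct computation is more elementary but does not expose why the identity holds. One point worth flagging in a write-up is the inverse in the paper's permutation convention ($T_{i,k}\mapsto T_{i,\beta_i^{-1}(k)}$); as you note, this only shifts each permutation to its inverse under the pullback identification, which is harmless for verifying a commutation relation, but it should be stated so the reader is not derailed by the bookkeeping.
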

\begin{proof}
Without loss of generality, it is sufficient to check this for the diagonal permutations $c =(1\,2\,3)$ and $\lambda =(2\,3)$ of $\SS_3(T)$.

Since $\rho'$ is of the form $(c,c^{-1})$, it is clear it commutes with the action of $c$, which is of the form $(c,c)$.

The action of $\lambda$ performs the permutation
\[
((a,b,c),(d,e,f)) \mapsto ((d,f,e),(a,c,b)).
\]
Therefore we can compute that
\[
(\rho' \circ \lambda)(a,b,c),(d,e,f)) = \rho'((d,f,e),(a,c,b)) = ((f,e,d),(b,a,c))
\]
and that
\[
(\lambda \circ \rho')((a,b,c),(d,e,f)) = \lambda((b,c,a),(f,d,e)) = ((f,e,d),(b,a,c))
\]
and see that $\rho'$ commutes with $\lambda$ as desired.
\end{proof}

Therefore, this map descends to an isomorphism of $\cO$--algebras
\[
\rho_{\Sigma_2(L)}'\colon f'_*(\cO|_{\Sigma_2(L)}) \to f'_*(\cO|_{\Sigma_2(L)})
\]
which corresponds to an isomorphism of schemes $\rho_{\Sigma_2(L)} \colon \Sigma_2(L) \to \Sigma_2(L)$. It is order $3$ since $\rho$ is.

\section{Triality Without Octonions}
In this section we demonstrate that phenomena surrounding triality which are described in \cite[Ch. X]{KMRT} also exist in our context over a scheme. We do so using computations with Chevalley generators in place of the traditional computations with Octonions.

\subsection{Automorphism Groups of Groups}\label{automorphisms_of_groups}
We fix a \emph{pinning} of each of the groups $\Spin_8$, $\bO_8^+$, and $\PGO_8^+$ in the style of \cite[XXIII.1.1]{SGA3}. This first involves a choice of maximal torus for each group. Using the theory of Chevalley generators, if $x_\alpha(t)$ are generators for a split group $\bG$, then setting
\[
w_\alpha(t)=x_\alpha(t)x_{-\alpha}(-t^{-1})x_\alpha(t) \text{ and } h_\alpha(t) = w_\alpha(t)w_\alpha(-1)
\]
for $t\in \cO^\times$ produces a commutative diagonalizable subgroup presheaf generated by sections $h_\alpha(t)$. We let $\bT$ be the sheafification of this presheaf, which is a maximal torus of $\bG$. In $\bO_{2n}^+$ with our choice of generators as in \Cref{Chevalley_generators}, this takes the familiar form of diagonal elements
\[
\bT_{\bO_{2n}^+} = \{\diag(t_1,\ldots,t_n,t_n^{-1},\ldots,t_1^{-1}) \in \Mat_{2n}(\cO) \mid t_i \in \cO^\times\}.
\]
For the simply connected group $\Spin_{2n}$, the character lattice $\Hom(\bT_{\Spin_{2n}},\GG_m) = \Lambda_w$ is the weight lattice of the root system $D_n$, while the adjoint group's character lattice $\Hom(\bT_{\PGO_{2n}^+},\GG_m) = \Lambda_r$ is the root lattice. These lattices are mutual duals and are isomorphic. The intermediate group $\bO_{2n}^+$ has the intermediate character lattice $\Lambda = \Hom(\bT_{\bO_{2n}^+},\GG_m) = \Span_\ZZ(\{e_1,\ldots,e_n\})$. The data of a simple system $\Delta \subseteq D_n$, these character lattices, and our fixed choices of Chevalley generators defines a pinning for each of these three groups. 

Given a graph automorphism of the Dynkin diagram of $D_n$ relative to the simple system $\Delta$, it corresponds to a bijection $\varphi \colon \Delta \iso \Delta$ which extends to an automorphism of the root system and hence also to an automorphism of the root and weight lattices. If this automorphism stabilizes the associated character lattice (which is automatic when $\bG$ is simply connected or adjoint) then it extends uniquely to an automorphism of the elementary subgroup $\bE_{\bG} \iso \bE_{\bG}$ defined by sending the Chevalley generators $x_\alpha(t) \mapsto x_{\varphi(\alpha)}(t)$ and $x_{-\alpha}(t) \mapsto x_{-\varphi(\alpha)}(t)$ for all $\alpha \in \Delta$. The images of the $x_\alpha(t)$ for other roots $\alpha \notin \Delta$ are then determined by the commutator relations of the group. By \Cref{Chevalley_sheafification}, this isomorphism extends to a unique isomorphism of $\bG$, which we also denote $\varphi \colon \bG \iso \bG$. 

Let $\Lambda$ be the character lattice of $\bG$ and let $\bAut(\Delta,\Lambda)$ denote the constant group sheaf associated to the abstract group $\Aut(\Delta,\Lambda)$ of graph automorphisms stabilizing $\Lambda$. By the discussion above, we have a group homomorphisms $a\colon \bAut(\Delta,\Lambda) \to \bAut(\bG)$. For any of the three groups we are considering, the canonical morphism $\bG \to \PGO_{2n}^+$ has a central kernel and the center of $\PGO_{2n}^+$ is trivial, so the group of inner automorphisms of $\bG$ is isomorphic to $\PGO_{2n}^+$. This gives a morphism $\Inn \colon \PGO_{2n}^+ \to \bAut(\bG)$. Then, by \cite[XXIV.1.3]{SGA3} we have an exact sequence
\[
1 \to \PGO_{2n}^+ \xrightarrow{\Inn} \bAut(\bG) \xrightarrow{p} \Out(\bG) \to 1
\]
where $\Out(\bG)=\bAut(\bG)/\Inn(\bG)$ is the quotient sheaf of automorphisms modulo inner automorphisms. Furthermore, \cite[XXIV.1.3(iii)]{SGA3} states that $p\circ a \colon \bAut(\Delta,\Lambda) \iso \Out(\bG)$ is an isomorphism. Finally, \cite[XXIV.1.4]{SGA3} tells us that the automorphism group of $\bG$ is the semidirect product
\[
\cAut(\bG) = \PGO_{2n}^+ \rtimes \bAut(\Delta,\Lambda).
\]
where $\bAut(\Delta,\Lambda)$ acts on $\PGO_{2n}^+$ by acting on Chevalley generators as described above since any graph automorphism fixes the root lattice.

Focusing on $\Spin_8$, $\bO_8^+$, and $\PGO_8^+$, we work with the following non-standard simple system $\Delta$ of the root system $D_4$.
\[
\begin{tikzpicture}
\draw[black] (0,0) -- (1,1);
\draw[black] (0,0) -- (1,-1);
\draw[black] (-1.41,0) -- (0,0);
\draw[black, fill=white] (0,0) circle (4pt) node[anchor=west]{\hspace{1pt}$e_3-e_4$};
\draw[black, fill=white] (1,1) circle (4pt) node[anchor=west]{\hspace{1pt}$e_1-e_3$};
\draw[black, fill=white] (1,-1) circle (4pt) node[anchor=west]{\hspace{1pt}$-e_1-e_3$};
\draw[black, fill=white] (-1.41,0) circle (4pt) node[anchor=east]{$e_2+e_4$\hspace{1pt}}; 
\end{tikzpicture}
\]
The justification for this choice is explained in \Cref{choice_remark}. If $\bG$ is one of $\Spin_8$ or $\PGO_8^+$, then the associated lattice is either the weight or root lattice, and so $\bAut(\Delta,\Lambda) = \bAut(\Delta) = \SS_3$ is the constant group sheaf of permutations of $3$ elements, viewed as the graph isomorphisms of the simple system above. Therefore,
\begin{equation}\label{aut_spin_PGO}
\cAut(\bG) = \PGO_8^+ \rtimes \SS_3.
\end{equation}
We denote by $\theta^+$ the order $3$ automorphism of $\Delta$ coming from a counter-clockwise turn of the graph, and let $\theta^- = (\theta^+)^2$ be its inverse.
\begin{lem}\label{theta+_description}
Let $\bG$ be one of $\Spin_8$ or $\PGO_8^+$ and let $x_\alpha(t)$ denote its Chevalley generators as chosen in \Cref{Chevalley_generators}. The automorphism $\theta^+ \colon \bG \to \bG$ behaves as follows on Chevalley generators.
\begin{align*}
x_{e_1-e_2}(t) &\mapsto x_{e_3+e_4}(-t) & x_{-e_1+e_2}(t) &\mapsto x_{-e_3-e_4}(-t)\\
x_{e_1-e_3}(t) &\mapsto x_{e_2+e_4}(t) & x_{-e_1+e_3}(t) &\mapsto x_{-e_2-e_4}(t) \\
x_{e_1-e_4}(t) &\mapsto x_{e_2+e_3}(-t) & x_{-e_1+e_4}(t) &\mapsto x_{-e_2-e_3}(-t)\\
x_{e_2-e_3}(t) &\mapsto x_{e_2-e_3}(t) & x_{-e_2+e_3}(t) &\mapsto x_{-e_2+e_3}(t) \\
x_{e_2-e_4}(t) &\mapsto x_{e_2-e_4}(t) & x_{-e_2+e_4}(t) &\mapsto x_{-e_2+e_4}(t)\\
x_{e_3-e_4}(t) &\mapsto x_{e_3-e_4}(t) & x_{-e_3+e_4}(t) &\mapsto x_{-e_3+e_4}(t) \\[2ex]
x_{e_1+e_2}(t) &\mapsto x_{-e_1+e_2}(t) & x_{-e_1-e_2}(t) &\mapsto x_{e_1-e_2}(t)\\
x_{e_1+e_3}(t) &\mapsto x_{-e_1+e_3}(t) & x_{-e_1-e_3}(t) &\mapsto x_{e_1-e_3}(t) \\
x_{e_1+e_4}(t) &\mapsto x_{-e_1+e_4}(t) & x_{-e_1-e_4}(t) &\mapsto x_{e_1-e_4}(t)\\
x_{e_2+e_3}(t) &\mapsto x_{-e_1-e_4}(-t) & x_{-e_2-e_3}(t) &\mapsto x_{e_1+e_4}(-t)\\
x_{e_2+e_4}(t) &\mapsto x_{-e_1-e_3}(t) & x_{-e_2-e_4}(t) &\mapsto x_{e_1+e_3}(t) \\
x_{e_3+e_4}(t) &\mapsto x_{-e_1-e_2}(-t) & x_{-e_3-e_4}(t) &\mapsto x_{e_1+e_2}(-t) \\
\end{align*}
\end{lem}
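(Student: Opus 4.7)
The plan is to pin down the action of $\theta^+$ on the simple-root generators $x_{\pm\alpha}(t)$ for $\alpha\in\Delta$ directly from the pinning, and then propagate through the Steinberg commutator relations to all roots, keeping careful track of the signs that arise from the Chevalley structure constants.

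The non-standard simple system $\Delta$ displayed before the lemma consists of the central node $e_3-e_4$ together with the three outer nodes $e_1-e_3$, $-e_1-e_3$, and $e_2+e_4$. The counter-clockwise graph automorphism fixes $e_3-e_4$ and cyclically permutes the three outer roots as $e_1-e_3\mapsto e_2+e_4\mapsto -e_1-e_3\mapsto e_1-e_3$. For $\bG\in\{\Spin_8,\PGO_8^+\}$ the character lattice is $\theta^+$-stable (it is the weight lattice, respectively the root lattice), so the construction reviewed in \Cref{automorphisms_of_groups} yields an automorphism $\theta^+\colon\bG\iso\bG$ acting on simple-root generators by $x_{\pm\alpha}(t)\mapsto x_{\pm\theta^+(\alpha)}(t)$ with no auxiliary sign. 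This immediately recovers the eight table entries at $x_{\pm(e_1-e_3)}$, $x_{\pm(e_3-e_4)}$, $x_{\pm(e_1+e_3)}$, and $x_{\pm(e_2+e_4)}$.

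For the remaining sixteen generators I would induct on the height of the root. For each non-simple positive root $\alpha$, choose a decomposition $\alpha=\beta+\gamma$ with $\gamma\in\Delta$ simple and $\beta$ a positive root of strictly smaller height. Because $D_4$ is simply laced, Steinberg's commutator formula collapses to $[x_\beta(s),x_\gamma(t)]=x_{\beta+\gamma}(N_{\beta,\gamma}\,st)$ for a single sign $N_{\beta,\gamma}\in\{\pm 1\}$ fixed by the Chevalley basis. Writing the inductively-known images as $\theta^+(x_\mu(t))=x_{\theta^+(\mu)}(\epsilon_\mu t)$ and applying $\theta^+$ to the commutator identity yields
\[
\theta^+(x_\alpha(t))=x_{\theta^+(\alpha)}(\epsilon_\alpha t),\qquad \epsilon_\alpha=\epsilon_\beta\,\epsilon_\gamma\,N_{\beta,\gamma}\,N_{\theta^+(\beta),\theta^+(\gamma)}.
\]
Negative-root generators are handled symmetrically. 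The structure constants $N_{\beta,\gamma}$ can be computed once and for all inside $\Spin_8$ from the explicit formulas $X_{e_i-e_j}(t)=1+tv_iv_j^*$ and $X_{e_i+e_j}(t)=1+tv_iv_j$ of \Cref{Chevalley_generators} by expanding $[1+sv_iv_j^{(*)},1+tv_kv_l^{(*)}]$ via the Clifford relations of \Cref{split_Clifford}. Since the canonical surjection $\pi_{\bO}\circ\chi\colon\Spin_8\surj\PGO_8^+$ sends Chevalley generators to Chevalley generators indexed by the same root and intertwines the two versions of $\theta^+$ (each being defined by the same pinning recipe), the table established inside $\Spin_8$ transfers unchanged to $\PGO_8^+$.

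The main obstacle is purely combinatorial bookkeeping: with twelve positive and twelve negative roots, one must enumerate the Steinberg commutators systematically to confirm that $\epsilon_\alpha=-1$ occurs precisely at the entries $\pm(e_1-e_2)$, $\pm(e_1-e_4)$, $\pm(e_2+e_3)$, $\pm(e_3+e_4)$ and $\epsilon_\alpha=+1$ elsewhere. No individual step is conceptually difficult; the care required lies in tracking signs consistently through the induction and, if alternative decompositions $\alpha=\beta'+\gamma'$ are used at different stages, in verifying that the recursion yields the same final sign.
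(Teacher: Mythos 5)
Your proposal is correct and follows essentially the same route as the paper: both determine $\theta^+$ on the eight generators attached to the simple system $\Delta$ directly from the pinning, and then obtain all remaining entries from the Steinberg commutator relations (the paper writes each non-simple generator as a nested commutator of $\pm$-simple generators and collapses it, while you organize this as an induction on height with a sign-bookkeeping recursion $\epsilon_\alpha = \epsilon_\beta\epsilon_\gamma N_{\beta,\gamma}N_{\theta^+(\beta),\theta^+(\gamma)}$; the two presentations are equivalent). The consistency concern you raise at the end is automatically resolved since $\theta^+$ is already known to be a well-defined automorphism of $\bG$ by the cited result from SGA3, so different decompositions cannot give conflicting signs.
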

\begin{proof}
We demonstrate one of the calculations for $x_\alpha(t)$ with $\alpha \notin \Delta$. Throughout this calculation we use the commutator relations computed in \cite[Appendix 2]{Rue20}. We can express $x_{e_1-e_2}(t)$ in terms of commutators as
\[
x_{e_1-e_2}(t)=\big((x_{e_1+e_3}(t),x_{-e_3+e_4}(-1)), x_{-e_2-e_4}(1) \big).
\]
This therefore gets mapped to the element
\begin{align*}
&\big((x_{\theta^+(e_1+e_3)}(t),x_{\theta^+(-e_3+e_4)}(-1)), x_{\theta^+(-e_2-e_4)}(1) \big) \\
=& \big((x_{-e_1+e_3}(t),x_{-e_3+e_4}(-1)), x_{e_1+e_3}(1) \big)\\
=& (x_{-e_1+e_4}(t),x_{e_1+e_3}(1)) \\ 
=& x_{e_3+e_4}(-t)
\end{align*}
as noted above. The other calculations proceed similarly.
\end{proof}

Let $\theta$ be the reflection of the graph which exchanges $e_1-e_3$ and $-e_1-e_3$. Since this stabilizes the intermediate lattice associated to $\bO_8^+$, we also have a well defined automorphism $\theta \colon \bO_8^+ \iso \bO_8^+$. As shown in the following lemma, this recovers the familiar fact that
\[
\bAut(\bO_8^+) = \PGO_8^+ \rtimes \SS_2 \cong \PGO_8.
\]
\begin{lem}\label{theta_description}
Let $\varphi \in \bO_8$ be the automorphism of $(\HH(\cV),q_8)$ which sends $v_1 \mapsto -v_1^*$ and $v_1^* \mapsto -v_1$ while fixing all other basis elements. Note $\varphi \notin \bO_8^+$. 
\begin{enumerate}[label={\rm(\roman*)}]
\item \label{theta_description_i} The automorphism $\theta \colon \Spin_8 \iso \Spin_8$ is the restriction of $\bC(\varphi)$.
\item \label{theta_description_ii} The automorphism $\theta \colon \bO_8^+ \iso \bO_8^+$ is the restriction of the inner automorphism $\Inn(\varphi)$ of $\bO_8$.
\item \label{theta_description_iii} We have that $\pi_{\bO}(\varphi) \in \PGO_8$, but not in $\PGO_8^+$, and $\theta \colon \PGO_8^+ \iso \PGO_8^+$ is the restriction of the inner automorphism $\Inn(\pi_{\bO}(\varphi))$ of $\PGO_8$.
\end{enumerate}
\end{lem}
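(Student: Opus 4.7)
The plan is to prove (i) by a direct computation on Chevalley generators of $\Spin_8$, then deduce (ii) and (iii) from (i) by exploiting the naturality of the vector representation $\chi$ and the projection $\pi_{\bO}$ with respect to Clifford-algebra actions.

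For (i), the algebra automorphism $\bC(\varphi)$ of $\Cl(\HH(\cV),q_8)$ is, by \Cref{eq_Clifford_actions}, the extension of $v_1\mapsto -v_1^*$, $v_1^*\mapsto -v_1$, and the identity on every other $v_i,v_i^*$. I would verify on each of the eight simple-root Chevalley generators $X_{\pm\alpha_i}(t)$ of $\Spin_8$ that $\bC(\varphi)(X_{\pm\alpha_i}(t))=X_{\pm\theta(\alpha_i)}(t)$. The generators attached to the $\theta$-fixed simple roots $e_3-e_4$ and $e_2+e_4$ involve none of $v_1,v_1^*$, so they are untouched by $\bC(\varphi)$. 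For the two remaining simple roots $\alpha_2=e_1-e_3$ and $\alpha_3=-e_1-e_3$, which $\theta$ exchanges, one computes directly, e.g., $\bC(\varphi)(X_{e_1-e_3}(t))=1+t(-v_1^*)v_3^*=1+tv_3^*v_1^*=X_{-e_1-e_3}(t)$, and the analogous three computations for $X_{-e_1+e_3}(t)$ and $X_{\pm(-e_1-e_3)}(t)$ are identical in style. Since the eight simple-root generators generate $\bE_{\Spin}$ and both $\bC(\varphi)$ and $\theta$ are group homomorphisms that agree on them, they coincide on $\bE_{\Spin}$, and hence on $\Spin_8$ by \Cref{Chevalley_sheafification}. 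As a by-product, the fact that each generator is sent to another shows that $\bC(\varphi)$ stabilizes $\bE_{\Spin}$, and so restricts to $\Spin_8$.

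For (ii), I would apply \Cref{autos_respect_vector_rep} to the automorphism $\Inn(\varphi)\colon (\Mat_8(\cO),\sigma,f)\to(\Mat_8(\cO),\sigma,f)$, which is a well-defined quadratic triple automorphism because $\varphi\in\bO_8$. Since $\Cl(\Inn(\varphi))=\bC(\varphi)$ by \Cref{eq_action_on_canonical}, the lemma yields the commutative square
\[
\chi\circ \bC(\varphi)|_{\Spin_8}=\Inn(\varphi)|_{\bO_8^+}\circ\chi.
\]
On the other hand, by construction $\theta$ on $\Spin_8$ and on $\bO_8^+$ both arise from the same graph automorphism applied to Chevalley generators, and since $\chi(X_\alpha(t))=x_\alpha(t)$ for each root $\alpha$, one has $\chi\circ\theta_{\Spin_8}=\theta_{\bO_8^+}\circ\chi$. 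Combining with part (i) gives $\theta_{\bO_8^+}\circ\chi=\Inn(\varphi)\circ\chi$, and the surjectivity of $\chi$ as a map of sheaves forces $\theta_{\bO_8^+}=\Inn(\varphi)|_{\bO_8^+}$.

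For (iii), the inclusion $\pi_{\bO}(\varphi)\in\PGO_8\setminus\PGO_8^+$ is immediate from $\mathrm{Arf}=\mathrm{Arf}'\circ\pi_{\bO}$ together with $\varphi\notin\bO_8^+=\Ker(\mathrm{Arf})$. The identification $\theta_{\PGO_8^+}=\Inn(\pi_{\bO}(\varphi))|_{\PGO_8^+}$ then follows by a parallel chase: naturality gives $\pi_{\bO}\circ\Inn(\varphi)=\Inn(\pi_{\bO}(\varphi))\circ\pi_{\bO}$, graph-automorphism compatibility (again from the fact that the Chevalley generators of $\PGO_8^+$ are by definition $x'_\alpha(t)=\pi_{\bO}(x_\alpha(t))$) gives $\pi_{\bO}\circ\theta_{\bO_8^+}=\theta_{\PGO_8^+}\circ\pi_{\bO}$, and combining with (ii) yields $\theta_{\PGO_8^+}\circ\pi_{\bO}=\Inn(\pi_{\bO}(\varphi))\circ\pi_{\bO}$; the surjectivity of $\pi_{\bO}\colon\bO_8^+\surj\PGO_8^+$ closes the argument. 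The main obstacle, such as it is, is the sign bookkeeping in step (i)---making sure the $-$ signs built into $\varphi$ cancel against the Clifford anti-commutation---but after reducing to the eight simple-root generators the computations are short and are stylistically identical to those already carried out in \Cref{theta+_description}.
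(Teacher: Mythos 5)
Your proof is correct, and while part (i) and part (iii) proceed essentially as in the paper, part (ii) takes a genuinely different route that is worth comparing. The paper establishes (ii) by a direct matrix computation: it writes out the action of $\Inn(\varphi)$ on the matrix units $E_{ij}$ of $\Mat_8(\cO)$, reads off its effect on the generators $x_{e_1-e_3}(t)$, $x_{-e_1-e_3}(t)$, $x_{-e_2+e_3}(t)$, $x_{e_2+e_4}(t)$, and matches it against $\theta$. You instead deduce (ii) from (i) by a naturality chase: applying \Cref{autos_respect_vector_rep} to the quadratic-triple automorphism $\pi_{\bO}(\varphi)=\Inn(\varphi)$ gives $\chi\circ\bC(\varphi)|_{\Spin_8}=\Inn(\varphi)|_{\bO_8^+}\circ\chi$, while the compatibility of the graph automorphism with the chosen Chevalley generators (that $\chi(X_\alpha(t))=x_\alpha(t)$, so $\chi\circ\theta_{\Spin_8}=\theta_{\bO_8^+}\circ\chi$ on $\bE_{\Spin}$, and hence on the sheafification by \Cref{Chevalley_sheafification}) together with part (i) and the surjectivity of $\chi$ as a sheaf map forces $\theta_{\bO_8^+}=\Inn(\varphi)|_{\bO_8^+}$. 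This makes (ii) a formal consequence of (i) and the functoriality of the vector representation, rather than an independent calculation; the trade-off is that the paper's approach is self-contained and more concrete, whereas yours is shorter and shows more clearly why (i), (ii), (iii) must all hold simultaneously once any one of them does. Your treatment of (iii) is the same chase the paper does, just phrased via surjectivity of $\pi_{\bO}$ rather than spelled out on generators, and the observation that $\pi_{\bO}(\varphi)\notin\PGO_8^+$ via $\mathrm{Arf}=\mathrm{Arf}'\circ\pi_{\bO}$ is a correct way to fill in a point the paper leaves implicit. One small thing worth making explicit in (ii) if you write this up: the identity $\chi\circ\theta_{\Spin_8}=\theta_{\bO_8^+}\circ\chi$ is first checked on $\bE_{\Spin}$ where it holds because both sides are group homomorphisms agreeing on the generators $X_{\pm\alpha}(t)$, $\alpha\in\Delta$, and then transferred to the sheaves via \Cref{Chevalley_sheafification}.
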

\begin{proof}
\noindent\ref{theta_description_i}: We can compute the following.
\[
\bC(\varphi)(X_{e_1-e_3}(t)) = \bC(\varphi)(1+tv_1v_3^*) = 1-tv_1^*v_3^* = 1+tv_3^* v_1^* = X_{-e_1-e_3}(t).
\]
Because $\varphi$ is order $2$, so is $\bC(\varphi)$, and so we also have $\bC(\varphi)(X_{-e_1-e_3}(t)) = X_{e_1-e_3}(t)$. Further, since $X_{e_2+e_4}(t)$ and $X_{e_3-e_4}(t)$ do not involve $v_1$ or $v_1^*$, they are fixed by $\bC(\varphi)$. Therefore, $\bC(\varphi)$ agrees with $\theta$ on $X_\alpha(t)$ for $\alpha \in \Delta$ and therefore it agrees with $\theta$ on all of $\Spin_8$ as claimed. 

\noindent\ref{theta_description_ii}: Using our identification $\cEnd_{\cO}(\HH(\cV))\cong \Mat_8(\cO)$, the automorphism $\Inn(\varphi)$ acts on the basis elements of $\Mat_8(\cO)$ by
\begin{align*}
E_{1j} &\mapsto -E_{\bar{1}j} & E_{\bar{1}j} &\mapsto -E_{1j} \\
E_{j1} &\mapsto -E_{j\bar{1}} & E_{j\bar{1}} &\mapsto -E_{j1} \\
E_{11} &\mapsto E_{\bar{1}\bar{1}} & E_{\bar{1}\bar{1}} &\mapsto E_{11} \\ 
E_{1\bar{1}} &\mapsto E_{\bar{1}1} & E_{\bar{1}1} &\mapsto E_{1\bar{1}}
\end{align*}
while fixing all other basis elements. This has the effect of mapping $x_{e_1-e_3}(t) \mapsto x_{-e_1-e_3}(t)$ and $x_{-e_1-e_3}(t) \mapsto x_{e_1-e_3}(t)$ while fixing $x_{-e_2+e_3}(t)$ and $x_{e_2+e_4}(t)$. As above, since it agrees with $\theta$ on these Chevalley generators of $\bO_8^+$, we conclude that $\Inn(\varphi)$ restricted to $\bO_8^+$ is $\theta$. 

\noindent\ref{theta_description_iii}: The claim about $\PGO_8^+$ follows because $x'_{\alpha}(t) = \pi_{\bO}(x_\alpha(t))$, and so in $\PGO_8$ we will have
\begin{align*}
\Inn(\pi_{\bO}(\varphi))(x'_\alpha(t)) &= \pi_{\bO}(\varphi) \circ x'_\alpha(t) \circ \pi_{\bO}(\varphi)^{-1} = \pi_{\bO}(\varphi \circ x_\alpha(t) \circ \varphi^{-1})\\
&= \pi_{\bO}(\theta(x_\alpha(t))) = \theta(x'_\alpha(t)),
\end{align*}
which finishes the proof.
\end{proof}

\subsection{Clifford Triality}\label{Clifford_Triality}
Let $\cV = \cO^4$ and consider the regular hyperbolic quadratic form $(\HH(\cV),q_8)$ as in \Cref{split_Clifford}. Notationally, let $(\Mat_8(\cO),\sigma_8,f_8) \cong (\cEnd_{\cO}(\HH(\cV)),\sigma_{q_8},f_{q_8})$ be the adjoint quadratic triple with the isomorphism coming from our standard choice of ordered basis for $\HH(\cV)$. We have the isomorphism
\[
\Phi_0 \colon (\Cl_0(\HH(\cV),q_8),\underline{\sigma}_0,\underline{f}) \iso (\cEnd_{\cO}(\wedge_0 \cV),\sigma_{\wedge 0},f_{\wedge 0})\times (\cEnd_{\cO}(\wedge_1 \cV),\sigma_{\wedge 1},f_{\wedge 1})
\]
of \Cref{eq_Phi_not}, where $(\sigma_{\wedge i},f_{\wedge i})$ are the adjoint quadratic pairs of the quadratic form $q_\wedge$ restricted to each component. We choose the following ordered bases of $\wedge_0\cV$ and $\wedge_1 \cV$.
\begin{align}
\{1,v_{\{1,2\}},v_{\{1,3\}},v_{\{1,4\}},-v_{\{2,3\}},v_{\{2,4\}},-v_{\{3,4\}},v_{[4]}\} &\text{ of } \wedge_0 \cV, \text{ and} \label{eq_V_bases}\\
\{-v_{\{2,3,4\}}, v_2,v_3,v_4,-v_{\{1,2,3\}},v_{\{1,2,4\}},-v_{\{1,3,4\}},-v_1\} &\text{ of } \wedge_1 \cV. \nonumber
\end{align}
These choices provide isomorphisms of quadratic modules $(\wedge_i \cV,q_{\wedge i}) \cong (\HH(\cV),q_8)$. In turn, we have isomorphisms $(\cEnd_{\cO}(\wedge_i \cV),\sigma_{\wedge i},f_{\wedge i}) \cong (\Mat_8(\cO),\sigma_8,f_8)$. Therefore, by applying these isomorphisms and our standard isomorphism $(\Mat_8(\cO),\sigma_8,f_8) \cong (\cEnd_{\cO}(\HH(\cV)),\sigma_{q_8},f_{q_8})$, the isomorphism $\Phi_0$ becomes an isomorphism
\begin{equation}\label{eq_Clifford_triality_iso}
\Psi \colon (\Cl(\Mat_8(\cO),\sigma_8,f_8),\underline{\sigma_8},\underline{f_8}) \iso (\Mat_8(\cO),\sigma_8,f_8)\times (\Mat_8(\cO),\sigma_8,f_8).
\end{equation}

We now analyze the homomorphism
\[
\PGO_8^+ \inj \PGO_8 \xrightarrow{\Cl} \cAut(\Cl(\Mat_8(\cO),\sigma_8,f_8),\underline{\sigma_8},\underline{f_8})
\]
where $\Cl$ is the map of \Cref{eq_action_on_canonical}. By definition of $\PGO_8^+$, the image of this homomorphism will consist of automorphisms which fix the center of the Clifford algebra. In our case, the isomorphism $\Psi$ makes it clear that this center is $\cO\times \cO$. Any automorphism which fixes the center then also stabilizes each factor of the decomposition $(\Mat_8(\cO),\sigma_8,f_8)\times (\Mat_8(\cO),\sigma_8,f_8)$. Therefore, we get a homomorphism
\[
\PGO_8^+ \to \PGO_8 \times \PGO_8.
\]
We describe this homomorphism explicitly using Chevalley generators. Recall that by \Cref{action_on_Clifford}, the image under $\Cl$ of any $\varphi \in \PGO_8^+$ is the inner automorphism of the Clifford algebra given by any element $X\in \Spin_8$ lying above $\varphi$. Therefore, we compute the images of the Chevalley generators of $\Spin_8$ from \Cref{Chevalley_generators} under the homomorphism $\Psi$.
\begin{lem}\label{Spin_Chevalley_computations}
The images of the Chevalley generators of $\Spin_8$ under the homomorphism $\Psi$ of \Cref{eq_Clifford_triality_iso} are as follows.
\begin{align*}
X_{e_1-e_2}(t) &\mapsto (x_{e_3+e_4}(-t),x_{-e_1-e_2}(t)) & X_{-e_1+e_2}(t) &\mapsto (x_{-e_3-e_4}(-t),x_{e_1+e_2}(t))\\
X_{e_1-e_3}(t) &\mapsto (x_{e_2+e_4}(t),x_{-e_1-e_3}(t)) & X_{-e_1+e_3}(t) &\mapsto (x_{-e_2-e_4}(t),x_{e_1+e_3}(t)) \\
X_{e_1-e_4}(t) &\mapsto (x_{e_2+e_3}(-t),x_{-e_1-e_4}(t)) & X_{-e_1+e_4}(t) &\mapsto (x_{-e_2-e_3}(-t),x_{e_1+e_4}(t))\\
X_{e_2-e_3}(t) &\mapsto (x_{e_2-e_3}(t),x_{e_2-e_3}(t)) & X_{-e_2+e_3}(t) &\mapsto (x_{-e_2+e_3}(t),x_{-e_2+e_3}(t)) \\
X_{e_2-e_4}(t) &\mapsto (x_{e_2-e_4}(t),x_{e_2-e_4}(t)) & X_{-e_2+e_4}(t) &\mapsto (x_{-e_2+e_4}(t),x_{-e_2+e_4}(t))\\
X_{e_3-e_4}(t) &\mapsto (x_{e_3-e_4}(t),x_{e_3-e_4}(t)) & X_{-e_3+e_4}(t) &\mapsto (x_{-e_3+e_4}(t),x_{-e_3+e_4}(t)) \\[2ex]
X_{e_1+e_2}(t) &\mapsto (x_{-e_1+e_2}(t),x_{-e_3-e_4}(-t)) & X_{-e_1-e_2}(t) &\mapsto (x_{e_1-e_2}(t),x_{e_3+e_4}(-t))\\
X_{e_1+e_3}(t) &\mapsto (x_{-e_1+e_3}(t),x_{-e_2-e_4}(t)) & X_{-e_1-e_3}(t) &\mapsto (x_{e_1-e_3}(t),x_{e_2+e_4}(t)) \\
X_{e_1+e_4}(t) &\mapsto (x_{-e_1+e_4}(t),x_{-e_2-e_3}(-t)) & X_{-e_1-e_4}(t) &\mapsto (x_{e_1-e_4}(t),x_{e_2+e_3}(-t))\\
X_{e_2+e_3}(t) &\mapsto (x_{-e_1-e_4}(-t),x_{e_1-e_4}(-t)) & X_{-e_2-e_3}(t) &\mapsto (x_{e_1+e_4}(-t),x_{-e_1+e_4}(-t))\\
X_{e_2+e_4}(t) &\mapsto (x_{-e_1-e_3}(t),x_{e_1-e_3}(t)) & X_{-e_2-e_4}(t) &\mapsto (x_{e_1+e_3}(t),x_{-e_1+e_3}(t)) \\
X_{e_3+e_4}(t) &\mapsto (x_{-e_1-e_2}(-t),x_{e_1-e_2}(-t)) & X_{-e_3-e_4}(t) &\mapsto (x_{e_1+e_2}(-t),x_{-e_1+e_2}(-t)) \\
\end{align*}
\end{lem}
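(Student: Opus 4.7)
The isomorphism $\Psi$ factors into three steps: first identify $\Cl(\Mat_8(\cO),\sigma_8,f_8)$ with $\Cl_0(\HH(\cV),q_8)$ via the canonical isomorphism, then apply $\Phi_0$ from \eqref{eq_Phi_not} to land in $\cEnd_{\cO}(\wedge_0\cV)\times\cEnd_{\cO}(\wedge_1\cV)$, and finally use the ordered bases \eqref{eq_V_bases} to identify each endomorphism algebra with $\Mat_8(\cO)$. The plan is to trace each Chevalley generator of $\Spin_8$ through these three steps.

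Each generator has the form $X_\alpha(t)=1+ty_\alpha$, where $y_\alpha$ is one of $v_iv_j^*$, $v_jv_i^*$, $v_iv_j$, or $v_j^*v_i^*$. Under the isomorphism $\Phi$ of \Cref{split_Clifford}, $X_\alpha(t)$ maps to $\Id+t\,z_\alpha$, where $z_\alpha$ is the corresponding composition of the creation operator $\ell_{v_i}$ and the contraction operator $d_{v_j^*}$ on $\wedge\cV$. Since $\ell_{v_i}$ raises exterior degree by one and $d_{v_j^*}$ lowers it by one, every such $z_\alpha$ preserves the parity decomposition $\wedge\cV=\wedge_0\cV\oplus\wedge_1\cV$ and therefore restricts to an endomorphism of each summand. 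The concrete task is to apply $z_\alpha$ to each basis vector listed in \eqref{eq_V_bases}, read off the resulting matrix in $\Mat_8(\cO)$ on both the even and the odd piece, and match it against the Chevalley generator of $\bO_8^+$ displayed in the table.

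To cut down the work, one may first verify the formulas only for the generators attached to the simple roots $\Delta=\{e_1-e_3,\,-e_1-e_3,\,e_3-e_4,\,e_2+e_4\}$ together with their negatives. Since $\bE_{\Spin}^\sharp=\Spin_8$ by \Cref{Chevalley_sheafification} and $\Psi$ is an algebra homomorphism, its restriction to $\Spin_8$ is a group homomorphism; the images of the remaining sixteen generators are then forced by the same commutator relations used in the proof of \Cref{theta+_description}, because those relations hold equally in $\bE_{\bO^+}\times\bE_{\bO^+}$. That said, each of the $24$ entries is a single one-line exterior-algebra computation, so working through them all directly is equally feasible.

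The main obstacle is careful sign bookkeeping. The six sign choices made in \eqref{eq_V_bases} (the minus signs in front of $v_{\{2,3\}}$, $v_{\{3,4\}}$, $v_{\{2,3,4\}}$, $v_{\{1,2,3\}}$, $v_{\{1,3,4\}}$, and $v_1$) were engineered precisely so that wedge manipulations like $v_2\wedge v_4\wedge v_{\{1,3\}}=-v_{[4]}$ and contractions like $d_{v_3^*}(v_{\{2,3,4\}})=v_{\{2,4\}}$ combine to produce matrices of the form $I+t(E_{ij}-E_{\bar{j}\bar{i}})$ on the nose, rather than one of their negatives or a transposed companion. Once these conventions are respected the entire table of images falls out by inspection.
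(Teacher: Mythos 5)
Your proposal is correct and follows essentially the same route as the paper: trace each generator $X_\alpha(t)=1+ty_\alpha$ through $\Phi_0$ to the operator $\Id+tz_\alpha$ on $\wedge\cV$, apply it to the chosen ordered bases, and read off the resulting matrix. The paper's proof carries out exactly this computation for $X_{e_1-e_2}(t)$ and remarks that the remaining cases proceed similarly; your observation that one could instead verify only the simple-root generators and deduce the rest from commutator relations (as in the proof of \Cref{theta+_description}) is a valid shortcut but is not what the paper does.
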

\begin{proof}
We demonstrate the calculation of one image and the other calculations proceed similarly. Consider $X_{e_1-e_2}(t)= 1+t v_1 v_2^* \in \Spin_8$. The element $v_1 v_2^* \in \Cl_0(\HH(\cV),q_8)$ maps via $\Phi$ to $\ell_{v_1}\circ d_{v_2^*} \in \cEnd_{\cO}(\wedge \cV)$. We calculate how this endomorphism behaves on the chosen bases of \Cref{eq_V_bases}. It sends
\begin{align*}
-v_{\{2,3\}} &\mapsto -v_{\{1,3\}} & -v_{\{2,3,4\}} &\mapsto -v_{\{1,3,4\}} \\
v_{\{2,4\}} &\mapsto v_{\{1,4\}} & v_2 &\mapsto v_1 
\end{align*}
and maps all other bases elements to zero. Using the isomorphisms $\cEnd_{\cO}(\wedge_i \cV) \cong \Mat_8(\cO)$ coming from the ordered bases, this endomorphism becomes the matrices
\[
(-E_{3,\bar{4}} + E_{4,\bar{3}},E_{\bar{2},1}-E_{\bar{1},2}).
\]
Therefore,
\[
X_{e_1-e_2}(t) \mapsto (I+(-t)(E_{3,\bar{4}} - E_{4,\bar{3}}),I+t(E_{\bar{2},1}-E_{\bar{1},2})) = (x_{e_3+e_4}(-t),x_{-e_1-e_2}(t))
\]
as claimed above.
\end{proof}

\begin{cor}\label{triality_hom}
The homomorphism 
\[
\PGO_8 \xrightarrow{\Cl} \cAut(\Cl(\Mat_8(\cO),\sigma_8,f_8),\underline{\sigma_8},\underline{f_8}) \iso \cAut\big((\Mat_8(\cO),q_8,f_8)\times (\Mat_8(\cO),q_8,f_8)\big)
\]
restricts to the homomorphism
\begin{align*}
\Psi^+ \colon \PGO_8^+ &\to \PGO_8^+ \times \PGO_8^+ \\
\varphi &\mapsto (\theta^+(\varphi),\theta^-(\varphi)).
\end{align*}
\end{cor}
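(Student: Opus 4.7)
The plan is to reduce the statement to a verification on Chevalley generators and then identify the restriction $\Psi^+$ by direct comparison of Lemma~\ref{Spin_Chevalley_computations} with Lemma~\ref{theta+_description}. Since $\PGO_8^+$ is the fppf sheafification of its elementary subpresheaf $\bE_{\PGO^+}$ by Proposition~\ref{Chevalley_sheafification}, and since the two maps to be compared are both morphisms of sheaves of groups, it suffices to check that they agree on each section of the form $x'_\alpha(t) = \pi_\bO(x_\alpha(t))$ for $\alpha \in D_4$ and each local section $t$.

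Fix such a generator. By the commutative diagram \eqref{eq_action_on_canonical}, the automorphism $\Cl(x'_\alpha(t))$ of $(\Cl(\Mat_8(\cO),\sigma_8,f_8),\underline{\sigma_8},\underline{f_8})$ equals $\bC_{\Inn}(X_\alpha(t))$, i.e., conjugation by the Chevalley generator $X_\alpha(t) \in \Spin_8$, since $\chi(X_\alpha(t)) = x_\alpha(t)$ is a lift of $x'_\alpha(t)$ along $\pi_\bO$. Transporting across $\Psi$ of \eqref{eq_Clifford_triality_iso}, this becomes conjugation by the element $\Psi(X_\alpha(t)) = (y_\alpha^+(t), y_\alpha^-(t))$ in $\Mat_8(\cO) \times \Mat_8(\cO)$. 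Lemma~\ref{Spin_Chevalley_computations} shows that each $y_\alpha^{\pm}(t)$ is itself a Chevalley generator of $\bO_8^+$, so its associated inner automorphism is a Chevalley generator of $\PGO_8^+$. In particular, this already confirms that the composition lands inside $\PGO_8^+\times \PGO_8^+$, so the restriction $\Psi^+$ is well defined.

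The final step is a direct comparison of the two tables. Reading off the first entries in the right-hand columns of Lemma~\ref{Spin_Chevalley_computations} and matching them with the assignments in Lemma~\ref{theta+_description} verifies that $\pi_\bO(y_\alpha^+(t)) = \theta^+(x'_\alpha(t))$ for every $\alpha$. For the second coordinate, I would apply the table of Lemma~\ref{theta+_description} twice in order to produce a table for $\theta^- = (\theta^+)^2$ and then match it against the second-component column of Lemma~\ref{Spin_Chevalley_computations}; a sample check such as $\theta^-(x_{e_1-e_2}(t)) = \theta^+(x_{e_3+e_4}(-t)) = x_{-e_1-e_2}(t)$, agreeing with the second component of $\Psi(X_{e_1-e_2}(t))$, illustrates the pattern, and the remaining entries go through identically. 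The main subtlety to watch is the sign contribution coming from the ordered bases of $\wedge_i\cV$ chosen in \eqref{eq_V_bases}, but this has already been absorbed into Lemma~\ref{Spin_Chevalley_computations}; beyond this bookkeeping there is no conceptual obstacle.
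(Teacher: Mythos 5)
Your proof is correct and follows essentially the same approach as the paper: reduce the verification to Chevalley generators (using sheafification), invoke the commutative diagram \eqref{eq_action_on_canonical} to replace $\Cl(x'_\alpha(t))$ with conjugation by $X_\alpha(t)\in\Spin_8$, read off the images from Lemma~\ref{Spin_Chevalley_computations}, and then compare the induced maps on $\PGO_8^+$ against Lemma~\ref{theta+_description}. Your write-up is somewhat more explicit than the paper's (in particular about invoking Proposition~\ref{Chevalley_sheafification} for the reduction and about constructing the $\theta^-$ table by iterating $\theta^+$), but there is no substantive difference in method.
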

\begin{proof}
Since all of the images described in \Cref{Spin_Chevalley_computations} lie in $\bO_8^+ \times \bO_8^+$, all of $\Spin_8$ must also map into this group. Therefore, when we consider the inner automorphisms by these elements we see that the map between adjoint groups will behave as, for example,
\[
x'_{e_1-e_2}(t) \mapsto (x'_{e_3+e_4}(-t),x'_{-e_1-e_2}(t))
\]
and likewise on other Chevalley generators according to \Cref{Spin_Chevalley_computations}. Therefore it is clear we get a map $\PGO_8^+ \to \PGO_8^+ \times \PGO_8^+$ which by comparison of the images of the Chevalley generators with \Cref{theta+_description}, we see is $\varphi \mapsto (\theta^+(\varphi),\theta^-(\varphi))$ as claimed.
\end{proof}

\begin{rem}\label{choice_remark}
The reader will have noticed that our choice of ordered basis for $\wedge_0 \cV$ is a natural one with respect to which $q_{\wedge 0}$ is obviously hyperbolic. With this natural choice of basis, the automorphism $\theta^+$ appears in the first factor while performing computations, and so this natural choice of basis for $\wedge_0 \cV$ justifies the less natural choice of simple system for $D_4$. The simple system we use is equivalent (after the relabelling $(0,1,2,3)\to(1,3,2,4)$) to the simple system obtained in \cite[Fig. 6.1]{EK} in the context of triality computations for Lie algebras of type $D_4$.

If one uses the natural ordered basis
\[
\{v_1,v_2,v_3,v_4,-v_{\{1,2,3\}},v_{\{1,2,4\}},-v_{\{1,3,4\}},v_{\{2,3,4\}}\}
\]
of $\wedge_1 \cP$, the automorphism $\theta^-$ does not appear in the second factor. Instead, $\theta \circ \theta^-$ appears. This justifies why we use a less natural basis of $\wedge_1 \cV$.
\end{rem}

\begin{cor}\label{split_Clifford_switch_iso}
Consider the element $\pi_{\bO}(\varphi) \in \PGO_8$ of \Cref{theta_description}. Using our standard identification $\cEnd_{\cO}(\HH(\cV))\cong \Mat_8(\cO)$, we view this as an automorphism of $\Mat_8(\cO)$. Then, the following diagram commutes
\[
\begin{tikzcd}
\Cl(\Mat_8(\cO),\sigma_8,f_8) \arrow{r}{\Psi} \arrow{d}{\Cl(\pi_{\bO}(\varphi))} & \Mat_8(\cO)\times \Mat_8(\cO) \arrow{d}{\sw \circ (\pi_{\bO}(\varphi)\times \pi_{\bO}(\varphi))} \\
\Cl(\Mat_8(\cO),\sigma_8,f_8) \arrow{r}{\Psi} & \Mat_8(\cO)\times \Mat_8(\cO)
\end{tikzcd}
\]
where $\sw \colon \Mat_8(\cO)\times \Mat_8(\cO) \to \Mat_8(\cO)\times \Mat_8(\cO)$ is the switch isomorphism, $\sw(b,c)=(c,b)$.
\end{cor}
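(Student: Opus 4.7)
Both compositions in the diagram are morphisms of $\cO$-algebras from $\Cl(\Mat_8(\cO),\sigma_8,f_8)$ to $\Mat_8(\cO)\times \Mat_8(\cO)$, so it suffices to verify equality on a generating family of the source as an $\cO$-algebra. Under the identification $\Cl(\Mat_8(\cO),\sigma_8,f_8)\cong \Cl_0(\HH(\cV),q_8)$ coming from \Cref{eq_Clifford_iso}, the source is locally generated by the products $v_av_b$ of basis vectors of $\HH(\cV)$. For $v_a\neq v_b$, each such product equals $X_\alpha(1)-1$ for some Chevalley generator $X_\alpha$ of $\Spin_8$ listed in \Cref{Chevalley_generators}, while the diagonal products $v_iv_i^*$ are recovered from these, for instance via the identity $v_iv_j^*\cdot v_jv_i^* + v_iv_j\cdot v_j^*v_i^* = v_iv_i^*$ valid for any $j\neq i$. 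Thus it suffices to check the diagram on the Chevalley generators of $\Spin_8$.

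By the right-hand diagram in \Cref{action_on_Clifford}, $\Cl(\pi_{\bO}(\varphi))$ coincides with $\bC(\varphi)|_{\Cl_0(\HH(\cV),q_8)}$. Since $\bC(\varphi)$ acts on $\Cl(\HH(\cV),q_8)$ through the action of $\varphi$ on $\HH(\cV)$---sending $v_1\mapsto -v_1^*$, $v_1^*\mapsto -v_1$, and fixing all other basis vectors---its effect on each $X_\alpha(t)=1+tv_av_b$ is immediate: generators not involving $v_1$ or $v_1^*$ are fixed, while each $X_{\pm e_1\pm e_j}(t)$ for $j\in\{2,3,4\}$ is sent to another Chevalley generator exactly as prescribed by the graph automorphism $\theta$ on roots. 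Pairing this with \Cref{theta_description}, which identifies $\pi_{\bO}(\varphi)$ with $\theta$ on $\bO_8^+$ and records the matrix-level formulas for the action of $\Inn(\varphi)$, the remaining verification becomes a case-by-case check using \Cref{Spin_Chevalley_computations}.

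For instance, applying $\sw\circ(\pi_{\bO}(\varphi)\times \pi_{\bO}(\varphi))$ to $\Psi(X_{e_1-e_2}(t))=(x_{e_3+e_4}(-t),x_{-e_1-e_2}(t))$ yields $(x_{e_1-e_2}(t),x_{e_3+e_4}(-t))=\Psi(X_{-e_1-e_2}(t))=\Psi(\bC(\varphi)(X_{e_1-e_2}(t)))$, as required. The same pattern handles all twenty-four Chevalley generators: for those fixed by $\bC(\varphi)$, the two components of $\Psi(X_\alpha(t))$ in \Cref{Spin_Chevalley_computations} are interchanged by $\pi_{\bO}(\varphi)$ in a manner compensated by $\sw$; for the others, $\bC(\varphi)$ and the combined right-hand action produce matching paired Chevalley generators. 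The main obstacle is merely the bookkeeping of signs and indices across the cases, each of which is an immediate comparison between \Cref{Spin_Chevalley_computations} and the explicit action of $\Inn(\varphi)$.
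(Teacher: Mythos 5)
Your proposal is correct and follows essentially the same route as the paper: reduce to checking equality on algebra generators given by the elements $X_\alpha(1)-1$, identify $\Cl(\pi_{\bO}(\varphi))$ with $\bC(\varphi)$, observe via \Cref{theta_description}\ref{theta_description_i} that $\bC(\varphi)$ acts on Chevalley generators of $\Spin_8$ by $\theta$, and then verify the compatibility by inspecting the table in \Cref{Spin_Chevalley_computations}. The one small addition you make is to spell out why the $X_\alpha(1)-1$ generate $\Cl(\Mat_8(\cO),\sigma_8,f_8)$ as an algebra (the paper asserts this without argument), supplying the identity $v_iv_j^*\cdot v_jv_i^* + v_iv_j\cdot v_j^*v_i^* = v_iv_i^*$ to recover the missing degree-two products; note that your phrasing ``for $v_a\neq v_b$'' should really read ``for $v_a,v_b$ non-dual,'' since $v_i$ and $v_i^*$ are distinct yet $v_iv_i^*$ is not of the form $X_\alpha(1)-1$, but you handle that case explicitly anyway. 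The sample verification for $X_{e_1-e_2}(t)$ is accurate.
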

\begin{proof}
Since the elements $X_\alpha(1)-1$, where $X_\alpha(1)$ is a Chevalley generator of $\Spin_8$, generate $\Cl(\Mat_8(\cO),\sigma_8,f_8)$ as an algebra, it is sufficient to argue that 
\[
\Psi\big(\Cl(\pi_{\bO}(\varphi))(X_{\alpha}(1))\big) = (\sw \circ (\pi_{\bO}(\varphi) \times \pi_{\bO}(\varphi)) \circ \Psi)(X_{\alpha}(1))
\]
for all $\alpha \in D_4$. However, $\Cl(\pi_{\bO}(\varphi)) = \bC(\varphi)$ which by \Cref{theta_description}\ref{theta_description_i} acts on elements of $\Spin_8$ by $\theta \colon \Spin_8 \iso \Spin_8$. One can then inspect the table of \Cref{Spin_Chevalley_computations} to see that
\[
\Psi(\theta(X_{\alpha}(1))) = (\sw \circ (\theta \times \theta) \circ \Psi)(X_{\alpha}(1))
\]
holds for all $\alpha \in D_4$, which is equivalent to the desired formula above. This completes the proof.
\end{proof}

\subsection{Triality with Octonions}\label{without_octonions}
Here we compare our preceding development of triality to the usual method using octonions. First, we will give a brief outline of the development over a scheme in terms of a general symmetric composition algebra analogous to what is done in \cite[\S 35]{KMRT} over fields of characteristic not $2$, \cite[\S 4]{DQ21} over a field of characteristic $2$, or \cite[\S 3.2]{AG19} over any ring. The first two sources also describe triality for similitudes, however in this section we will only describe triality for adjoint groups. Then, by applying this machinery to the split para-Cayley algebra and carefully choosing a basis, we show that we obtain the same order $3$ outer automorphisms of $\PGO^+_8$ as above.

\subsubsection{Symmetric Composition Algebras}
To begin, we follow the definitions given in \cite[\S 34]{KMRT}, adapting them over schemes. Let $(\cS,n)$ be a finite locally free non-associative $\cO$--algebra with a regular multiplicative quadratic form $n\colon \cS \to \cO$. I.e., $n$ is regular and denoting the multiplication in $\cS$ by $x \star y$, we have $n(x\star y) = n(x)n(y)$. The quadratic form $n$ has an associated polar bilinear form $b_n(x,y) = n(x+y)-n(x)-n(y)$ and we call $(\cS,n)$ a \emph{symmetric composition algebra} if
\[
b_n(x\star y,z)=b_n(x,y \star z)
\]
for all appropriate sections $x,y,z\in \cS$. This is equivalent to $(\cS,n)$ satisfying the relations
\[
x\star (y\star x) = n(x)y = (x\star y)\star x
\]
for all appropriate $x,y\in \cS$.

Now, assume that $(\cS,n)$ is a symmetric composition algebra of constant rank $8$. Since symmetric composition algebras come with a quadratic form, we may discuss their Clifford algebra, $\Cl(\cS,n)$, and their even Clifford algebra, $\Cl_0(\cS,n)\cong \Cl(\cEnd_{\cO}(\cS),\sigma_n,f_n)$, where $(\sigma_n,f_n)$ is the quadratic pair adjoint to $n$. There is an isomorphism
\begin{align*}
\alpha \colon \Cl(\cS,n) &\iso \cEnd_{\cO}(\cS\oplus \cS) \\
x &\mapsto \begin{bmatrix} 0 & \ell_x \\ r_x & 0 \end{bmatrix}
\end{align*}
defined on the image of $\cS \to \Cl(\cS,n)$ and extended algebraically. Here, $\ell_x(y)=x\star y$ and $r_x(y) = y\star x$ are the left and right multiplication by $x$ operators respectively. This isomorphism restricts to the even Clifford algebra and the restriction respects canonical involutions, see \cite[3.10]{AG19} for the result over rings, giving an isomorphism
\[
\alpha_0 \colon (\Cl_0(\cS,n),\underline{\sigma_n}) \iso (\cEnd_{\cO}(\cS),\sigma_n)\times (\cEnd_{\cO}(\cS),\sigma_n).
\]
The following lemma is \cite[4.5]{DQ21} over a field of characteristic $2$. We adapt their proof.
\begin{lem}\label{oct_iso_respects_canonical}
The isomorphism $\alpha_0$ respects the canonical semi-trace on $\Cl_0(\cS,n)$. That is, we have an isomorphism of algebras with semi-trace
\[
\alpha_0 \colon (\Cl_0(\cS,n),\underline{\sigma_n},\underline{f_n})\iso (\cEnd_{\cO}(\cS),\sigma_n,f_n)\times (\cEnd_{\cO}(\cS),\sigma_n,f_n)
\]
\end{lem}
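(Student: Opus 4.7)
Since the lemma preceding the statement already establishes that $\alpha_0$ is an isomorphism of algebras with involution, it suffices to show that $\alpha_0$ intertwines the canonical semi-trace $\underline{f_n}$ on $\Cl_0(\cS,n)$ with the product semi-trace $(x_1,x_2)\mapsto f_n(x_1)+f_n(x_2)$ on $\cSym_{\cEnd_\cO(\cS),\sigma_n}\times\cSym_{\cEnd_\cO(\cS),\sigma_n}$. This is an equality of $\cO$--linear maps on a sheaf, so it can be verified on any fppf cover of $S$. I therefore propose to pass to a cover over which $(\cS,n)$ becomes isomorphic to the split symmetric composition algebra, reducing to the case where $n$ is a hyperbolic quadratic form of rank $8$.

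With this reduction in hand, fix an isometry $\phi\colon (\cS,n)\iso(\HH(\cV),q_8)$. Its induced isomorphism of adjoint quadratic triples $\bar\phi\colon (\cEnd_\cO(\cS),\sigma_n,f_n)\iso(\cEnd_\cO(\HH(\cV)),\sigma_{q_8},f_{q_8})$ lifts, by the functoriality of the Clifford algebra with canonical semi-trace recorded in \Cref{eq_Clifford_functorial}, to an isomorphism $\Cl(\bar\phi)$ of Clifford algebras with canonical semi-trace. Composing $\alpha_0$ with $\Cl(\bar\phi)^{-1}$ then reduces the claim to comparing the resulting map with the isomorphism $\Phi_0$ of \Cref{eq_Phi_not}, which by that equation already respects canonical semi-traces.

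Both $\alpha_0\circ\Cl(\bar\phi)^{-1}$ and $\Phi_0$ are isomorphisms from $\Cl_0(\HH(\cV),q_8)$ onto a product of two Azumaya algebras with involution, and each arises from realizing $\Cl(\HH(\cV),q_8)$ as the endomorphism algebra of a spinor module: $\cS\oplus\cS$ on one side, via the formula $\alpha(x)=\bigl[\begin{smallmatrix}0 & \ell_x\\ r_x & 0\end{smallmatrix}\bigr]$, and $\wedge_0\cV\oplus\wedge_1\cV$ on the other, via $v_i\mapsto\ell_{v_i}$ and $v_i^*\mapsto d_{v_i^*}$. The key step is to construct a graded isomorphism of these two spinor modules whose restrictions to the parity components are isometries between $(\cS,n)$ and $(\wedge_i\cV,q_{\wedge i})$ for each $i\in\{0,1\}$. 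Granted such isometries, they induce isomorphisms of adjoint quadratic triples on each factor, possibly combined with the switch $\sw$, so the composite $\Phi_0\circ(\alpha_0\circ\Cl(\bar\phi)^{-1})^{-1}$ is an isomorphism of algebras with semi-trace, and the compatibility of $\alpha_0$ with canonical semi-traces follows from that of $\Phi_0$.

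The principal obstacle is constructing the spinor module isomorphism and verifying the isometry property. Concretely, this amounts to choosing a ``vacuum'' section in the split symmetric composition algebra so that its orbit under the Clifford action identifies $\cS\oplus\cS$ with $\wedge_0\cV\oplus\wedge_1\cV$, and then using the symmetric composition identity $x\star(y\star x)=n(x)y=(x\star y)\star x$ together with the explicit description of $b_\wedge$ from \Cref{split_Clifford} to match the induced bilinear forms. This reduces to a finite direct calculation in the split para-Cayley algebra, mirroring the strategy of \cite[4.5]{DQ21} in characteristic $2$ and the corresponding argument in \cite[3.10]{AG19} over rings.
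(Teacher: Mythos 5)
Your proof takes a genuinely different route from the paper's. The paper gives a direct computation: after reducing to the split case it represents $\underline{f_n}$ by the element $v_1^*v_1 \in \Cl_0(\cS,n)$, computes $\alpha_0(v_1^*v_1)=\diag(\ell_{v_1^*}r_{v_1},\,r_{v_1^*}\ell_{v_1})$, and checks agreement with $f_n$ by evaluating $\Trd(\alpha_0(v_1^*v_1)\,\und)$ on symmetric elements $\varphi_n(v\otimes v)$, using multiplicativity of $n$ and the symmetric composition identities. Your plan instead pre-composes $\alpha_0$ with $\Cl(\bar\phi)^{-1}$ and tries to transfer compatibility with canonical semi-traces from $\Phi_0$ to $\alpha_0$ across a spinor-module identification.

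The outline is logically sound, but the key step hides essentially all of the content of the lemma, and as written it is not clearly achievable. From compatibility with involutions one only obtains that the automorphism $\Phi_0\circ\Cl(\bar\phi)\circ\alpha_0^{-1}$ is induced by \emph{bilinear} similitudes of the two spinor modules (possibly up to $\sw$); what you need for semi-trace compatibility is that these maps are similitudes (or isometries) of the \emph{quadratic} forms $n$ and $q_{\wedge i}$ themselves. Since the paper does not assume $2$ is invertible, a bilinear-form similitude is strictly weaker than a quadratic-form similitude, and indeed the passage from one to the other is precisely equivalent to the statement of the lemma. So your deferred ``finite direct calculation'' is the lemma, restated in spinor-module language — and the final clause about ``matching the induced bilinear forms'' would not settle it. The calculation is feasible in principle, but it must be a quadratic-form (not bilinear-form) comparison, it is not obviously simpler than the paper's direct argument, and neither of the sources you point to carries it out: \cite[4.5]{DQ21} is the direct semi-trace computation the paper actually follows, and \cite[3.10]{AG19} only proves compatibility of $\alpha_0$ with the involutions, not with the semi-traces.
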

\begin{proof}
It is sufficient to show that this hold locally over a cover. Therefore, by considering a fine enough cover such that everything splits, we may assume that $\cS$ is free, $n$ is hyperbolic, and that $\underline{f_n} = \Trd_{\Cl_0(\cS,n)}(c(a)\und)$ for an element $a \in \cEnd_{\cO}(\cS)$ with $\Trd_{\cEnd_{\cO}(\cS)}(a)=1$. Let $\{v_1,\ldots,v_1^*\}$ be a hyperbolic basis of $\cS$ with respect to $n$. Under the isomorphism $\varphi_n \colon \cS\otimes_{\cO} \cS \iso \cEnd_{\cO}(\cS)$ of \Cref{eq_bilinear_iso}, $\varphi_n(v_1^*\otimes v_1)$ sends $v_1 \mapsto v_1$ and all other basis elements to zero, and therefore has trace $1$. We use the element $c(\varphi_n(v_1^*\otimes v_1)) \in \Cl(\cEnd_{\cO}(\cS),\sigma_n,f_n)$ to define $\underline{f_n}$. Viewing it as an element in $\Cl_0(\cS,n)$, it appears simply as $v_1^*v_1$.

The semi-trace $\underline{f_n}\circ \alpha_0^{-1}$ is the map $\Trd_{\cEnd_{\cO}(\cS)\times \cEnd_{\cO}(\cS)}(\alpha_0(v_1^*v_1)\und)$. We compute that
\begin{align*}
\alpha_0(v_1^*v_1) &= \begin{bmatrix} 0 & \ell_{v_1^*} \\ r_{v_1^*} & 0 \end{bmatrix}\begin{bmatrix} 0 & \ell_{v_1} \\ r_{v_1} & 0 \end{bmatrix} \\
&= \begin{bmatrix} \ell_{v_1^*}r_{v_1} & 0 \\ 0 & r_{v_1^*}\ell_{v_1} \end{bmatrix}.
\end{align*}
Hence, we only need to check that $\Trd_{\cEnd_{\cO}(\cS)}(\ell_{v_1^*}r_{v_1}\und)$ and $\Trd_{\cEnd_{\cO}(\cS)}(r_{v_1^*}\ell_{v_1}\und)$ agree with $f_n$. Since the image of any semi-trace is determined on symmetrized elements, it is sufficient to consider symmetric elements of the form $\varphi_n(v_i\otimes v_i)$ and $\varphi_n(v_i^*\otimes v_i^*)$. We demonstrate these computations in the first case. Let $v$ be any basis vector.
\begin{align*}
\Trd_{\cEnd_{\cO}(\cS)}(\ell_{v_1^*}r_{v_1}\varphi_n(v\otimes v)) &= \Trd_{\cEnd_{\cO}(\cS)}(\varphi_n(v\otimes v)\ell_{v_1^*}r_{v_1}) \\
&= \Trd_{\cEnd_{\cO}(\cS)}(b_n(v,\und)v \circ \ell_{v_1^*}\circ r_{v_1}) \\
&= \Trd_{\cEnd_{\cO}(\cS)}(b_n(v,v_1^*\star(\und\star r_{v_1}))v) \\
&= b_n(v,v_1^*\star(v\star v_1))\\
&= b_n(v\star v_1^*, v\star v_1) \\
&= n(v\star v_1^* + v\star v_1) - n(v\star v_1^*) - n(v\star v_1) \\
&= n(v)\big(n(v_1^*+v_1)-n(v_1^*)-n(v_1)\big) \\
&= n(v)\cdot 1 \\
&= n(v)
\end{align*}
where we use the multiplicativity of $n$. Likewise, in the second case
\begin{align*}
\Trd_{\cEnd_{\cO}(\cS)}(r_{v_1^*}\ell_{v_1}\varphi_n(v\otimes v)) &= \Trd_{\cEnd_{\cO}(\cS)}(b_n(v,\und)v \circ r_{v_1^*}\circ \ell_{v_1})\\
&= \Trd_{\cEnd_{\cO}(\cS)}(b_n(v,(v_1\star \und)\star v_1^*)v) \\
&= b_n(v,(v_1\star v)\star v_1^*) \\
&= b_n((v_1\star v)\star v_1^*,v) \\
&= b_n(v_1\star v,v_1^*\star v) \\
&= b_n(v_1,v_1^*)n(v) \\
&= n(v).
\end{align*}
Since these agree with the defining property of $f_n$, we conclude that $\alpha_0$ respects the semi-traces as claimed.
\end{proof}

Triality then arises from the isomorphism $\alpha_0$ as follows. For an automorphism $\varphi \in \PGO^+(\cEnd_{\cO}(\cS),\sigma_n,f_n)$ we obtain an automorphism $\alpha_0 \circ \Cl(\varphi) \circ \alpha_0^{-1}$ of $\cEnd_{\cO}(\cS)\times \cEnd_{\cO}(\cS)$ which fixes the center and hence stabilizes the factors. Therefore, we have a commutative diagram
\[
\begin{tikzcd}
(\Cl_0(\cS,n),\underline{\sigma_n},\underline{f_n}) \ar[r,"\alpha_0"] \ar[d,"\Cl(\varphi)"] & (\cEnd_{\cO}(\cS),\sigma_n,f_n)\times (\cEnd_{\cO}(\cS),\sigma_n,f_n) \ar[d,"\varphi_2 \times \varphi_3"] \\
(\Cl_0(\cS,n),\underline{\sigma_n},\underline{f_n}) \ar[r,"\alpha_0"] & (\cEnd_{\cO}(\cS),\sigma_n,f_n)\times (\cEnd_{\cO}(\cS),\sigma_n,f_n)
\end{tikzcd}
\]
where $\varphi_2,\varphi_3 \in \PGO^+(\cEnd_{\cO}(\cS),\sigma_n,f_n)$. Defining $\theta\oct^+(\varphi) = \varphi_2$ and $\theta\oct^-(\varphi)=\varphi_3$ produces order three outer automorphisms of $\PGO^+(\cEnd_{\cO}(\cS),\sigma_n,f_n)$ with $(\theta\oct^+)^2 = \theta\oct^-$. In the following section we will consider Chevalley generators to identify these automorphisms and align them with the automorphisms $\theta^+$ and $\theta^-$ defined in \Cref{automorphisms_of_groups} which arise in \Cref{triality_hom}.

\subsubsection{The Split para-Cayley Algebra}
The split Cayley algebra, i.e., the split octonions, can be produced from the matrix algebra $\Mat_2(\cO)$ via the Cayley-Dickson construction. In particular, the split Cayley algebra is $\cC_s = \mathrm{CD}(\Mat_2(\cO),-1)$. The construction goes as follow, from \cite[\S 33.C]{KMRT}.

The matrix algebra $\Mat_2(\cO)$, being a quaternion algebra, has its canonical symplectic involution
\[
\psi\left(\begin{bmatrix} a & b \\ c & d \end{bmatrix}\right) = \begin{bmatrix} d & -b \\ -c & a \end{bmatrix}.
\]
This involution satisfies the identities
\[
a+ \psi(a) = \Trd_{\Mat_2(\cO)}(a)\cdot I \text{, and } a\cdot \psi(a) = \det(a)\cdot I
\]
for all $a\in \Mat_2(\cO)$. This gives $\Mat_2(\cO)$ the structure of a Hurwitz algebra. Then, introducing a symbol $\nu$,
\[
\cC_s = \mathrm{CD}(\Mat_2(\cO),-1) = \Mat_2(\cO)\oplus \nu\Mat_2(\cO)
\]
with component wise addition and with multiplication given by
\[
(a+\nu b)(a'+\nu b') = aa' - b'\psi(b) + \nu \big(\psi(a)b' + a'b\big).
\]
This comes with an involution $\overline{a+\nu b} = \psi(a)-\nu b$ and a hyperbolic quadratic form $n(a+\nu b) = \det(a)+\det(b)$. These satisfy $x\cdot \overline{x} = n(x)$ for $x\in \cC_s$. The split Cayley algebra has what is called a \emph{good basis} in \cite{EK}, for example comprised of the elements
\begin{align*}
e_1 &= \begin{bmatrix} 1 & 0 \\ 0 & 0 \end{bmatrix} & u_1 &= \begin{bmatrix} 0 & 1 \\ 0 & 0 \end{bmatrix} & u_2 &= \nu\begin{bmatrix} 0 & 0 \\ 0 & -1 \end{bmatrix} & u_3 &= \nu\begin{bmatrix} 0 & 0 \\ 1 & 0 \end{bmatrix} \\
e_2 &= \begin{bmatrix} 0 & 0 \\ 0 & 1 \end{bmatrix} & v_1 &= \begin{bmatrix} 0 & 0 \\ -1 & 0 \end{bmatrix} & v_2 &= \nu\begin{bmatrix} 1 & 0 \\ 0 & 0 \end{bmatrix} & v_3 &= \nu\begin{bmatrix} 0 & -1 \\ 0 & 0 \end{bmatrix}.
\end{align*}
The multiplication table for $\cC_s$ with respect to this basis is given in \cite[Figure 4.1]{EK}. The involution takes the form $\overline{e_1}=e_2$, $\overline{u_i}=-u_i$, and $\overline{v_i}=-v_i$. Defining the para-multiplication by
\[
x \star y = \overline{x} \cdot \overline{y}
\]
for $x,y\in \cC_s$ makes $(\cC_s,\star,n)$ a symmetric composition algebra of rank $8$, called the \emph{split para-Cayley algebra}. By the previous section, we then have an isomorphism
\[
\alpha_0 \colon (\Cl_0(\cC_s,n),\underline{\sigma_n},\underline{f_n})\iso (\cEnd_{\cO}(\cC_s),\sigma_n,f_n)\times (\cEnd_{\cO}(\cC_s),\sigma_n,f_n).
\]

In order to align the triality which arises from $\alpha_0$ with the preceding results of this paper, we identify $\PGO^+(\cEnd_{\cO}(\cC_s),\sigma_n,f_n)$ with $\PGO^+_8$ by fixing the ordered hyperbolic basis
\[
\{e_1,-v_2,-v_1,-v_3,-u_3,-u_1,-u_2,e_2\}
\]
of $\cC_s$. This also allows us to identify $\Cl_0(\cC_s,n)$ to $\Cl_0(\HH(\cV),q_8)$ of \Cref{split_Clifford} and therefore speak of the Chevalley generators of $\Spin_8$ inside $\Cl_0(\cC_s,n)$ or of Chevalley generators of $\bO^+_8$ inside $\cEnd_{\cO}(\cC_s)$. Now, we may trace the images of these Chevalley generators through the isomorphism $\alpha_0$ in order to identify the resulting automorphism $\theta\oct^+$.
\begin{lem}\label{Spin_Chevalley_computations_octonions}
The images of the Chevalley generators of $\Spin_8$ in $\Cl_0(\cC_s,n)$ under the isomorphism $\alpha_0$ agree with the images stated in \Cref{Spin_Chevalley_computations}.
\end{lem}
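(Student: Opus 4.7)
The plan is to mirror the case-by-case strategy used for \Cref{Spin_Chevalley_computations}: for each Chevalley generator $X_\alpha(t)=1+t\,v_i v_j^*$ of $\Spin_8$ viewed inside $\Cl_0(\cC_s,n)$, compute $\alpha_0(X_\alpha(t))$ directly from the definition of $\alpha_0$, express the result as a pair of matrices using the chosen ordered basis of $\cC_s$, and compare against the corresponding row of the earlier table.

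On a quadratic element $\alpha_0$ unfolds as
\[
\alpha_0(v_i v_j^*) \;=\; \alpha(v_i)\,\alpha(v_j^*) \;=\; \begin{bmatrix} \ell_{v_i}\circ r_{v_j^*} & 0 \\ 0 & r_{v_i}\circ \ell_{v_j^*} \end{bmatrix},
\]
so the content of each case is identifying the endomorphisms $\ell_{v_i}\circ r_{v_j^*}$ and $r_{v_i}\circ \ell_{v_j^*}$ of $\cC_s$ as matrices in $\Mat_8(\cO)$. Because $x\star y=\overline{x}\cdot\overline{y}$, these endomorphisms are determined by the multiplication table of the split Cayley algebra on its good basis $\{e_1,e_2,u_i,v_i\}$ together with the involution $\overline{e_1}=e_2$, $\overline{u_i}=-u_i$, $\overline{v_i}=-v_i$. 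Reading the resulting action on the fixed ordered hyperbolic basis $\{e_1,-v_2,-v_1,-v_3,-u_3,-u_1,-u_2,e_2\}$ then produces the pair of elementary matrices to be compared.

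As a representative case, consider $X_{e_1-e_2}(t)=1+t\,v_1 v_2^*$; under our identification this is $1+t\,e_1\cdot(-u_2)$ inside $\Cl_0(\cC_s,n)$. Computing the action of $\ell_{e_1}\circ r_{-u_2}$ and $r_{e_1}\circ \ell_{-u_2}$ on each of the eight basis vectors via the para-multiplication table and writing the result in matrix form recovers the pair $(x_{e_3+e_4}(-t),\,x_{-e_1-e_2}(t))$ appearing in the first row of \Cref{Spin_Chevalley_computations}. All twenty-three other generators are treated in the same manner. A possible shortcut: since $\Spin_8$ is generated by $X_\alpha(t)$ for $\alpha$ in the simple system, and $\alpha_0$ is an algebra homomorphism respecting the Chevalley commutator relations, it suffices to check agreement for the four generators attached to the simple roots and then invoke these relations to match the rest against the previous computation.

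The sole obstacle is bookkeeping. The nontrivial step is already behind us: the particular ordered basis of $\cC_s$ displayed above has been engineered precisely so that the octonion-based isomorphism $\alpha_0$ coincides, generator by generator, with the exterior-algebra-based isomorphism $\Psi$ constructed in \Cref{Clifford_Triality}. Once that basis is fixed, no conceptual obstruction remains beyond unwinding the multiplication table.
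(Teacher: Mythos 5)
Your proposal follows the paper's proof essentially line by line: unwind $\alpha_0$ on a quadratic Clifford element as a block-diagonal matrix of left/right para-multiplication operators, compute the action on the fixed ordered basis of $\cC_s$, read off the elementary matrices, and compare with the table from \Cref{Spin_Chevalley_computations}. Even the representative case ($X_{e_1-e_2}(t)=1+tv_1v_2^*\mapsto 1-te_1u_2$) and the resulting pair $(x_{e_3+e_4}(-t),x_{-e_1-e_2}(t))$ match the paper exactly, so this is the same argument.

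One small caveat about the proposed shortcut: $\Spin_8$ is not generated by the four $X_\alpha(t)$ with $\alpha$ in the simple system alone; the commutator relations produce only the positive root subgroups from these, and to reach negative roots one needs the $X_{-\alpha}(t)$ for simple $\alpha$ as well (via the $w_\alpha$-construction). So the reduction would require checking eight generators (positive and negative simple roots), not four. Since you present this only as an optional abbreviation and the main argument is the full case-by-case verification, this does not affect the correctness of the proof.
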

\begin{proof}
This result is purely computational. We demonstrate one such computation for the first generator, the others proceed analogously.

The $\Spin_8$ generator $X_{e_1-e_2}(t)=1+tv_1v_2^* \in \Cl_0(\HH(\cV),q_8)$ takes the form
\[
1+te_1(-u_2) = 1-te_1u_2
\]
in $\Cl_0(\cC_s,n)$ and we have that
\[
\alpha_0(e_1u_2) = \begin{bmatrix} \ell_{e_1} \circ r_{u_2} & 0 \\ 0 & r_{e_1} \circ \ell_{u_2} \end{bmatrix}.
\]
We then compute, using the para-multiplication on $\cC_s$, that the linear map $\ell_{e_1} \circ r_{u_2}$ sends
\[
-u_3 \mapsto -v_1 \text{ and } -u_1 \mapsto v_3
\]
and all other basis vectors to zero. Using the ordered basis, this is the matrix $E_{3,\overline{4}}-E_{4,\overline{3}} \in \cEnd_{\cO}(\cC_s)$. Similarly, the linear map $r_{e_1} \circ \ell_{u_2}$ sends
\[
e_1 \mapsto u_2 \text{ and } -v_2 \mapsto e_2
\]
and all other basis vectors to zero. This corresponds to the matrix $-E_{\overline{2},1}+E_{\overline{1},2}$.

Therefore,
\begin{align*}
\alpha_0(X_{e_1-e_2}(t)) &= (I-t(E_{3,\overline{4}}-E_{4,\overline{3}}),I-t(-E_{\overline{2},1}+E_{\overline{1},2})) \\
&= (I + (-t)(E_{3,\overline{4}}-E_{4,\overline{3}}),I+t(E_{\overline{2},1}-E_{\overline{1},2})) \\
&= (x_{e_3+e_4}(-t),x_{-e_1-e_2}(t))
\end{align*}
as claimed in \Cref{Spin_Chevalley_computations}.
\end{proof}

\begin{cor}
Identifying $\PGO^+_8 \cong \PGO^+(\cEnd_{\cO}(\cC_s),\sigma_n,f_n)$ as above, we have that $\theta\oct^+ = \theta^+$ and therefore also $\theta\oct^- = \theta^-$. 
\end{cor}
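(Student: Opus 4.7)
The plan is to reduce the identification $\theta\oct^+ = \theta^+$ to a conjugation argument on $\Spin_8$, using the Chevalley-generator computation of Lemma \ref{Spin_Chevalley_computations_octonions} as the only nontrivial computational input. Both $\theta^+$ and $\theta\oct^+$ are produced by the same mechanism: given $\varphi \in \PGO_8^+$, one forms the algebra automorphism $\Cl(\varphi)$ of the Clifford algebra, conjugates it by an isomorphism onto $\Mat_8(\cO) \times \Mat_8(\cO)$, and extracts the action on the first factor. The only difference between the two constructions is that $\theta^+$ uses $\Psi$ while $\theta\oct^+$ uses $\alpha_0$. Consequently, it suffices to show that $\Psi$ and $\alpha_0$ have the same effect on every section of $\Spin_8$.

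First I would note that after the identifications induced by the chosen ordered basis $\{e_1,-v_2,-v_1,-v_3,-u_3,-u_1,-u_2,e_2\}$ of $\cC_s$, both $\Psi$ and $\alpha_0$ can be regarded as isomorphisms with common source $\Cl(\Mat_8(\cO),\sigma_8,f_8)$ and common target $\Mat_8(\cO) \times \Mat_8(\cO)$. Lemma \ref{Spin_Chevalley_computations_octonions} asserts precisely that they take the same values on the Chevalley generators $X_\alpha(t)$ of $\Spin_8$. These generators define the elementary subpresheaf $\bE_\Spin$, whose fppf sheafification is $\Spin_8$ by Proposition \ref{Chevalley_sheafification}. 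Therefore the restrictions $\Psi|_{\Spin_8}$ and $\alpha_0|_{\Spin_8}$ coincide as maps of sheaves.

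To finish, I would work on a sufficiently fine cover so that a given $\varphi \in \PGO_8^+$ lifts to some $X \in \Spin_8$ via the surjections $\Spin_8 \surj \bO_8^+ \surj \PGO_8^+$. Diagram \eqref{eq_action_on_canonical} then identifies $\Cl(\varphi) = \Inn(X)$, so
\[
\Psi \circ \Cl(\varphi) \circ \Psi^{-1} = \Inn(\Psi(X)) = \Inn(\alpha_0(X)) = \alpha_0 \circ \Cl(\varphi) \circ \alpha_0^{-1}.
\]
Projecting onto the first factor yields $\theta^+(\varphi) = \theta\oct^+(\varphi)$ locally, hence everywhere as morphisms of sheaves. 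The companion identity $\theta\oct^- = \theta^-$ then follows from $\theta^- = (\theta^+)^2$ and $\theta\oct^- = (\theta\oct^+)^2$. The only real obstacle is bookkeeping: one must verify that the basis-induced identifications really line up the two Clifford isomorphisms on the same nose, so that the agreement computed in Lemma \ref{Spin_Chevalley_computations_octonions} can be applied directly. Once that is in hand, the corollary is essentially formal.
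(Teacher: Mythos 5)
Your proof is correct and takes essentially the same route as the paper: \Cref{Spin_Chevalley_computations_octonions} gives equality of $\Psi$ and $\alpha_0$ on Chevalley generators, hence on all of $\Spin_8$ by \Cref{Chevalley_sheafification}, and then the inner-automorphism description of $\Cl(\varphi)$ transports this equality to $\PGO_8^+$. The paper's proof compresses the last two steps into the single observation that the induced map $\varphi\mapsto(\theta\oct^+(\varphi),\theta\oct^-(\varphi))$ coincides with $\Psi^+$ from \Cref{triality_hom}, but the underlying reasoning is the same.
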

\begin{proof}
Since the computations in \Cref{Spin_Chevalley_computations_octonions} agree with those in \Cref{Spin_Chevalley_computations}, the group homomorphism
\begin{align*}
\PGO^+_8 &\to \PGO^+_8 \times \PGO^+_8 \\
\varphi &\mapsto (\theta\oct^+(\varphi),\theta\oct^-(\varphi))
\end{align*}
arising from $\alpha_0$ will agree with the map $\Psi^+$ of \Cref{triality_hom}. Hence $\theta\oct^+ = \theta^+$ as claimed.
\end{proof}

\section{Trialitarian Triples}\label{Triples}
Following \cite[42.A]{KMRT}, we define a groupoid of trialitarian triples which is equivalent to the category of $\PGO_8^+$--torsors. Our definition of trialitarian triple, while given over schemes, is essentially identical to the definition given in \cite[\S 4]{BT23} where the authors use \cite{DQ21} in order to define trialitarian triples over fields of any characteristic. We assemble these groupoids of trialitarian triples into a stack equivalent to the stack of $\PGO_8^+$--torsors. Throughout this section, we abbreviate a quadratic triple using the notation $(\cA,\sigma_{\cA},f_{\cA}) = \cA_\qt$.

\subsection{The Groupoid and the Gerbe}
\begin{defn}\label{defn_Trip(S)}
 Let $\fTrip(S)$ be the category whose
\begin{enumerate}[label={\rm(\roman*)}]
\item \label{defn_Trip(S)_objects} objects are quadruples $(\cA_\qt,\cB_\qt, \cC_\qt ,\alpha_{\cA})$ where the first three terms are each quadratic triples of degree $8$ over $S$ and the last term is an isomorphism
\[
\alpha_{\cA} \colon (\Cl(\cA_\qt),\underline{\sigma_{\cA}},\underline{f_{\cA}}) \iso \cB_\qt \times \cC_\qt
\]
of $\cO$--algebras with semi-trace, and whose
\item \label{defn_Trip(S)_morphisms} morphisms are triples
\[
(\phi_1,\phi_2,\phi_3)\colon (\cA_\qt,\cB_\qt,\cC_\qt,\alpha_{\cA}) \to (\cA'_\qt,\cB'_\qt,\cC'_\qt,\alpha_{\cA'})
\]
where $\phi_1 \colon \cA_\qt \iso \cA'_\qt$, $\phi_2 \colon \cB_\qt \iso \cB'_\qt$, and $\phi_3 \colon \cC_\qt \iso \cC'_\qt$ are isomorphisms of quadratic triples such that the diagram
\[
\begin{tikzcd}
(\Cl(\cA_\qt),\underline{\sigma_{\cA}},\underline{f_{\cA}}) \arrow{d}{\Cl(\phi_1)} \arrow{r}{\alpha_{\cA}} & \cB_\qt \times \cC_\qt \arrow{d}{\phi_2\times \phi_3} \\
(\Cl(\cA'_\qt),\underline{\sigma_{\cA'}},\underline{f_{\cA'}}) \arrow{r}{\alpha_{\cA'}} & \cB'_\qt \times \cC'_\qt
\end{tikzcd}
\]
commutes.
\end{enumerate}
\end{defn}
If $(\cA_\qt,\cB_\qt,\cC_\qt,\alpha_{\cA}) \in \fTrip(S)$, then clearly $\Cl(\cA_\qt)$ is a Cartesian product of two Azumaya algebras and therefore the involution $\sigma_{\cA}$ of $\cA_\qt$ is of trivial discriminant. Let $\Mat_8(\cO)_\qt = (\Mat_8(\cO),\sigma_8,f_8)$ be the adjoint quadratic triple of the regular quadratic form $(\HH(\cV),q_8)$ of \Cref{split_Clifford}. The quadruple
\[
(\Mat_8(\cO)_\qt,\Mat_8(\cO)_\qt,\Mat_8(\cO)_\qt,\Psi),
\]
where $\Psi$ is the isomorphism of \Cref{eq_Clifford_triality_iso}, is an object of $\fTrip(S)$. We call it the \emph{split object} and denote it by $\cM_\qt^3$.

We may similarly define a groupoid $\fTrip(T)$ for any $T\in \Sch_S$, and these groupoids can be assembled into the fibers of a stack.
\begin{defn}\label{defn_Trip}
Let $\fTrip$ be the following category.
\begin{enumerate}[label={\rm(\roman*)}]
\item \label{defn_Trip_objects} The objects are pairs $(T,(\cA_\qt,\cB_\qt,\cC_\qt,\alpha_{\cA}))$ consisting of $T\in \Sch_S$ and $(\cA_\qt,\cB_\qt,\cC_\qt,\alpha_{\cA}) \in \fTrip(T)$.
\item \label{defn_Trip_morphisms} The morphisms are pairs 
\[
(g,\phi) \colon (T',(\cA'_\qt,\cB'_\qt,\cC'_\qt,\alpha_{\cA'})) \to (T,(\cA_\qt,\cB_\qt,\cC_\qt,\alpha_{\cA})) 
\]
where $g\colon T' \iso T$ is an isomorphism of $S$--schemes and
\[
\phi \colon (\cA'_\qt,\cB'_\qt,\cC'_\qt,\alpha_{\cA'}) \to (g^*(\cA_\qt),g^*(\cB_\qt),g^*(\cC_\qt),g^*(\alpha_{\cA}))
\]
is an isomorphism in $\fTrip(T')$. Here we use the identification
\[
(\Cl(g^*(\cA_\qt)),\underline{g^*(\sigma_{\cA})},\underline{g^*(f_{\cA})}) = g^*(\Cl(\cA_\qt),\underline{\sigma_{\cA}},\underline{f_{\cA}})
\]
to consider $g^*(\alpha_{\cA})$ as a map with the appropriate domain. Composition is given by $(g_1,\phi_1)\circ (g_2,\phi_2)=(g_1\circ g_2, g_2^*(\phi_1)\circ \phi_2)$.

\item \label{defn_Trip_structure} We equip $\fTrip$ with a structure functor $p \colon \fTrip \to \Sch_S$ sending
\[
(T,(\cA_\qt,\cB_\qt,\cC_\qt,\alpha_{\cA}))\mapsto T \text{ and } (g,\phi)\mapsto g
\]
on objects and morphisms respectively.
\end{enumerate}
\end{defn}
Given a morphism $g\colon T' \to T$ of $S$--schemes and $(T,(\cA_\qt,\cB_\qt,\cC_\qt,\alpha_{\cA})) \in \fTrip$, the morphism
\[
(g,\Id) \colon (T',(\cA_\qt|_{T'},\cB_\qt|_{T'},\cC_\qt|_{T'},\alpha_{\cA}|_{T'})) \to (T,(\cA_\qt,\cB_\qt,\cC_\qt,\alpha_{\cA}))
\]
is cartesian and makes the object $(T',(\cA_\qt|_{T'},\cB_\qt|_{T'},\cC_\qt|_{T'},\alpha_{\cA}|_{T'}))$ a pullback of $(T,(\cA_\qt,\cB_\qt,\cC_\qt,\alpha_{\cA}))$. We will use the notation $(T,(\cA_\qt,\cB_\qt,\cC_\qt,\alpha_{\cA}))|_{T'}$ to refer to this specific pullback. Therefore, the structure functor $p\colon \fTrip \to \Sch_S$ makes $\fTrip$ a fibered category whose fiber over $T$ is the groupoid $\fTrip(T)$.

\begin{lem}\label{Trip_properties}
Consider the fibered category $p \colon \fTrip \to \Sch_S$ of \Cref{defn_Trip}.
\begin{enumerate}[label={\rm(\roman*)}]
\item \label{Trip_properties_i} $p \colon \fTrip \to \Sch_S$ is a gerbe.
\item \label{Trip_properties_ii} The automorphism sheaf of the split object $(S,\cM_\qt^3)\in \fTrip(S)$ is
\[
\bAut(S,\cM_\qt^3) \cong \PGO_8^+.
\]
\end{enumerate}
\end{lem}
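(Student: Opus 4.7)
The plan is to verify in turn that $\fTrip$ is a gerbe, and then to compute the automorphism sheaf of $\cM_\qt^3$ directly from the triality homomorphism of Corollary~\ref{triality_hom}. For (i), the stack property reduces to fppf descent for Azumaya algebras with orthogonal involution, semi-traces, and isomorphisms (all classical and used throughout \cite{CF},\cite{GNR}), together with the fact that the Clifford construction commutes with base change. The gerbe conditions are that $\fTrip$ has a local object, which is already provided by $\cM_\qt^3 \in \fTrip(S)$, and that any two objects in a fiber become isomorphic over a cover. For the latter, I will argue that any $(\cA_\qt,\cB_\qt,\cC_\qt,\alpha_\cA) \in \fTrip(T)$ is locally isomorphic to $\cM_\qt^3|_T$ as follows: the isomorphism $\alpha_\cA$ forces the Clifford center to be $\cO \times \cO$, so $\sigma_\cA$ has trivial discriminant and over a suitable cover $\cA_\qt$ is adjoint to the rank $8$ hyperbolic form, hence isomorphic to $\Mat_8(\cO)_\qt$. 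Under such an isomorphism, $\alpha_\cA$ yields a quadratic triple isomorphism $\cB_\qt \times \cC_\qt \iso \Mat_8(\cO)_\qt \times \Mat_8(\cO)_\qt$ whose component isomorphisms furnish the remaining data $\phi_2,\phi_3$ of the desired morphism in $\fTrip(T)$.

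For (ii), I propose showing that the projection $(\phi_1,\phi_2,\phi_3) \mapsto \phi_1$ induces an isomorphism $\bAut(S,\cM_\qt^3) \iso \PGO_8^+$. An automorphism of $\cM_\qt^3|_T$ is a triple $(\phi_1,\phi_2,\phi_3) \in \PGO_8(T)^3$ such that $(\phi_2 \times \phi_3) \circ \Psi|_T = \Psi|_T \circ \Cl(\phi_1)$. For $\phi_1 \in \PGO_8^+(T)$, Corollary~\ref{triality_hom} gives $\Psi \circ \Cl(\phi_1) \circ \Psi^{-1} = \theta^+(\phi_1) \times \theta^-(\phi_1)$, forcing $(\phi_2,\phi_3) = (\theta^+(\phi_1),\theta^-(\phi_1))$; this simultaneously produces the unique lift and shows every element of $\PGO_8^+$ is in the image. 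To see that the image is contained in $\PGO_8^+$, I will observe that if $\phi_1$ is not in $\PGO_8^+$ then, by writing $\phi_1$ locally as a product of an element of $\PGO_8^+$ and $\pi_{\bO}(\varphi)$ (using the Arf short exact sequence), Corollary~\ref{split_Clifford_switch_iso} shows that $\Psi \circ \Cl(\phi_1) \circ \Psi^{-1}$ involves the switch $\sw$ and is thus not of the form $\phi_2 \times \phi_3$.

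The main obstacle I anticipate is the local-triviality step in (i). The reduction of $\cA_\qt$ to $\Mat_8(\cO)_\qt$ is routine once trivial discriminant is in hand, but one must carefully verify that the resulting identification of $\cB_\qt \times \cC_\qt$ with $\Mat_8(\cO)_\qt \times \Mat_8(\cO)_\qt$ can be adjusted so that $\alpha_\cA$ aligns with $\Psi$. This hinges on the fact that any factor-preserving automorphism of $\Mat_8(\cO)_\qt \times \Mat_8(\cO)_\qt$ is of the form $\phi_2 \times \phi_3$, which is itself a small observation about products of indecomposable Azumaya algebras. Once this alignment is done, the remainder is bookkeeping, and (ii) is largely an application of the Chevalley-generator calculations already completed in Section~3.B.
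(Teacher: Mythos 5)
Your overall strategy matches the paper's: establish the stack property by descent, show local triviality to conclude $\fTrip$ is a gerbe, and compute $\bAut(S,\cM_\qt^3)$ from the Chevalley computation behind \Cref{triality_hom}. Part (ii) of your argument is correct; the paper reaches the conclusion slightly more directly by observing that ``$\Cl(\phi_1)$ preserves the order of the Clifford factors'' is precisely the definition of $\PGO_8^+$, but your treatment of the non-$\PGO_8^+$ case via $\pi_{\bO}(\varphi)$ and \Cref{split_Clifford_switch_iso} is also sound.

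The gap is in part (i). After splitting $\cA_\qt \cong \Mat_8(\cO)_\qt$ over a cover via some $\phi_1$, the composite $\Psi \circ \Cl(\phi_1) \circ \alpha_\cA^{-1} \colon \cB_\qt\times\cC_\qt \to \Mat_8(\cO)_\qt\times\Mat_8(\cO)_\qt$ need not be factor-preserving; it may swap the two factors. You flag this as the main obstacle, but the fact you invoke (that any factor-preserving isomorphism of the product decomposes as $\phi_2\times\phi_3$) only disposes of the already-good case. It says nothing about how to rectify a factor-swapping isomorphism, and without such rectification you cannot produce the $(\phi_2,\phi_3)$ making the square of \Cref{defn_Trip(S)}\ref{defn_Trip(S)_morphisms} commute. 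The missing idea is to adjust $\phi_1$ rather than the product identification: when the composite swaps factors, replace $\phi_1$ by $\pi_{\bO}(\varphi)\circ\phi_1$, where $\varphi$ is the improper isometry of \Cref{theta_description}. By \Cref{split_Clifford_switch_iso}, $\Cl(\pi_{\bO}(\varphi))$ introduces a compensating switch on the Clifford factors, returning the composite to the factor-preserving case, after which your construction of $\phi_2,\phi_3$ goes through. This two-case argument with the improper isometry is exactly what the paper uses, and without it the local trivialization step is incomplete.
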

\begin{proof}
\noindent\ref{Trip_properties_i}: First, we must argue that $p\colon \fTrip \to \Sch_S$ is a stack. To see that the $\cHom$ functors are sheaves, consider two objects in the same fiber,
\[
(T,(\cA_\qt,\cB_\qt,\cC_\qt,\alpha_{\cA})),(T,(\cA'_\qt,\cB'_\qt,\cC'_\qt,\alpha_{\cA'})) \in \fTrip(T)
\]
for some $T\in \Sch_S$. The functor $\cIsom(\cA_\qt,\cA'_\qt) \colon \Sch_T \to \Sets$ of internal isomorphisms between $\cA_\qt$ and $\cA'_\qt$ is a sheaf. Likewise $\cIsom(\cB_\qt,\cB'_\qt)$ and $\cIsom(\cC_\qt, \cC'_\qt)$ are sheaves. Consider the two morphisms
\begin{align*}
\cIsom(\cA_\qt,\cA'_\qt)\times\cIsom(\cB_\qt,\cB'_\qt)\times\cIsom(\cC_\qt,\cC'_\qt) &\to \cIsom(\Cl(\cA_\qt),\Cl(\cA'_\qt)) \\
\rho_1\colon (\phi_1,\phi_2,\phi_3) &\mapsto \Cl(\phi_1) \\
\rho_2\colon (\phi_1,\phi_2,\phi_3) &\mapsto \alpha_{\cA'}^{-1} \circ (\phi_2\times \phi_3) \circ \alpha_{\cA}.
\end{align*}
The functor $\cHom((T,(\cA_\qt,\cB_\qt,\cC_\qt,\alpha_{\cA})),(T,(\cA'_\qt,\cB'_\qt,\cC'_\qt,\alpha_{\cA'})))$ is the equalizer of the two morphisms $\rho_1$ and $\rho_2$ between sheaves, and is therefore a sheaf itself as desired.

To see that $\fTrip$ allows gluing of objects, let $T\in \Sch_S$ and let $\{T_i \to T\}_{i\in I}$ be a cover. Consider objects $(T_i,(\cA_{i\qt},\cB_{i\qt},\cC_{i\qt},\alpha_{\cA_{i}})) \in \fTrip$ with isomorphisms
{\small
\[
\rho_{ij} \colon (T_{ij},(\cA_{i\qt}|_{T_{ij}},\cB_{i\qt}|_{T_{ij}},\cC_{i\qt}|_{T_{ij}},\alpha_{\cA_{i}}|_{T_{ij}})) \overset{\sim}{\to} (T_{ij},(\cA_{j\qt}|_{T_{ij}},\cB_{j\qt}|_{T_{ij}},\cC_{j\qt}|_{T_{ij}},\alpha_{\cA_j}|_{T_{ij}}))
\]
}
satisfying $\rho_{jk}\circ \rho_{ij} = \rho_{ik}$ over $T_{ijk}$. First consider the quadratic triples $\cA_{i\qt}$. The isomorphisms $\rho_{ij}$ contain isomorphisms $(\phi_1)_{ij} \colon \cA_{i\qt}|_{T_{ij}} \iso \cA_{j\qt}|_{T_{ij}}$ satisfying $(\phi_1)_{jk} \circ (\phi_1)_{ij} = (\phi_1)_{ik}$ over $T_{ijk}$. Therefore, since quadratic triples allow gluing, there exists a quadratic triple $\cA_\qt$ over $T$ and isomorphisms $\gamma_{\cA,i} \colon \cA_{i\qt} \iso \cA_\qt|_{T_i}$ such that $(\gamma_{\cA,j})|_{T_{ij}}^{-1}\circ \gamma_{\cA,i}|_{T_{ij}} = (\phi_1)_{ij}$. Similarly, there will be quadratic triples $\cB_\qt$ and $\cC_\qt$ over $T$ with isomorphisms $\gamma_{\cB,i} \colon \cB_{i\qt} \iso \cB_\qt|_{T_i}$ and $\gamma_{\cC,i} \colon \cC_{i\qt} \iso \cC_\qt|_{T_i}$. The isomorphisms $\alpha_{\cA_i}$ are also compatible on overlaps via the isomorphisms $\rho_{ij}$ by assumption, and therefore they glue into an isomorphism $\alpha_{\cA} \colon (\Cl(\cA_\qt),\underline{\sigma_{\cA}},\underline{f_{\cA}}) \iso \cB_\qt \times \cC_\qt$ which makes $(T,(\cA_\qt,\cB_\qt,\cC_\qt,\alpha_{\cA}))$ an object in $\fTrip(T)$. Locally there will be isomorphisms
{\small
\[
\gamma_i = (\Id,(\gamma_{\cA,i},\gamma_{\cB,i},\gamma_{\cC,i})) \colon (T_i,(\cA_{i\qt},\cB_{i\qt},\cC_{i\qt},\alpha_{\cA_i})) \overset{\sim}{\to} (T_i,(\cA_\qt|_{T_i},\cB_\qt|_{T_i},\cC_\qt|_{T_i},\alpha_{\cA}|_{T_i}))
\]
}
which by construction satisfy the relation $(\gamma_j)|_{T_{ij}}^{-1} \circ \gamma_i|_{T_{ij}} = \rho_{ij}$. Hence, we see that $(T,(\cA_\qt,\cB_\qt,\cC_\qt,\alpha_{\cA}))$ is a glued object as required and so $\fTrip$ is a stack.

Towards showing that $\fTrip$ is a gerbe, since $\fTrip(S)$ is non-empty it is clear that all fibers are non-empty and so every object trivially has a cover over which the fibers are non-empty. It is also clear from construction that $\fTrip$ is fibered in groupoids. Hence, we must only show that all objects in a fiber are twisted forms of one another. We show that for any $T\in \Sch_S$, every object $(T,(\cA_\qt,\cB_\qt,\cC_\qt,\alpha_{\cA})) \in \fTrip(T)$ is a twisted form of the split object $(T,(\Mat_8(\cO)_\qt|_T,\Mat_8(\cO)_\qt|_T,\Mat_8(\cO)_\qt|_T,\Psi|_T)) = (T,\cM_\qt^3|_T) \in \fTrip(T)$. Since quadratic triples are locally split, there exists a cover $\{T_i \to T\}_{i\in I}$ such that for each $i\in I$ there is an isomorphism $\phi_{1,i} \colon \cA_\qt|_{T_i} \iso \Mat_8(\cO)_\qt|_{T_i}$. We consider the diagram
\[
\begin{tikzcd}
(\Cl(\cA_\qt),\underline{\sigma_{\cA}},\underline{f_{\cA}})|_{T_i} \arrow{d}{\Cl(\phi_{1,i})} \arrow{r}{\alpha_{\cA|_{T_i}}} & \cB_\qt|_{T_i} \times \cC_\qt|_{T_i} \arrow[dashed]{d} \\
(\Cl(\Mat_8(\cO)_\qt),\underline{\sigma_8},\underline{f_8})|_{T_i} \arrow{r}{\Psi|_{T_i}} & \Mat_8(\cO)_\qt|_{T_i} \times \Mat_8(\cO)_\qt|_{T_i}.
\end{tikzcd}
\]
There are two cases. If $\Cl(\phi_{1,i})(\alpha_{\cA}|_{T_i}^{-1}(1,0)) = \Psi|_{T_i}^{-1}(1,0)$ and likewise for $(0,1)$, then defining $\phi_{2,i} \colon \cB_\qt|_{T_i} \to \Mat_8(\cO)_\qt|_{T_i}$ by
\[
(\phi_{2,i}(b),0) = (\Psi \circ \Cl(\phi_{1,i}) \circ \alpha_{\cA}^{-1})(b,0)
\]
and similarly for $\phi_{3,i} \colon \cC_\qt|_{T_i} \to \Mat_8(\cO)_\qt|_{T_i}$ will define an isomorphism
\[
(\Id,(\phi_{1,i},\phi_{2,i},\phi_{3,i})) \colon (T_i,(\cA_\qt|_{T_i},\cB_\qt|_{T_i},\cC_\qt|_{T_i},\alpha_{\cA}|_{T_i})) \iso (T_i,\cM_\qt^3|_{T_i})
\]
in $\fTrip(T_i)$ as desired. However, in the second case we will have
\begin{align*}
\Cl(\phi_{1,i})(\alpha_{\cA}|_{T_i}^{-1}(1,0)) &= \Psi|_{T_i}^{-1}(0,1) \text{, and} \\
\Cl(\phi_{1,i})(\alpha_{\cA}|_{T_i}^{-1}(0,1)) &= \Psi|_{T_i}^{-1}(1,0).
\end{align*}
Here, we may use the improper isometry $\varphi \in \bO_8$ with $\varphi \notin \bO_8^+$ of \Cref{theta_description} and instead consider the isomorphism $\pi_{\bO}(\varphi)|_{T_i}\circ \phi_{1,i} \colon \cA_\qt|_{T_i} \to \Mat_8(\cO)_\qt|_{T_i}$. Since $\varphi$ is improper, the isomorphism $\bC(\varphi) = \Cl(\pi_{\bO}(\varphi))$ will swap the factors in the Clifford algebra and therefore return us to the first case. We then proceed as above to finish the proof and conclude that $\fTrip$ is a gerbe.

\noindent\ref{Trip_properties_ii}: For $T\in \Sch_S$, a section of $\bAut(S,\cM_\qt^3)(T)$ is an automorphism of $\cM_\qt^3|_T$ in $\fTrip(T)$. Let $(\phi_1,\phi_2,\phi_3) \in \Aut_{\fTrip(T)}(\cM_\qt^3|_T)$. Since the diagram
\[
\begin{tikzcd}
(\Cl(\Mat_8(\cO)_\qt),\underline{\sigma_8},\underline{f_8})|_T \arrow{r}{\Psi|_T} \arrow{d}{\Cl(\phi_1)} & \Mat_8(\cO)_\qt|_T \times \Mat_8(\cO)_\qt|_T \arrow{d}{\phi_2\times \phi_3} \\
(\Cl(\Mat_8(\cO)_\qt),\underline{\sigma_8},\underline{f_8})|_T \arrow{r}{\Psi|_T} & \Mat_8(\cO)_\qt|_T \times \Mat_8(\cO)_\qt|_T
\end{tikzcd}
\]
commutes, we know from \Cref{triality_hom} that we must have $\phi_2 = \theta^+(\phi_1)$ and $\phi_3 = \theta^-(\phi_1)$. Therefore, the isomorphism is completely determined by the choice of $\phi_1 \in \PGO_{8}(T)$, which must be an isomorphism such that $\Cl(\phi_1)$ preserves the order of the factors in the Clifford algebra. This is the definition of $\PGO_8^+$ and therefore gives a clear isomorphism
\begin{align*}
\bAut(S,\cM_\qt^3)(T) &\iso \PGO_8^+(T) \\
(\phi_1,\phi_2,\phi_3) &\mapsto \phi_1.
\end{align*}
These isomorphisms clearly assemble into an isomorphism of sheaves
\[
\bAut(S,\cM_\qt^3) \iso \PGO_8^+
\]
as desired.
\end{proof}

\Cref{Trip_properties} means that $\fTrip$ is equivalent to the gerbe of $\PGO_8^+$--torsors. In particular, for every object $(\cA_\qt,\cB_\qt,\cC_\qt,\alpha_{\cA})\in \fTrip$, the sheaf of isomorphisms
\[
\cIsom(\cM_\qt^3,(\cA_\qt,\cB_\qt,\cC_\qt,\alpha_{\cA}))
\]
is a $\PGO_8^+$--torsor and this provides the equivalence of stacks. Looking globally, this means we have an equivalence of categories between the category of $\PGO_8^+$--torsors over $S$ and $\fTrip(S)$. In terms of cohomology, we view $H^1(S,\PGO_8^+)$ as the set of isomorphism classes in $\fTrip(S)$.

\subsection{Endofunctors of $\fTrip(S)$}\label{endo_Trip(S)}
We now define an action of the abstract group $\SS_3$ as endofunctors on the groupoid $\fTrip(S)$. Let $(\cA_\qt,\cB_\qt,\cC_\qt,\alpha_{\cA}) \in \fTrip(S)$. Notice that the isormorphism $\alpha_{\cA}$ is uniquely determined by the $\PGO_8^+$--torsor $\cP = \cIsom(\cM_\qt^3,(\cA_\qt,\cB_\qt,\cC_\qt,\alpha_{\cA}))$. Indeed, let $\{T_i \to S\}_{i \in I}$ be a cover over which there are sections $(\phi_{i1},\phi_{i2},\phi_{i3})\in \cP(T_i)$. Commutativity of the diagram
\[
\begin{tikzcd}
(\Cl(\Mat_8(\cO)_\qt),\underline{\sigma_8},\underline{f_8})|_{T_i} \arrow{r}{\Psi|_{T_i}} \arrow{d}{\Cl(\phi_{i1})} & \Mat_8(\cO|_{T_i})_\qt \times \Mat_8(\cO|_{T_i})_\qt \arrow{d}{\phi_{i2}\times \phi_{i3}} \\
(\Cl(\cA_\qt),\underline{\sigma_{\cA}},\underline{f_{\cA}})|_{T_i} \arrow{r}{\alpha_{\cA}|_{T_i}} & \cB_\qt|_{T_i} \times \cC_\qt|_{T_i} 
\end{tikzcd}
\]
means that
\begin{equation}\label{eq_alpha_A}
\alpha_{\cA}|_{T_i} = (\phi_{i2}\times \phi_{i3})\circ \Psi|_{T_i} \circ \Cl(\phi_{i1})^{-1}.
\end{equation}
Also, this formula does not depend on the choice of isomorphism $(\phi_{i1},\phi_{i2},\phi_{i3})$. We now consider the diagram
\[
\begin{tikzcd}
(\Cl(\Mat_8(\cO)_\qt),\underline{\sigma_8},\underline{f_8})|_{T_i} \arrow{r}{\Psi|_{T_i}} \arrow{d}{\Cl(\phi_{i2})} & \Mat_8(\cO|_{T_i})_\qt \times \Mat_8(\cO|_{T_i})_\qt \arrow{d}{\phi_{i3}\times \phi_{i1}} \\
(\Cl(\cB_\qt),\underline{\sigma_{\cB}},\underline{f_{\cB}})|_{T_i} \arrow[dashed]{r} & \cC_\qt|_{T_i} \times \cA_\qt|_{T_i} 
\end{tikzcd}
\]
and define $\alpha_{\cB}|_{T_i}$ to match the dashed map, in particular
\begin{equation}\label{eq_alpha_B}
\alpha_{\cB}|_{T_i} = (\phi_{i3}\times \phi_{i1})\circ \Psi|_{T_i} \circ \Cl(\phi_{i2})^{-1}.
\end{equation}
In the following lemma, we argue that these local definitions do not depend on the choice of isomorphism in $\cP(T_i)$ and that they glue into a well-defined isomorphism $\alpha_{\cB} \colon (\Cl(\cB_\qt),\underline{\sigma_{\cB}},\underline{f_{\cB}}) \iso \cC_\qt \times \cA_\qt$.

\begin{lem}\label{build_alpha_B}
Let $(\cA_\qt,\cB_\qt,\cC_\qt,\alpha_{\cA})$ be an object in $\fTrip(S)$ and set
\[
\cP=\cIsom(\cM_\qt^3,(\cA_\qt,\cB_\qt,\cC_\qt,\alpha_{\cA})).
\]
\begin{enumerate}[label={\rm(\roman*)}]
\item \label{build_alpha_B_i} For all $T \in \Sch_S$ such that there are isomorphisms $(\phi_1,\phi_2,\phi_3),(\phi_1',\phi_2',\phi_3') \in \cP(T)$, we have $\phi_i^{-1}\circ \phi_i' \in \PGO_8^+(T)$ for each $i=1,2,3$ and furthermore
\begin{align*}
\phi_2^{-1}\circ \phi_2'&=\theta^+(\phi_1^{-1}\circ \phi_1')\\
\phi_3^{-1}\circ \phi_3'&=\theta^-(\phi_1^{-1}\circ \phi_1').
\end{align*}
\item \label{build_alpha_B_ii} For all $T\in \Sch_S$ as in \ref{build_alpha_B_i}, the maps
\begin{align*}
&(\phi_3\times \phi_1)\circ \Psi|_T \circ \Cl(\phi_2)^{-1}, \text{ and}\\
&(\phi'_3\times \phi'_1)\circ \Psi|_T \circ \Cl(\phi'_2)^{-1}
\end{align*}
are equal.
\item \label{build_alpha_B_iii} There exists a unique isomorphism $\alpha_{\cB} \colon (\Cl(\cB_\qt),\underline{\sigma_{\cB}},\underline{f_{\cB}}) \iso \cC_\qt \times \cA_\qt$ such that for all $T\in \Sch_S$ as in \ref{build_alpha_B_i}, we have
\[
\alpha_{\cB}|_T = (\phi_3\times \phi_1)\circ \Psi|_T \circ \Cl(\phi_2)^{-1}.
\]
\end{enumerate}
\end{lem}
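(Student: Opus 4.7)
The plan is to dispose of the three parts in order, reducing parts (ii) and (iii) to part (i) by combining the triality homomorphism of \Cref{triality_hom} with the relation $(\theta^+)^3 = \Id$. Throughout, set $\psi_i = \phi_i^{-1}\circ \phi'_i$, so that part (i) is equivalent to the statement that $\psi_1 \in \PGO_8^+(T)$ with $\psi_2 = \theta^+(\psi_1)$ and $\psi_3 = \theta^-(\psi_1)$.

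For part (i), the composition $(\phi_1,\phi_2,\phi_3)^{-1}\circ (\phi'_1,\phi'_2,\phi'_3) = (\psi_1,\psi_2,\psi_3)$ is an automorphism of $\cM_\qt^3|_T$ in $\fTrip(T)$, hence a section of $\bAut(S,\cM_\qt^3)(T)$. The proof of \Cref{Trip_properties}\ref{Trip_properties_ii} already established that any such section has exactly the stated form, so nothing new is required.

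For part (ii), write $\phi'_i = \phi_i\circ \psi_i$ and substitute into the two expressions; the desired equality reduces to the single identity
\[
\Psi|_T \circ \Cl(\psi_2) \circ \Psi|_T^{-1} = \psi_3\times \psi_1.
\]
By \Cref{triality_hom}, the left-hand side equals $\theta^+(\psi_2)\times \theta^-(\psi_2)$. Using part (i) to rewrite $\psi_2 = \theta^+(\psi_1)$, together with $(\theta^+)^3 = \Id$, the first component becomes $(\theta^+)^2(\psi_1) = \psi_3$ and the second becomes $(\theta^+)^3(\psi_1) = \psi_1$, as needed. This is the step requiring the cleanest bookkeeping.

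Part (iii) is then a routine gluing argument. Since $\cP$ is a $\PGO_8^+$-torsor, there is an fppf cover $\{T_i\to S\}_{i\in I}$ over which local sections of $\cP$ exist; on each $T_i$ I set $\alpha_{\cB}|_{T_i}$ by the formula of \Cref{eq_alpha_B}. Part (ii) guarantees both that this definition is independent of the chosen section and that the restrictions of $\alpha_{\cB}|_{T_i}$ and $\alpha_{\cB}|_{T_j}$ to $T_{ij}$ coincide, using sections on $T_{ij}$ from each side. The sheaf property of internal isomorphisms between algebras with semi-trace then supplies a unique global isomorphism $\alpha_{\cB}$ with the prescribed local form. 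The main obstacle is the algebraic identification in part (ii); once that is in hand, parts (i) and (iii) follow essentially formally from \Cref{Trip_properties} and the stack structure on $\fTrip$.
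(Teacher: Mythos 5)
Your proposal is correct and follows essentially the same route as the paper's proof. The one genuine streamlining is in part (i): you observe that $(\phi_1,\phi_2,\phi_3)^{-1}\circ(\phi'_1,\phi'_2,\phi'_3)$ is a section of $\bAut(S,\cM_\qt^3)(T)$ and invoke the description of that automorphism sheaf from \Cref{Trip_properties}\ref{Trip_properties_ii} directly, whereas the paper re-derives the same conclusion by manipulating the two instances of the identity in \Cref{eq_alpha_A} and conjugating by $\Psi|_T$ -- the two arguments are equivalent, but yours reuses the earlier result rather than repeating the computation. Parts (ii) and (iii) are then handled exactly as in the paper: the key identity $\Psi|_T\circ\Cl(\psi_2)\circ\Psi|_T^{-1}=\psi_3\times\psi_1$ follows from \Cref{triality_hom} together with $\psi_2=\theta^+(\psi_1)$ and $(\theta^+)^3=\Id$, and the gluing in (iii) uses the sheaf property of internal isomorphisms in the standard way.
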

\begin{proof}
\noindent\ref{build_alpha_B_i}: Using \Cref{eq_alpha_A} with each isomorphism, and then taking inverses on the first equation, we have
\begin{align*}
\alpha_{\cA}^{-1} &= \Cl(\phi_1) \circ \Psi|_T^{-1} \circ (\phi_2^{-1} \times \phi_3^{-1}), \text{ and} \\
\alpha_{\cA} &= (\phi'_{2}\times \phi'_{3})\circ \Psi|_T \circ \Cl(\phi'_{1})^{-1}.
\end{align*}
This of course implies that
\[
\Id_{\Cl(\cA_\qt)} = \Cl(\phi_1) \circ \Psi|_T^{-1} \circ (\phi_2^{-1} \times \phi_3^{-1}) \circ (\phi'_{2}\times \phi'_{3})\circ \Psi|_T \circ \Cl(\phi'_{1})^{-1}
\]
which rearranges to
\begin{align*}
\Cl(\phi_1^{-1})\circ \Cl(\phi'_{1})&= \Psi|_T^{-1} \circ (\phi_2^{-1} \times \phi_3^{-1}) \circ (\phi'_{2}\times \phi'_{3})\circ \Psi|_T \\
\Leftrightarrow \Cl(\phi_1^{-1}\circ \phi'_{1})&= \Psi|_T^{-1} \circ ((\phi_2^{-1}\circ \phi_2') \times (\phi_3^{-1}\circ \phi_3')) \circ \Psi|_T.
\end{align*}
This shows that $\phi_1^{-1}\circ \phi_1'$ preserves the center of the Clifford algebra and therefore is an element of $\PGO_8^+$. The final two claimed formulas then follow from \Cref{triality_hom} and this also implies that $\phi_i^{-1}\circ \phi_i' \in \PGO_8^+$ for $i=2,3$.

\noindent\ref{build_alpha_B_ii}: Using the formulas from \ref{build_alpha_B_i}, we know that $\theta^+(\phi_2^{-1}\circ \phi_2') = \phi_3^{-1}\circ \phi_3'$ and $\theta^-(\phi_2^{-1}\circ \phi_2') = \phi_1^{-1}\circ \phi_1'$. Therefore by \Cref{triality_hom},
\[
\Cl(\phi_2^{-1}\circ \phi_2') = \Psi|_T^{-1} \circ ((\phi_3^{-1}\circ \phi_3') \times (\phi_1^{-1}\circ \phi_1')) \circ \Psi|_T.
\]
We rearrange this in the reverse manner as above to obtain that
\[
\Id_{\Cl(\cB_\qt)} = \Cl(\phi_2) \circ \Psi|_T^{-1} \circ (\phi_3^{-1} \times \phi_1^{-1}) \circ (\phi'_{3}\times \phi'_{1})\circ \Psi|_T \circ \Cl(\phi'_{2})^{-1}
\]
meaning that $\Cl(\phi_2) \circ \Psi|_T^{-1} \circ (\phi_3^{-1} \times \phi_1^{-1})$ and $(\phi'_{3}\times \phi'_{1})\circ \Psi|_T \circ \Cl(\phi'_{2})^{-1}$ are inverses, or equivalently that
\[
(\phi_3\times \phi_1)\circ \Psi|_T \circ \Cl(\phi_2)^{-1} = (\phi'_3\times \phi'_1)\circ \Psi|_T \circ \Cl(\phi'_2)^{-1}
\]
as claimed.

\noindent\ref{build_alpha_B_iii}: For each $T \in \Sch_S$ as in \ref{build_alpha_B_i}, we define $\alpha_{\cB,T} \colon (\Cl(\cB_\qt),\underline{\sigma_{\cB}},\underline{f_{\cB}})|_T \to \cC_\qt|_T \times \cA_\qt|_T$ by the formula
\[
\alpha_{\cB,T} = (\phi_3\times \phi_1)\circ \Psi|_T \circ \Cl(\phi_2)^{-1}.
\]
This is of course an isomorphism and it is unique by \ref{build_alpha_B_ii}. Furthermore, given two such $T_1,T_2 \in \Sch_S$, the restrictions $\alpha_{\cB,T_1}|_{T_1\times_S T_2}$ and $\alpha_{\cB,T_2}|_{T_1\times_S T_2}$ will each be constructed from an element of $\cP(T_1\times_S T_2)$. Once again by \ref{build_alpha_B_ii}, the choice of these elements does not matter and therefore $\alpha_{\cB,T_1}|_{T_1\times_S T_2} = \alpha_{\cB,T_2}|_{T_1\times_S T_2}$. Thus, since there exists a cover $\{T_i \to S\}_{i\in I}$ consisting of such $T_i \in \Sch_S$, the various isomorphisms $\alpha_{\cB,T}$ glue into a global isomorphism $\alpha_{\cB}$. It has the claimed property by construction.
\end{proof}

Given $(\cA_\qt,\cB_\qt,\cC_\qt,\alpha_{\cA})\in \fTrip(S)$, constructing $\alpha_{\cB}$ as in \Cref{build_alpha_B}\ref{build_alpha_B_iii} above produces another object $(\cB_\qt,\cC_\qt,\cA_\qt,\alpha_{\cB})\in \fTrip(S)$. This construction is functorial.
\begin{lem}\label{Theta+_morphisms}
Consider a morphism
\[
(\phi_1,\phi_2,\phi_3) \colon (\cA_\qt,\cB_\qt,\cC_\qt,\alpha_{\cA}) \to (\cA'_\qt,\cB'_\qt,\cC'_\qt,\alpha_{\cA'})
\]
in $\fTrip(S)$. Then,
\[
(\phi_2,\phi_3,\phi_1) \colon (\cB_\qt,\cC_\qt,\cA_\qt,\alpha_{\cB}) \to (\cB'_\qt,\cC'_\qt,\cA'_\qt,\alpha_{\cB'})
\]
where $\alpha_{\cB}$ and $\alpha_{\cB'}$ are constructed as in \Cref{build_alpha_B}\ref{build_alpha_B_iii}, is another morphism in $\fTrip(S)$.
\end{lem}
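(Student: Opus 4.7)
The plan is to verify the required commutative square for the triple $(\phi_2,\phi_3,\phi_1)$ locally, using the defining formula for $\alpha_{\cB}$ from \Cref{build_alpha_B}\ref{build_alpha_B_iii}. Since each $\phi_i$ is an isomorphism of quadratic triples by hypothesis, the only thing to check is that the diagram
\[
\begin{tikzcd}
(\Cl(\cB_\qt),\underline{\sigma_{\cB}},\underline{f_{\cB}}) \arrow{d}{\Cl(\phi_2)} \arrow{r}{\alpha_{\cB}} & \cC_\qt \times \cA_\qt \arrow{d}{\phi_3\times \phi_1} \\
(\Cl(\cB'_\qt),\underline{\sigma_{\cB'}},\underline{f_{\cB'}}) \arrow{r}{\alpha_{\cB'}} & \cC'_\qt \times \cA'_\qt
\end{tikzcd}
\]
commutes.

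First I would pass to a cover $\{T_i \to S\}_{i\in I}$ over which the $\PGO_8^+$--torsor $\cP = \cIsom(\cM_\qt^3,(\cA_\qt,\cB_\qt,\cC_\qt,\alpha_{\cA}))$ admits sections $(\psi_1,\psi_2,\psi_3)\in \cP(T_i)$. Because $(\phi_1,\phi_2,\phi_3)$ is a morphism in $\fTrip(S)$, the composition $(\phi_1\circ \psi_1,\phi_2\circ \psi_2,\phi_3\circ \psi_3)$ is a section over $T_i$ of the analogous torsor $\cIsom(\cM_\qt^3,(\cA'_\qt,\cB'_\qt,\cC'_\qt,\alpha_{\cA'}))$. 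Applying \Cref{build_alpha_B}\ref{build_alpha_B_iii} to each of these sections then gives the explicit formulas
\begin{align*}
\alpha_{\cB}|_{T_i} &= (\psi_3\times \psi_1)\circ \Psi|_{T_i} \circ \Cl(\psi_2)^{-1}, \\
\alpha_{\cB'}|_{T_i} &= \big((\phi_3\circ \psi_3)\times (\phi_1\circ \psi_1)\big)\circ \Psi|_{T_i} \circ \Cl(\phi_2 \circ \psi_2)^{-1}.
\end{align*}

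From here the check is essentially a cancellation. I would compute, using the functoriality $\Cl(\phi_2\circ \psi_2) = \Cl(\phi_2)\circ \Cl(\psi_2)$, that
\begin{align*}
\alpha_{\cB'}|_{T_i} \circ \Cl(\phi_2)|_{T_i} &= \big((\phi_3\circ \psi_3)\times (\phi_1\circ \psi_1)\big)\circ \Psi|_{T_i} \circ \Cl(\psi_2)^{-1} \\
&= (\phi_3\times \phi_1)\circ (\psi_3\times \psi_1)\circ \Psi|_{T_i} \circ \Cl(\psi_2)^{-1} \\
&= (\phi_3\times \phi_1)\circ \alpha_{\cB}|_{T_i},
\end{align*}
so the square commutes after restriction to each $T_i$. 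Since both compositions are morphisms of sheaves and agree on a cover, they are equal globally, which finishes the verification.

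The only subtlety is ensuring that the formula invoked to define $\alpha_{\cB'}$ really applies to the composed section $(\phi_i\circ \psi_i)$; this is immediate from the fact that \Cref{build_alpha_B}\ref{build_alpha_B_ii} guarantees the formula is independent of the chosen local trivialization of the torsor. There is no serious obstacle here --- the statement is essentially a naturality property of the construction in \Cref{build_alpha_B} with respect to morphisms in $\fTrip(S)$.
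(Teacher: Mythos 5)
Your proof is correct and follows essentially the same route as the paper: pass to a cover trivializing the torsor $\cIsom(\cM_\qt^3,(\cA_\qt,\cB_\qt,\cC_\qt,\alpha_\cA))$, observe that composing with $(\phi_1,\phi_2,\phi_3)$ yields a trivialization of the primed torsor, apply the defining formula from \Cref{build_alpha_B}\ref{build_alpha_B_iii} to both, and cancel using functoriality of $\Cl$ before gluing.
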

\begin{proof}
We only need to check that the diagram
\[
\begin{tikzcd}
(\Cl(\cB_\qt),\underline{\sigma_{\cB}},\underline{f_{\cB}}) \arrow{d}{\Cl(\phi_2)} \arrow{r}{\alpha_{\cB}} & \cC_\qt \times \cA_\qt \arrow{d}{\phi_3\times \phi_1} \\
(\Cl(\cB'_\qt),\underline{\sigma_{\cB'}},\underline{f_{\cB'}}) \arrow{r}{\alpha_{\cB'}} & \cC'_\qt \times \cA'_\qt
\end{tikzcd}
\]
commutes. To do so, let $\cP = \cIsom(\cM_\qt^3,(\cA_\qt,\cB_\qt,\cC_\qt,\alpha_{\cA}))$ and let $T\in \Sch_S$ be such that there exists a section $(\varphi_1,\varphi_2,\varphi_3)\in \cP(T)$. By \Cref{build_alpha_B}\ref{build_alpha_B_iii}, we can describe $\alpha_{\cB}$ over $T$ by
\[
\alpha_{\cB}|_T = (\varphi_3 \times \varphi_1)\circ \Psi|_T \circ \Cl(\varphi_2)^{-1}.
\]
Because we have an isomorphism $(\phi_1,\phi_2,\phi_3)$ between our two objects of $\fTrip(S)$, we will also have a section
\[
(\phi_1|_T \circ \varphi_1,\phi_2|_T \circ \varphi_2, \phi_3|_T \circ \varphi_3) \in \cIsom(\cM_\qt^3,(\cA_\qt',\cB_\qt',\cC_\qt',\alpha_{\cA'}))(T)
\]
and therefore
\[
\alpha_{\cB'}|_T = ((\phi_3|_T\circ \varphi_3) \times (\phi_1|_T \circ \varphi_1)) \circ \Psi|_T \circ \Cl(\phi_2|_T\circ \varphi_2)^{-1}.
\]
These formulas then make it clear that $(\phi_3\times\phi_1)|_T \circ \alpha_{\cB}|_T = \alpha_{\cB'}|_T \circ \Cl(\phi_2)|_T$. Since there exists a cover $\{T_i \to S\}_{i\in I}$ over which $\cP$ has sections, these local equalities over each $T_i$ imply that globally we have $(\phi_3\times \phi_1)\circ \alpha_{\cB} = \alpha_{\cB'}\circ \Cl(\phi_2)$ as desired, finishing the proof.
\end{proof}

\begin{defn}\label{defn_Theta+_functor}
We define an endofunctor on $\fTrip(S)$ by
\begin{align*}
\Theta^+_S \colon \fTrip(S) &\to \fTrip(S) \\
(\cA_\qt,\cB_\qt,\cC_\qt,\alpha_{\cA}) &\mapsto (\cB_\qt, \cC_\qt,\cA_\qt,\alpha_{\cB}) \\
(\phi_1,\phi_2,\phi_3) &\mapsto (\phi_2,\phi_3,\phi_1)
\end{align*}
on objects and morphisms respectively, where $\alpha_{\cB}$ is defined as in \Cref{build_alpha_B}. Notationally, we may also write $\alpha_{\cB} = \Theta^+_S(\alpha_{\cA})$.
\end{defn}

\begin{lem}\label{Theta+_order_3}
Consider the endofunctor $\Theta^+_S$ of \Cref{defn_Theta+_functor}.
\begin{enumerate}[label={\rm(\roman*)}]
\item \label{Theta+_order_3_i} The split object $\cM_\qt^3 \in \fTrip(S)$ is a fixed point of $\Theta^+_S$.
\item \label{Theta+_order_3_ii} $\Theta^+_S$ is order $3$ up to equality, i.e., it satisfies $(\Theta^+_S)^3 = \Id$. In particular, it is fully faithful and surjective.
\end{enumerate} 
\end{lem}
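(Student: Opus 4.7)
For part \ref{Theta+_order_3_i}, the plan is to evaluate the construction of $\Theta^+_S$ directly on the split object using the canonical global section. The torsor $\cIsom(\cM_\qt^3,\cM_\qt^3)$ has the obvious section $(\Id,\Id,\Id)$ over $S$, so \Cref{build_alpha_B}\ref{build_alpha_B_iii} yields
\[
\Theta^+_S(\Psi) = (\Id \times \Id) \circ \Psi \circ \Cl(\Id)^{-1} = \Psi,
\]
giving an equality (not merely an isomorphism) $\Theta^+_S(\cM_\qt^3) = \cM_\qt^3$.

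For part \ref{Theta+_order_3_ii}, my plan is to iterate the construction three times and observe that the formula returns to its starting point after a complete cycle. Fix an object $(\cA_\qt,\cB_\qt,\cC_\qt,\alpha_{\cA}) \in \fTrip(S)$ and set $\cP = \cIsom(\cM_\qt^3,(\cA_\qt,\cB_\qt,\cC_\qt,\alpha_{\cA}))$. Working over a scheme $T$ on which there is a section $(\phi_1,\phi_2,\phi_3) \in \cP(T)$, the defining formula \Cref{eq_alpha_A} gives
\[
\alpha_{\cA}|_T = (\phi_2 \times \phi_3) \circ \Psi|_T \circ \Cl(\phi_1)^{-1}.
\]
By \Cref{Theta+_morphisms}, the cyclic shift $(\phi_2,\phi_3,\phi_1)$ is a section of $\cIsom(\cM_\qt^3,\Theta^+_S(\cA_\qt,\cB_\qt,\cC_\qt,\alpha_{\cA}))(T)$, and inductively $(\phi_3,\phi_1,\phi_2)$ is a section of the analogous torsor for $(\Theta^+_S)^2(\cA_\qt,\cB_\qt,\cC_\qt,\alpha_{\cA})$. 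Applying \Cref{build_alpha_B}\ref{build_alpha_B_iii} three times produces, in turn, the maps with indices shifted cyclically, and at the third stage one recovers the formula $(\phi_2 \times \phi_3) \circ \Psi|_T \circ \Cl(\phi_1)^{-1}$, which is $\alpha_{\cA}|_T$. Since such sections exist on a cover, the uniqueness clause of \Cref{build_alpha_B}\ref{build_alpha_B_iii} forces the global equality $(\Theta^+_S)^3(\alpha_{\cA}) = \alpha_{\cA}$. On morphisms, the iterated shift $(\phi_1,\phi_2,\phi_3) \mapsto (\phi_2,\phi_3,\phi_1) \mapsto (\phi_3,\phi_1,\phi_2) \mapsto (\phi_1,\phi_2,\phi_3)$ is visibly of order three, so $(\Theta^+_S)^3 = \Id$ as functors.

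The in-particular claim then follows formally: the equality $(\Theta^+_S)^3 = \Id$ exhibits $(\Theta^+_S)^2$ as a strict two-sided inverse, so $\Theta^+_S$ is an isomorphism of categories and, in particular, fully faithful and (essentially) surjective. I do not expect a substantial obstacle here; all of the combinatorial content lies in the cyclic-shift compatibility already packaged in \Cref{Theta+_morphisms}, and the only point requiring care is invoking the uniqueness in \Cref{build_alpha_B}\ref{build_alpha_B_iii} to promote the local equality of isomorphisms to a global one.
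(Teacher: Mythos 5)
Your proof is correct and follows essentially the same route as the paper: part (i) via the global section $(\Id,\Id,\Id)$ and the formula from \Cref{build_alpha_B}\ref{build_alpha_B_iii}, and part (ii) via the cyclic shift of local sections together with uniqueness. The one step you leave implicit is that identifying $(\phi_2,\phi_3,\phi_1)$ as a section of $\cIsom(\cM_\qt^3,\Theta^+_S(\cA_\qt,\cB_\qt,\cC_\qt,\alpha_\cA))$ (rather than $\cIsom(\Theta^+_S(\cM_\qt^3),\Theta^+_S(\cdots))$, which is what \Cref{Theta+_morphisms} gives directly) requires invoking part (i); the paper makes this dependence explicit, and you should too.
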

\begin{proof}
\noindent\ref{Theta+_order_3_i}: Let $\Theta^+_S(\cM_\qt^3) = (\Mat_8(\cO)_\qt,\Mat_8(\cO)_\qt,\Mat_8(\cO)_\qt,\Psi')$. We must argue that $\Psi'=\Psi$. However, this is immediate since we have a global section $(\Id,\Id,\Id) \in \cIsom(\cM_\qt^3,\cM_\qt^3)$, and therefore by \Cref{build_alpha_B}\ref{build_alpha_B_iii} we have
\[
\Psi' = (\Id \times \Id)\circ \Psi \circ \Cl(\Id)^{-1} = \Psi.
\]

\noindent\ref{Theta+_order_3_ii}: It is clear from the definition that $\Theta^+_S$ is order $3$ on morphisms. Therefore, given $(\cA_\qt,\cB_\qt,\cC_\qt,\alpha_{\cA})\in \fTrip(S)$ and setting notation
\begin{align*}
\Theta^+_S(\cA_\qt,\cB_\qt,\cC_\qt,\alpha_{\cA}) &= (\cB_\qt,\cC_\qt,\cA_\qt,\alpha_{\cB}), \\
\Theta^+_S(\cB_\qt,\cC_\qt,\cA_\qt,\alpha_{\cB}) &= (\cC_\qt,\cA_\qt,\cB_\qt,\alpha_{\cC}) \text{, and} \\
\Theta^+_S(\cC_\qt,\cA_\qt,\cB_\qt,\alpha_{\cC}) &= (\cA_\qt,\cB_\qt,\cC_\qt,\alpha'_{\cA}),
\end{align*}
we only need to check that $\alpha'_{\cA} = \alpha_{\cA}$. Since $\cM_\qt^3$ is a fixed point of $\Theta^+_S$ by \ref{Theta+_order_3_i}, if we have a section $(\phi_1,\phi_2,\phi_3) \in \cIsom(\cM_\qt^3,(\cA_\qt,\cB_\qt,\cC_\qt,\alpha_{\cA}))(T)$ for some $T\in \Sch_S$, then we also have that
\[
(\phi_3,\phi_1,\phi_2) \in \cIsom(\cM_\qt^3,(\cC_\qt,\cA_\qt,\cB_\qt,\alpha_{\cC}))(T).
\]
Thus by \Cref{build_alpha_B}\ref{build_alpha_B_iii}, locally we have
\[
\alpha'_{\cA}|_T = (\phi_2\times \phi_3) \circ \Psi|_T \circ \Cl(\phi_1)^{-1}
\]
which by \Cref{eq_alpha_A} means that $\alpha'_{\cA} = \alpha_{\cA}$ globally as required.
\end{proof}
In light of \Cref{Theta+_order_3}\ref{Theta+_order_3_ii}, we denote $\Theta^-_S = (\Theta^+_S)^2$. We also define a second, more straightforward, endofunctor on $\fTrip(S)$. Given two algebras $\cB$ and $\cC$, let $\sw \colon \cB\times \cC \to \cC \times \cB$ be the switch isomorphism, $\sw(b,c)=(c,b)$.
\begin{defn}\label{defn_Theta_functor}
We define an endofunctor on $\fTrip(S)$ by
\begin{align*}
\Theta_S \colon \fTrip(S) &\to \fTrip(S) \\
(\cA_\qt,\cB_\qt,\cC_\qt,\alpha_{\cA}) &\mapsto (\cA_\qt,\cC_\qt,\cB_\qt,\sw \circ \alpha_{\cA}) \\
(\phi_1,\phi_2,\phi_3) &\mapsto (\phi_1,\phi_3,\phi_2).
\end{align*}
on objects and morphisms respectively.
\end{defn}
It is clear that this defines an order two endofunctor and that $\Theta_S$ is fully faithful and surjective. It also satisfies the following.

\begin{lem}\label{Theta_functor_properties}
Consider the functor $\Theta_S$ of \ref{defn_Theta_functor}.
\begin{enumerate}[label={\rm(\roman*)}]
\item \label{Theta_functor_properties_i} There are isomorphisms
\begin{align*}
(\pi_{\bO}(\varphi),\pi_{\bO}(\varphi),\pi_{\bO}(\varphi)) &\colon \cM_\qt^3 \iso \Theta_S(\cM_\qt^3)\text{, and} \\
(\pi_{\bO}(\varphi),\pi_{\bO}(\varphi),\pi_{\bO}(\varphi)) &\colon \Theta_S(\cM_\qt^3) \iso \cM_\qt^3
\end{align*}
where $\pi_{\bO}(\varphi)$ is as in \Cref{theta_description}. These isomorphisms are inverses.
\item \label{Theta_functor_properties_ii} Also consider $\Theta^+_S$ of \Cref{defn_Theta+_functor}. We have $\Theta^+_S \circ \Theta_S = \Theta_S \circ \Theta^-_S$.
\end{enumerate}
\end{lem}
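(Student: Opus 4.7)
The plan is to address the two parts in order, with (i) providing the ingredient that makes (ii) essentially mechanical.

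For part (i), the relevant input is \Cref{split_Clifford_switch_iso}, which gives the identity $\Psi \circ \Cl(\pi_{\bO}(\varphi)) = \sw \circ (\pi_{\bO}(\varphi) \times \pi_{\bO}(\varphi)) \circ \Psi$. Composing $\sw$ on the left and using $\sw^2 = \Id$ rewrites this as $(\pi_{\bO}(\varphi) \times \pi_{\bO}(\varphi)) \circ \Psi = (\sw \circ \Psi) \circ \Cl(\pi_{\bO}(\varphi))$, which is exactly the square that makes $(\pi_{\bO}(\varphi),\pi_{\bO}(\varphi),\pi_{\bO}(\varphi))$ a morphism $\cM_\qt^3 \to \Theta_S(\cM_\qt^3) = (\Mat_8(\cO)_\qt, \Mat_8(\cO)_\qt, \Mat_8(\cO)_\qt, \sw \circ \Psi)$. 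The same identity gives the reverse morphism $\Theta_S(\cM_\qt^3) \to \cM_\qt^3$, and the two compose to the identity in both directions because $\varphi$ is an involution.

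For part (ii), the equality on morphisms is immediate: both $\Theta^+_S \circ \Theta_S$ and $\Theta_S \circ (\Theta^+_S)^2$ send $(\phi_1,\phi_2,\phi_3)$ to $(\phi_3,\phi_2,\phi_1)$ by tracking the permutations of entries. The content lies in the equality on objects: writing $\alpha_{\cB} = \Theta^+_S(\alpha_{\cA})$ and $\alpha_{\cC} = \Theta^+_S(\alpha_{\cB})$, and letting $\alpha'$ denote the isomorphism produced when $\Theta^+_S$ is applied to $\Theta_S(\cA_\qt,\cB_\qt,\cC_\qt,\alpha_{\cA}) = (\cA_\qt,\cC_\qt,\cB_\qt,\sw \circ \alpha_{\cA})$, we must show $\alpha' = \sw \circ \alpha_{\cC}$.

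To verify this I would argue locally using the uniqueness in \Cref{build_alpha_B}\ref{build_alpha_B_iii}. Given a section $(\phi_1,\phi_2,\phi_3) \in \cIsom(\cM_\qt^3,(\cA_\qt,\cB_\qt,\cC_\qt,\alpha_{\cA}))(T)$, composing the morphism from part (i) with $\Theta_S(\phi_1,\phi_2,\phi_3) = (\phi_1,\phi_3,\phi_2)$ yields a section $(\phi_1 \circ \pi_{\bO}(\varphi), \phi_3 \circ \pi_{\bO}(\varphi), \phi_2 \circ \pi_{\bO}(\varphi))$ of $\cIsom(\cM_\qt^3,(\cA_\qt,\cC_\qt,\cB_\qt,\sw \circ \alpha_{\cA}))(T)$. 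Substituting this into the formula of \Cref{build_alpha_B}\ref{build_alpha_B_iii}, then using \Cref{split_Clifford_switch_iso} to eliminate the $\pi_{\bO}(\varphi)$ factors and moving $\sw$ across the product $\phi_2 \times \phi_1$, reduces the expression for $\alpha'|_T$ to $\sw \circ (\phi_1 \times \phi_2) \circ \Psi|_T \circ \Cl(\phi_3)^{-1}$. Applying \Cref{build_alpha_B}\ref{build_alpha_B_iii} a second time, now to the section $(\phi_2,\phi_3,\phi_1) \in \cIsom(\cM_\qt^3,\Theta^+_S(\cA_\qt,\cB_\qt,\cC_\qt,\alpha_{\cA}))(T)$, identifies this with $\sw \circ \alpha_{\cC}|_T$, and gluing over a cover finishes the proof. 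The only real obstacle is bookkeeping the permutations of triples across the two iterated applications of $\Theta^+_S$; the underlying algebra collapses to the single identity provided by \Cref{split_Clifford_switch_iso}.
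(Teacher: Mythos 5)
Your proposal is correct and follows essentially the same route as the paper: part (i) is the same one-line reduction to \Cref{split_Clifford_switch_iso} plus $\pi_{\bO}(\varphi)^2 = \Id$, and for part (ii) both you and the paper compute $\Theta^+_S(\sw\circ\alpha_{\cA})$ locally from the section $(\phi_1\circ\pi_{\bO}(\varphi),\phi_3\circ\pi_{\bO}(\varphi),\phi_2\circ\pi_{\bO}(\varphi))$ via \Cref{build_alpha_B}\ref{build_alpha_B_iii}, then cancel the $\pi_{\bO}(\varphi)$'s using \Cref{split_Clifford_switch_iso} and slide $\sw$ across $\phi_2\times\phi_1$ to match $\sw\circ\Theta^-_S(\alpha_{\cA})$. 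The only cosmetic difference is your explicit introduction of the names $\alpha'$ and $\alpha_{\cC}$ where the paper carries a single chain of equalities.
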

\begin{proof}
\noindent\ref{Theta_functor_properties_i}: This follows immediately from \Cref{split_Clifford_switch_iso} and the fact that $\pi_{\bO}(\varphi)$ is order $2$.

\noindent\ref{Theta_functor_properties_ii}: Let $(\cA_\qt,\cB_\qt,\cC_\qt,\alpha_{\cA})\in \fTrip(S)$ be an object and let
\[
\cP = \cIsom(\cM_\qt^3,(\cA_\qt,\cB_\qt,\cC_\qt,\alpha_{\cA})).
\]
Whenever we have a section $(\phi_1,\phi_2,\phi_3) \in \cP$, we will also have a section
\[
(\phi_1,\phi_3,\phi_2) \in \cIsom(\Theta_S(\cM_\qt^3),(\cA_\qt,\cC_\qt,\cB_\qt,\sw\circ \alpha_{\cA}))
\]
which by composing with the isomorphism from \ref{Theta_functor_properties_i} means we will have a section
\[
(\phi_1\circ \pi_{\bO}(\varphi),\phi_3\circ \pi_{\bO}(\varphi),\phi_2\circ \pi_{\bO}(\varphi)) \in \cIsom(\cM_\qt^3,(\cA_\qt,\cC_\qt,\cB_\qt,\sw\circ \alpha_{\cA})).
\]
Therefore by \Cref{build_alpha_B}\ref{build_alpha_B_iii}, $\Theta^+_S(\sw\circ \alpha_{\cA})$ will be locally described by
\begin{align*}
\Theta^+_S(\sw\circ \alpha_{\cA}) &= (\phi_2\circ \pi_{\bO}(\varphi))\times (\phi_1\circ \pi_{\bO}(\varphi)) \circ \Psi \circ \Cl(\phi_3\circ \pi_{\bO}(\varphi))^{-1} \\
&= (\phi_2\times \phi_1)\circ (\pi_{\bO}(\varphi) \times \pi_{\bO}(\varphi)) \circ \Psi \circ \Cl(\pi_{\bO}(\varphi))^{-1} \circ \Cl(\phi_3)^{-1} \\
&= (\phi_2\times \phi_1)\circ (\sw \circ \Psi) \circ \Cl(\phi_3)^{-1} \\
&= \sw \circ (\phi_1\times \phi_2)\circ \Psi \circ \Cl(\phi_3)^{-1}.
\end{align*}
On the other hand, $\Theta^-_S(\alpha_{\cA})$ will be locally described by
\[
\Theta^-_S(\alpha_{\cA}) = (\phi_1\times \phi_2)\circ \Psi \circ \Cl(\phi_3)^{-1}
\]
and so $\Theta_S(\Theta^-_S(\alpha_{\cA})) = \sw \circ (\phi_1\times \phi_2)\circ \Psi \circ \Cl(\phi_3)^{-1}$. Therefore,
\begin{align*}
\Theta^+_S(\Theta_S(\cA_\qt,\cB_\qt,\cC_\qt,\alpha_{\cA})) &= (\cC_\qt,\cB_\qt,\cA_\qt,\Theta^+_S(\sw \circ \alpha_{\cA})) \\
&= (\cC_\qt,\cB_\qt,\cA_\qt,\sw \circ \Theta^-_S(\alpha_{\cA}))\\
&= \Theta_S(\Theta^-_S(\cA_\qt,\cB_\qt,\cC_\qt,\alpha_{\cA})),
\end{align*}
which finishes the proof.
\end{proof}

\subsection{Stack Endomorphisms and Cohomology}
Since our base scheme $S$ is arbitrary, the endofunctors $\Theta^+_S, \Theta_S \colon \fTrip(S) \to \fTrip(S)$ constructed in \Cref{endo_Trip(S)} have obvious analogues $\Theta^+_T, \Theta_T \colon \fTrip(T) \to \fTrip(T)$ for any $T\in \Sch_S$. These functors can be combined into endomorphisms of the gerbe $\fTrip$ of \Cref{defn_Trip}.
\begin{lem}\label{endo_compatibility}
Consider a morphism in $\fTrip$
\[
(g,\phi) \colon (T',(\cA'_\qt,\cB'_\qt,\cC'_\qt,\alpha_{\cA'})) \to (T,(\cA_\qt,\cB_\qt,\cC_\qt,\alpha_{\cA})). 
\]
Recall that $g\colon T'\to T$ is a morphism of $S$--schemes and $\phi \colon (\cA'_\qt,\cB'_\qt,\cC'_\qt,\alpha_{\cA'}) \to (\cA_\qt|_{T'},\cB_\qt|_{T'},\cC_\qt|_{T'},\alpha_{\cA}|_{T'})$ is an isomorphism in $\fTrip(T')$.
\begin{enumerate}[label={\rm(\roman*)}]
\item \label{endo_compatibility_i} There is an equality
\[
\Theta^+_{T'}(\cA_\qt|_{T'},\cB_\qt|_{T'},\cC_\qt|_{T'},\alpha_{\cA}|_{T'}) = (\Theta^+_T(\cA_\qt,\cB_\qt,\cC_\qt,\alpha_{\cA}))|_{T'}.
\]
\item \label{endo_compatibility_ii} $(g,\Theta^+_{T'}(\phi)) \colon (T',(\cB'_\qt,\cC'_\qt,\cA'_\qt,\alpha_{\cB'})) \to (T,(\cB_\qt,\cA_\qt,\cC_\qt,\alpha_{\cB}))$ is a morphism in $\fTrip$.
\item \label{endo_compatibility_iii} There is an equality
\[
\Theta_{T'}(\cA_\qt|_{T'},\cB_\qt|_{T'},\cC_\qt|_{T'},\alpha_{\cA}|_{T'}) = (\Theta_T(\cA_\qt,\cB_\qt,\cC_\qt,\alpha_{\cA}))|_{T'}.
\]
\item \label{endo_compatibility_iv} $(g,\Theta_{T'}(\phi)) \colon (T',(\cA'_\qt,\cC'_\qt,\cA'_\qt,\sw\circ\alpha_{\cA'})) \to (T,(\cA_\qt,\cC_\qt,\cB_\qt,\sw\circ\alpha_{\cA}))$ is a morphism in $\fTrip$.
\end{enumerate}
\end{lem}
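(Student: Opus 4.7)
My plan is to verify (i) and (iii) first, since (ii) and (iv) will be almost immediate consequences once the underlying equalities of objects are in hand. Part (iii) is the easier one: by definition, $\Theta_T(\cA_\qt,\cB_\qt,\cC_\qt,\alpha_{\cA}) = (\cA_\qt,\cC_\qt,\cB_\qt,\sw\circ\alpha_{\cA})$, and restriction to $T'$ acts componentwise. Since the switch isomorphism $\sw$ is defined on any product and commutes with pullback in the evident way, one has $(\sw\circ\alpha_{\cA})|_{T'} = \sw\circ(\alpha_{\cA}|_{T'})$, and the equality in (iii) is tautological.

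For (i), the content is to check that the formula defining $\alpha_{\cB}$ in \Cref{build_alpha_B}\ref{build_alpha_B_iii} is compatible with restriction. The key observation is that the torsor
\[
\cP = \cIsom\big(\cM_\qt^3,(\cA_\qt,\cB_\qt,\cC_\qt,\alpha_{\cA})\big)
\]
on $\Sch_T$ restricts to
\[
\cP|_{T'} = \cIsom\big(\cM_\qt^3|_{T'},(\cA_\qt|_{T'},\cB_\qt|_{T'},\cC_\qt|_{T'},\alpha_{\cA}|_{T'})\big)
\]
on $\Sch_{T'}$. Fix a cover $\{U_i \to T\}_{i\in I}$ over which $\cP$ has sections $(\phi_{i1},\phi_{i2},\phi_{i3})$; pulling back along $T'\to T$ yields a cover $\{U_i\times_T T'\to T'\}_{i\in I}$ with sections $(\phi_{i1}|,\phi_{i2}|,\phi_{i3}|)$ of $\cP|_{T'}$. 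Over $U_i$, the isomorphism $\Theta^+_T(\alpha_{\cA})$ is given by $(\phi_{i3}\times\phi_{i1})\circ\Psi|_{U_i}\circ\Cl(\phi_{i2})^{-1}$, while over $U_i\times_T T'$ the isomorphism $\Theta^+_{T'}(\alpha_{\cA}|_{T'})$ is given by the same formula restricted. Hence these two isomorphisms agree locally on a cover, and by uniqueness they agree globally, proving (i).

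With (i) and (iii) established, parts (ii) and (iv) reduce to a routine check. By the very definition of morphisms in $\fTrip$, we must verify that the square defining $(g,\Theta^+_{T'}(\phi))$ or $(g,\Theta_{T'}(\phi))$ commutes in $\fTrip(T')$. But $\phi$ is an isomorphism in $\fTrip(T')$ from $(\cA'_\qt,\cB'_\qt,\cC'_\qt,\alpha_{\cA'})$ to $(\cA_\qt,\cB_\qt,\cC_\qt,\alpha_{\cA})|_{T'}$, and by (i) respectively (iii) the target of $\Theta^+_{T'}(\phi)$ (resp. $\Theta_{T'}(\phi)$) coincides with the restriction of $\Theta^+_T$ (resp. $\Theta_T$) applied to the ambient object. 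Thus $\Theta^+_{T'}(\phi)$ really is a morphism in $\fTrip(T')$ with codomain $(\Theta^+_T(\cA_\qt,\cB_\qt,\cC_\qt,\alpha_{\cA}))|_{T'}$, which is exactly what is needed for $(g,\Theta^+_{T'}(\phi))$ to be a morphism in $\fTrip$, and similarly for $\Theta$. The commutativity of the required diagrams is precisely \Cref{Theta+_morphisms} applied over $T'$ in the $\Theta^+$ case, and is immediate from the definition of $\Theta$ in the other case.

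The main subtle point is (i), but once one recognises that the torsor $\cP$ and the defining formula for $\alpha_{\cB}$ are both manifestly natural under pullback along $T'\to T$, there is no real obstacle; the remaining parts are formal.
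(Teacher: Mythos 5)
Your proposal is correct and takes essentially the same approach as the paper: part (iii) is handled by noting $\sw$ is stable under base change, part (i) by choosing a cover of $T$ over which $\cIsom(\cM_\qt^3,(\cA_\qt,\cB_\qt,\cC_\qt,\alpha_\cA))$ has sections, pulling back to a cover of $T'$, and comparing the defining formula of $\alpha_\cB$ from \Cref{build_alpha_B}\ref{build_alpha_B_iii} on both sides, and parts (ii), (iv) then follow formally (the paper states they follow ``immediately''; your explicit appeal to \Cref{Theta+_morphisms} for the commutativity of the relevant square in $\fTrip(T')$ is precisely the implicit step the paper elides).
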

\begin{proof}
\noindent\ref{endo_compatibility_i}: First, we know that $\Theta^+_T(\cA_\qt,\cB_\qt,\cC_\qt,\alpha_{\cA}) = (\cB_\qt,\cC_\qt,\cA_\qt,\alpha_{\cB})$ and therefore
\[
(\Theta^+_T(\cA_\qt,\cB_\qt,\cC_\qt,\alpha_{\cA}))|_{T'} = (\cB_\qt|_{T'},\cC_\qt|_{T'},\cA_\qt|_{T'},\alpha_{\cB}|_{T'}).
\]
Thus, we need to show that $\alpha_{\cB}|_{T'} = \Theta^+_{T'}(\alpha_{\cA}|_{T'})$. Let $\{T_i \to T\}_{i\in I}$ be a cover over which $\cIsom(\cM_\qt^3|_T, (\cA_\qt,\cB_\qt,\cC_\qt,\alpha_{\cA}))$ has sections. Over any such $T_i$, there will be a section $(\phi_1,\phi_2,\phi_3)$ and by definition of $\alpha_{\cB}$ from \Cref{build_alpha_B}\ref{build_alpha_B_iii} we have
\[
\alpha_{\cB}|_{T_i} = (\phi_3\times \phi_1)\circ \Psi|_{T_i} \circ \Cl(\phi_2)^{-1}.
\]
However, $\{T'\times_T T_i \to T'\}_{i\in I}$ is a cover of $T'$ and the sheaf
\[
\cIsom(\cM_\qt^3|_{T'},(\cA_\qt|_{T'},\cB_\qt|_{T'},\cC_\qt|_{T'},\alpha_{\cA}|_{T'}))
\]
will have sections over this cover, namely those sections coming from restriction along the morphisms $T'\times_T T_i \to T_i$. Therefore, for such a $T'\times_T T_i$, we will have a section $(\phi_1|_{T'\times_T T_i},\phi_2|_{T'\times_T T_i},\phi_3|_{T'\times_T T_i})$. Thus,
\begin{align*}
\Theta^+_{T'}(\alpha_{\cA}|_{T'})|_{T'\times_T T_i} &= (\phi_3|_{T'\times_T T_i}\times \phi_1|_{T'\times_T T_i})\circ \Psi|_{T'\times_T T_i} \circ \Cl(\phi_2|_{T'\times_T T_i})^{-1} \\
&= (\alpha_{\cB}|_{T_i})|_{T'\times_T T_i} \\
&= (\alpha_{\cB}|_{T'})|_{T'\times_T T_i}
\end{align*}
which means that $\Theta^+_{T'}(\alpha_{\cA}|_{T'})$ and $\alpha_{\cB}|_{T'}$ agree on a cover of $T'$. Hence $\Theta^+_{T'}(\alpha_{\cA}|_{T'}) = \alpha_{\cB}|_{T'}$ as desired.

\noindent\ref{endo_compatibility_ii}: This follows immediately from \ref{endo_compatibility_i}.

\noindent\ref{endo_compatibility_iii}: This is straightforward since the switch isomorphism is stable under base change. In particular, $(\sw\circ \alpha_{\cA})|_{T'} = \sw \circ \alpha_{\cA}|_{T'}$.

\noindent\ref{endo_compatibility_iv}: This follows immediately from \ref{endo_compatibility_iii}.
\end{proof}

\begin{defn}\label{defn_endo_Trip}
We define the following two endomorphisms of the gerbe $\fTrip$.
\begin{enumerate}[label={\rm(\roman*)}]
\item \label{defn_endo_Trip_i}
\hfill $\begin{aligned}[t]
\Theta^+ \colon \fTrip &\to \fTrip \\
(T,(\cA_\qt,\cB_\qt,\cC_\qt,\alpha_{\cA})) &\mapsto (T,\Theta^+_T(\cA_\qt,\cB_\qt,\cC_\qt,\alpha_{\cA})) \\
(g,\phi) &\mapsto (g,\Theta_{T'}^+(\phi))
\end{aligned}$ \hfill\null \vspace{2ex}
\item \label{defn_endo_Trip_ii}
\hfill $\begin{aligned}[t]
\Theta \colon \fTrip &\to \fTrip \\
(T,(\cA_\qt,\cB_\qt,\cC_\qt,\alpha_{\cA})) &\mapsto (T,\Theta_T(\cA_\qt,\cB_\qt,\cC_\qt,\alpha_{\cA})) \\
(g,\phi) &\mapsto (g,\Theta_{T'}(\phi)).
\end{aligned}$\hfill\null
\end{enumerate}
\end{defn}
It is clear from construction that $\Theta^+|_{\fTrip(T)} \colon \fTrip(T) \to \fTrip(T)$ is equal to $\Theta^+_T$, and so these endomorphisms agree with our previous endofunctors on fibers. The same holds for $\Theta$, it restricts to $\Theta_T$ on $\fTrip(T)$.

\begin{prop}\label{endo_Trip_group_homs}
Consider the endomorphism $\Theta^+$ of \Cref{defn_endo_Trip}\ref{defn_endo_Trip_i}.
\begin{enumerate}[label={\rm(\roman*)}]
\item \label{endo_Trip_group_homs_i} $\Theta^+$ is order three, i.e., $(\Theta^+)^3 = \Id$. In particular, $\Theta^+$ is an isomorphism.
\item \label{endo_Trip_group_homs_ii} Viewing the automorphism sheaf of the split object $\bAut(S,\cM_\qt^3)$ as $\PGO_8^+$ via \Cref{Trip_properties}\ref{Trip_properties_ii}, the induced group automorphism
\[
\bAut(\Theta^+)\colon \PGO_8^+ \to \PGO_8^+
\]
is $\theta^+$.
\item \label{endo_Trip_group_homs_iii} $\Theta$ is order two and hence an isomorphism. Furthermore, $\Theta^+ \circ \Theta = \Theta \circ \Theta^-$, where we define $\Theta^- = (\Theta^+)^2$.
\item \label{endo_Trip_group_homs_iv} Using the isomorphisms of \Cref{Theta_functor_properties}\ref{Theta_functor_properties_i}, we also identify $\bAut(\Theta_S(\cM_\qt^3))\cong \PGO_8^+$. Then, the induced group automorphism
\[
\bAut(\Theta) \colon \PGO_8^+ \to \PGO_8^+
\]
is $\theta$.
\end{enumerate}
\end{prop}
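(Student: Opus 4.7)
The plan is to reduce parts (i) and (iii) to the fiberwise statements already proved in \Cref{Theta+_order_3} and \Cref{Theta_functor_properties}, and then to compute $\bAut(\Theta^+)$ and $\bAut(\Theta)$ directly using the explicit form of sections of $\bAut(S,\cM_\qt^3)$ obtained in the proof of \Cref{Trip_properties}\ref{Trip_properties_ii}.

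For (i), by construction of $\Theta^+$ in \Cref{defn_endo_Trip}\ref{defn_endo_Trip_i}, both the object part and the morphism part of $(\Theta^+)^k$ are given by iterating $\Theta^+_T$ fiberwise. Since each $\Theta^+_T$ is order three by \Cref{Theta+_order_3}\ref{Theta+_order_3_ii}, iterating three times returns the triple to its original cyclic order, so $(\Theta^+)^3 = \Id$ and $\Theta^+$ is an isomorphism. Part (iii) follows by the same fiberwise reduction: $\Theta^2 = \Id$ because $\sw^2 = \Id$ and swapping the second and third components twice is the identity, while the relation $\Theta^+\circ \Theta = \Theta \circ \Theta^-$ is exactly the content of \Cref{Theta_functor_properties}\ref{Theta_functor_properties_ii} asserted on each fiber.

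For (ii), recall from the proof of \Cref{Trip_properties}\ref{Trip_properties_ii} that every section of $\bAut(S,\cM_\qt^3)(T)$ takes the form $(\phi_1,\theta^+(\phi_1),\theta^-(\phi_1))$ for a unique $\phi_1 \in \PGO_8^+(T)$, and the isomorphism $\bAut(S,\cM_\qt^3) \cong \PGO_8^+$ is projection onto the first component. Since $\cM_\qt^3$ is fixed by $\Theta^+$ by \Cref{Theta+_order_3}\ref{Theta+_order_3_i}, the endomorphism $\Theta^+$ cyclically permutes such a triple to $(\theta^+(\phi_1),\theta^-(\phi_1),\phi_1)$, whose first coordinate is $\theta^+(\phi_1)$. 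Therefore $\bAut(\Theta^+) = \theta^+$ as claimed.

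For (iv), the principal subtlety, and the main obstacle, is that $\Theta_S(\cM_\qt^3)\neq\cM_\qt^3$, so the identification $\bAut(\Theta_S(\cM_\qt^3)) \cong \PGO_8^+$ must be made through the self-inverse isomorphism $\psi = (\pi_\bO(\varphi),\pi_\bO(\varphi),\pi_\bO(\varphi))\colon \cM_\qt^3 \iso \Theta_S(\cM_\qt^3)$ furnished by \Cref{Theta_functor_properties}\ref{Theta_functor_properties_i}. Starting from $\phi_1 \in \PGO_8^+(T)$, represented by $(\phi_1,\theta^+(\phi_1),\theta^-(\phi_1))$ on $\cM_\qt^3|_T$, the endomorphism $\Theta$ produces the triple $(\phi_1,\theta^-(\phi_1),\theta^+(\phi_1))$ as an automorphism of $\Theta_S(\cM_\qt^3)|_T$, and conjugating componentwise by $\psi$ gives an automorphism of $\cM_\qt^3|_T$ whose first coordinate is
\[
\pi_\bO(\varphi) \circ \phi_1 \circ \pi_\bO(\varphi) = \Inn(\pi_\bO(\varphi))(\phi_1) = \theta(\phi_1)
\]
by \Cref{theta_description}\ref{theta_description_iii}. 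To confirm that this conjugated triple is a legitimate section of $\bAut(S,\cM_\qt^3)(T)$, one must check that its remaining two coordinates equal $\theta^+(\theta(\phi_1))$ and $\theta^-(\theta(\phi_1))$; this reduces to the standard $\SS_3$ relations $\theta\circ\theta^-\circ \theta^{-1} = \theta^+$ and $\theta\circ\theta^+\circ \theta^{-1} = \theta^-$, which hold by the semidirect product structure $\cAut(\PGO_8^+) = \PGO_8^+\rtimes \SS_3$ recorded in \Cref{aut_spin_PGO}. This yields $\bAut(\Theta) = \theta$ and completes the plan.
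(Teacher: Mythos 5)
Your proposal is correct and follows essentially the same path as the paper's own proof: parts (i) and (iii) are reduced fiberwise to \Cref{Theta+_order_3} and \Cref{Theta_functor_properties}, and parts (ii) and (iv) trace a section $\phi$ through the explicit chain of identifications, using that automorphisms of $\cM_\qt^3$ are triples $(\phi,\theta^+(\phi),\theta^-(\phi))$ and invoking \Cref{theta_description}\ref{theta_description_iii} for the conjugation by $\pi_\bO(\varphi)$.
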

\begin{proof}
\noindent\ref{endo_Trip_group_homs_i}: Since $\Theta^+$ is given by $\Theta^+_T$ on fibers, this follows from \Cref{Theta+_order_3}\ref{Theta+_order_3_ii}.

\noindent\ref{endo_Trip_group_homs_ii}: For $T\in \Sch_S$, the group homomorphism $\bAut(\Theta^+)(T)$ will be the composition
\[
\begin{tikzcd}
\PGO_8^+(T) \arrow{r}{\sim} & \bAut(S,\cM_\qt^3)(T) \arrow{r}{\Theta^+_T} & \bAut(S,\cM_\qt^3)(T) \arrow{r}{\sim} & \PGO_8^+(T) \\[-4.5ex]
\phi \arrow[mapsto]{r} & (\phi,\theta^+(\phi),\theta^-(\phi)) \arrow[mapsto]{r} & (\theta^+(\phi),\theta^-(\phi),\phi) \arrow[mapsto]{r} & \theta^+(\phi).
\end{tikzcd}
\]
Hence, we see that we obtain the map $\theta^+ \colon \PGO_8^+ \iso \PGO_8^+$ overall.

\noindent\ref{endo_Trip_group_homs_iii}: Similarly as in \ref{endo_Trip_group_homs_i}, the first claim holds because $\Theta$ restricts to $\Theta_T$ on fibers, each of which is order two. Likewise, the second claim follows because it holds over each fiber by \Cref{Theta_functor_properties}\ref{Theta_functor_properties_ii}.

\noindent\ref{endo_Trip_group_homs_iv}: For $T\in \Sch_S$, the group homomorphism $\bAut(\Theta)(T)$ will be the composition
\begin{align*}
\PGO_8^+(T) &\iso \bAut(S,\cM_\qt^3)(T) \xrightarrow{\Theta}  \bAut(S,\Theta_S(\cM_\qt^3))(T) \\
&\iso \bAut(S,\cM_\qt^3)(T) \iso \PGO_8^+(T)
\end{align*}
which sends
\begin{align*}
\phi &\mapsto (\phi,\theta^+(\phi),\theta^-(\phi)) \\
&\mapsto (\phi,\theta^-(\phi),\theta^+(\phi)) \\ 
&\mapsto (\pi_{\bO}(\varphi)\circ\phi\circ\pi_{\bO}(\varphi),\pi_{\bO}(\varphi)\circ\theta^-(\phi)\circ\pi_{\bO}(\varphi),\pi_{\bO}(\varphi)\circ\theta^+(\phi)\circ\pi_{\bO}(\varphi)) \\
&\mapsto \pi_{\bO}(\varphi)\circ\phi\circ\pi_{\bO}(\varphi).
\end{align*}
Since $\pi_{\bO}(\varphi)\circ\phi\circ\pi_{\bO}(\varphi) = \theta(\phi)$ by \Cref{theta_description}\ref{theta_description_iii}, these calculations for each $T \in \Sch_S$ show that $\bAut(\Theta)=\theta$ as morphisms of sheaves.
\end{proof}

\begin{cor}\label{PGO+_cohomology}
View $H^1(S,\PGO_8^+)$ as the set of isomorphism classes of $\fTrip(S)$. The maps on cohomology induced by the automorphisms $\theta^+$ and $\theta$ of $\PGO_8^+$ are
\begin{align*}
\widetilde{\theta^+} \colon H^1(S,\PGO_8^+) &\to H^1(S,\PGO_8^+) \\
[(\cA_\qt,\cB_\qt,\cC_\qt,\alpha_{\cA})] &\mapsto [(\cB_\qt,\cC_\qt,\cA_\qt,\alpha_{\cB})] 
\end{align*}
and
\begin{align*}
\widetilde{\theta} \colon H^1(S,\PGO_8^+) &\to H^1(S,\PGO_8^+) \\
[(\cA_\qt,\cB_\qt,\cC_\qt,\alpha_{\cA})] &\mapsto [(\cA_\qt,\cC_\qt,\cB_\qt,\sw \circ \alpha_{\cA})] 
\end{align*}
respectively.
\end{cor}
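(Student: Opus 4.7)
The plan is to apply \Cref{stack_cohomology} to the stack endomorphisms $\Theta^+$ and $\Theta$ of $\fTrip$ constructed in \Cref{defn_endo_Trip}, so that the corollary becomes a direct unpacking of what these functors do on objects.

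First, I would record the identification needed for the statement. By \Cref{Trip_properties}\ref{Trip_properties_i}, $\fTrip$ is a gerbe, and by \Cref{Trip_properties}\ref{Trip_properties_ii} the automorphism sheaf of the split object $(S,\cM_\qt^3)$ is $\PGO_8^+$. The equivalence of $\fTrip$ with the gerbe of $\PGO_8^+$--torsors that follows from this identification (via $\cIsom(\cM_\qt^3, \und)$) allows us to view $H^1(S,\PGO_8^+)$ as the pointed set of isomorphism classes of objects in $\fTrip(S)$, as noted in the text immediately following \Cref{Trip_properties}.

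Next, I would invoke \Cref{endo_Trip_group_homs}\ref{endo_Trip_group_homs_ii} and \ref{endo_Trip_group_homs_iv}, which state that the group automorphisms induced on $\bAut(S,\cM_\qt^3) \cong \PGO_8^+$ by $\Theta^+$ and $\Theta$ are exactly $\theta^+$ and $\theta$ respectively. Applying \Cref{stack_cohomology} to the endomorphism $\Theta^+ \colon \fTrip \to \fTrip$ then yields that the induced map on cohomology is
\[
\widetilde{\theta^+}\colon [x] \mapsto [\Theta^+(x)],
\]
and likewise $\widetilde{\theta}\colon [x] \mapsto [\Theta(x)]$. The explicit formulas for these maps on isomorphism classes of quadruples are then immediate from the definitions of $\Theta^+$ and $\Theta$ on objects given in \Cref{defn_Theta+_functor} and \Cref{defn_Theta_functor} (where $\alpha_{\cB}$ is the isomorphism built in \Cref{build_alpha_B}\ref{build_alpha_B_iii}).

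There is no real obstacle here: the heavy lifting was already done in constructing $\Theta^+$ and $\Theta$ and in computing $\bAut(\Theta^+)$ and $\bAut(\Theta)$ in \Cref{endo_Trip_group_homs}. The only thing to take some care with is confirming that the identification $H^1(S,\PGO_8^+) \cong \pi_0(\fTrip(S))$ used to state the corollary is precisely the one induced by $\bAut(S,\cM_\qt^3) \cong \PGO_8^+$ from \Cref{Trip_properties}\ref{Trip_properties_ii}, which is the identification under which \Cref{stack_cohomology} applies verbatim.
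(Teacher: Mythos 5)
Your proposal matches the paper's proof: cite \Cref{endo_Trip_group_homs}\ref{endo_Trip_group_homs_ii} and \ref{endo_Trip_group_homs_iv} for the induced group automorphisms and then apply \Cref{stack_cohomology}. The extra remarks on pinning down the identification $H^1(S,\PGO_8^+)\cong \pi_0(\fTrip(S))$ are helpful but not a deviation in approach.
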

\begin{proof}
Since we know the induced group homomorphism from \Cref{endo_Trip_group_homs}\ref{endo_Trip_group_homs_ii} and \ref{endo_Trip_group_homs_iv}, the claim follows by applying \Cref{stack_cohomology}.
\end{proof}

\section{Trialitarian Algebras}\label{Trialitarian_Algebras}
In the same style as the previous section, but now considering the group $\PGO_8^+\rtimes \SS_3$, we define groupoids and a stack of trialitarian algebras and show that we obtain categories equivalent to the groupoid or stack of $\PGO_8^+\rtimes \SS_3$--torsors. In \cite[\S 43]{KMRT}, underlying a trialitarian $\FF$--algebra is an Azumaya algebra over a degree $3$ \'etale extension of $\FF$. Likewise, our definition of a trialitarian algebra over a scheme $S$ involves a degree $3$ \'etale extension $L \to S$. We first define some morphisms associated to such a cover and then use those morphisms to define trialitarian algebras. Our newly defined stack of trialitarian algebras will be equivalent to both the stack of simply connected groups of type $D_4$ as well as the stack of adjoint groups of type $D_4$ since they are all equivalent to the stack of $\PGO_8^+\rtimes \SS_3$--torsors by definition. In each case, we provide a concrete realization of these equivalences by defining the adjoint and simply connected group associated to a trialitarian algebra. All of our definitions in this section are inspired by their analogues in \cite{KMRT}, with modifications made to work in our setting.

\subsection{The Definition via the Trialitarian Cover}
We recall the background on finite \'etale covers from \Cref{etale_covers}. In particular, recall the morphisms $\pi,\tau \colon S^{\sqcup 6} \to S^{\sqcup 3}$. Let $\cE$ be an Azumaya $\cO|_{S^{\sqcup 3}}$--algebra, so $\cE$ is of the form
\begin{align*}
\cE \colon \Sch_{S^{\sqcup 3}} &\to \Ab \\
U\sqcup V \sqcup W &\mapsto \cA(U)\times \cB(V) \times \cC(W)
\end{align*} 
where $\cA$, $\cB$, and $\cC$ are Azumaya $\cO$--algebras. We denote this by $\cE = (\cA,\cB,\cC)$.

\begin{lem}\label{split_E_twist}
Let $\cE = (\cA,\cB,\cC)$ be an Azumaya $\cO|_{S^{\sqcup 3}}$--algebra. Then
\[
\pi_*(\tau^*(\cE)) = (\cB\times\cC, \cC\times\cA, \cA \times \cB).
\]
\end{lem}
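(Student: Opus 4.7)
The plan is to verify the formula by direct computation, tracing through the explicit definitions of $\pi$ and $\tau$ given in the preliminaries on \'etale covers. Since both morphisms are affine (indeed finite \'etale), the pullback $\tau^*$ and pushforward $\pi_*$ act componentwise on the decomposition of $S^{\sqcup 3}$ and $S^{\sqcup 6}$ into connected summands.

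First, I would compute $\tau^*(\cE)$ on each of the six components of $S^{\sqcup 6}$. Since $\tau$ sends $S_{i,j}$ to $S_j$ via the identity, the pullback on the summand $S_{i,j}$ is just the restriction of $\cE$ to the $j$-th component $S_j$. Using the ordering of components from \eqref{eq_S6_order}, this yields
\[
\tau^*(\cE) = (\cB,\cC,\cA,\cC,\cA,\cB)
\]
as an Azumaya $\cO|_{S^{\sqcup 6}}$--algebra. (Equivalently, this corresponds to the map $\tau'\colon \cO^3 \to \cO^6$ applied with $\cE$ in place of $\cO^3$.)

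Next, I would compute $\pi_*$ of this algebra. Since $\pi \colon S^{\sqcup 6} \to S^{\sqcup 3}$ sends $S_{i,j}$ to $S_i$, the preimage of $S_i$ is $S_{i,j_1}\sqcup S_{i,j_2}$ with $\{j_1,j_2\} = \{1,2,3\}\setminus\{i\}$, and the pushforward on each summand $S_i$ is the product of the algebras on $S_{i,j_1}$ and $S_{i,j_2}$. Reading off from $\tau^*(\cE)$ above:
\begin{align*}
\pi^{-1}(S_1) &= S_{1,2}\sqcup S_{1,3}, \quad \text{giving } \cB\times \cC,\\
\pi^{-1}(S_2) &= S_{2,3}\sqcup S_{2,1}, \quad \text{giving } \cC\times \cA,\\
\pi^{-1}(S_3) &= S_{3,1}\sqcup S_{3,2}, \quad \text{giving } \cA\times \cB.
\end{align*}
Assembling these gives the claimed formula. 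There is no real obstacle here; the only care needed is to respect the chosen ordering of the six summands of $S^{\sqcup 6}$ in \eqref{eq_S6_order} so that the factors of each product come out in the asserted order. This could alternatively be checked by applying the composite $\pi'\circ \tau'$ (with $\cE$ substituted for $\cO^3$) directly to the explicit formulas for $\pi'$ and $\tau'$.
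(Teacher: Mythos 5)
Your proposal is correct and follows essentially the same route as the paper: compute $\tau^*(\cE) = (\cB,\cC,\cA,\cC,\cA,\cB)$ and then read off $\pi_*$ componentwise from the fibers of $\pi$, paying attention to the ordering in \eqref{eq_S6_order}. The paper phrases the $\pi_*$ step via evaluation on a test object $U\sqcup V\sqcup W$ rather than via $\pi^{-1}(S_i)$, but the content is identical.
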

\begin{proof}
First, it is easy to compute that
\[
\tau^*(\cE) = (\cB,\cC,\cA,\cC,\cA,\cB).
\]
Next, for $U\sqcup V \sqcup W \in \Sch_{S^{\sqcup 3}}$ we have that
\[
(U\sqcup V \sqcup W)\times_\pi S^{\sqcup 6} = (U\sqcup V \sqcup W) \sqcup (U \sqcup V \sqcup W).
\]
Therefore,
\begin{align*}
\pi_*(\tau^*(\cE))(U\sqcup V \sqcup W) &= \tau^*(\cE)\big((U\sqcup V \sqcup W)\times_\pi S^{\sqcup 6}\big) \\
&= \tau^*(\cE)\big((U\sqcup V \sqcup W) \sqcup (U \sqcup V \sqcup W)\big) \\
&= \cB(U)\times \cC(V) \times \cA(W) \times \cC(U) \times \cA(V) \times \cB(W) \\
&= (\cB\times \cC)(U) \times (\cC\times \cA)(V) \times (\cA\times \cB)(W)
\end{align*}
and $\pi_*(\tau^*(\cE)) \cong (\cB\times\cC, \cC\times\cA, \cA \times \cB)$ as claimed.
\end{proof}

Now, for a degree $3$ \'etale cover $L\to S$, recall the twisted versions of $\pi$ and $\tau$, namely $\pi_L,\tau_L \colon \Sigma_2(L) \to L$.
\begin{lem}\label{gamma_E-morphism}
Let $L \to S$ be a degree $3$ \'etale cover and let $\cE$ be an Azumaya algebra over $L$. For any $T \in \Sch_S$ which splits $L$, we write $\cE|_{L\times_S T} = (\cA_T,\cB_T, \cC_T)$ for Azumaya $\cO|_T$--algebras $\cA_T$, $\cB_T$, and $\cC_T$. There is a morphism of $\cO|_L$--algebras
\[
\gamma_\cE \colon \cE \to (\pi_L)_*(\tau_L^*(\cE))
\]
that is given over any $T \in \Sch_S$ which splits $L$ by
\begin{align*}
\gamma_{\cE}|_{L\times_S T} \colon (\cA_T,\cB_T,\cC_T) &\to (\cB_T\times\cC_T,\cC_T\times\cA_T,\cA_T\times\cB_T) \\
(a,b,c) &\mapsto ((b,c),(c,a),(a,b))
\end{align*}
where we use \Cref{split_E_twist}.
\end{lem}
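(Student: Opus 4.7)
The plan is to construct $\gamma_\cE$ by descent from the split case. For the split cover $L = S^{\sqcup 3}$ with $\cE = (\cA,\cB,\cC)$, define
\[
\gamma^{\mathrm{split}} \colon (\cA,\cB,\cC) \to (\cB \times \cC, \cC \times \cA, \cA \times \cB), \qquad (a,b,c) \mapsto ((b,c),(c,a),(a,b)),
\]
where the right-hand side uses the identification of \Cref{split_E_twist}. On global sections this formula is manifestly additive, multiplicative, and unital, and it is natural in the triple $(\cA,\cB,\cC)$, hence it defines a morphism of $\cO|_{S^{\sqcup 3}}$--algebras.

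The key step is to verify that $\gamma^{\mathrm{split}}$ is $\SS_3$--equivariant, where $\SS_3$ acts on the source $(\cA,\cB,\cC)$ by permutation of components and on the target through the induced action arising from the equivariance of $\pi$ and $\tau$ via $\SS_3 \inj \SS_6$. Checking the generators $(1\,2\,3)$ and $(2\,3)$ suffices. For the $3$--cycle, the convention of \eqref{permutation_convention} gives $(1\,2\,3)\cdot (a,b,c) = (b,c,a)$ on the source; unpacking the action on $\pi_*\tau^*$ (which sends $(\cB \times \cC, \cC \times \cA, \cA \times \cB)$ to $(\cC \times \cA, \cA \times \cB, \cB \times \cC)$ by the computation in \Cref{split_E_twist} applied to $(\cB,\cC,\cA)$) yields
\[
\gamma^{\mathrm{split}}(b,c,a) = ((c,a),(a,b),(b,c)) = (1\,2\,3)\cdot ((b,c),(c,a),(a,b)).
\]
The transposition $(2\,3)$ is handled by an entirely analogous computation, mirroring the verification for $\rho'$ carried out in \Cref{split_rho}.

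Having established equivariance, the descent argument is standard. Since a degree $3$ \'etale cover $L \to S$ corresponds to twisting the split cover by the $\SS_3$--torsor $\cP_L$ from \Cref{etale_covers}, and the two functors $\mathrm{Id}$ and $(\pi)_*\tau^*$ commute with the $\SS_3$--action on $\SS_3$--equivariant objects, the equivariant natural transformation $\gamma^{\mathrm{split}}$ descends to the desired morphism $\gamma_\cE \colon \cE \to (\pi_L)_*\tau_L^*\cE$. For any $T \in \Sch_S$ splitting $L$, \Cref{Sigma(L)_pullback} gives $L \times_S T \cong T^{\sqcup 3}$ and $\Sigma_2(L)\times_S T \cong \Sigma_2(L\times_S T)$ compatibly with $\pi_L$ and $\tau_L$, so $\gamma_\cE|_{L \times_S T}$ specializes to $\gamma^{\mathrm{split}}$ applied to $\cE|_{L \times_S T} = (\cA_T, \cB_T, \cC_T)$, recovering the stated formula.

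The main obstacle is the $\SS_3$--equivariance verification, which requires careful bookkeeping of the conventions in \eqref{permutation_convention} and of how $\SS_3 \inj \SS_6$ permutes the individual factors within each product $\cB \times \cC$, $\cC \times \cA$, $\cA \times \cB$ after applying $\pi_*$. Once this is established, the descent is formal.
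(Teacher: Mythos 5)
Your proposal is correct and takes essentially the same approach as the paper: the paper verifies that the local formula is compatible over overlaps whose transition morphism is a section of $\SS_3$, checking the generators $c=(1\,2\,3)$ and $\lambda=(2\,3)$ after reducing to diagonal sections, while you package the identical verification as $\SS_3$-equivariance of $\gamma^{\mathrm{split}}$ followed by descent along the $\SS_3$-torsor twisting $S^{\sqcup 3}$ into $L$. The underlying computation is the same, and the reduction to the generators $c$ and $\lambda$ matches the paper's use of diagonal sections of $\SS_3$ via the constant presheaf $\PP_3$.
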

\begin{proof}
If such a map exists then it is uniquely defined by the formula above since there exists a cover $\{T_i \to S\}_{i\in I}$ which splits $L$ and $\{L\times_S T_i \to L\}_{i\in I}$ is also a cover. Thus, we must argue that these local descriptions are compatible and glue into a global morphism. Let $T_1, T_2 \in \Sch_S$ be two schemes which split $L$, i.e., $L\times_S T_i \cong T_i \sqcup T_i \sqcup T_i$. Then, $\cE|_{T_i} = (\cA_{T_i},\cB_{T_i},\cC_{T_i})$ for $i=1,2$. We have a transition isomorphism
\[
(L\times_S T_1)\times_{T_1} T_{12} \cong T_{12} \sqcup T_{12} \sqcup T_{12} \iso T_{12} \sqcup T_{12} \sqcup T_{12} \cong (L\times_S T_2)\times_{T_2} T_{12}
\]
which is a permutation in $\SS_3(T_{12})$. Without loss of generality, it is sufficient to assume this permutation is a diagonal section of $\SS_3(T_{12})$, i.e., it is a section of the presheaf $\PP_3$. Further, it is sufficient to assume it is either the cycle $c = (1\; 2\; 3)$ or the transposition $\lambda = (2\; 3)$.

First, assume it is $c$. Since the pullback
\[
c^*\big((\cE|_{L\times_S T_2})|_{L\times_S T_{12}}\big) = c^*(\cA_{T_2}|_{T_{12}},\cB_{T_2}|_{T_{12}},\cC_{T_2}|_{T_{12}}) = (\cB_{T_2}|_{T_{12}},\cC_{T_2}|_{T_{12}},\cA_{T_2}|_{T_{12}})
\]
agrees with $(\cE|_{L\times_S T_1})|_{L\times_S T_{12}} = (\cA_{T_1}|_{T_{12}},\cB_{T_1}|_{T_{12}},\cC_{T_1}|_{T_{12}})$, we have equalities
\begin{align*}
\cA_{T_1}|_{T_{12}} &= \cB_{T_2}|_{T_{12}} \\
\cB_{T_1}|_{T_{12}} &= \cC_{T_2}|_{T_{12}} \\
\cC_{T_1}|_{T_{12}} &= \cA_{T_2}|_{T_{12}}.
\end{align*}
Thus, the map $c^*\big((\gamma_{\cE}|_{L\times_S T_2})|_{L\times_S T_{12}}\big)$ acts as $(b',c',a')\mapsto ((c',a'),(a',b'),(b',c'))$ which clearly matches with $(\gamma_{\cE}|_{L\times_S T_1})|_{L\times_S T_{12}}$ after setting $a=b'$, $b=c'$, and  $c=a'$. Hence, the local maps agree when the transition isomorphism is $c$.

Now, assume the transition isomorphism is $\lambda$. In this case, we obtain local equalities
\begin{align*}
\cA_{T_1}|_{T_{12}} &= \cA_{T_2}|_{T_{12}} \\
\cB_{T_1}|_{T_{12}} &= \cC_{T_2}|_{T_{12}} \\
\cC_{T_1}|_{T_{12}} &= \cB_{T_2}|_{T_{12}}
\end{align*}
and the map $\lambda^*\big((\gamma_{\cE}|_{L\times_S T_2})|_{L\times_S T_{12}}\big)$ acts as $(a',c',b') \mapsto ((c',b'),(b',a'),(a',c'))$. After setting $a=a'$, $b=c'$, and $c=b'$, this too clearly agrees with $(\gamma_{\cE}|_{L\times_S T_1})|_{L\times_S T_{12}}$ which of course acts as $(a,b,c) \mapsto ((b,c),(c,a),(a,b))$. Hence, the local maps also agree when the transition morphism is $\lambda$. This means that the local morphisms agree on overlaps and glue into a global morphism with the stated property, so we are done.
\end{proof}

\begin{lem}\label{gamma_respects_autos}
Let $L\to S$ be a degree $3$ \'etale cover and let $\psi \colon \cE_1 \iso \cE_2$ be an isomorphism of Azumaya algebras over $L$. Then, the diagram
\[
\begin{tikzcd}[column sep = 10ex]
\cE_1 \ar[r,"\psi"] \ar[d,"\gamma_{\cE_1}"] & \cE_2 \ar[d,"\gamma_{\cE_2}"] \\
(\pi_L)_*(\tau_L^*(\cE_1)) \ar[r,"(\pi_L)_*(\tau_L^*(\psi))"] & (\pi_L)_*(\tau_L^*(\cE_2))
\end{tikzcd}
\]
commutes, where $\gamma_{\cE_i}$ are the maps of \Cref{gamma_E-morphism}.
\end{lem}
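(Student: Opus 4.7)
The plan is to check commutativity locally on a cover that splits $L$, using the explicit local descriptions from \Cref{gamma_E-morphism} and \Cref{split_E_twist}. Since both $\gamma_{\cE_i}$ are characterized uniquely by their behaviour on such a splitting cover, and since morphisms of sheaves agree globally if they agree locally, it suffices to verify the square commutes after restriction to some $T \in \Sch_S$ with $L \times_S T \cong T^{\sqcup 3}$.

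Over such a $T$, write $\cE_i|_{L \times_S T} = (\cA_i, \cB_i, \cC_i)$ for $i=1,2$, and observe that the isomorphism $\psi$ decomposes as $\psi|_{L\times_S T} = (\psi_\cA, \psi_\cB, \psi_\cC)$ for component isomorphisms $\psi_\cA \colon \cA_1 \iso \cA_2$, etc. By \Cref{split_E_twist}, the induced map $(\pi_L)_*(\tau_L^*(\psi))|_{L \times_S T}$ is identified with
\[
(\psi_\cB \times \psi_\cC,\; \psi_\cC \times \psi_\cA,\; \psi_\cA \times \psi_\cB) \colon (\cB_1 \times \cC_1,\, \cC_1 \times \cA_1,\, \cA_1 \times \cB_1) \to (\cB_2 \times \cC_2,\, \cC_2 \times \cA_2,\, \cA_2 \times \cB_2).
\]
Combined with the formula for $\gamma_{\cE_i}$ from \Cref{gamma_E-morphism}, both paths around the square send a section $(a,b,c)$ to $\bigl((\psi_\cB(b),\psi_\cC(c)),(\psi_\cC(c),\psi_\cA(a)),(\psi_\cA(a),\psi_\cB(b))\bigr)$, so the local square commutes.

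There is no substantive obstacle; the proof is a direct unwinding of definitions. The only small subtlety is to justify that it is enough to check the diagram on splitting covers, which follows because both $\gamma_{\cE_1}$ and $\gamma_{\cE_2}$ were themselves defined via gluing from such covers in \Cref{gamma_E-morphism}, so their compositions with $\psi$ and $(\pi_L)_*(\tau_L^*(\psi))$ are likewise determined by their splitting-cover restrictions. The compatibility on overlaps $T_{12}$ for two different splitting schemes $T_1,T_2$ is automatic from the naturality of $\psi$, since the transition isomorphisms $c$ and $\lambda$ of \SS_3 considered in the proof of \Cref{gamma_E-morphism} commute with the componentwise action of $\psi$.
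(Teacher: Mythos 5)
Your proposal is correct and follows essentially the same route as the paper: restrict to a cover splitting $L$, decompose $\psi$ componentwise, use \Cref{split_E_twist} to identify $(\pi_L)_*(\tau_L^*(\psi))$, and trace a section $(a,b,c)$ through both sides of the square. The only difference is that your final paragraph over-justifies the reduction to the split case; since an equality of morphisms of sheaves can simply be checked over any cover, the remark about re-examining the overlap permutations $c$ and $\lambda$ from \Cref{gamma_E-morphism} is unnecessary (though harmless).
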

\begin{proof}
This may be checked locally, and therefore we may assume that everything is split. In this case, $\cE_i = (\cA_i,\cB_i,\cC_i)$ and the isomorphism $\psi$ consists of three isomorphisms $\psi_{\cA} \colon \cA_1 \iso \cA_2$ and similarly $\psi_{\cB}$ and $\psi_{\cC}$. We know the form of $(\pi_L)_*(\tau_L^*(\cE_i))$ from \Cref{split_E_twist}, from which we also see that
\[
(\pi_L)_*(\tau_L^*(\psi)) = (\psi_{\cB}\times\psi_{\cC}, \psi_{\cC}\times\psi_{\cA},\psi_{\cA}\times\psi_{\cB}).
\]
Then, using the description of $\gamma_{\cE_i}$ from \Cref{gamma_E-morphism} when things are split, we can simply trace an element through the diagram, yielding
\[
\begin{tikzcd}
(a,b,c) \ar[r,mapsto] \ar[d,mapsto] & (\psi_{\cA}(a),\psi_{\cB}(b),\psi_{\cC}(c)) \ar[d,mapsto] \\
((b,c),(c,a),(a,b)) \ar[r,mapsto] & ((\psi_{\cB}(b),\psi_{\cC}(c)),(\psi_{\cC}(c),\psi_{\cA}(a)),(\psi_{\cA}(a),\psi_{\cB}(b)))
\end{tikzcd}
\]
which justifies the claim that the diagram commutes.
\end{proof}

We can now use the morphism $\rho_{\Sigma_2(L)} \colon \Sigma_2(L) \to \Sigma_2(L)$ to construct a trialitarian triple.
\begin{lem}\label{triple_over_Sigma(L)}
Let $(\cE,\sigma,f)$ be a quadratic triple of degree $8$ over $L$ and consider an isomorphism
\[
\alpha_\cE \colon (\Cl(\cE_\qt),\underline{\sigma},\underline{f}) \iso (\pi_L)_*(\tau_L^*(\cE_\qt))
\]
of algebras with semi-trace. For any $T\in \Sch_S$ which splits $L$, we write $(\cE_\qt)|_{L\times_S T} = (\cA_{T\qt},\cB_{T\qt},\cC_{T\qt})$ and $\alpha_{\cE}|_{L\times_S T} = (\alpha_{T\cA},\alpha_{T\cB},\alpha_{T\cC})$ for isomorphisms
\begin{align*}
\alpha_{T\cA} \colon \Cl(\cA_{T\qt})&\iso \cB_{T\qt}\times \cC_{T\qt} \\
\alpha_{T\cB} \colon \Cl(\cB_{T\qt})&\iso \cC_{T\qt}\times \cA_{T\qt} \\
\alpha_{T\cC} \colon \Cl(\cC_{T\qt})&\iso \cA_{T\qt}\times \cB_{T\qt}.
\end{align*}
Then, there is trialitarian triple over $\Sigma_2(L)$,
\[
(\pi_L^*(\cE_\qt),\rho_{\Sigma_2(L)}^*(\pi_L^*(\cE_\qt)),\rho_{\Sigma_2(L)}^{*2}(\pi_L^*(\cE_\qt)),\Xi(\alpha_\cE)) \in \fTrip(\Sigma_2(L))
\]
where $\Xi(\alpha_\cE)$ is the isomorphism locally defined by
\[
\Xi(\alpha_\cE)|_{\Sigma_2(L)\times_S T} = (\alpha_{T\cA},\alpha_{T\cB},\alpha_{T\cC},\sw\circ\alpha_{T\cA},\sw\circ\alpha_{T\cB},\sw\circ\alpha_{T\cC}).
\]
\end{lem}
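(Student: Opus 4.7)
The plan is to build $\Xi(\alpha_\cE)$ by gluing the stated local formula along a cover of $\Sigma_2(L)$ coming from covers of $S$ that split $L$. First I fix an fppf cover $\{T_i \to S\}_{i\in I}$ with each $T_i$ splitting $L$; then $\{\Sigma_2(L) \times_S T_i \to \Sigma_2(L)\}_{i\in I}$ is an fppf cover along which all relevant \'etale and Azumaya structures decompose. On each such $T_i$, the pullback $\pi_L^*(\cE_\qt)|_{\Sigma_2(L)\times_S T_i}$ equals $(\cA_{T_i\qt},\cB_{T_i\qt},\cC_{T_i\qt},\cA_{T_i\qt},\cB_{T_i\qt},\cC_{T_i\qt})$, and applying \Cref{rho_pullback} yields
\[
\rho_{\Sigma_2(L)}^*(\pi_L^*(\cE_\qt))|_{T_i} = (\cB_{T_i\qt},\cC_{T_i\qt},\cA_{T_i\qt},\cC_{T_i\qt},\cA_{T_i\qt},\cB_{T_i\qt}),
\]
\[
\rho_{\Sigma_2(L)}^{*2}(\pi_L^*(\cE_\qt))|_{T_i} = (\cC_{T_i\qt},\cA_{T_i\qt},\cB_{T_i\qt},\cB_{T_i\qt},\cC_{T_i\qt},\cA_{T_i\qt}).
\]
Thus an isomorphism $\Cl(\pi_L^*(\cE_\qt))|_{T_i} \to (\rho^*_{\Sigma_2(L)}(\pi_L^*(\cE_\qt))\times \rho^{*2}_{\Sigma_2(L)}(\pi_L^*(\cE_\qt)))|_{T_i}$ must have its six components landing respectively in $\cB_{T_i\qt}\times \cC_{T_i\qt}$, $\cC_{T_i\qt}\times\cA_{T_i\qt}$, $\cA_{T_i\qt}\times \cB_{T_i\qt}$, $\cC_{T_i\qt}\times \cB_{T_i\qt}$, $\cA_{T_i\qt}\times\cC_{T_i\qt}$, $\cB_{T_i\qt}\times\cA_{T_i\qt}$; the stated formula fits these targets exactly, with each component an algebra-with-semi-trace isomorphism since $\alpha_{T_i\cA}$, $\alpha_{T_i\cB}$, $\alpha_{T_i\cC}$ are (as restrictions of $\alpha_\cE$) and $\sw$ preserves the semi-trace.

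The main task is verifying these local definitions glue over $T_{ij} = T_i \times_S T_j$. The two trivializations of $L \times_S T_{ij}$ differ by a section $\beta \in \SS_3(T_{ij})$, and as in the proof of \Cref{gamma_E-morphism} it suffices to treat a diagonal section $\beta = c = (1\,2\,3)$ or $\beta = \lambda = (2\,3)$. Under the embedding $\SS_3 \hookrightarrow \SS_6$ defined in \Cref{etale_covers}, $c$ acts on $\Sigma_2(L) \times_S T_{ij}$ as $(1\,2\,3)(4\,6\,5)$, cyclically permuting each of the two triples of components internally, while $\lambda$ acts as $(1\,4)(2\,6)(3\,5)$, swapping corresponding positions between the two triples.

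The $c$-case is routine cyclic book-keeping: since $\alpha_\cE$ is a global isomorphism, restricting to $T_{ij}$ forces $\alpha_{T_i\cA}|_{T_{ij}} = \alpha_{T_j\cB}|_{T_{ij}}$ and its cyclic translates, which yield exactly the required equalities among the six component maps of $\Xi$. The main obstacle is the $\lambda$-case, where the permutation mixes the two triples. Here $\cA_{T_i\qt}|_{T_{ij}} = \cA_{T_j\qt}|_{T_{ij}}$ while $\cB_{T_i\qt}|_{T_{ij}} = \cC_{T_j\qt}|_{T_{ij}}$ and $\cC_{T_i\qt}|_{T_{ij}} = \cB_{T_j\qt}|_{T_{ij}}$, and the globality of $\alpha_\cE$ forces $\alpha_{T_i\cA}|_{T_{ij}} = \sw \circ \alpha_{T_j\cA}|_{T_{ij}}$ together with the analogous relations for $\cB$ and $\cC$. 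This is precisely what forces the presence of the $\sw$ factors in the second three components of $\Xi(\alpha_\cE)$: the position-$1$ component $\alpha_{T_i\cA}$ must be identified under $\lambda$ with the position-$4$ component of $\Xi(\alpha_\cE)|_{T_j}$, which is $\sw \circ \alpha_{T_j\cA}$, and similarly for positions $2 \leftrightarrow 6$ and $3 \leftrightarrow 5$. Once gluing is verified, $\Xi(\alpha_\cE)$ exists globally as an isomorphism of algebras with semi-trace (this property being local), so the resulting quadruple lies in $\fTrip(\Sigma_2(L))$ by construction.
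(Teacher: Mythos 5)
Your approach is the same as the paper's: verify that the stated local formula has the correct (co)domain structure, then check that the local definitions agree on overlaps by reducing to the two diagonal transition permutations $c=(1\,2\,3)$ and $\lambda=(2\,3)$, using that $\alpha_\cE$ is globally defined. The $\lambda$-case, which you correctly identify as the crux, is handled properly: $\tilde\lambda = (1\,4)(2\,6)(3\,5)$, the equalities $\cA_{T_i}|_{T_{ij}} = \cA_{T_j}|_{T_{ij}}$, $\cB_{T_i}|_{T_{ij}} = \cC_{T_j}|_{T_{ij}}$, $\cC_{T_i}|_{T_{ij}} = \cB_{T_j}|_{T_{ij}}$, and the forced relation $\alpha_{T_i\cA}|_{T_{ij}} = \sw\circ\alpha_{T_j\cA}|_{T_{ij}}$ all match the paper's explicit diagram chase.

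Two bookkeeping slips in the $c$-case are worth noting. First, with the pair ordering $\{(1,2),(2,3),(3,1),(1,3),(2,1),(3,2)\}$ from \Cref{etale_covers}, the image of $c$ in $\SS_6$ is $(1\,2\,3)(4\,5\,6)$, not $(1\,2\,3)(4\,6\,5)$: since $c$ acts componentwise on pairs in exactly the same way on both halves, it cannot cycle the two triples in opposite directions. Second, the asserted compatibility $\alpha_{T_i\cA}|_{T_{ij}}=\alpha_{T_j\cB}|_{T_{ij}}$ does not type-check: the transition $c$ forces $\cA_{T_i}|_{T_{ij}}=\cC_{T_j}|_{T_{ij}}$, $\cB_{T_i}|_{T_{ij}}=\cA_{T_j}|_{T_{ij}}$, $\cC_{T_i}|_{T_{ij}}=\cB_{T_j}|_{T_{ij}}$, so the correct identification is $\alpha_{T_i\cA}|_{T_{ij}}=\alpha_{T_j\cC}|_{T_{ij}}$ (and cyclic permutations thereof), as in the paper. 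Since you dismiss the $c$-case as ``routine'' without actually writing the commuting squares, these slips would propagate. They don't affect the structure of the argument, but the cyclic shift should be rechecked to the point where domains and codomains genuinely agree.
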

\begin{proof}
First, we check that our claim is well-defined, i.e., that locally $\Xi(\alpha_\cE)|_{\Sigma_2(L)\times_S T}$ has the appropriate domain and codomain. Because $\pi_L$ is the standard degree $2$ \'etale cover of $L$, when $L$ is split we will have $\pi_L|_{L\times_S T} \colon (T \sqcup T \sqcup T) \sqcup (T \sqcup T \sqcup T) \to T \sqcup T \sqcup T$ and
\[
\pi_L^*(\cE_\qt)|_{\Sigma_2(L)\times_S T} = \pi_L|_{L\times_S T}^*(\cE_\qt|_{L\times_S T}) = (\cA_{T\qt},\cB_{T\qt},\cC_{T\qt},\cA_{T\qt},\cB_{T\qt},\cC_{T\qt}).
\]
The Clifford algebra $\Cl(\pi_L^*(\cE_\qt)|_{\Sigma_2(L)\times_S T})$ is then
\[
(\Cl(\cA_{T\qt}),\Cl(\cB_{T\qt}),\Cl(\cC_{T\qt}),\Cl(\cA_{T\qt}),\Cl(\cB_{T\qt}),\Cl(\cC_{T\qt})).
\]
By the definition of $\rho_{\Sigma_2(L)}$, we know from \Cref{rho_pullback} that
\begin{align*}
\rho_{\Sigma_2(L)}^*(\pi_L^*(\cE_\qt)|_{\Sigma_2(L)\times_S T}) &= (\cB_{T\qt},\cC_{T\qt},\cA_{T\qt},\cC_{T\qt},\cA_{T\qt},\cB_{T\qt})\text{, and} \\
\rho_{\Sigma_2(L)}^{*2}(\pi_L^*(\cE_\qt)|_{\Sigma_2(L)\times_S T}) &= (\cC_{T\qt},\cA_{T\qt},\cB_{T\qt},\cB_{T\qt},\cC_{T\qt},\cA_{T\qt})
\end{align*}
so we have that $\big(\rho_{\Sigma_2(L)}^*(\pi_L^*(\cE_\qt))\times \rho_{\Sigma_2(L)}^{*2}(\pi_L^*(\cE_\qt))\big)|_{\Sigma_2(L)\times_S T}$ is the $\cO|_{T^{\sqcup 6}}$--algebra
\[
(\cB_{T\qt}\times\cC_{T\qt},\cC_{T\qt}\times\cA_{T\qt},\cA_{T\qt}\times\cB_{T\qt},\cC_{T\qt}\times\cB_{T\qt},\cA_{T\qt}\times\cC_{T\qt},\cB_{T\qt}\times\cA_{T\qt}).
\]
Hence $\Xi(\alpha_{\cE})|_{\Sigma_2(L)\times_S T}$ is the morphism between the appropriate $\cO|_{T^{\sqcup 6}}$--algebras.

Now, if the isomorphism $\Xi(\alpha_\cE)$ exists, then it is uniquely defined by the formula above over those $T\in \Sch_S$ which split $L$ and hence also split $\Sigma_2(L)$. Therefore, we only need to show that the given local formulas are compatible with gluing and hence produce a global isomorphism as desired. Let $T_1,T_2\in \Sch_S$ be two schemes which split $L$ and set $T_{12} = T_1\times_S T_2$. We have an isomorphism of $S$--schemes
\[
\beta \colon (T_1\sqcup T_1 \sqcup T_1)\times_{T_1} T_{12} = T_{12}\sqcup T_{12} \sqcup T_{12} \iso T_{12}\sqcup T_{12} \sqcup T_{12} = (T_2\sqcup T_2 \sqcup T_2)\times_{T_2} T_{12}
\]
given by a permutation $\beta \in \SS_3(T_{12})$. As usual, we assume that $\beta$ is one of the diagonal sections $c=(1\; 2\; 3)$ or $\lambda = (2\; 3)$.

Assume first that $\beta = c$. In this case, because the pullback of $(\cE|_{L\times_S T_2})|_{L\times_S T_{12}}$ along $c$ must agree with $(\cE|_{L\times _S T_1})|_{L\times_S T_{12}}$, we obtain equalities
\begin{align*}
\cA_{T_1}|_{T_{12}} &= \cC_{T_2}|_{T_{12}} \\
\cB_{T_1}|_{T_{12}} &= \cA_{T_2}|_{T_{12}} \\
\cC_{T_1}|_{T_{12}} &= \cB_{T_2}|_{T_{12}}.
\end{align*}
The isomorphisms $(\alpha_\cE|_{L\times_S T_1})|_{L\times_S T_{12}}$ and $(\alpha_\cE|_{L\times_S T_2})|_{L\times_S T_{12}}$ must also be compatible along this pullback, and so we obtain commutative diagrams
\[
\begin{tikzcd}[column sep=10ex]
\Cl(\cA_{T_1})|_{T_{12}} \ar[r,"\alpha_{T_1\cA}|_{T_{12}}"] \ar[d,equals] & \cB_{T_1}|_{T_{12}} \times \cC_{T_1}|_{T_{12}} \ar[d,equals] \\
\Cl(\cC_{T_2})|_{T_{12}} \ar[r,"\alpha_{T_2\cC}|_{T_{12}}"] & \cA_{T_2}|_{T_{12}} \times \cB_{T_2}|_{T_{12}},
\end{tikzcd}\;
\begin{tikzcd}[column sep=10ex]
\Cl(\cB_{T_1})|_{T_{12}} \ar[r,"\alpha_{T_1\cB}|_{T_{12}}"] \ar[d,equals] & \cC_{T_1}|_{T_{12}} \times \cA_{T_1}|_{T_{12}} \ar[d,equals] \\
\Cl(\cA_{T_2})|_{T_{12}} \ar[r,"\alpha_{T_2\cA}|_{T_{12}}"] & \cB_{T_2}|_{T_{12}} \times \cC_{T_2}|_{T_{12}},
\end{tikzcd}
\]
and
\[
\begin{tikzcd}[column sep=10ex]
\Cl(\cC_{T_1})|_{T_{12}} \ar[r,"\alpha_{T_1\cC}|_{T_{12}}"] \ar[d,equals] & \cA_{T_1}|_{T_{12}} \times \cB_{T_1}|_{T_{12}} \ar[d,equals] \\
\Cl(\cB_{T_2})|_{T_{12}} \ar[r,"\alpha_{T_2\cB}|_{T_{12}}"] & \cC_{T_2}|_{T_{12}} \times \cA_{T_2}|_{T_{12}}.
\end{tikzcd}
\]
Now, the pullback of $(\Xi(\alpha_\cE)|_{\Sigma_2(L)\times_S T_2})|_{\Sigma_2(L)\times_S T_{12}}$ along the permutation of $(T_{12})^{\sqcup 6}$ corresponding to the action of $c$ is
\[
(\alpha_{T_2\cC}|_{T_{12}},\alpha_{T_2\cA}|_{T_{12}},\alpha_{T_2\cB}|_{T_{12}},\sw\circ\alpha_{T_2\cC}|_{T_{12}},\sw\circ\alpha_{T_2\cA}|_{T_{12}},\sw\circ\alpha_{T_2\cB}|_{T_{12}})
\]
which must agree with $(\Xi(\alpha_\cE)|_{\Sigma_2(L)\times_S T_1})|_{\Sigma_2(L)\times_S T_{12}}$ which is simply
\[
(\alpha_{T_1\cA}|_{T_{12}},\alpha_{T_1\cB}|_{T_{12}},\alpha_{T_1\cC}|_{T_{12}},\sw\circ\alpha_{T_1\cA}|_{T_{12}},\sw\circ\alpha_{T_1\cB}|_{T_{12}},\sw\circ\alpha_{T_1\cC}|_{T_{12}}).
\]
The commutative diagrams above show precisely that these two isomorphisms agree, and so in this case we are done.

Assume now that $\beta = \lambda$. In this case we obtain equalities
\begin{align*}
\cA_{T_1}|_{T_{12}} &= \cA_{T_2}|_{T_{12}} \\
\cB_{T_1}|_{T_{12}} &= \cC_{T_2}|_{T_{12}} \\
\cC_{T_1}|_{T_{12}} &= \cB_{T_2}|_{T_{12}}.
\end{align*}
and commutative diagrams
\[
\begin{tikzcd}[column sep=10ex]
\Cl(\cA_{T_1})|_{T_{12}} \ar[r,"\alpha_{T_1\cA}|_{T_{12}}"] \ar[d,equals] & \cB_{T_1}|_{T_{12}} \times \cC_{T_1}|_{T_{12}} \ar[d,equals] \\
\Cl(\cA_{T_2})|_{T_{12}} \ar[r,"\sw\circ (\alpha_{T_2\cA}|_{T_{12}})" {yshift=0.5ex}] & \cC_{T_2}|_{T_{12}} \times \cB_{T_2}|_{T_{12}},
\end{tikzcd}\;
\begin{tikzcd}[column sep=10ex]
\Cl(\cB_{T_1})|_{T_{12}} \ar[r,"\alpha_{T_1\cB}|_{T_{12}}"] \ar[d,equals] & \cC_{T_1}|_{T_{12}} \times \cA_{T_1}|_{T_{12}} \ar[d,equals] \\
\Cl(\cC_{T_2})|_{T_{12}} \ar[r,"\sw\circ (\alpha_{T_2\cC}|_{T_{12}})" {yshift=0.5ex}] & \cB_{T_2}|_{T_{12}} \times \cA_{T_2}|_{T_{12}},
\end{tikzcd}
\]
and
\[
\begin{tikzcd}[column sep=10ex]
\Cl(\cC_{T_1})|_{T_{12}} \ar[r,"\alpha_{T_1\cC}|_{T_{12}}"] \ar[d,equals] & \cA_{T_1}|_{T_{12}} \times \cB_{T_1}|_{T_{12}} \ar[d,equals] \\
\Cl(\cB_{T_2})|_{T_{12}} \ar[r,"\sw\circ (\alpha_{T_2\cB}|_{T_{12}})" {yshift=0.5ex}] & \cA_{T_2}|_{T_{12}} \times \cC_{T_2}|_{T_{12}}.
\end{tikzcd}
\]
We then require compatibility between the pullback of $(\Xi(\alpha_\cE)|_{\Sigma_2(L)\times_S T_2})|_{\Sigma_2(L)\times_S T_{12}}$ along the action of $\lambda$, which is
\[
(\sw\circ\alpha_{T_2\cA}|_{T_{12}},\sw\circ\alpha_{T_2\cC}|_{T_{12}},\sw\circ\alpha_{T_2\cB}|_{T_{12}},\alpha_{T_2\cA}|_{T_{12}},\alpha_{T_2\cC}|_{T_{12}},\alpha_{T_2\cB}|_{T_{12}}),
\]
and $(\Xi(\alpha_\cE)|_{\Sigma_2(L)\times_S T_1})|_{\Sigma_2(L)\times_S T_{12}}$, which is still as above in the previous case. Once again, the commutative diagrams show that these two morphisms agree, so we are done.
\end{proof}

Now that we have a trialitarian triple
\[
(\pi_L^*(\cE_\qt),\rho_{\Sigma_2(L)}^*(\pi_L^*(\cE_\qt)),\rho_{\Sigma_2(L)}^{*2}(\pi_L^*(\cE_\qt)),\Xi(\alpha_\cE))
\]
over $\Sigma_2(L)$, we have two ways of generating an isomorphism
\[
\Cl\big(\rho_{\Sigma_2(L)}^*(\pi_L^*(\cE_\qt))\big) \iso \rho_{\Sigma_2(L)}^{*2}(\pi_L^*(\cE_\qt)) \times \pi_L^*(\cE_\qt),
\]
namely by applying $\Theta_{\Sigma_2(L)}^+$ to $\Xi(\alpha_\cE)$ or by pulling $\Xi(\alpha_\cE)$ back along $\rho_{\Sigma_2(L)}$. We use these in the following definition.

\begin{defn}\label{defn_TriAlg}
A \emph{trialitarian algebra} (over $S$) is a triple $(L,(\cE,\sigma,f),\alpha_{\cE})$ where
\begin{enumerate}[label={\rm(\roman*)}]
\item \label{defn_TriAlg_i} $L \to S$ is a degree $3$ \'etale cover,
\item \label{defn_TriAlg_ii} $(\cE,\sigma,f)$ is a quadratic triple over $L$ where $\cE$ is a degree $8$ Azumaya $\cO|_L$--algebra, and
\item \label{defn_TriAlg_iii} $\alpha_{\cE} \colon (\Cl(\cE,\sigma,f),\Csigma,\Cf) \iso (\pi_L)_*(\tau_L^*(\cE,\sigma,f))$ is an isomorphism of algebras with semi-trace such that
\[
\Theta_{\Sigma_2(L)}^+(\Xi(\alpha_\cE)) = \rho_{\Sigma_2(L)}^*(\Xi(\alpha_\cE))
\]
as isomorphisms of $\cO|_{\Sigma_2(L)}$--algebras, where $\Xi(\alpha_\cE)$ is the isomorphism appearing in the trialitarian triple of \Cref{triple_over_Sigma(L)}.
\end{enumerate}
\end{defn}

\begin{ex}\label{split_rho_calculation}
Let $L=S^{\sqcup 3}$ be the split degree $3$ \'etale cover, so $\Sigma_2(L)=S^{\sqcup 6}$, $\pi_L = \pi$, $\tau_L = \tau$, and $\rho_{\Sigma_2(L)} = \rho$. A degree $8$ quadratic triple over $L$ is of the form
\[
(\cE,\sigma,f) = ((\cA,\sigma_{\cA},f_{\cA}),(\cB,\sigma_{\cB},f_{\cB}),(\cC,\sigma_{\cC},f_{\cC})).
\]
By applying \Cref{split_E_twist} while also keeping track of semi-traces, we know that
\[
\pi_*(\tau^*(\cE,\sigma,f)) \cong (\cB_\qt \times \cC_\qt, \cC_\qt \times \cA_\qt, \cA_\qt \times \cB_\qt).
\]
The Clifford algebra of $(\cE,\sigma,f)$ is
\[
\Cl(\cE_\qt) = (\Cl(\cA_\qt),\Cl(\cB_\qt),\Cl(\cC_\qt)).
\]
where we omit writing the canonical involutions and semi-traces on the Clifford algebras. An isomorphism $\alpha_{\cE} \colon (\Cl(\cE_\qt),\underline{\sigma},\underline{f})\iso \pi_*(\tau^*(\cE_\qt))$ is the data of three isomorphisms
\begin{align*}
\alpha_1 \colon (\Cl(\cA_\qt),\underline{\sigma_{\cA}},\underline{f_{\cA}}) &\iso \cB_\qt \times \cC_\qt \\
\alpha_2 \colon (\Cl(\cB_\qt),\underline{\sigma_{\cB}},\underline{f_{\cB}}) &\iso \cC_\qt \times \cA_\qt \\
\alpha_3 \colon (\Cl(\cC_\qt),\underline{\sigma_{\cC}},\underline{f_{\cC}}) &\iso \cA_\qt \times \cB_\qt.
\end{align*}
Going back up to $\Sigma_2(L)$, we have that
\[
\Cl(\pi^*(\cE_\qt)) = (\Cl(\cA_\qt),\Cl(\cB_\qt),\Cl(\cC_\qt),\Cl(\cA_\qt),\Cl(\cB_\qt),\Cl(\cC_\qt))
\]
and
\begin{align*}
&\rho^*(\pi^*(\cE_\qt))\times\rho^{*2}(\pi^*(\cE_\qt))\\
= &(\cB_\qt\times\cC_\qt,\cC_\qt\times\cA_\qt,\cA_\qt\times\cB_\qt,\cC_\qt\times\cB_\qt,\cA_\qt\times\cC_\qt,\cB_\qt\times\cA_\qt).
\end{align*}
Then, $\Xi(\alpha_\cE) = (\alpha_1,\alpha_2,\alpha_3,\sw\circ\alpha_1,\sw\circ\alpha_2,\sw\circ\alpha_3)$ and so it is clear that
\[
\rho^*(\Xi(\alpha_\cE)) = (\alpha_2,\alpha_3,\alpha_1,\sw\circ\alpha_3,\sw\circ\alpha_1,\sw\circ\alpha_2).
\]
Now, we must identify $\Theta_{\Sigma_2(L)}^+(\Xi(\alpha_\cE))$. Notice that, because $\Sigma_2(L)=S^{\sqcup 6}$, the trialitarian triple
\[
(\pi_L^*(\cE_\qt),\rho_{\Sigma_2(L)}^*(\pi_L^*(\cE_\qt)),\rho_{\Sigma_2(L)}^{*2}(\pi_L^*(\cE_\qt)),\Xi(\alpha_\cE)) \in \fTrip(\Sigma_2(L))
\]
is simply the data of six triples in $\fTrip(S)$, namely the objects
\begin{align*}
&(\cA_\qt,\cB_\qt,\cC_\qt,\alpha_1) & &(\cB_\qt,\cC_\qt,\cA_\qt,\alpha_2) & &(\cC_\qt,\cA_\qt,\cB_\qt,\alpha_3) \\
&(\cA_\qt,\cC_\qt,\cB_\qt,\sw\circ\alpha_1) & &(\cB_\qt,\cA_\qt,\cC_\qt,\sw\circ\alpha_2) & &(\cC_\qt,\cB_\qt,\cA_\qt,\sw\circ\alpha_3).
\end{align*}
Since any isomorphism with $\cM_{\qt,\Sigma_2(L)}^3 \in \fTrip(\Sigma_2(L))$ will also just be six different isomorphisms between the above triples and $\cM_\qt^3 \in \fTrip(S)$, we see that applying $\Theta_{\Sigma_2(L)}^+$, which produces the isomorphism from \Cref{build_alpha_B}\ref{build_alpha_B_iii}, is the same as applying $\Theta_S^+$ to each of the six triples. Therefore,
\begin{align*}
& \Theta_{\Sigma_2(L)}^+(\Xi(\alpha_\cE)) \\
=& (\Theta_S^+(\alpha_1),\Theta_S^+(\alpha_2),\Theta_S^+(\alpha_3),\Theta_S^+(\sw\circ\alpha_1),\Theta_S^+(\sw\circ\alpha_2),\Theta_S^+(\sw\circ\alpha_1)) \\
=& (\Theta_S^+(\alpha_1),\Theta_S^+(\alpha_2),\Theta_S^+(\alpha_3),\sw\circ\Theta_S^-(\alpha_1),\sw\circ\Theta_S^-(\alpha_2),\sw\circ\Theta_S^-(\alpha_3)).
\end{align*}
Thus, $(L,\cE_\qt,\alpha_{\cE})=(S^{\sqcup 3},(\cA_\qt,\cB_\qt,\cC_\qt),(\alpha_1,\alpha_2,\alpha_3))$ is a trialitarian algebra if and only if $\alpha_2 = \Theta_S^+(\alpha_1)$ and $\alpha_3 = \Theta_S^+(\alpha_2)$.
\end{ex}

\subsection{The Groupoid and the Gerbe}
The definition appearing below of an isomorphism between trialitarian algebras is not surprising, and with it we obtain a groupoid of trialitarian algebras. Ultimately, we will assemble these groupoids into a gerbe which is equivalent to the gerbe of $(\PGO_8^+ \rtimes \SS_3)$--torsors. Recall, over a connected scheme $T$ we view $\SS_3(T)$ as generated by the automorphisms $\theta^+|_T$ and $\theta|_T$ of \Cref{automorphisms_of_groups}. The group $\SS_3$ acts on $\PGO_8^+$ within the semi-direct product by $\phi \cdot \beta = \beta \cdot \beta^{-1}(\phi)$ for $\beta \in \SS_3$ and $\phi \in \PGO_8^+$.

\begin{defn}\label{defn_TriAlg_groupoid}
Let $\fTrial(S)$ be the groupoid whose
\begin{enumerate}
\item \label{defn_TriAlg_groupoid_i} objects are trialitarian algebras $(L,\cE_\qt,\alpha_{\cE})$ over $S$, and whose
\item \label{defn_TriAlg_groupoid_ii} morphisms are pairs $(h,\psi)\colon (L',\cE'_\qt,\alpha_{\cE'}) \to (L,\cE_\qt,\alpha_{\cE})$ where $h\colon L' \iso L$ is an isomorphism of $S$--schemes and $\psi \colon \cE'_\qt \iso h^*(\cE_\qt)$ is an isomorphism of quadratic triples over $L'$ such that
\[
\begin{tikzcd}
(\Cl(\cE'_\qt),\underline{\sigma'},\underline{f'}) \arrow{r}{\alpha_{\cE'}} \arrow{d}{\Cl(\psi)} & (\tau_{L'})_*(\pi_{L'}^*(\cE'_\qt)) \arrow{d}{(\tau_{L'})_*(\pi_{L'}^*(\psi))}\\
(\Cl(h^*(\cE_\qt)),\underline{h^*(\sigma)},\underline{h^*(f)}) \arrow[equals]{d} & (\tau_{L'})_*(\pi_{L'}^*(h^*(\cE_\qt))) \arrow[equals]{d}{\rotatebox{90}{$\sim$}} \\[-3ex]
h^*(\Cl(\cE_\qt),\underline{\sigma},\underline{f}) \arrow{r}{h^*(\alpha_{\cE})} & h^*((\tau_L)_*(\pi_L^*(\cE_\qt)))
\end{tikzcd}
\]
commutes. Composition is given by $(h_1,\psi_1)\circ(h_2,\psi_2) = (h_1\circ h_2, h_2^*(\psi_1)\circ \psi_2)$.
\end{enumerate}
\end{defn}
We designate $\cT\spl = (S^{\sqcup 3},(\Mat_8(\cO)_\qt,\Mat_8(\cO)_\qt,\Mat_8(\cO)_\qt),(\Psi,\Psi,\Psi)) \in \fTrial(S)$ as the split object. The notion of trialitarian algebra is stable under base change because finite \'etale covers, Azumaya algebras, and Clifford algebras are all stable. If $g\colon T' \to T$ is any morphism of $S$--schemes and $(L,\cE_\qt,\alpha_{\cE}) \in \fTriAlg(T)$, then $T'\times_T L \to T'$ will be a degree $3$ \'etale cover of $T'$ and $(T'\times_T L, \cE_\qt|_{T'\times_T L},\alpha_{\cE}|_{T'\times_T L})$ will be a trialitarian algebra over $T'$. We denote this by $g^*(L,\cE_\qt,\alpha_{\cE})$ or simply by $(L,\cE_\qt,\alpha_{\cE})|_{T'}$. It is then clear that the definition below produces a fibered category.

\begin{defn}\label{defn_TriAlg_stack}
Let $\fTriAlg \to \Sch_S$ be the fibered category defined as follows.
\begin{enumerate}
\item The objects are of the form $(T,L,\cE_\qt,\alpha_{\cE})$ where $T\in\Sch_S$ is any $S$--scheme and $(L,\cE_\qt,\alpha_{\cE})\in \fTriAlg(T)$ is a trialitarian algebra over $T$.
\item The morphisms are pairs $(g,\phi)\colon (T',L',\cE_\qt',\alpha_{\cE'}) \to (T,L,\cE_\qt,\alpha_{\cE})$ where $g\colon T' \to T$ is any $S$-scheme morphism and
\[
\phi \colon (L',\cE'_\qt,\alpha_{\cE'}) \iso g^*(L,\cE_\qt,\alpha_{\cE}) = (T'\times_T L, \cE_\qt|_{T'\times_T L},\alpha_{\cE}|_{T'\times_T L})
\]
is an isomorphism in $\fTriAlg(T')$ of trialitarian algebras over $T'$. Composition is given by $(g_1,\phi_1)\circ(g_2,\phi_2) = (g_1\circ g_2, g_2^*(\phi_1)\circ \phi_2)$.
\item The structure functor $\fTriAlg \to \Sch_S$ is given by
\[
(T,L,\cE_\qt,\alpha_{\cE}) \mapsto T \text{ and } (g,\phi) \mapsto g.
\]
\end{enumerate}
\end{defn}
By construction, the fiber over a scheme $T\in \Sch_S$ is the groupoid $\fTriAlg(T)$ of \Cref{defn_TriAlg_groupoid}. At this point, we can define a morphism of fibered categories from trialitarian triples into trialitarian algebras which is intuitively an inclusion. Recall that for $(T,(\cA_\qt,\cB_\qt,\cC_\qt,\alpha_{\cA})) \in \fTrip$ we write $\alpha_{\cB}=\Theta^+_T(\alpha_{\cA})$ for the isomorphism appearing in the object $\Theta^+_T(\cA_\qt,\cB_\qt,\cC_\qt,\alpha_{\cA})$ and similarly $\alpha_{\cC}=\Theta^{+2}_T(\alpha_{\cA})$.
\begin{lem}\label{inc_Trip_TriAlg}
There is a morphism of fibered categories $\frc \colon \fTrip \to \fTriAlg$ defined as follows.
\begin{align*}
(T,(\cA_\qt,\cB_\qt,\cC_\qt,\alpha_{\cA}))&\mapsto (T,T^{\sqcup 3},\cE_\qt = (\cA_\qt,\cB_\qt,\cC_\qt),\alpha_{\cE}=(\alpha_{\cA},\alpha_{\cB},\alpha_{\cC})) \\
(g,(\phi_1,\phi_2,\phi_3)) &\mapsto \big(g,\big(h=g^{\sqcup 3},\psi=(\phi_1,\phi_2,\phi_3)\big)\big) 
\end{align*}
on objects and morphisms respectively. Furthermore, this is a faithful functor and it is essentially surjective onto objects of the form $(T',T'^{\sqcup 3},\cE_{\qt},\alpha_{\cE})$, i.e., those with split degree $3$ \'etale cover.
\end{lem}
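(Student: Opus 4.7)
The plan is to verify four things in turn: that the proposed assignment lands in $\fTriAlg$ on objects, that it lands in $\fTriAlg$ on morphisms, that it is faithful, and that it is essentially surjective onto the stated substack of objects with split degree $3$ \'etale cover. Functoriality with respect to composition is formal once objects and morphisms are handled, since the composition rule in $\fTriAlg$ mirrors the composition rule in $\fTrip$ componentwise.

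For objects, given $(T,(\cA_\qt,\cB_\qt,\cC_\qt,\alpha_\cA)) \in \fTrip$, I would apply the calculation of \Cref{split_rho_calculation} to the split cover $L = T^{\sqcup 3}$. That example shows that a triple $(T^{\sqcup 3},(\cA_\qt,\cB_\qt,\cC_\qt),(\alpha_1,\alpha_2,\alpha_3))$ is a trialitarian algebra if and only if $\alpha_2 = \Theta^+_T(\alpha_1)$ and $\alpha_3 = \Theta^+_T(\alpha_2)$. With the choice $\alpha_1 = \alpha_\cA$, $\alpha_2 = \alpha_\cB = \Theta^+_T(\alpha_\cA)$, $\alpha_3 = \alpha_\cC = \Theta^{+2}_T(\alpha_\cA)$, these relations are automatic from \Cref{defn_Theta+_functor} together with the order-three identity $(\Theta^+_T)^3 = \Id$ of \Cref{Theta+_order_3}\ref{Theta+_order_3_ii}. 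Hence $\frc$ does land in $\fTriAlg$ on objects.

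For morphisms, consider $(g,(\phi_1,\phi_2,\phi_3))\colon (T',(\cA'_\qt,\cB'_\qt,\cC'_\qt,\alpha_{\cA'}))\to (T,(\cA_\qt,\cB_\qt,\cC_\qt,\alpha_\cA))$ in $\fTrip$. I need to check that the diagram of \Cref{defn_TriAlg_groupoid}\ref{defn_TriAlg_groupoid_ii} commutes for the split cover $h = g^{\sqcup 3}$ and $\psi = (\phi_1,\phi_2,\phi_3)$. In the split setting, using \Cref{split_E_twist}, this diagram splits into three componentwise diagrams: the first is exactly the defining diagram of $(\phi_1,\phi_2,\phi_3)$ being a morphism in $\fTrip$ (\Cref{defn_Trip(S)}\ref{defn_Trip(S)_morphisms}); the remaining two are the defining diagrams for $(\phi_2,\phi_3,\phi_1)$ and $(\phi_3,\phi_1,\phi_2)$ being morphisms in $\fTrip$ between the $\Theta^+_T$- and $\Theta^{+2}_T$-twisted triples, which is precisely the content of \Cref{Theta+_morphisms} (applied once and twice). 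This step is the main technical point to tie together carefully, but all the required compatibility statements are already in hand.

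Faithfulness is immediate, since from $(g,(h,\psi))$ one reads off $g$ as the base and $\psi = (\phi_1,\phi_2,\phi_3)$ as the three components of the quadratic-triple isomorphism. For essential surjectivity onto objects with split cover, let $(T',T'^{\sqcup 3},\cE_\qt,\alpha_\cE) \in \fTriAlg$ with $\cE_\qt = (\cA_\qt,\cB_\qt,\cC_\qt)$ and $\alpha_\cE = (\alpha_1,\alpha_2,\alpha_3)$. By \Cref{split_rho_calculation} again, the trialitarian condition forces $\alpha_2 = \Theta^+_{T'}(\alpha_1)$ and $\alpha_3 = \Theta^{+2}_{T'}(\alpha_1)$, so the object is literally equal to $\frc\big(T',(\cA_\qt,\cB_\qt,\cC_\qt,\alpha_1)\big)$. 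This finishes the argument; the bulk of the work is the bookkeeping in the morphism step, but it reduces to reapplying \Cref{Theta+_morphisms} three times.
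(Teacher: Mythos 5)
Your proposal is correct and follows essentially the same route as the paper's own proof: \Cref{split_rho_calculation} handles well-definedness on objects and essential surjectivity onto the split-cover locus, \Cref{Theta+_morphisms} (applied once and twice, after the identification of $\Theta^+_{T'}$ with pullbacks via \Cref{endo_compatibility}) gives the commutativity of the three componentwise squares for morphisms, and faithfulness is read off from the formula. The only item you compress that the paper spells out is the check that $\frc$ respects composition and the structure functors, but as you note this is formal.
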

\begin{proof}
First, it is clear this is well-defined on objects, i.e., that $\frc$ indeed produces objects in $\fTriAlg$, since $\frc(T,(\cA_\qt,\cB_\qt,\cC_\qt,\alpha_{\cA}))$ matches the description of trialitarian algebras with split \'etale cover given in \Cref{split_rho_calculation}. It is clear as well from \Cref{split_rho_calculation} that, if $\frc$ is a functor, then it is essentially surjective onto such trialitarian algebras.

Now, consider a morphism
\[
(g,(\phi_1,\phi_2,\phi_3)) \colon (T',(\cA'_\qt,\cB'_\qt,\cC'_\qt,\alpha_{\cA'})) \to (T,(\cA_\qt,\cB_\qt,\cC_\qt,\alpha_{\cA}))
\]
in $\fTrip$. Using the identification $\Cl(g^*(\cA_\qt))_\qt = g^*(\Cl(\cA_\qt)_\qt)$, this means we have a commutative diagram
\[
\begin{tikzcd}
\Cl(\cA'_\qt)_\qt \ar[r,"\alpha_{\cA'}"] \ar[d,"\Cl(\phi_1)"] & \cB'_\qt \times \cC'_\qt \ar[d,"\phi_2\times \phi_3"] \\
g^*(\Cl(\cA_\qt)_\qt) \ar[r,"g^*(\alpha_{\cA})"] & \cB_\qt \times \cC_\qt.
\end{tikzcd}
\]
In turn, through the application of $\Theta^+_{T'}$, this produces two more commutative diagrams with the roles of $\cA_\qt$, $\cB_\qt$, and $\cC_\qt$ cyclically permuted and $\alpha_{\cA}$ replaced with $\alpha_{\cB}$ and $\alpha_{\cC}$ as appropriate. Thus, we combine them into the diagram
\[
\begin{tikzcd}[column sep = 4ex]
(\Cl(\cA'_\qt)_\qt,\Cl(\cB'_\qt)_\qt,\Cl(\cC'_\qt)_\qt) \ar[r,"{(\alpha_{\cA'},\alpha_{\cB'},\alpha_{\cC'})}"] \ar[d,"{(\Cl(\phi_1),\Cl(\phi_2),\Cl(\phi_3)}" description] & (\cB'_\qt\times\cC'_\qt,\cC'_\qt\times\cA'_\qt,\cA'_\qt\times\cB'_\qt) \ar[d,"{(\phi_2\times\phi_3,\phi_3\times\phi_1,\phi_1\times\phi_2)}" description]\\[2ex]
(g^{\sqcup 3})^*(\Cl(\cA_\qt)_\qt,\Cl(\cB_\qt)_\qt,\Cl(\cC_\qt)_\qt) \ar[r,"{(\alpha_{\cA},\alpha_{\cB},\alpha_{\cC})}" yshift=1ex] & (g^{\sqcup 3})^*(\cB_\qt\times\cC_\qt,\cC_\qt\times\cA_\qt,\cA_\qt\times\cB_\qt)
\end{tikzcd}
\]
which is the required commutative diagram of \ref{defn_TriAlg_groupoid}\ref{defn_TriAlg_groupoid_ii}, thus our proposed morphism $\frc(g,(\phi_1,\phi_2,\phi_3))$ is a morphism in $\fTriAlg$.

Given two composable morphisms $(g,(\phi_1,\phi_2,\phi_3)$ and $(g',(\phi'_1,\phi'_2,\phi'_3))$ in $\fTrip$, their composition is
\[
(g\circ g', (g'^*(\phi_1)\circ \phi'_1,g'^*(\phi_2)\circ \phi'_2,g'^*(\phi_3)\circ \phi'_3)).
\]
After passing through $\frc$, the composition $\frc(g,(\phi_1,\phi_2,\phi_3))\circ \frc(g',(\phi'_1,\phi'_2,\phi'_3))$ is
\begin{align*}
&\big(g\circ g',\big(g^{\sqcup 3}\circ g'^{\sqcup 3},(g'^{\sqcup 3})^*(\phi_1,\phi_2,\phi_3) \circ (\phi'_1,\phi'_2,\phi'_3)\big)\big) \\
=& \big(g\circ g',\big((g\circ g')^{\sqcup 3},(g'^*(\phi_1)\circ \phi'_1,g'^*(\phi_2)\circ \phi'_2,g'^*(\phi_3)\circ \phi'_3)\big)\\
=& \frc\big((g,(\phi_1,\phi_2,\phi_3)\circ(g',(\phi'_1,\phi'_2,\phi'_3))\big)
\end{align*}
and therefore $\frc$ is a functor.

If $\frc(g,(\phi_1,\phi_2,\phi_3)) = \frc(g',(\phi'_1,\phi'_2,\phi'_3))$ for two morphisms then it is immediate that $g=g'$ and thus $\phi_i=\phi'_i$, so $\frc$ is faithful as claimed. 

It is obvious that $\frc$ respects the structure functors $\fTrip \to \Sch_S$ and $\fTriAlg \to \Sch_S$, and since $\fTriAlg$ is fibered in groupoids this is sufficient for $\frc$ to be a morphism of fibered categories. This finishes the proof.
\end{proof}

\begin{lem}\label{TriAlg_gerbe}
The fibered category $\fTriAlg$ of \Cref{defn_TriAlg_stack} is a gerbe.
\end{lem}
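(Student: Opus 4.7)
The plan is to verify the three defining conditions of a gerbe: $\fTriAlg$ is a stack, every fiber is locally non-empty, and any two objects in the same fiber are twisted forms of one another. Local non-emptiness is immediate, since the split object $\cT\spl$ restricts from $\fTriAlg(S)$ to $\cT\spl|_T \in \fTriAlg(T)$ for every $T \in \Sch_S$.

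To verify that $\fTriAlg$ is a stack, I would proceed in parallel with the proof of \Cref{Trip_properties}\ref{Trip_properties_i}. For two objects $(L',\cE'_\qt,\alpha_{\cE'}),(L,\cE_\qt,\alpha_{\cE}) \in \fTriAlg(T)$, the internal $\cIsom$ between them is cut out inside the product of the sheaf of \'etale cover isomorphisms $L' \iso L$ and the sheaf of quadratic triple isomorphisms $\cE'_\qt \iso h^*(\cE_\qt)$ as the equalizer of the two natural morphisms that compare $\Cl(\psi)\circ \alpha_{\cE'}$ with $h^*(\alpha_\cE) \circ (\tau_{L'})_*(\pi_{L'}^*(\psi))$, hence is itself a sheaf. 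For descent, given a cover $\{T_i \to T\}_{i\in I}$ and objects $(L_i,\cE_{i\qt},\alpha_{\cE_i}) \in \fTriAlg(T_i)$ with compatible transition data on overlaps, the $L_i$ glue to a degree $3$ \'etale cover $L \to T$ by descent for finite \'etale algebras, the $\cE_{i\qt}$ glue to a quadratic triple $\cE_\qt$ over $L$ by descent for quadratic triples, and the $\alpha_{\cE_i}$ glue to an isomorphism $\alpha_\cE$ because $\Cl(\und)$ and $(\pi_L)_*(\tau_L^*(\und))$ both commute with base change. The trialitarian condition $\Theta^+_{\Sigma_2(L)}(\Xi(\alpha_\cE)) = \rho^*_{\Sigma_2(L)}(\Xi(\alpha_\cE))$ holds because it holds locally on the cover by assumption and equality of sheaf morphisms is a sheaf condition; here one uses that $\Sigma_2(\und)$ (via \Cref{Sigma(L)_pullback}), $\rho_{\Sigma_2(\und)}$, $\Xi$, and $\Theta^+$ (via \Cref{endo_compatibility}\ref{endo_compatibility_i}) are all compatible with pullback along $T_i \to T$.

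For local triviality, let $(L,\cE_\qt,\alpha_\cE) \in \fTriAlg(T)$. There is a cover $\{T_i \to T\}_{i \in I}$ over which $L$ is split, so $L \times_T T_i \cong T_i^{\sqcup 3}$. Over each $T_i$ the trialitarian algebra has split underlying \'etale cover, and by the analysis of \Cref{split_rho_calculation} it takes the form $\frc\big(T_i,(\cA_{i\qt},\cB_{i\qt},\cC_{i\qt},\alpha_{i,1})\big)$ for a trialitarian triple uniquely determined by $\alpha_{i,1}$, since the defining condition of a trialitarian algebra forces $\alpha_{i,2} = \Theta^+_{T_i}(\alpha_{i,1})$ and $\alpha_{i,3} = (\Theta^+_{T_i})^2(\alpha_{i,1})$. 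By \Cref{Trip_properties}\ref{Trip_properties_i} the triple $(\cA_{i\qt},\cB_{i\qt},\cC_{i\qt},\alpha_{i,1})$ is locally isomorphic to $\cM_\qt^3|_{T_i}$, and applying the functor $\frc$ of \Cref{inc_Trip_TriAlg} yields, after refining the cover, a local isomorphism with $\frc(\cM_\qt^3)|_{T_i} = \cT\spl|_{T_i}$. Hence every object of $\fTriAlg$ in any fiber is a twisted form of the split object.

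The main obstacle is organising the naturality of the constructions $\Sigma_2(L)$, $\pi_L$, $\tau_L$, $\rho_{\Sigma_2(L)}$, $\Xi$, and $\Theta^+$ with respect to base change so that both the sheaf property of $\cIsom$ and the descent of $\alpha_\cE$ along with its trialitarian condition go through cleanly. Once these compatibilities are in place, the geometric content of the lemma reduces via the inclusion functor $\frc$ to the fact that $\fTrip$ is already a gerbe.
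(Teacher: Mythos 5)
Your proposal follows the same route as the paper: verify the stack axioms by descending the \'etale cover, the quadratic triple, and the isomorphism $\alpha_\cE$ separately, check the trialitarian condition after gluing because it holds locally and the constructions $\Sigma_2(\und)$, $\rho_{\Sigma_2(\und)}$, $\Xi$, and $\Theta^+$ commute with base change, and deduce local triviality via $\frc$ from the fact that $\fTrip$ is a gerbe. The one imprecision is in the $\cIsom$-sheaf step: the codomain $h^*(\cE_\qt)$ of $\psi$ depends on $h$, so one cannot literally take a product of two sheaves and then an equalizer; the paper instead directly checks separatedness and gluing for the pairs $(h,\psi)$, appealing to the fppf-sheaf property of $\cHom_{\Sch_T}(L',L)$ and to descent for morphisms of quadratic triples, and your argument is correct once phrased this way.
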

\begin{proof}
As before, we must first convince ourselves that $\fTriAlg$ is a stack. Beginning with the homomorphism presheaves, let $(T,\cT_1)=(T,L_1,\cE_{1\qt},\alpha_{\cE_1})$ and $(T,\cT_2)=(T,L_2,\cE_{2\qt},\alpha_{\cE_2})$ be two objects in the same fiber. Let $\{T_i \to T\}_{i\in I}$ be a cover of $T$. Assume we are given two morphisms
\begin{align*}
(g_1,\phi_1),(g_2,\phi_2) &\in \cIsom((T,\cT_1),(T,\cT_2))(T)\\
 &= \Hom_{\fTriAlg(T)}((L_1,\cE_{1\qt},\alpha_{\cE_1}),(L_2,\cE_{2\qt},\alpha_{\cE_2}))
\end{align*}
which agree over all $T_i$. By \cite[Tag 040L]{Stacks}, the functor
\begin{align*}
\cHom_{\Sch_T}(L_1,L_2)\colon \Sch_T &\to \Sets \\
T' &\to \Hom_{T'}(L_1\times_T T', L_2\times_T T')
\end{align*}
is an fppf--sheaf and therefore we conclude $g_1=g_2$ and denote them simply as $g$. But then, we have two maps of algebras
$\phi_1,\phi_2 \colon \cE_{1\qt} \iso g^*(\cE_{2\qt})$ which agree locally, and so we also have $\phi_1=\phi_2$. Hence, we have a separated presheaf.

If we are given local morphisms
\[
(g_i,\phi_i)\colon (L_1\times_T T_i,\cE_{1\qt}|_{L_1\times_T T_i},\alpha_{\cE_1}|_{L_1\times_T T_i})\iso (L_2\times_T T_i,\cE_{2\qt}|_{L_2\times_T T_i},\alpha_{\cE_2}|_{L_2\times_T T_i})
\]
which are compatible on overlaps, then once again \cite[Tag 040L]{Stacks} guarantees that the $g_i$ glue into a global $g\colon L_1 \iso L_2$ whose restriction to $T_i$ is $g_i$. Therefore, the maps $\phi_i$ are compatible local isomorphisms between the algebras $\cE_{1\qt}$ and $g^*(\cE_{2\qt})$, so we also have a glued isomorphism $\phi \colon \cE_{1\qt} \iso g^*(\cE_{2\qt})$. It is an isomorphism of quadratic triples, but we still must check that is qualifies as an isomorphism of trialitarian algebras. Recall that by \Cref{Sigma(L)_pullback} we have that $\Sigma_2(L_1\times_T T_i)\cong \Sigma_2(L_1)\times_T T_i$. The maps $\tau_{L_1},\pi_{L_1} \colon \Sigma_2(L_1) \to L_1$ are compatible with restrictions, i.e., $\tau_{L_1}|_{L_1\times_T T_i} = \tau_{L_1\times_T T_i}$ and likewise for $\pi_{L_1}$. Of course, analogous statements hold for $L_2$. Therefore, if we consider the diagram of \Cref{defn_TriAlg_groupoid}\ref{defn_TriAlg_groupoid_ii} for this case and restrict all maps to $L_1\times_T T_i$, we will obtain the commutative diagrams corresponding to \Cref{defn_TriAlg_groupoid}\ref{defn_TriAlg_groupoid_ii} for the maps $(g_i,\phi_i) \in \fTriAlg(T_i)$. Hence, since our desired diagram commutes locally it must commute globally as well and so $(g,\phi)$ is a morphism of trialitarian algebras.

Second, we must argue that $\fTriAlg$ allows gluing of objects. Still working with a cover $\{T_i\to T\}_{i\in I}$ in $\Sch_S$, assume we have objects $(T_i,L_i,\cE_{i\qt},\alpha_{\cE_i}) \in \fTriAlg(T_i)$ and gluing data isomorphisms
\[
(g_{ij},(h_{ij},\phi_{ij}))\colon (T_i,L_i,\cE_{i\qt},\alpha_{\cE_i})|_{T_{ij}} \iso (T_j,L_j,\cE_{j\qt},\alpha_{\cE_j})|_{T_{ij}}
\]
in $\fTriAlg$. If we restrict just to the information of the degree $3$ \'etale covers, i.e., $(T_i,L_i)$ and morphisms $(g_{ij},h_{ij})$, then we have diagrams
\[
\begin{tikzcd}
L_i\times_{T_i} T_{ij} \ar[r,"h_{ij}"] \ar[dr] & (L_j\times_{T_j} T_{ij})\times_{g_{ij}} T_{ij} \ar[r,"p"] \ar[d] & L_j\times_{T_j} T_{ij} \ar[d] \\
& T_{ij} \ar[r,"g_{ij}"] & T_{ij}.
\end{tikzcd}
\]
where $p$ is the projection coming from the fiber product. This makes the collection of $(p\circ h_{ij},g_{ij})$ a family of morphisms in the stack of finite \'etale covers $F\acute{E}t$ of \cite[0BLY]{Stacks}, which by \cite[0BLZ]{Stacks} is indeed a stack. In fact, we have gluing data in this stack and therefore we have a glued degree $3$ \'etale cover $L\to T$. Furthermore, since $L\times_T T_i \cong L_i$, we have a cover $\{L_i \to L\}_{i\in I}$ of $L$. Then, the $\phi_{ij}$ becomes gluing data in the stack of quadratic triples over $L$ and hence there is a glued quadratic triple $\cE_\qt$. Locally it is degree $8$, so it is a degree $8$ $\cO|_L$--algebra. Since morphisms between sheaves also allow gluing and the $\phi_{ij}$ are compatible with the isomorphisms $\alpha_{\cE_i}|_{T_{ij}}$ and $\alpha_{\cE_j}|_{T_{ij}}$, we will have a glued isomorphism $\alpha_{\cE}$ over $L$. We must verify that $(L,\cE_\qt,\alpha_{\cE})$ is a trialitarian algebra, i.e., that is satisfies \Cref{defn_TriAlg}\ref{defn_TriAlg_iii}. However, the isomorphism $\Xi(\alpha_\cE)$ of \Cref{triple_over_Sigma(L)} is constructed by gluing local constructions, and therefore it is clear that $\Xi(\alpha_{\cE})|_{\Sigma_2(L_i)} = \Xi(\alpha_{\cE_i})$. Then, because $\Theta^+$ of \Cref{defn_endo_Trip}\ref{defn_endo_Trip_i} is a stack morphisms and also respects base change,
\[
\Theta^+_{\Sigma_2(L)}(\Xi(\alpha_{\cE}))|_{\Sigma_2(L_i)} = \Theta^+_{\Sigma_2(L_i)}(\Xi(\alpha_{\cE_i}))=\rho_{\Sigma_2(L_i)}^*(\Xi(\alpha_{\cE_i})) = \rho_{\Sigma_2(L)}^*(\Xi(\alpha_{\cE}))|_{\Sigma_2(L_i)}.
\]
Since $\{L_i \cong L\times_T T_i \to L\}_{i\in I}$ is a cover and
\[
\Sigma_2(L_i)\cong \Sigma_2(L\times_T T_i) \cong \Sigma_2(L)\times_T T_i \cong \Sigma_2(L)\times_L L_i,
\]
where the second isomorphism is by \Cref{Sigma(L)_pullback}, we see that $\{\Sigma_2(L_i) \to \Sigma_2(L)\}_{i\in I}$ is another cover. By above, the maps $\Theta^+_{\Sigma_2(L)}(\Xi(\alpha_{\cE}))$ and $\rho_{\Sigma_2(L)}^*(\Xi(\alpha_{\cE}))$ agree locally over this cover and so we conclude that
\[
\Theta^+_{\Sigma_2(L)}(\Xi(\alpha_{\cE})) = \rho_{\Sigma_2(L)}^*(\Xi(\alpha_{\cE})).
\]
Thus, $(L,\cE_\qt,\alpha_{\cE})$ is a trialitarian algebra, meaning that $\fTriAlg$ allows gluing. This concludes the proof that $\fTriAlg$ is a stack.

The claim that $\fTriAlg$ is a gerbe is now almost immediate. Given any object $(T,L,\cE_\qt,\alpha_{\cE})\in \fTriAlg$, there is a cover which splits $L$, meaning that it locally becomes of the form $(T',T'^{\sqcup 3},(\cA_\qt, \cB_\qt,\cC_\qt),(\alpha,\Theta^+(\alpha),\Theta^{+2}(\alpha)))$, which is the image of the trialitarian triple $(T',(\cA_\qt,\cB_\qt,\cC_\qt,\alpha)) \in \fTrip(T')$ under the morphism $\frc$ of \Cref{inc_Trip_TriAlg}. Then, since $\fTrip$ is a gerbe by \ref{Trip_properties}\ref{Trip_properties_i}, all such objects are locally isomorphic to $\frc(T',\cM_\qt^3|_{T'}) \cong \frc(S,\cM_\qt^3)|_{T'}$. Thus all objects of $\fTriAlg$ are locally isomorphic to the split object $\frc(S,\cM_\qt^3) = (S,\cT\spl)$, hence $\fTriAlg$ is a gerbe and we are done.
\end{proof}

\begin{lem}\label{TriAlg_automorphism_sheaf}
The automorphism sheaf of the split object $(S,\cT\spl) \in \fTriAlg(S)$ is
\[
\bAut(S,\cT\spl) \cong \PGO_8^+\rtimes \SS_3.
\]
In particular, $\fTriAlg$ is equivalent to the gerbe of $\PGO_8^+\rtimes \SS_3$--torsors.
\end{lem}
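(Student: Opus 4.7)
My plan is to identify $\bAut(S,\cT\spl)$ as a split extension of $\SS_3$ by $\PGO_8^+$. First I will define a projection $p \colon \bAut(S,\cT\spl) \to \SS_3$ by sending an automorphism $(h,\psi)$ to $h \in \cAut_S(S^{\sqcup 3}) = \SS_3$. An element of the kernel is of the form $(\Id,\psi)$ where $\psi = (\phi_1,\phi_2,\phi_3)$ is an automorphism of $(\Mat_8(\cO)_\qt)^3$ over $S^{\sqcup 3}$. Using \Cref{split_E_twist} and \Cref{gamma_respects_autos}, the compatibility diagram of \Cref{defn_TriAlg_groupoid}\ref{defn_TriAlg_groupoid_ii} with $\alpha_\cE = (\Psi,\Psi,\Psi)$ decomposes componentwise into three squares, the $i$--th being
\[
\Psi \circ \Cl(\phi_i) = (\phi_{i+1} \times \phi_{i+2}) \circ \Psi.
\]
By \Cref{triality_hom}, each such square forces $\phi_i \in \PGO_8^+$ and pins down the other two components as $\theta^+(\phi_i)$ and $\theta^-(\phi_i)$; the three conditions are mutually equivalent, so $\Ker(p)$ is identified with $\PGO_8^+$ via $\phi \mapsto (\Id,(\phi,\theta^+(\phi),\theta^-(\phi)))$. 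This inclusion coincides with the one coming from the fully faithful functor $\frc$ of \Cref{inc_Trip_TriAlg} applied to $\bAut(S,\cM_\qt^3) \cong \PGO_8^+$ of \Cref{Trip_properties}\ref{Trip_properties_ii}.

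Next I will produce a canonical section $s \colon \SS_3 \to \bAut(S,\cT\spl)$ by $s(\beta) = (\beta,\Id)$, exploiting the fact that all three quadratic triples in $\cT\spl$ coincide with $\Mat_8(\cO)_\qt$ and all three components of $\alpha_\cE$ equal $\Psi$, so that $\beta^*(\cE_\qt,\alpha_\cE) = (\cE_\qt,\alpha_\cE)$ on the nose. The nontrivial point is to check the compatibility diagram of \Cref{defn_TriAlg_groupoid}\ref{defn_TriAlg_groupoid_ii} for $s(\beta)$, which reduces to the $\SS_3$--equivariance of the structure maps $\pi,\tau \colon S^{\sqcup 6} \to S^{\sqcup 3}$ together with equivariance of $\Psi$ across the three copies; the first is of the same nature as \Cref{split_rho}, and the second is immediate since $\Psi$ is a single fixed isomorphism duplicated across the three factors. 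With $p \circ s = \Id_{\SS_3}$, we obtain a split short exact sequence
\[
1 \to \PGO_8^+ \to \bAut(S,\cT\spl) \to \SS_3 \to 1,
\]
so $\bAut(S,\cT\spl) \cong \PGO_8^+ \rtimes_\alpha \SS_3$ for some action $\alpha$. I will identify $\alpha$ by computing $s(\beta)\,\phi\,s(\beta)^{-1}$ for $\beta = c$ and $\beta = \lambda$: pulling back the triple $(\phi,\theta^+(\phi),\theta^-(\phi))$ along the cyclic permutation $c$ yields $(\theta^+(\phi),\theta^-(\phi),\phi)$, which via the kernel identification corresponds to $\theta^+(\phi)$, showing $c$ acts as $\theta^+$. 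A parallel calculation combined with \Cref{theta_description} shows $\lambda$ acts as $\theta$, matching the action in \Cref{aut_spin_PGO} and yielding $\bAut(S,\cT\spl) \cong \PGO_8^+ \rtimes \SS_3$.

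The second claim is then immediate from \Cref{equivalence_gerbes}: $\fTriAlg$ is a gerbe by \Cref{TriAlg_gerbe} with global object $(S,\cT\spl)$ whose automorphism sheaf has just been shown to be $\PGO_8^+ \rtimes \SS_3$, so the tautological morphism $\cT \mapsto \cIsom(\cT\spl,\cT)$ to the gerbe of $(\PGO_8^+ \rtimes \SS_3)$--torsors is an equivalence. The main obstacle is the middle step verifying that $s$ is well defined: this requires bookkeeping of how the $\SS_3$--action on $S^{\sqcup 3}$ interacts with the induced action on $\Sigma_2(S^{\sqcup 3}) = S^{\sqcup 6}$ and the maps $\pi$ and $\tau$, which is the essential content of the equivariance underlying \Cref{split_rho}. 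Once this equivariance is in hand, the rest of the argument is largely formal.
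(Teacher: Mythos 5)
Your proposal runs into a genuine gap at exactly the place you flag as ``the nontrivial point'': the proposed section $s(\beta) = (\beta,\Id)$ does not land in $\bAut(S,\cT\spl)$ when $\beta$ is an odd permutation.

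The issue is the middle ``canonical isomorphism'' arrow on the right side of the compatibility diagram in \Cref{defn_TriAlg_groupoid}\ref{defn_TriAlg_groupoid_ii}, namely $\tau_*(\pi^*(h^*(\cE_\qt))) \cong h^*(\tau_*(\pi^*(\cE_\qt)))$. You are right that $h^*(\cE_\qt) = \cE_\qt$ and $h^*(\alpha_\cE) = \alpha_\cE$ on the nose, but that is not the same as this canonical isomorphism being the identity. A direct computation (which the paper's own proof carries out) shows that for a transposition $\lambda = (2\,3)$ one has
\[
\tau_*(\pi^*(\lambda^*(\cA_1,\cA_2,\cA_3))) = (\cA_3\times\cA_2,\cA_2\times\cA_1,\cA_1\times\cA_3), \qquad
\lambda^*(\tau_*(\pi^*(\cA_1,\cA_2,\cA_3))) = (\cA_2\times\cA_3,\cA_1\times\cA_2,\cA_3\times\cA_1),
\]
so the canonical isomorphism is $(\sw,\sw,\sw)$, not the identity. (The two sides are equal as algebras because all $\cA_i = \Mat_8(\cO)_\qt$, but the canonical map between them is still the componentwise switch.) Consequently, for $\psi = \Id$ the compatibility square becomes the assertion $\sw \circ \Psi = \Psi$, which is false. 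The $\SS_3$--equivariance of $\pi$ and $\tau$ you invoke does not rescue this: that equivariance is precisely what produces the $(\sw,\sw,\sw)$ rather than killing it, since odd permutations swap the roles of the two branches encoded in $\pi$ and $\tau$.

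What works instead is the section $s(\lambda) = (\lambda, (\pi_\bO(\varphi),\pi_\bO(\varphi),\pi_\bO(\varphi)))$ with $\pi_\bO(\varphi)$ the improper element of \Cref{theta_description}: by \Cref{split_Clifford_switch_iso} the square with $\beta = \sw\circ\alpha$ then commutes, and $\pi_\bO(\varphi)^2 = \Id$ guarantees this extends to a genuine splitting of $p$. Your identification of $\Ker(p) \cong \PGO_8^+$, and the conjugation computation giving the action of $c$ as $\theta^+$, are both correct and match the paper; the paper itself does not set up a split exact sequence but directly exhibits the isomorphism $\bAut(S,\cT\spl) \to \PGO_8^+\rtimes\SS_3$ with exactly this $\pi_\bO(\varphi)$-twist built in for odd permutations, and then verifies it is a group homomorphism. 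With the section corrected, your split-extension route would recover the same isomorphism.
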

\begin{proof}
To ease notation, let $\cE_\qt = (\Mat_8(\cO)_\qt,\Mat_8(\cO)_\qt,\Mat_8(\cO)_\qt)$ be the split degree $8$ Azumaya algebra over $S^{\sqcup 3}$. To keep track of positions, let $\cE_\qt = (\cA_1,\cA_2,\cA_3)$ with all $\cA_i = \Mat_8(\cO)_\qt$. Recall that $\tau_{S^{\sqcup 3}} = \tau$, $\pi_{S^{\sqcup 3}} = \pi$, and we have that
\[
\tau_*(\pi^*(\cA_1,\cA_2,\cA_3)) = (\cA_2\times \cA_3,\cA_3\times \cA_1,\cA_1\times \cA_2)
\]
from \Cref{split_E_twist}. As part of the definition of morphisms in \ref{defn_TriAlg_groupoid}\ref{defn_TriAlg_groupoid_ii}, we use a canonical isomorphism
\[
\tau_*(\pi^*(h^*(\cE_\qt))) \cong h^*(\tau_*(\pi^*(\cE_\qt)))
\]
for an element $h\in \SS_3(S)$. 

To investigate this more closely, we may assume without loss of generality that we are dealing with a diagonal section and first consider the cycle $c = (1\; 2\; 3) \in \SS_3(S)$. Then,
\begin{align*}
\tau_*(\pi^*(c^*(\cA_1,\cA_2,\cA_3))) &= \tau_*(\pi^*(\cA_2,\cA_3,\cA_1)) \\ 
&= (\cA_3\times\cA_1,\cA_1\times\cA_2,\cA_2\times\cA_3)
\intertext{and}
c^*(\tau_*(\pi^*(\cA_1,\cA_2,\cA_3))) &= c^*(\cA_2\times \cA_3,\cA_3\times \cA_1,\cA_1\times \cA_2) \\
&= (\cA_3\times\cA_1,\cA_1\times\cA_2,\cA_2\times\cA_3)
\end{align*}
so in this case the canonical isomorphism is an equality. Furthermore, this is also clearly the case for the identity permutation and $c^2$, i.e., the diagonal permutations in $A_3(S) \subset \SS_3(S)$. However, now consider the transposition $\lambda = (2\; 3)$. We have
\begin{align*}
\tau_*(\pi^*(\lambda^*(\cA_1,\cA_2,\cA_3))) &= \tau_*(\pi^*(\cA_1,\cA_3,\cA_2)) \\ 
&= (\cA_3\times\cA_2,\cA_2\times\cA_1,\cA_1\times\cA_3)
\intertext{and}
\lambda^*(\tau_*(\pi^*(\cA_1,\cA_2,\cA_3))) &= \lambda^*(\cA_2\times \cA_3,\cA_3\times \cA_1,\cA_1\times \cA_2) \\
&= (\cA_2\times\cA_3,\cA_1\times\cA_2,\cA_3\times\cA_1)
\end{align*}
and so now the canonical isomorphism is $(\sw,\sw,\sw)$. This is also the canonical isomorphism for the other diagonal sections $c\lambda$ and $c^2\lambda$ outside of $A_3(S)$ in $\SS_3(S)$. 

Now, let $h\in \SS_3(S)$ be any diagonal section. Since we have $h^*(\cE_\qt)=\cE_\qt$, $\tau_*(\pi^*(\cE_\qt))=\cE_\qt \times \cE_\qt$, and $h^*(\Cl(\cE_\qt)_\qt) = \Cl(\cE_\qt)_\qt$ (in fact these equations hold even for non-diagonal sections), an automorphism $(\phi_1,\phi_2,\phi_3) \colon \cE_\qt \to \cE_\qt$ of quadratic triples over $S^{\sqcup 3}$ will give a section of $\bAut(S,\cT\spl)(S)$ if and only if the diagram
\[
\begin{tikzcd}[column sep=15ex]
\Cl(\cE_\qt)_\qt \ar[r,"\alpha"] \ar[d,"{(\Cl(\phi_1),\Cl(\phi_2),\Cl(\phi_3))}" description] & \cE_\qt \times \cE_\qt \ar[d,"{(\phi_2\times\phi_3,\phi_3\times\phi_1,\phi_1\times\phi_2)}" description] \\
\Cl(\cE_\qt)_\qt \ar[r,"\beta"] & \cE_\qt \times \cE_\qt
\end{tikzcd}
\]
commutes, where $\beta = \alpha$ if $h\in A_3(S)$ or $\beta = \sw\circ\alpha$ if $h\notin A_3(S)$. In other words, this is the information of
\begin{enumerate}
\item \label{TriAlg_automorphism_sheaf_i} an automorphism $(\phi_1,\phi_2,\phi_3) \in \bAut_{\fTrip(S)}(\cM_\qt^3)(S)$ if $h\in A_3(S)$, or
\item \label{TriAlg_automorphism_sheaf_ii} an isomorphism $(\phi_1,\phi_2,\phi_3) \in \cIsom_{\fTrip(S)}(\cM_\qt^3,\Theta_S(\cM_\qt^3))(S)$ if $h\notin A_3(S)$.
\end{enumerate}
Note that, in case \ref{TriAlg_automorphism_sheaf_i} this means all $\phi_i \in \PGO_8^+(S)$ and we have $\phi_2 = \theta^+(\phi_1)$ and $\phi_3=\theta^+(\phi_1)$. In case \ref{TriAlg_automorphism_sheaf_ii}, they are in $\PGO_8(S)\backslash \PGO_8^+(S)$. Using the isomorphisms of \Cref{Theta_functor_properties}\ref{Theta_functor_properties_i}, where we recall $\pi_{\bO}(\varphi)$ from \Cref{theta_description}\ref{theta_description_iii}, we have that
\[
(\pi_{\bO}(\varphi)\cdot \phi_1,\pi_{\bO}(\varphi)\cdot \phi_2,\pi_{\bO}(\varphi)\cdot \phi_3) \in \bAut_{\fTrip(S)}(\cM_\qt^3)(S)
\]
and so $\theta^+(\pi_{\bO}(\varphi)\cdot \phi_1) = \pi_{\bO}(\varphi)\cdot \phi_2$ and $\theta^+(\pi_{\bO}(\varphi)\cdot \phi_2) = \pi_{\bO}(\varphi)\cdot \phi_3$. Since an arbitrary $h\in \SS_3(S)$ is locally constant, after passing to a cover this discussion is sufficient to describe all of $\bAut(S,\cT\spl)(S)$. The above discussion is also stable under base change and therefore describes $\bAut(T,\cT\spl|_T)$ for any $T \in \Sch_S$ as well. 

Now, we define a morphism of sheaves of groups $\bAut(S,\cT\spl) \to \PGO_8^+ \rtimes \SS_3$ by defining it to act as
\begin{align}
\bAut(S,\cT\spl)(T) &\to (\PGO_8^+ \rtimes \SS_3)(T) \label{eq_TriAlg_automorphism_sheaf} \\
(\Id_S,h,(\phi_1,\phi_2,\phi_3)) &\mapsto \begin{cases} (1,h)(\phi_1,1) & h \in A_3(T) \\ (1,h)(\pi_{\bO}(\varphi) \cdot \phi_1,1) & h \notin A_3(T) \end{cases} \nonumber
\end{align}
for diagonal sections $h\in \SS_3(T)$, which then uniquely defines the behaviour on all sections. We claim that this is an isomorphism of groups. It is sufficient to argue that it is an isomorphism between the subpresheaf of automorphisms where $h$ is a diagonal section and the subpresheaf $\PGO_8^+\rtimes \PP_3$ where $\PP_3$ is the constant presheaf associated to $P_3$. The discussion above makes it clear that it is a bijection of presheaves with an inverse given by sending
\[
(1,h)(\phi,1) \mapsto \begin{cases} (\Id_S,h,(\phi,\theta^+(\phi),\theta^-(\phi))) & h \in A_3(T) \\ (\Id_S,h,(\pi_{\bO}(\varphi)\cdot\phi,\pi_{\bO}(\varphi)\cdot\theta^+(\phi),\pi_{\bO}(\varphi)\cdot\theta^-(\phi))) & h \notin A_3(T) \end{cases}
\]
Thus, it only remains to show that it is a group homomorphism. It is sufficient to consider compositions $(\Id_S,h_1,(\phi_1,\phi_2,\phi_3))\circ (\Id_S,h_2,(\psi_1,\psi_2,\psi_3))$ where $h_2$ is $c$ or $\lambda$.

First, consider
\[
(\Id_S,h_1,(\phi_1,\phi_2,\phi_3))\circ (\Id_S,c,(\psi_1,\psi_2,\psi_3)) = (\Id_S,h_1c,(\phi_2\psi_1,\phi_3\psi_2,\phi_1\psi_3)).
\]
If $h_1 \in A_3(T)$, then the left hand side maps to
\begin{align*}
(1,h_1)(\phi_1,1)(1,c)(\psi_1,1) &= (1,h_1)(1,c)(\theta^+(\phi_1),1)(\psi_1,1) \\
&= (1,h_1c)(\theta^+(\phi_1)\psi_1,1) \\
&= (1,h_1c)(\phi_2\psi_1,1)
\end{align*}
which is the image of the right hand side since $h_1c \in A_3(T)$ also. If $h_1 \notin A_3(T)$, then the left hand side maps to
\begin{align*}
(1,h_1)(\pi_{\bO}(\varphi)\phi_1,1)(1,c)(\psi_1,1) &= (1,h_1c)(\theta^+(\pi_{\bO}(\varphi)\phi_1)\psi_1,1) \\
&= (1,h_1c)(\pi_{\bO}(\varphi)\phi_2\psi_1,1)
\end{align*}
which is the image of the right hand side since $h_1c \notin A_3(T)$ also.

Second, consider 
\[
(\Id_S,h_1,(\phi_1,\phi_2,\phi_3))\circ (\Id_S,\lambda,(\psi_1,\psi_2,\psi_3)) = (\Id_S,h_1\lambda,(\phi_1\psi_1,\phi_3\psi_2,\phi_2\psi_3)).
\]
If $h_1\in A_3(T)$ then the left side maps to
\begin{align*}
(1,h_1)(\phi_1,1)(1,\lambda)(\pi_{\bO}(\varphi)\cdot \psi_1) &= (1,h_1\lambda)(\theta(\phi_1),1)(\pi_{\bO}(\varphi)\cdot \psi_1,1) \\
&= (1,h_1\lambda)(\pi_{\bO}(\varphi)\cdot \phi_1 \cdot \pi_{\bO}(\varphi),1)(\pi_{\bO}(\varphi)\cdot \psi_1,1) \\
&= (1,h_1\lambda)(\pi_{\bO}(\varphi)\cdot \phi_1\psi_1,1)
\end{align*}
which is the image of the right side since $h_1\lambda \notin A_3(T)$. Here we used the description of the automorphism $\theta$ of $\PGO_8^+$ from \Cref{theta_description}\ref{theta_description_iii}. If instead, $h_1 \notin A_3(T)$ then the left side maps to
\begin{align*}
(1,h_1)(\pi_{\bO}(\varphi)\cdot \phi_1,1)(1,\lambda)(\pi_{\bO}(\varphi)\cdot \psi_1) &= (1,h_1\lambda)(\theta(\pi_{\bO}(\varphi)\cdot \phi_1),1)(\pi_{\bO}(\varphi)\cdot \psi_1,1) \\
&= (1,h_1\lambda)(\phi_1\cdot \pi_{\bO}(\varphi),1)(\pi_{\bO}(\varphi)\cdot \psi_1,1) \\
&= (1,h_1\lambda)(\phi_1\psi_1,1)
\end{align*}
which is the image of the right side since $h_1\lambda \in A_3(T)$. Therefore the proposed isomorphism is a group homomorphism and we are done.
\end{proof}

\begin{cor}
The morphism $\frc \colon \fTrip \to \fTriAlg$ of \Cref{inc_Trip_TriAlg} is a morphism of gerbes which sends the split object to the split object, i.e., $\frc(S,\cM_\qt^3) = (S,\cT\spl)$. The corresponding induced homomorphism between automorphism sheaves is the inclusion
\[
\PGO_8^+ \inj \PGO_8^+ \rtimes \SS_3.
\]
Under the corresponding map on cohomology $H^1(S,\PGO_8^+) \to H^1(S,\PGO_8^+\rtimes \SS_3)$, the images of the classes
\begin{align*}
&[\big(S,(\cA_\qt,\cB_\qt,\cC_\qt,\alpha_{\cA})\big)], \\
&[\big(S,\big(\cB_\qt,\cC_\qt,\cA_\qt,\Theta^+_S(\alpha_{\cA})\big)\big)],\text{ and }\\
&[\big(S,\big(\cA_\qt,\cC_\qt,\cB_\qt,\Theta_S(\alpha_{\cA})\big)\big)]
\end{align*}
are all equal.
\end{cor}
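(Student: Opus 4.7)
The plan is to verify the three parts in sequence: that $\frc$ is a morphism of gerbes sending split to split, that the induced map on automorphism sheaves is the claimed inclusion, and that applying $\frc$ to the three trialitarian triples yields isomorphic trialitarian algebras.

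The first claim is almost immediate. By \Cref{inc_Trip_TriAlg} the functor $\frc$ is already a morphism of fibered categories, and both source and target are gerbes by \Cref{Trip_properties}\ref{Trip_properties_i} and \Cref{TriAlg_gerbe}, so $\frc$ is automatically a morphism of gerbes. To see $\frc(S,\cM_\qt^3) = (S,\cT\spl)$, I will invoke \Cref{Theta+_order_3}\ref{Theta+_order_3_i}: since $\cM_\qt^3$ is a fixed point of $\Theta^+_S$, the defining tuple of $\frc(S,\cM_\qt^3)$ collapses to $(\Psi,\Psi,\Psi)$, matching $\cT\spl$.

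Next, to identify the induced homomorphism between automorphism sheaves, I will trace the automorphism $(\phi,\theta^+(\phi),\theta^-(\phi))$ corresponding to $\phi \in \PGO_8^+(T) \cong \bAut(S,\cM_\qt^3)(T)$ through $\frc$. Its image in $\bAut(S,\cT\spl)(T)$ is $(\Id_T,(\Id_{T^{\sqcup 3}},(\phi,\theta^+(\phi),\theta^-(\phi))))$. Because the underlying cover-automorphism $\Id_{T^{\sqcup 3}}$ is a section of $A_3$, the identification \Cref{eq_TriAlg_automorphism_sheaf} sends this to $(\phi,1) \in (\PGO_8^+ \rtimes \SS_3)(T)$, which is precisely the canonical inclusion of the first factor.

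For the cohomology statement, I plan to apply \Cref{stack_cohomology}, so the task is to exhibit isomorphisms in $\fTriAlg(S)$ between the images of the three triples under $\frc$. Using \Cref{Theta_functor_properties}\ref{Theta_functor_properties_ii} to simplify $\Theta^+_S(\sw \circ \alpha_\cA) = \sw \circ \alpha_\cC$ and $\Theta^{+2}_S(\sw \circ \alpha_\cA) = \sw \circ \alpha_\cB$, the three images read
\begin{align*}
\cT_1 &= (S^{\sqcup 3},(\cA_\qt,\cB_\qt,\cC_\qt),(\alpha_\cA,\alpha_\cB,\alpha_\cC)), \\
\cT_2 &= (S^{\sqcup 3},(\cB_\qt,\cC_\qt,\cA_\qt),(\alpha_\cB,\alpha_\cC,\alpha_\cA)), \\
\cT_3 &= (S^{\sqcup 3},(\cA_\qt,\cC_\qt,\cB_\qt),(\sw\circ\alpha_\cA,\sw\circ\alpha_\cC,\sw\circ\alpha_\cB)).
\end{align*}
To construct $\cT_2 \iso \cT_1$ I will take the pair $(c,\Id)$ where $c=(1\;2\;3) \in \SS_3(S)$, so that $c^*(\cA_\qt,\cB_\qt,\cC_\qt) = (\cB_\qt,\cC_\qt,\cA_\qt)$; to construct $\cT_3 \iso \cT_1$ I will take $(\lambda,\Id)$ with $\lambda=(2\;3)$.

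The main bookkeeping step, and the one place some care is needed, is the compatibility diagram of \Cref{defn_TriAlg_groupoid}\ref{defn_TriAlg_groupoid_ii}. For $\psi=\Id$ this reduces to the equation $(\text{canonical iso}) \circ \alpha_{\cE'} = h^*(\alpha_\cE)$, where the canonical identification $\tau_*\pi^*(h^*(\cE_\qt)) \cong h^*(\tau_*\pi^*(\cE_\qt))$ was computed in the proof of \Cref{TriAlg_automorphism_sheaf} to be the identity when $h \in A_3$ and $(\sw,\sw,\sw)$ otherwise. For $h=c$ this immediately gives $(\alpha_\cB,\alpha_\cC,\alpha_\cA)=c^*(\alpha_\cA,\alpha_\cB,\alpha_\cC)$, which holds. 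For $h=\lambda$ the three $\sw$ factors introduced by the canonical identification cancel the three $\sw$ factors present in the $\alpha$-tuple of $\cT_3$, yielding $(\alpha_\cA,\alpha_\cC,\alpha_\cB)=\lambda^*(\alpha_\cA,\alpha_\cB,\alpha_\cC)$, which also holds. Once both compatibilities are verified, the classes of $\cT_1$, $\cT_2$, and $\cT_3$ in $H^1(S,\PGO_8^+\rtimes \SS_3)$ coincide, completing the proof.
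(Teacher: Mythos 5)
Your proof is correct and follows essentially the same route as the paper: after checking the split object is preserved and the induced automorphism map is the inclusion, you exhibit the isomorphisms $(c,\Id)$ and $(\lambda,\Id)$ in $\fTriAlg(S)$ and invoke \Cref{stack_cohomology}. Where you add value is in actually verifying the compatibility diagram of \Cref{defn_TriAlg_groupoid}\ref{defn_TriAlg_groupoid_ii} for both permutations, including the cancellation of the three $\sw$ factors coming from the canonical identification $\tau_*\pi^*(\lambda^*(\cE))\cong\lambda^*(\tau_*\pi^*(\cE))$ against those in the $\alpha$-tuple of $\cT_3$; the paper asserts these maps are isomorphisms without spelling that out. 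You also explicitly simplify $\Theta^+_S(\sw\circ\alpha_\cA) = \sw\circ\alpha_\cC$ and $\Theta^{+2}_S(\sw\circ\alpha_\cA) = \sw\circ\alpha_\cB$ via \Cref{Theta_functor_properties}\ref{Theta_functor_properties_ii}, which is a useful intermediate step the paper skips. One small notational caveat, which you inherit from the paper itself: with the pullback convention fixed in \Cref{permutation_convention} (and as used in the computation of $\rho^*$ in \Cref{rho_pullback}), the cycle $c=(1\;2\;3)$ gives $c^*(\cA_\qt,\cB_\qt,\cC_\qt)=(\cC_\qt,\cA_\qt,\cB_\qt)$ rather than $(\cB_\qt,\cC_\qt,\cA_\qt)$; taking $c=(1\;3\;2)$ instead, or running the isomorphism in the reverse direction, repairs this, and the argument goes through unchanged. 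Since this matches what the paper does verbatim, it does not constitute a gap on your part, but it is worth flagging if you intend to tighten the notation.
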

\begin{proof}
It is a morphism between gerbes by \Cref{Trip_properties}\ref{Trip_properties_i} and \Cref{TriAlg_gerbe}. The fact that the homomorphism between automorphism sheaves is as claimed is clear from the definition of $\frc$, the isomorphism described in \Cref{Trip_properties}\ref{Trip_properties_ii}, and the description of the automorphism sheaf in \Cref{TriAlg_automorphism_sheaf}. The objects $\frc(S,(\cB_\qt,\cC_\qt,\cA_\qt,\Theta^+_S(\alpha_{\cA})))$ and $\frc(S,(\cA_\qt,\cB_\qt,\cC_\qt,\alpha_{\cA}))$ are isomorphic in $\fTriAlg(S)$ via the map $(\Id_S,c=(1\; 2\; 3),(\Id_{\cB},\Id_{\cC},\Id_{\cA}))$. This shows that the images of the first and second cohomology classes are equal. Similarly, the map $(\Id_S,\lambda=(2\; 3),(\Id_{\cA},\Id_{\cC},\Id_{\cB}))$ will be an isomorphism in $\fTriAlg(S)$, showing that the images of the first and third classes are equal.
\end{proof}

\section{Groups of Type $D_4$}\label{Groups}
In this final section, we define the simply connected and adjoint groups associated to a trialitarian algebra. These constructions will provide equivalences of stacks between the stack of simply connected groups of type $D_4$ and $\fTrial$ or the stack of adjoint groups of type $D_4$ and $\fTrial$. Our definition of the adjoint groups is a direct analogue of the definition in \cite[44.A]{KMRT} and our approach to the simply connected groups is heavily motivated by \cite[44.A]{KMRT}.

\subsection{Adjoint Groups}
The adjoint groups of type $D_4$ are exactly the twisted forms of $\PGO_8^+$. It is therefore clear that the stack of all such groups, which we denote $\fD_4^\adj$ and which has
\begin{enumerate}
\item objects which are pairs $(T,\bG)$ where $T\in \Sch_S$ is any scheme and $\bG$ is a sheaf of groups on $\Sch_T$ for which there exists a cover $\{T_i \to T\}_{i\in I}$ such that $\bG|_{T_i} \cong \PGO_8^+|_{T_i}$,
\item morphisms are pairs $(g,\varphi)\colon (T',\bG') \to (T,\bG)$ where $g\colon T' \to T$ is any morphism in $\Sch_S$ and $\varphi \colon \bG' \iso g^*(\bG)$ is an isomorphism of groups, and
\item structure functor $(T,\bG) \mapsto T$ and $(g,\varphi)\mapsto g$,
\end{enumerate}
is a gerbe which is equivalent to the gerbe of $\bAut(\PGO_8^+)=\PGO_8^+\rtimes \SS_3$--torsors.

We now define a subgroup of the automorphisms of a trialitarian algebra. Recall that for a trialitarian algebra $(T,L,\cE_\qt,\alpha_\cE) \in \fTrial(T)$, the sections of its automorphism sheaf over $T'\in \Sch_T$ are of the form $(g=\Id_{T'},h,\psi)$ where $h\colon L\times_T T' \iso L\times_T T'$ is an automorphism in $\Sch_{T'}$ and $\psi \colon \cE_\qt|_{L\times_T T'} \iso h^*(\cE_\qt|_{L\times_T T'})$ is an automorphism of quadratic triples which respects $\alpha_{\cE}|_{L\times_T T'}$.
\begin{defn}\label{defn_PGO+_T}
Let $(T,\cT)=(T,L,\cE_\qt,\alpha_\cE) \in \fTrial$ be a trialitarian algebra over $T\in \Sch_S$. We define the group $\PGO^+_\cT$ to be the subgroup $\bAut_{\fTrial}(T,\cT)$ consisting of automorphisms which fix $L$. In detail, for $T'\in \Sch_T$
\[
\PGO^+_\cT(T') = \{(\Id_{T'},h,\psi)\in \bAut_{\fTrial}(T,\cT)(T') \mid h=\Id_{L\times_T T'}\}.
\]
\end{defn}

\begin{lem}\label{adjoint_group_prelim}
With the definition of $\PGO^+_\cT$ of a trialitarian algebra given above, we have the following.
\begin{enumerate}
\item \label{adjoint_group_prelim_i} $\PGO^+_{\cT\spl} \cong \PGO_8^+$.
\item \label{adjoint_group_prelim_ii} A morphism $(g,\phi)\colon (T',\cT')\iso (T,\cT)$ in $\fTrial$ induces an isomorphism 
\begin{align*}
\phi^\adj \colon \PGO^+_{\cT'} &\iso g^*(\PGO^+_\cT) = \PGO^+_{g^*(\cT)} \\
\varphi &\mapsto \phi \circ \varphi \circ \phi^{-1}
\end{align*}
of group sheaves over $T'$.
\item \label{adjoint_group_prelim_iii} Given $(T,\cT)\in \fTrial$, the group $\PGO^+_\cT$ is a twisted form of $\PGO_8^+|_T$.
\end{enumerate}
\end{lem}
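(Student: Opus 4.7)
For \ref{adjoint_group_prelim_i}, I would simply invoke the explicit isomorphism $\bAut(S,\cT\spl) \iso \PGO_8^+ \rtimes \SS_3$ built in the proof of \Cref{TriAlg_automorphism_sheaf}. Formula \Cref{eq_TriAlg_automorphism_sheaf} sends a diagonal section $(\Id_T, h, (\phi_1,\phi_2,\phi_3))$ with $h = \Id \in A_3(T)$ to $(\phi_1, 1)$, and extends uniquely to a sheaf isomorphism over arbitrary $T \in \Sch_S$. Restricting to those sections whose $h$-component is trivial yields a sheaf isomorphism $\PGO^+_{\cT\spl} \iso \PGO_8^+ \rtimes \{1\} = \PGO_8^+$, which is the desired identification.

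For \ref{adjoint_group_prelim_ii}, conjugation by $\phi$ is manifestly a group isomorphism $\bAut_{\fTrial}(\cT') \iso \bAut_{\fTrial}(g^*\cT) = g^*\bAut_{\fTrial}(\cT)$ of automorphism sheaves over $T'$. I would then verify that it restricts to the claimed subsheaves by a direct calculation using the composition and inverse formulas of \Cref{defn_TriAlg_groupoid}\ref{defn_TriAlg_groupoid_ii}: writing $\phi = (h_0, \psi_0)$ and $\varphi = (\Id, \psi)$, one computes $\phi^{-1} = (h_0^{-1}, (h_0^{-1})^*(\psi_0^{-1}))$, and a short composition chase shows that the $h$-component of $\phi \circ \varphi \circ \phi^{-1}$ is $h_0 \circ \Id \circ h_0^{-1} = \Id$. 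The identification $g^*(\PGO^+_\cT) = \PGO^+_{g^*\cT}$ is immediate from the definition, since a section of either sheaf over $T'' \in \Sch_{T'}$ is simply an automorphism of $\cT|_{T''}$ with trivial $h$-component.

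For \ref{adjoint_group_prelim_iii}, I combine \ref{adjoint_group_prelim_i} and \ref{adjoint_group_prelim_ii}. By \Cref{TriAlg_gerbe}, $\fTrial$ is a gerbe with split object $\cT\spl$, so there exists a cover $\{T_i \to T\}_{i\in I}$ together with isomorphisms $\phi_i \colon \cT|_{T_i} \iso \cT\spl|_{T_i}$ in $\fTrial(T_i)$. Applying $\phi_i^\adj$ from \ref{adjoint_group_prelim_ii} and then using \ref{adjoint_group_prelim_i}, we obtain local isomorphisms $\PGO^+_\cT|_{T_i} \iso \PGO^+_{\cT\spl}|_{T_i} \cong \PGO_8^+|_{T_i}$, exhibiting $\PGO^+_\cT$ as a twisted form of $\PGO_8^+|_T$. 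The only genuine bookkeeping is the composition chase in \ref{adjoint_group_prelim_ii}, which is routine given the formulas of \Cref{defn_TriAlg_groupoid}; everything else is an immediate consequence of the gerbe structure and the computation of $\bAut(S,\cT\spl)$ already carried out.
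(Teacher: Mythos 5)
Your proposal is correct and follows essentially the same route as the paper's own proof: part \ref{adjoint_group_prelim_i} by restricting the isomorphism \Cref{eq_TriAlg_automorphism_sheaf}, part \ref{adjoint_group_prelim_ii} by observing that conjugation preserves the subsheaf of automorphisms with trivial $h$-component, and part \ref{adjoint_group_prelim_iii} by combining the first two with the gerbe property from \Cref{TriAlg_gerbe}. The explicit inverse formula $\phi^{-1} = (h_0^{-1},(h_0^{-1})^*(\psi_0^{-1}))$ you record is accurate and slightly more detailed than the paper's phrasing, but the argument is the same.
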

\begin{proof}
\noindent\ref{adjoint_group_prelim_i}: It is clear that the automorphism of \Cref{TriAlg_automorphism_sheaf}, in particular \Cref{eq_TriAlg_automorphism_sheaf}, restricts to an isomorphism $\PGO^+_{\cT\spl}\iso \PGO_8^+$.

\noindent\ref{adjoint_group_prelim_ii}: The morphism $(g,\phi)$ consist of a scheme morphism $g \colon T' \to T$ and an isomorphism $\phi \colon \cT' \to g^*(\cT)$ in $\fTriAlg(T')$. Therefore, if $\varphi$ is an automorphism of $\cT'$ in $\fTriAlg(T')$, it is clear that $\phi \circ \varphi \circ \phi^{-1}$ is an automorphism of $g^*(\cT)$. Furthermore, let $\cT' = (L',\cE'_\qt,\alpha_{\cE'})$ and $\cT = (L,\cE_\qt,\alpha_{\cE})$. Then, $g^*(\cT) = (L\times_T T',\cE_\qt|_{L\times_T T'},\alpha_{\cE}|_{L\times_T T'})$ and the isomorphism $\phi$ contains a component $h\colon L' \iso L\times_T T'$ which is a isomorphism of $T'$--schemes. Any $\varphi \in \PGO^+_{\cT'}$ acts as the identity on $L'$, so it is clear that $\phi \circ \varphi \circ \phi^{-1}$ acts as $h \circ \Id_{L'} \circ h^{-1} = \Id_{L\times_T T'}$ on $L\times_T T'$ and therefore $\phi \circ \varphi \circ \phi^{-1} \in \PGO^+_{g^*(\cT)}$. Hence, the given morphism is well-defined. It is clear it is an isomorphism and so we are done.

\noindent\ref{adjoint_group_prelim_iii}: Since $\fTriAlg$ is a gerbe by \Cref{TriAlg_gerbe}, the object $(T,\cT)$ is locally isomorphic to $(T,\cT\spl|_T)$, i.e., there is a cover $\{T_i \to T\}_{i\in I}$ over which $(T_i,\cT|_{T_i})\cong (T_i,\cT\spl|_{T_i})$. Therefore by \ref{adjoint_group_prelim_ii}, there are isomorphisms $\PGO^+_{(\cT|_{T_i})} \cong \PGO^+_{(\cT\spl|_{T_i})}$, where this second group is $\PGO^+_{(\cT\spl|_{T_i})} = (\PGO^+_{\cT\spl})|_{T_i} \cong \PGO_8^+|_{T_i}$ by \ref{adjoint_group_prelim_i}. This shows that $\PGO^+_\cT$ is a twisted form of $\PGO_8^+|_T$.
\end{proof}

\Cref{adjoint_group_prelim} shows that the groups $\PGO^+_\cT$ are adjoint group of type $D_4$ and that we have a functor
\begin{align}
\cF^\adj \colon \fTrial &\to \fD_4^\adj \label{F^adj} \\
(T,\cT) &\mapsto (T,\PGO^+_\cT) \nonumber \\
(g,\phi) &\mapsto (g,\phi^\adj). \nonumber
\end{align}
It is a morphism of stacks since it is clearly compatible with the structure functors and $\fD_4^\adj$ is a gerbe. 

\begin{thm}\label{F^ad_equiv}
The morphism $\cF^\adj$ above is an equivalence of gerbes.
\end{thm}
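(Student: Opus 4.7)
The plan is to apply Lemma \ref{equivalence_gerbes}. We have already established that $\fTrial$ is a gerbe (Lemma \ref{TriAlg_gerbe}) and that $\fD_4^\adj$ is a gerbe (since all its objects are twisted forms of $\PGO_8^+$). We have the distinguished object $(S,\cT\spl) \in \fTrial(S)$, whose image is $(S,\PGO_8^+) \in \fD_4^\adj(S)$ by Lemma \ref{adjoint_group_prelim}\ref{adjoint_group_prelim_i}. So it suffices to show that the induced group homomorphism
\[
\bAut(\cF^\adj) \colon \bAut(S,\cT\spl) \longrightarrow \bAut(S,\PGO^+_{\cT\spl}) = \cAut(\PGO_8^+)
\]
is an isomorphism of sheaves of groups.

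By Lemma \ref{TriAlg_automorphism_sheaf} and the isomorphism of \Cref{eq_TriAlg_automorphism_sheaf}, the source is identified with $\PGO_8^+ \rtimes \SS_3$. By the discussion of pinnings in \Cref{automorphisms_of_groups}, the target is also identified with $\PGO_8^+ \rtimes \SS_3$ where $\PGO_8^+$ embeds as $\Inn(\PGO_8^+)$ and $\SS_3 = \bAut(\Delta,\Lambda)$ acts via graph automorphisms. By the definition of $\cF^\adj$ in \Cref{adjoint_group_prelim}\ref{adjoint_group_prelim_ii}, the map $\bAut(\cF^\adj)$ sends an automorphism $\varphi$ to the conjugation $\psi \mapsto \varphi \psi \varphi^{-1}$ on $\PGO^+_{\cT\spl}$. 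The plan is to check this map is the identity of $\PGO_8^+ \rtimes \SS_3$ on each of the two evident generating pieces.

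First, I would treat the $\PGO_8^+$ piece. Given $\phi \in \PGO_8^+(T)$, the corresponding section of $\bAut(T,\cT\spl|_T)$ is $(\Id_T,\Id_{T^{\sqcup 3}},(\phi,\theta^+(\phi),\theta^-(\phi)))$. An element of $\PGO^+_{\cT\spl}(T)$ is itself of the form $(\Id_T,\Id_{T^{\sqcup 3}},(\psi_1,\theta^+(\psi_1),\theta^-(\psi_1)))$, and conjugation in $\fTrial(T)$ produces the triple $(\phi\psi_1\phi^{-1},\theta^+(\phi\psi_1\phi^{-1}),\theta^-(\phi\psi_1\phi^{-1}))$. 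Under the identification of Lemma \ref{TriAlg_automorphism_sheaf}, this is the image in $\PGO_8^+ \rtimes \SS_3$ of $\Inn(\phi)(\psi_1) = \phi\psi_1\phi^{-1}$. So on the $\PGO_8^+$ subgroup the map is exactly the canonical embedding $\PGO_8^+ \inj \cAut(\PGO_8^+)$ as $\Inn(\PGO_8^+)$.

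Second, I would treat the $\SS_3$ piece, which is where the main verification sits. Without loss of generality (after passing to a cover) we may take $h = c$ or $h = \lambda$ as in the proof of Lemma \ref{TriAlg_automorphism_sheaf}. For $h = c$ the corresponding automorphism of $\cT\spl|_T$ is $(\Id_T, c^{\sqcup 3}, (\Id,\Id,\Id))$ (up to the canonical identification), and conjugating the element $(\Id_T,\Id_{T^{\sqcup 3}},(\psi_1,\theta^+(\psi_1),\theta^-(\psi_1))) \in \PGO^+_{\cT\spl}(T)$ produces the triple whose first component is $\theta^+(\psi_1)$; by Lemma \ref{TriAlg_automorphism_sheaf} this is the image of $\theta^+(\psi_1) \in \PGO_8^+(T)$, which matches the graph automorphism $c$ acting on $\PGO_8^+$. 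The case $h = \lambda$ is analogous but uses Lemma \ref{theta_description}\ref{theta_description_iii} (together with the isomorphism of \Cref{Theta_functor_properties}\ref{Theta_functor_properties_i}), giving the graph automorphism $\theta$. Thus on the $\SS_3$ subgroup the map is also the canonical embedding. Since $\bAut(\cF^\adj)$ is a group homomorphism matching both generators of the semidirect product description on each side, it is an isomorphism, and the theorem follows from Lemma \ref{equivalence_gerbes}. The main obstacle is the bookkeeping in the $\SS_3$ piece, in particular keeping track of the pullback conventions and the correction by $\pi_{\bO}(\varphi)$ that appeared in \Cref{eq_TriAlg_automorphism_sheaf}, but this is purely formal once the identifications from Lemma \ref{TriAlg_automorphism_sheaf} and \Cref{automorphisms_of_groups} are written out side by side.
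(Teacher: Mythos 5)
Your proposal is correct and follows essentially the same route as the paper: both invoke Lemma \ref{equivalence_gerbes}, identify the two automorphism sheaves with $\PGO_8^+ \rtimes \SS_3$ via Lemma \ref{TriAlg_automorphism_sheaf} and \Cref{aut_spin_PGO}, use Lemma \ref{adjoint_group_prelim}\ref{adjoint_group_prelim_ii} to see that $\bAut(\cF^\adj)$ is conjugation, and then check it is the identity on the $\PGO_8^+$ and $\SS_3$ pieces of the semidirect product. The paper's verification of the $\SS_3$ piece is a one-line observation that $\sigma\varphi\sigma^{-1}=\sigma(\varphi)$ in the semidirect product, whereas you unwind it generator-by-generator; this is only a presentational difference.
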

\begin{proof}
We argue using \Cref{equivalence_gerbes}. Consider the object $(S,\cT\spl) \in \fTriAlg(S)$. Recall that by \Cref{TriAlg_automorphism_sheaf}, its automorphism group is $\bAut_{\fTriAlg}(S,\cT\spl)\cong \PGO_8^+\rtimes \SS_3$. By \Cref{adjoint_group_prelim}\ref{adjoint_group_prelim_i}, we have $\cF^\adj(S,\cT\spl) = \PGO_8^+$ which is the obvious subgroup of $\bAut_{\fTriAlg}(S,\cT\spl)$. By \Cref{aut_spin_PGO}, the automorphism group of $\PGO_8^+$ is $\bAut_{\fD_4^\adj}(\PGO_8^+) \cong \PGO_8^+\rtimes \SS_3$. The morphism $\cF^\adj$ induces a group homomorphism between automorphism groups
\[
\bAut(\cF^\adj)\colon \PGO_8^+\rtimes \SS_3 \to \PGO_8^+\rtimes \SS_3.
\]
Applying \Cref{adjoint_group_prelim}\ref{adjoint_group_prelim_ii}, we see that for $\phi \in \PGO_8^+\rtimes \SS_3$, its image under this group homomorphism is the automorphism
\begin{align*}
\bAut(\cF^\adj)(\phi) \colon \PGO_8^+ &\iso \PGO_8^+ \\
\varphi &\mapsto \phi \varphi \phi^{-1}.
\end{align*}
However, by the description of $\bAut_{\fD_4^\adj}(\PGO_8^+)$ in \Cref{aut_spin_PGO}, the elements of $\PGO_8^+\subset \PGO_8^+ \rtimes \SS_3$ represent the inner automorphisms of $\PGO_8^+$, and therefore $\bAut(\cF^\adj)$ acts as the identity on this subgroup. Furthermore, for $\sigma \in \SS_3$ and $\varphi \in \PGO_8^+$ we have
\[
\sigma \varphi \sigma^{-1} = (\sigma\sigma^{-1})\cdot \sigma(\varphi) = \sigma(\varphi)
\]
and so $\sigma$ maps to itself as an automorphism of $\PGO_8^+$, i.e., $\bAut(\cF^\adj)$ is the identity on $\SS_3$ also. This makes it clear that $\bAut(\cF^\adj)$ is an isomorphism and so by \Cref{equivalence_gerbes} we conclude that $\cF^\adj$ is an equivalence as claimed.
\end{proof}

\subsection{Simply Connected Groups}
The simply connected groups of type $D_4$ are exactly the twisted forms of $\Spin_8$. Therefore, the stack denoted $\fD_4^\simpc$ consisting of
\begin{enumerate}
\item objects which are pairs $(T,\bG)$ where $T\in \Sch_S$ is any scheme and $\bG$ is a sheaf of groups on $\Sch_T$ for which there exists a cover $\{T_i \to T\}_{i\in I}$ such that $\bG|_{T_i} \cong \Spin_8|_{T_i}$,
\item morphisms which are pairs $(g,\varphi)\colon (T',\bG') \to (T,\bG)$ where $g\colon T' \to T$ is any morphism in $\Sch_S$ and $\varphi \colon \bG' \iso g^*(\bG)$ is an isomorphism of groups, and
\item structure functor which sends $(T,\bG) \mapsto T$ and $(g,\varphi) \mapsto g$
\end{enumerate}
is a gerbe which is equivalent to the gerbe of $\bAut(\Spin_8)=\PGO_8^+ \rtimes \SS_3$--torsors.

Given a trialitarian algebra $(T,\cT)=(T,L,\cE_\qt,\alpha_\cE)\in \fTriAlg$, the quadratic triple $\cE_\qt$ has an associated spin group as in \Cref{revelant_definitions}, namely $\Spin_{\cE_\qt} \subset \Cl(\cE_\qt)$. The quadratic triple $\cE_\qt$ also has its orthogonal group, $\bO^+_{\cE_\qt}\subset \cE_\qt^\times$ and we have the vector representation $\chi_\cE \colon \Spin_{\cE_\qt} \to \bO^+_{\cE_\qt}$. Thus, we have the following two compositions from $\Spin_{\cE_\qt}$ into $(\pi_L)_*(\tau_L^*(\cE_\qt))^\times$,
\[
\begin{tikzcd}
\Spin_{\cE_\qt} \ar[r,hookrightarrow] & \Cl(\cE_\qt)^\times \ar[r,"\alpha_\cE"] & (\pi_L)_*(\tau_L^*(\cE_\qt))^\times \textnormal{ and} \\[-4ex]
\Spin_{\cE_\qt} \ar[r,"\chi_\cE"] & \bO^+_{\cE_\qt} \subset \cE_\qt^\times \ar[r,"\gamma_\cE"] & (\pi_L)_*(\tau_L^*(\cE_\qt))^\times
\end{tikzcd}
\]
where $\gamma_\cE$ is the map of \Cref{gamma_E-morphism}. Both these maps are group homomorphisms and thus their equalizer is a group sheaf over $L$, which we denote as
\begin{align*}
\preSpin_{\cT} \colon \Sch_L &\to \Grp \\
L' &\mapsto \{a \in \Spin_{\cE_\qt}(L') \mid \alpha_\cE(a) = (\gamma_\cE \circ \chi_\cE)(a)\}.
\end{align*} 
After a pushforward, it is the group we are after.
\begin{defn}\label{defn_Spin_T}
Let $(T,\cT)=(T,L,\cE_\qt,\alpha_\cE)\in \fTriAlg$ be a trialitarian algebra over $T\in \Sch_S$ and let $f\colon L \to T$ be the degree $3$ \'etale cover. We define the spin group of the trialitarian algebra to be $\Spin_\cT = f_*(\preSpin_\cT)$. It is a group sheaf over $T$.
\end{defn}

We keep in mind that there is seemingly some ambiguity in the definition of pushforwards coming from the non-uniqueness of fiber products. Given a trialitarian algebra $(T,\cT)=(T,L,\cE_\qt,\alpha_\cE)$ and a scheme $T' \in \Sch_T$, we may have two fiber products $L'$ and $L\times_T T'$. Of course, these come with an isomorphism $L' \iso L\times_T T'$ of $T$--schemes due to their universal properties. This isomorphism yields an isomorphism of groups $\preSpin_\cT(L\times_T T')\iso \preSpin_\cT(L')$. When discussing the pushforward, we view this isomorphism as an identification and abusively write that $\Spin_\cT(T') = \preSpin_\cT(L\times_T T') = \preSpin_\cT(L')$.

\begin{lem}\label{simpc_group_prelim}
With the definition of $\Spin_\cT$ of a trialitarian algebra given above, we have the following.
\begin{enumerate}
\item \label{simpc_group_prelim_i} $\Spin_{\cT\spl} \cong \Spin_8$.
\item \label{simpc_group_prelim_ii} Let $(g,\phi) \colon (T',\cT') \iso (T,\cT)$ be a morphism in $\fTriAlg$. In detail, $\cT' = (L',\cE'_\qt,\alpha_{\cE'})$, $\cT =(L,\cE_\qt,\alpha_\cE)$, and $\phi=(h,\psi)$ where $h\colon L' \iso L\times_T T'$ is an isomorphism of $T'$--schemes and $\psi \colon \cE'_\qt \iso h^*(\cE_\qt|_{L\times_T T'})$ is an isomorphism of quadratic triples over $L'$ which respects the trialitarian algebra structures. This induces an isomorphism of group sheaves over $L'$
\begin{align*}
\preSpin_{\cT'} &\iso h^*(\preSpin_\cT|_{L\times_T T'}) \\
a &\mapsto \Cl(\psi)(a)
\end{align*}
whose pushforward is an isomorphism of group sheaves over $T'$
\[
\phi^\simpc \colon \Spin_{\cT'} \iso g^*(\Spin_\cT).
\]
\item \label{simpc_group_prelim_iii} Given $(T,\cT) \in \fTriAlg$, the group $\Spin_\cT$ is a twisted form of $\Spin_8|_T$.
\end{enumerate}
\end{lem}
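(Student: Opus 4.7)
The plan is to handle each part in turn, using the Chevalley-generator description of $\Psi$ from \Cref{Spin_Chevalley_computations} for part~\ref{simpc_group_prelim_i}, propagating the functoriality established in \Cref{autos_respect_vector_rep} and \Cref{gamma_respects_autos} for part~\ref{simpc_group_prelim_ii}, and then invoking the gerbe property for part~\ref{simpc_group_prelim_iii}.

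For \ref{simpc_group_prelim_i}, I would first unwind the split case: $L = S^{\sqcup 3}$, $\cE_\qt = \Mat_8(\cO)_\qt^3$, and $\alpha_\cE = (\Psi,\Psi,\Psi)$. After pushforward along $f\colon S^{\sqcup 3}\to S$, a section of $\Spin_{\cT\spl}$ over $T'\in \Sch_S$ is a triple $(a_1,a_2,a_3)\in \Spin_8(T')^3$ satisfying the equalizer condition
\[
(\Psi(a_1),\Psi(a_2),\Psi(a_3)) = ((\chi(a_2),\chi(a_3)),(\chi(a_3),\chi(a_1)),(\chi(a_1),\chi(a_2))),
\]
where the right-hand side is dictated by the formula for $\gamma_\cE$ from \Cref{gamma_E-morphism}. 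The crucial observation, extracted by comparing \Cref{Spin_Chevalley_computations} with the formulas for $\theta^+$ and $\theta^-$ on $\Spin_8$ in \Cref{theta+_description}, is that $\Psi|_{\Spin_8} = (\chi\circ\theta^+,\chi\circ\theta^-)$ as morphisms of sheaves; this equality holds on Chevalley generators and hence on all of $\Spin_8$ by \Cref{Chevalley_sheafification}. The assignment $a\mapsto (a,\theta^+(a),\theta^-(a))$ then lands in the equalizer, providing $\Spin_8 \to \Spin_{\cT\spl}$. To see that this is an isomorphism, I would use projection to the first factor, noting that $\chi(a_2) = \chi(\theta^+(a_1))$ forces $a_2 = \varepsilon\,\theta^+(a_1)$ for some $\varepsilon \in \runity_2$, after which the second component of $\Psi(a_2) = (\chi(a_3),\chi(a_1))$ reads $\varepsilon\cdot\chi(a_1) = \chi(a_1)$, forcing $\varepsilon=1$ since $\chi(a_1)$ is an invertible matrix; the analogous reasoning pins down $a_3$.

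For \ref{simpc_group_prelim_ii}, I would check directly that $\Cl(\psi)$ respects the equalizer. It restricts to a group isomorphism $\Spin_{\cE'_\qt} \iso h^*(\Spin_{\cE_\qt|_{L\times_T T'}})$ which intertwines vector representations by \Cref{autos_respect_vector_rep}. Combining this with \Cref{gamma_respects_autos} yields $(\tau_{L'})_*(\pi_{L'}^*(\psi))\circ\gamma_{\cE'}\circ\chi_{\cE'} = \gamma\circ\chi\circ\Cl(\psi)$ after the appropriate identifications, while the commutative diagram of \Cref{defn_TriAlg_groupoid}\ref{defn_TriAlg_groupoid_ii} supplies the matching equality on the $\alpha_\cE$ side. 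Together these show that $\Cl(\psi)$ carries sections of $\preSpin_{\cT'}$ into $h^*(\preSpin_\cT|_{L\times_T T'})$, with inverse coming from $\Cl(\psi^{-1})$. Pushing forward along the finite \'etale (hence affine) map $f'\colon L'\to T'$ and using the base-change identification $f'_*\circ h^* \cong g^*\circ f_*$ then produces $\phi^\simpc$.

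Part~\ref{simpc_group_prelim_iii} follows immediately: since $\fTriAlg$ is a gerbe by \Cref{TriAlg_gerbe}, there is a cover $\{T_i\to T\}_{i\in I}$ with isomorphisms $\cT|_{T_i}\cong \cT\spl|_{T_i}$, whence \ref{simpc_group_prelim_ii} and \ref{simpc_group_prelim_i} combine to give $\Spin_\cT|_{T_i}\cong \Spin_8|_{T_i}$. I expect the main obstacle to be the sign-tracking in \ref{simpc_group_prelim_i}, together with carefully aligning the identifications used in the pushforward step of \ref{simpc_group_prelim_ii}; the remaining arguments are diagram chases assembled from the naturality lemmas already established.
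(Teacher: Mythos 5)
Your parts~\ref{simpc_group_prelim_ii} and~\ref{simpc_group_prelim_iii} follow the paper's proof essentially verbatim: for (ii) you propagate \Cref{autos_respect_vector_rep} and \Cref{gamma_respects_autos} through the two legs of the equalizer and push forward along the finite \'etale cover, and for (iii) you invoke the gerbe property of $\fTriAlg$ together with (i) and (ii). Those parts are fine.

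There is a genuine gap in your argument for part~\ref{simpc_group_prelim_i}, at the step ``the second component of $\Psi(a_2)$ reads $\varepsilon\cdot\chi(a_1) = \chi(a_1)$, forcing $\varepsilon=1$.'' Writing $a_2 = \varepsilon\,\theta^+(a_1)$ with $\varepsilon\in\Ker(\chi)$, the second component of $\Psi(a_2)$ is $\chi(\theta^-(a_2)) = \chi(\theta^-(\varepsilon))\cdot\chi(a_1)$, not $\varepsilon\cdot\chi(a_1)$. The invertibility of $\chi(a_1)$ therefore only forces $\chi(\theta^-(\varepsilon)) = 1$, i.e.\ $\theta^-(\varepsilon)\in\Ker(\chi)$. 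Concluding $\varepsilon = 1$ from this requires the additional fact that $\Ker(\chi)\cap\theta^+(\Ker(\chi))$ is trivial, or equivalently that $\theta^-$ does not preserve $\Ker(\chi)$ inside the center $Z(\Spin_8)\cong\runity_2\times\runity_2$. This is precisely the point where the paper does real work: it identifies $\Ker(\chi)$ as $\{h_{e_3-e_4}(t)h_{e_3+e_4}(t)\mid t\in\runity_2\}$ and computes $\theta^+(h_{e_3-e_4}(t)h_{e_3+e_4}(t)) = h_{e_3-e_4}(t)h_{-e_1-e_2}(t)$, which is not in $\Ker(\chi)$ unless $t=1$. A priori the triality automorphism might have fixed $\Ker(\chi)$ (it does fix the center of $\Spin_8$ as a set, just not this particular $\runity_2$), and if it did, the equalizer $\Spin_{\cT\spl}$ would be strictly larger than $\Spin_8$. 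You should not write $\chi(\theta^-(\varepsilon))$ as ``$\varepsilon$'' without first establishing that $\theta^-$ moves $\Ker(\chi)$ off itself; that requires a Chevalley-generator (or equivalent) computation, which is exactly the content the paper supplies.
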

\begin{proof}
\noindent\ref{simpc_group_prelim_i}: In the case of the split trialitarian algebra
\[
(S,\cT\spl) = (S,S^{\sqcup 3},\cE_\qt = (\Mat_8(\cO)_\qt,\Mat_8(\cO)_\qt,\Mat_8(\cO)_\qt),\alpha_\cE = (\Psi,\Psi,\Psi))
\]
we know that
\[
\pi_*(\tau^*(\cE_\qt)) \cong (\Mat_8(\cO)_\qt\times \Mat_8(\cO)_\qt,\Mat_8(\cO)_\qt\times \Mat_8(\cO)_\qt,\Mat_8(\cO)_\qt\times \Mat_8(\cO)_\qt)
\]
from \Cref{split_E_twist}. Also, since $\Cl(\cE_\qt) = (\Cl(\Mat_8(\cO)_\qt),\Cl(\Mat_8(\cO)_\qt),\Cl(\Mat_8(\cO)_\qt))$ it is clear that $\Spin_{\cE_\qt} = (\Spin_8,\Spin_8,\Spin_8)$. From the calculations in \Cref{Spin_Chevalley_computations}, we know that for $(a,b,c)\in \Spin_{\cE_\qt}$ we have
\begin{align*}
&(\Psi,\Psi,\Psi)(a,b,c)\\
= &\big(\big(\chi(\theta^+(a)),\chi(\theta^-(a))\big),\big(\chi(\theta^+(b)),\chi(\theta^-(b))\big),\big(\chi(\theta^+(c)),\chi(\theta^-(c))\big)\big).
\end{align*}
On the other hand, since $S^{\sqcup 3}$ is the split degree $3$ \'etale cover of $S$ we know the explicit description of $\gamma_\cE$, and so
\[
(\gamma_\cE \circ \chi_\cE)(a,b,c) = \gamma_\cE(\chi(a),\chi(b),\chi(c)) = \big((\chi(b),\chi(c)),(\chi(c),\chi(a)),(\chi(a),\chi(b))\big).
\]
Therefore, $(a,b,c) \in \Spin_{\cT\spl}$ if and only if the relations
\begin{align*}
\chi(\theta^+(a))&= \chi(b) & \chi(\theta^-(a)) &= \chi(c) \\
\chi(\theta^+(b))&= \chi(c) & \chi(\theta^-(b)) &= \chi(a) \\
\chi(\theta^+(c))&= \chi(a) & \chi(\theta^-(c)) &= \chi(b)
\end{align*}
hold. If these relations hold, then $\theta^+(a) = xb$ for some $x\in \Ker(\chi)$. In turn, this means that $\theta^-(a)=\theta^+(x)\theta^+(b)$ and so $\chi(\theta^-(a)) = \chi(\theta^+(x))\chi(\theta^+(b))$. However, we have both $\chi(\theta^-(a))=\chi(c)$ and $\chi(\theta^+(b))=\chi(c)$ from the relations and so we must also have $\chi(\theta^+(x))=1$, i.e., $\theta^+(x) \in \Ker(\chi)$ also.

From the discussion in the proof of \Cref{Chevalley_sheafification}, we recall that for $T' \in \Sch_T$ we have $\Ker(\chi)(T') = \{h_{e_3-e_4}(t)h_{e_3+e_4}(t) \mid t\in \runity_2(T')\}$ where
\[
h_\alpha(t) = w_\alpha(t)w_\alpha(-1) = x_\alpha(t)x_{-\alpha}(-t^{-1})x_\alpha(t)x_\alpha(-1)x_{-\alpha}(1)x_\alpha(-1)
\]
for a root $\alpha$ and $t\in \cO(T')^\times$ as in \cite[Lemma 19]{Steinberg}. Using the description of the automorphism $\theta^+$ of $\Spin_8$ from \Cref{theta+_description}, we can compute that $\theta^+(h_{e_3-e_4}(t)) = h_{e_3-e_4}(t)$ and that
\begin{align*}
&\theta^+(h_{e_3+e_4}(t))\\
= &x_{-e_1-e_2}(-t)x_{e_1+e_2}(t^{-1})x_{-e_1-e_2}(-t)x_{-e_1-e_2}(1)x_{e_1+e_2}(-1)x_{-e_1-e_2}(1) \\
= &w_{-e_1-e_2}(-t)w_{-e_1-e_2}(1) \\
= &w_{-e_1-e_2}(-t)w_{-e_1-e_2}(-1)w_{-e_1-e_2}(1)^2 \\
= &h_{-e_1-e_2}(-t)h_{-e_1-e_2}(-1) \\
= &h_{-e_1-e_2}(t).
\end{align*}
Here we use the facts that $w_\alpha(t)^{-1} = w_\alpha(-t)$, that $h_\alpha(t)$ is multiplicative in $t$, and \cite[Lemma 1(3)]{Rue20} which states that in spin groups we have the relation $w_\alpha(t)^2 = h_\alpha(-1)$. Thus, $\theta^+(h_{e_3-e_4}(t)h_{e_3+e_4}(t))$ is not in $\Ker(\chi)$ unless $t=1$ and hence $h_{e_3-e_4}(t)h_{e_3+e_4}(t) = 1$ also.

Therefore, we conclude that $\theta^+(a)=b$ and, by the same argument, that $\theta^+(b) = c$. It is then clear that the map
\begin{align*}
\Spin_{\cT\spl} &\iso \Spin_8 \\
(a,b,c) &\mapsto a
\end{align*}
is an isomorphism with inverse $a \mapsto (a,\theta^+(a),\theta^-(a))$.

\noindent\ref{simpc_group_prelim_ii}: First, we show that we have the claimed isomorphism over $L'$ between $\preSpin$ groups. Let $X\in \Sch_{L'}$. By definition we have that
\[
\preSpin_{\cT'}(X) = \{a \in \Spin_{\cE'_\qt}(X) \mid \alpha_{\cE'}(a) = (\gamma_{\cE'} \circ \chi_{\cE'})(a) \}.
\]
Next, we have
\begin{align*}
&h^*(\preSpin_\cT|_{L\times_T T'})(X)= \\
& \{ a \in \Spin_{h^*(\cE_\qt|_{L\times_T T'})}(X) \mid h^*(\alpha_\cE|_{L\times_T T'})(a) = \big(h^*(\gamma_{\cE}|_{L\times_T T'})\circ h^*(\chi_\cE|_{L\times_T T'})\big)(a) \}.
\end{align*}
Since we are given the isomorphism $\psi \colon \cE'_\qt \iso h^*(\cE_\qt|_{L\times_T T'})$, we have the induced isomorphism between the Clifford algebras which in turn gives the functoriality of $\Spin$, so we have a group isomorphism
\begin{align*}
\Spin_{\cE'_\qt} &\iso \Spin_{h^*(\cE_\qt|_{L\times_T T'})} \\
a &\mapsto \Cl(\psi)(a).
\end{align*}
We must check that this isomorphism restricts to our desired isomorphism between $\preSpin$ groups. Since $\psi$ is coming from an isomorphism of trialitarian algebras, i.e., $(h,\psi)$ is a morphism in $\fTriAlg(T')$, we have that
\[
h^*(\alpha_{\cE}|_{L\times_T T'})\circ \Cl(\psi) = (\tau_L)_*(\pi_L^*(\psi))\circ \alpha_{\cE'}.
\]
By combining \Cref{autos_respect_vector_rep} and \Cref{gamma_respects_autos}, we have that
\[
h^*(\gamma_{\cE}|_{L\times_T T'})\circ h^*(\chi_\cE|_{L\times_T T'}) \circ \Cl(\psi) = (\tau_L)_*(\pi_L^*(\psi)) \circ \gamma_{\cE'} \circ \chi_{\cE'}.
\]
Therefore, if $a\in \preSpin_{\cT'}(X)$, then
\begin{align*}
\big(h^*(\gamma_{\cE}|_{L\times_T T'})\circ h^*(\chi_\cE|_{L\times_T T'})\circ \Cl(\psi)\big)(a) &= \big((\tau_L)_*(\pi_L^*(\psi)) \circ \gamma_{\cE'} \circ \chi_{\cE'}\big)(a) \\
&= \big((\tau_L)_*(\pi_L^*(\psi))\big)((\gamma_{\cE'} \circ \chi_{\cE'})(a)) \\
&= \big((\tau_L)_*(\pi_L^*(\psi))\big)(\alpha_{\cE'}(a)) \\
&= \big((\tau_L)_*(\pi_L^*(\psi))\circ \alpha_{\cE'}\big)(a) \\
&= \big(h^*(\alpha_{\cE}|_{L\times_T T'})\circ \Cl(\psi)\big)(a)
\end{align*}
which shows that $\Cl(\psi)(a) \in h^*(\preSpin_\cT|_{L\times_T T'})(X)$ as desired.

Finally, it is left to show that the pushforward of $h^*(\preSpin_\cT|_{L\times_T T'})$ along $f'\colon L' \to T'$ is equal to $g^*(\Spin_\cT)$. Given $Y \in \Sch_{T'}$, we have $g^*(\Spin_\cT)(Y) = \Spin_\cT(Y) = \preSpin_\cT(L\times_T Y)$. However, we have a diagram of schemes
\[
\begin{tikzcd}
L'\times_{T'} Y \ar[r] \ar[d,"\rotatebox{90}{$\sim$}"] & L' \ar[d,"h"] & \\
L\times_T Y \ar[r] \ar[d] & L\times_T {T'} \ar[r] \ar[d] & L \ar[d] \\
Y \ar[r] & T' \ar[r,"g"] & T
\end{tikzcd}
\]
and so
\begin{align*}
\preSpin_\cT(L\times_T Y) &= \preSpin_\cT(L'\times_{T'} Y) \\
&= h^*(\preSpin_\cT|_{L\times_T T'})(L'\times_{T'} Y) \\
&= f'_*\big(h^*(\preSpin_\cT|_{L\times_T T'})\big)(Y)
\end{align*}
showing that $g^*(\Spin_\cT) = f'_*\big(h^*(\preSpin_\cT|_{L\times_T T'})\big)$. Thus setting $\phi^\simpc = f'_*(\Cl(\psi))$ produces our desired isomorphism.

\noindent\ref{simpc_group_prelim_iii}: This follows from \ref{simpc_group_prelim_i} since any trialitarian algebra $(T,\cT)$ is locally isomorphic to $(T,\cT\spl|_T)$ because $\fTriAlg$ is a gerbe by \Cref{TriAlg_gerbe}.
\end{proof}

As a result of \Cref{simpc_group_prelim}, it is immediate that we have a morphism of stacks
\begin{align}
\cF^\simpc \colon \fTriAlg &\to \fD_4^\simpc \label{F^sc}\\
(T,\cT) &\mapsto (T,\Spin_\cT) \nonumber \\
(g,\phi) &\mapsto (g,\phi^\simpc). \nonumber
\end{align}

\begin{thm}\label{F^sc_equiv}
The morphism $\cF^\simpc$ above is an equivalence of gerbes.
\end{thm}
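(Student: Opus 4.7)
The plan is to apply \Cref{equivalence_gerbes} to the split object $(S,\cT\spl) \in \fTriAlg(S)$, mirroring the proof of \Cref{F^ad_equiv}. By \Cref{TriAlg_automorphism_sheaf} we have $\bAut_{\fTriAlg}(\cT\spl) \cong \PGO_8^+ \rtimes \SS_3$, while \Cref{simpc_group_prelim}\ref{simpc_group_prelim_i} gives $\cF^\simpc(S,\cT\spl) = (S,\Spin_8)$, whose automorphism sheaf is likewise $\PGO_8^+ \rtimes \SS_3$ by \eqref{aut_spin_PGO}. It therefore suffices to verify that the induced homomorphism $\bAut(\cF^\simpc) \colon \PGO_8^+ \rtimes \SS_3 \to \PGO_8^+ \rtimes \SS_3$ is an isomorphism, which I will do by checking that it restricts to the identity on each of the two generating subgroups.

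For the $\PGO_8^+$ factor, an element $\phi \in \PGO_8^+$ corresponds under the isomorphism of \Cref{TriAlg_automorphism_sheaf} to the automorphism $(\Id,\Id,(\phi,\theta^+(\phi),\theta^-(\phi)))$ of $\cT\spl$. By \Cref{simpc_group_prelim}\ref{simpc_group_prelim_ii}, its image under $\cF^\simpc$ is the pushforward of the componentwise automorphism $(\Cl(\phi),\Cl(\theta^+(\phi)),\Cl(\theta^-(\phi)))$ of $\preSpin_{\cT\spl}$. Under the identification $\Spin_{\cT\spl} \cong \Spin_8$ obtained in the proof of \Cref{simpc_group_prelim}\ref{simpc_group_prelim_i} (projection onto the first factor), this reduces to the automorphism $\Cl(\phi)|_{\Spin_8}$, which by diagram \eqref{action_on_Clifford} is the inner automorphism of $\Spin_8$ by any local lift of $\phi$. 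Under the description of $\cAut(\Spin_8)$ from \Cref{automorphisms_of_groups}, where $\PGO_8^+$ embeds as the group of inner automorphisms, this is exactly $\phi$.

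The main obstacle will be the $\SS_3$ factor, where one must track how the permutation of the connected components of $S^{\sqcup 3}$ interacts with the first-factor identification $\Spin_{\cT\spl} \cong \Spin_8$. For the three-cycle $c$, the corresponding element of $\bAut(\cT\spl)$ is $(\Id_S, c,(\Id,\Id,\Id))$: although $\Cl(\psi)=\Id$ at the level of Clifford algebras, the canonical isomorphism $c^*(\preSpin_{\cT\spl}) \cong \preSpin_{\cT\spl}$ needed to identify the target of $\phi^\simpc$ with $g^*(\Spin_{\cT\spl}) = \Spin_{\cT\spl}$ will cyclically permute the components of a triple section. Tracing a section $(a,\theta^+(a),\theta^-(a))$, corresponding to $a \in \Spin_8$, through this composition will produce a triple representing $\theta^+(a) \in \Spin_8$; hence $c$ maps to the graph automorphism $\theta^+ \in \SS_3 \subset \cAut(\Spin_8)$, matching \eqref{aut_spin_PGO}. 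For the transposition $\lambda$, a parallel calculation starting from the element $(\Id_S,\lambda,(\pi_\bO(\varphi),\pi_\bO(\varphi),\pi_\bO(\varphi)))$ of $\bAut(\cT\spl)$ (cf.\ the proof of \Cref{TriAlg_automorphism_sheaf}), together with \Cref{theta_description}\ref{theta_description_i} which identifies $\theta$ with the restriction of $\bC(\varphi)$ to $\Spin_8$, will yield that $\lambda$ maps to $\theta \in \SS_3$.

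Since $\bAut(\cF^\simpc)$ restricts to the identity on both $\PGO_8^+$ and $\SS_3$, and both of these generate $\PGO_8^+ \rtimes \SS_3$, it is an automorphism of $\PGO_8^+ \rtimes \SS_3$. \Cref{equivalence_gerbes} then concludes that $\cF^\simpc$ is an equivalence of gerbes.
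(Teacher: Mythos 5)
Your proposal is correct and follows essentially the same route as the paper's proof: apply \Cref{equivalence_gerbes} to the split object, use \Cref{simpc_group_prelim} to identify $\cF^\simpc(S,\cT\spl)$ with $\Spin_8$, and verify the induced homomorphism $\PGO_8^+\rtimes\SS_3 \to \PGO_8^+\rtimes\SS_3$ by tracing what happens to elements of $\PGO_8^+$ and to the diagonal sections $c$ and $\lambda$, using \eqref{action_on_Clifford}, \Cref{theta_description}\ref{theta_description_i}, and the component-permuting action that the $\SS_3$ part of a morphism induces on the triple $(\Spin_8,\Spin_8,\Spin_8)$. The only difference is that you leave the final computations for $c$ and $\lambda$ as a sketch, whereas the paper carries them out explicitly to obtain $c\mapsto\theta^+$ and $\lambda\mapsto\theta$; the underlying mechanism and conclusions are identical.
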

\begin{proof}
We argue using \Cref{equivalence_gerbes}. We focus on the object $(S,\cT\spl) \in \fTriAlg(S)$, which by \Cref{simpc_group_prelim}\ref{simpc_group_prelim_i} is mapped by $\cF^\simpc$ to $\Spin_8$. In particular, $\cF^\simpc(S,\cT\spl) = \Spin_{\cT\spl} \iso \Spin_8$ via the isomorphism
\begin{align*}
\Spin_{\cT\spl}(X) = \{(a,\theta^+(a),\theta^-(a))\in \Spin_8(X)^3 \} &\iso \Spin_8(X) \\
(a,\theta^+(a),\theta^-(a)) &\mapsto a.
\end{align*}

We know that $\bAut_{\fTriAlg}(S,\cT\spl) \cong \PGO_8^+ \rtimes \SS_3$ by \ref{TriAlg_automorphism_sheaf} and we also know that $\bAut_{\fD_4^\simpc}(\Spin_8) \cong \PGO_8^+ \rtimes \SS_3$ by \Cref{aut_spin_PGO}. Therefore, $\cF^\simpc$ gives us an induced group homomorphism
\[
\cF^\simpc_{(S,\cT\spl)} \colon \PGO_8^+ \rtimes \SS_3 \to \PGO_8^+ \rtimes \SS_3
\]
which we argue is an isomorphism. 

The automorphism group of $(S,\cT\spl|_T)$ takes the following form, as in \ref{TriAlg_automorphism_sheaf}. By definition $\cT\spl = (S^{\sqcup 3},(\Mat_8(\cO)_\qt,\Mat_8(\cO)_\qt,\Mat_8(\cO)_\qt),(\Phi,\Phi,\Phi))$. The elements $\phi \in \PGO_8^+(S) \subset (\PGO_8^+\rtimes \SS_3)(S)$ act on the algebra by
\[
(\phi,\theta^+(\phi),\theta^-(\phi))\colon (\Mat_8(\cO)_\qt,\Mat_8(\cO)_\qt,\Mat_8(\cO)_\qt)\iso (\Mat_8(\cO)_\qt,\Mat_8(\cO)_\qt,\Mat_8(\cO)_\qt).
\]
The elements of $\SS_3(S)$ permute the components of $S^{\sqcup 3}$. Let $\sigma \in \SS_3(S)$ be a diagonal section. If $\sigma \in A_3(S)\subset \SS_3(S)$, then it also cyclically permutes the three components of the algebra. However, if $\sigma \notin A_3(S)$, the it both permutes the factors of the algebra according to $\sigma$ and applies the automorphism $(\pi_\bO(\varphi),\pi_\bO(\varphi),\pi_\bO(\varphi))$ where $\pi_\bO(\varphi) \in \PGO_8(S)$ is the element of \Cref{theta_description}\ref{theta_description_iii}.

The induced action on the Clifford algebra is similar. A section $\phi \in \PGO_8^+(S)$ acts by $\big(\Cl(\phi),\Cl(\theta^+(\phi)),\Cl(\theta^-(\phi))\big)$ on $(\Cl(\Mat_8(\cO)_\qt),\Cl(\Mat_8(\cO)_\qt),\Cl(\Mat_8(\cO)_\qt))$. The diagonal sections of $A_3(S)$ simply permute the factors of the Clifford algebra while the other diagonal sections of $\SS_3(S)$ both permute the factors and apply $(\Cl(\pi_\bO(\varphi)),\Cl(\pi_\bO(\varphi)),\Cl(\pi_\bO(\varphi)))$. However, by the commutativity of \Cref{action_on_Clifford} and by \Cref{theta_description}\ref{theta_description_i}, we have that
\begin{equation}\label{F^sc_equiv_eq_i}
(\Cl(\pi_\bO(\varphi)),\Cl(\pi_\bO(\varphi)),\Cl(\pi_\bO(\varphi)))|_{(\Spin_8,\Spin_8,\Spin_8)} = (\theta,\theta,\theta).
\end{equation}

As a penultimate step, we point out that the commutativity of \Cref{action_on_Clifford} implies that for $\phi \in \PGO_8^+(S)$, if we restrict the induced automorphism $\Cl(\phi)$ of the Clifford algebra to $\Spin_8$, we obtain the usual action of $\phi$ as an inner automorphism of $\Spin_8$, i.e. viewing $\phi \in \PGO_8^+(S) \subseteq \bAut(\Spin_8)$. Indeed, locally there exist sections $x\in \Spin_8$ lying above $\phi$, and $\phi = \Inn(x)$ as an automorphism of $\Spin_8$. However, \Cref{action_on_Clifford} shows that $\Cl(\phi)=\Inn(x)$ as an inner automorphism of the Clifford algebra, and this justifies our claim.

Finally, we can describe $\cF^\simpc_{(S,\cT\spl)}$. A section $\phi \in \PGO_8^+(S)$ viewed as a subset of $\bAut_{\fTriAlg}(S,\cT\spl)(S)$ acts on the Clifford algebra by $\big(\Cl(\phi),\Cl(\theta^+(\phi)),\Cl(\theta^-(\phi))\big)$, which by the previous paragraph restricts to the automorphism $(\phi,\theta^+(\phi),\theta^-(\phi))$ on $(\Spin_8,\Spin_8,\Spin_8)$. Thus, taking the projection onto the first factor we see that $\cF^\simpc_{(S,\cT\spl)}(\phi) = \phi$. Now we consider diagonal sections of $\SS_3(S)$. For the cycle $c=(1\; 2\; 3) \in A_3(S) \subset \SS_3(S)$, it acts as a permutation on the Clifford algebra and thus also on $(\Spin_8,\Spin_8,\Spin_8)$, sending
\[
(a,\theta^+(a),\theta^-(a)) \mapsto (\theta^+(a),\theta^-(a),a).
\]
Taking the first projection, this shows that $\cF^\simpc_{(S,\cT\spl)}(c) = \theta^+$. Lastly, the transposition $\lambda = (2\; 3) \in \SS_3(S)$ acts on $(\Spin_8,\Spin_8,\Spin_8)$ by transposing the last two positions and applying the automorphism of \Cref{F^sc_equiv_eq_i}. It thus sends
\[
(a,\theta^+(a),\theta^-(a)) \mapsto \big(\theta(a),\theta(\theta^-(a)),\theta(\theta^+(a))\big).
\]
Considering the first factor, we obtain that $\cF^\simpc_{(S,\cT\spl)}(\lambda) = \theta$. In summary,
\begin{align*}
\cF^\simpc_{(S,\cT\spl)}(S) \colon (\PGO_8^+ \rtimes \SS_3)(S) &\to (\PGO_8^+ \rtimes \SS_3)(S) \\
\phi &\mapsto \phi \\
c &\mapsto \theta^+ \\
\lambda &\mapsto \theta
\end{align*}
for $\phi \in \PGO_8^+(S)$ and $c,\lambda$ diagonal as above. This discussion is independent of the base and therefore also describes how $\cF^\simpc_{(S,\cT\spl)}(T)$ acts over any $T$ when the section of $\SS_3(T)$ is diagonal. It is thus clear that $\cF^\simpc_{(S,\cT\spl)}$ restricts to an automorphism of the subpresheaf $\PGO_8^+ \rtimes \PP_3$ and therefore $\cF^\simpc_{(S,\cT\spl)}$ is an isomorphism itself. Hence, we are done.
\end{proof}

\begin{rem}\label{PGO_T_Spin_T_isogeny}
As one would expect, give a trialitarian algebra $(T,\cT)\in \fTriAlg$, there is an isogeny of groups $\Spin_\cT \surj \PGO^+_\cT$. Let $\cT = (L,\cE_\qt,\alpha_\cE)$ and let $X\in \Sch_T$. We know by definition that $\Spin_\cT(X) \subset \Spin_{\cE_\qt}(L\times_T X)$. Any element of $\PGO^+_\cT(X)$ is an automorphism of $(L\times_T X, \cE_\qt|_{L\times_T X}, \alpha_\cE|_{L\times_T X})$ which fixes $L\times_T X$ and can therefore be viewed as simply a quadratic triple isomorphism of $\cE_\qt|_{L\times_T X}$, i.e., $\PGO^+_\cT(X) \subset \PGO_{\cE_\qt}(X)$. In fact, since $\phi$ respects $\alpha_\cE$, we have $\PGO^+_\cT(X) \subset \PGO^+_{\cE_\qt}(X)$. We can then consider the standard isogeny
\begin{align*}
\Spin_{\cE_\qt} &\surj \PGO^+_{\cE_\qt} \\
a &\mapsto \Inn(\chi_{\cE}(a)).
\end{align*}
Let $a\in \Spin_\cT$ and set $\phi=\Inn(\chi_{\cE}(a))$. By the commutativity of \Cref{action_on_Clifford}, we know that $\Cl(\phi) = \Inn(a)$ as an automorphism of $\Cl(\cE_\qt)$. Locally, $\chi_{\cE}(a)$ is an element of the form $(a_1,a_2,a_3)\in (\Mat_8(\cO),\Mat_8(\cO),\Mat_8(\cO))$ and one can easily compute that $(\tau_L)_*\big(\pi_L^*\big(\Inn(\chi_{\cE}(a))\big)\big) = \Inn\big(\gamma_\cE(\chi_\cE(a))\big)$ holds locally, and thus globally as well. However, by assumption $(\gamma_\cE\circ \chi_\cE)(a) = \alpha_\cE(a)$ and therefore the diagram
\[
\begin{tikzcd}
\Cl(\cE_\qt) \ar[r,"\alpha_\cE"] \ar[d,"\Cl(\phi)=\Inn(a)"] & (\tau_L)_*(\pi_L^*(\cE_\qt)) \ar[d,"(\tau_L)_*(\pi_L^*(\phi))=\Inn(\alpha_\cE(a))"] \\
\Cl(\cE_\qt) \ar[r,"\alpha_\cE"] & (\tau_L)_*(\pi_L^*(\cE_\qt))
\end{tikzcd}
\]
commutes, meaning that $\phi \in \PGO^+_\cT$. For the split trialitarian algebra $(S,\cT\spl)$, we recover the usual isogeny $\Spin_{\cT\spl} \cong \Spin_8 \surj \PGO^+_8 \cong \PGO^+_{\cT\spl}$. Since this is the description locally, we see that in general $\Spin_\cT \surj \PGO^+_\cT$ is surjective and the kernel is the center. Because of this, an automorphism of $\Spin_\cT$ induces an automorphism of $\PGO^+_\cT$ and thus we get a morphism of stacks
\begin{align*}
\fD_4^\simpc &\to \fD_4^\adj \\
(T,\Spin_{\cT}) &\mapsto (T,\PGO^+_\cT)
\end{align*}
which makes the diagram
\[
\begin{tikzcd}[row sep=-1ex]
 & \fD_4^\simpc \ar[dd] \\
\fTriAlg \ar[ur,near end,"\cF^\simpc"] \ar[dr,near end,swap,"\cF^\adj"] & \\
 & \fD_4^\adj
\end{tikzcd}
\]
commute up to equality.
\end{rem}

\end{document}